\documentclass[11pt,reqno]{amsart}
\usepackage[left=1in,right=1in,top=1in,bottom=0.75in]{geometry}
\usepackage[T1]{fontenc}
\usepackage[utf8]{inputenc}
\usepackage[tt=false]{libertine}
\usepackage{microtype}
\usepackage{amsmath,amssymb,mathtools}
\usepackage{graphicx}
\usepackage{aliascnt}
\usepackage{enumitem}
\usepackage{xcolor}
\usepackage[colorlinks=true]{hyperref}
\usepackage{cleveref}

\definecolor{arxivblue}{RGB}{0,55,110}
\hypersetup{
  citecolor=violet,
  linkcolor=violet,
  urlcolor=arxivblue,
  pdftitle={A Priori Estimates for Singular Fractional Stochastic Burgers Equations},
  pdfauthor={Xiaohao Ji}
}
\numberwithin{equation}{section}
\allowdisplaybreaks[2]

\newtheorem{theorem}{Theorem}[section]
\newaliascnt{proposition}{theorem}
\newtheorem{proposition}[proposition]{Proposition}
\aliascntresetthe{proposition}
\newaliascnt{lemma}{theorem}
\newtheorem{lemma}[lemma]{Lemma}
\aliascntresetthe{lemma}
\newaliascnt{corollary}{theorem}
\newtheorem{corollary}[corollary]{Corollary}
\aliascntresetthe{corollary}
\theoremstyle{remark}
\newaliascnt{remark}{theorem}
\newtheorem{remark}[remark]{Remark}
\aliascntresetthe{remark}
\newenvironment{nouppercase}{\renewcommand{\uppercasenonmath}[1]{}}{}

\crefname{theorem}{Theorem}{Theorems}
\Crefname{theorem}{Theorem}{Theorems}
\crefname{proposition}{Proposition}{Propositions}
\Crefname{proposition}{Proposition}{Propositions}
\crefname{lemma}{Lemma}{Lemmas}
\Crefname{lemma}{Lemma}{Lemmas}
\crefname{corollary}{Corollary}{Corollaries}
\Crefname{corollary}{Corollary}{Corollaries}
\crefname{remark}{Remark}{Remarks}
\Crefname{remark}{Remark}{Remarks}

\newcommand{\T}{\mathbb T}
\newcommand{\R}{\mathbb R}
\newcommand{\dd}{\mathop{}\!\mathrm d}
\newcommand{\eps}{\varepsilon}
\DeclarePairedDelimiter{\norm}{\lVert}{\rVert}
\DeclarePairedDelimiter{\abs}{\lvert}{\rvert}
\DeclarePairedDelimiterX{\ip}[2]{\langle}{\rangle}{#1,#2}
\newcommand{\Ccal}{\mathcal C}
\newcommand{\Dcal}{\mathcal D}
\newcommand{\Acal}{\mathcal A}
\newcommand{\Rcal}{\mathcal R}
\newcommand{\Kcal}{\mathcal K}
\newcommand{\Ical}{\mathcal I}
\newcommand{\zzone}{\text{\resizebox{.7em}{!}{\includegraphics{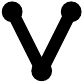}}}}
\newcommand{\zztwo}{\text{\resizebox{.7em}{!}{\includegraphics{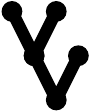}}}}
\newcommand{\zzthree}{\text{\resizebox{.7em}{!}{\includegraphics{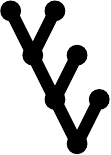}}}}
\newcommand{\zzfour}{\text{\resizebox{1em}{!}{\includegraphics{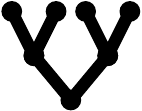}}}}
\newcommand{\zzthreereso}{\text{\resizebox{.7em}{!}{\includegraphics{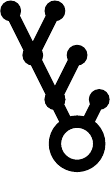}}}}
\newcommand{\zzfivebal}{\text{\resizebox{1em}{!}{\includegraphics{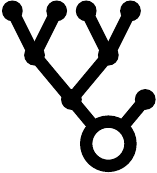}}}}
\newcommand{\zzfivecomb}{\text{\resizebox{.7em}{!}{\includegraphics{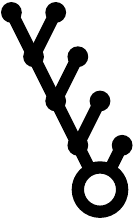}}}}
\newcommand{\zzfivetwothree}{\text{\resizebox{.9em}{!}{\includegraphics{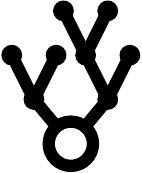}}}}
\newcommand{\zzfivebalprod}{\text{\resizebox{1em}{!}{\includegraphics{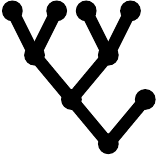}}}}
\newcommand{\zzfivecombprod}{\text{\resizebox{.7em}{!}{\includegraphics{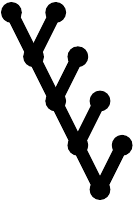}}}}
\newcommand{\zzfivetwothreeprod}{\text{\resizebox{.9em}{!}{\includegraphics{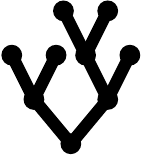}}}}
\newcommand{\Xone}{X^{\zzone}}
\newcommand{\Xtwo}{X^{\zztwo}}
\newcommand{\Xthree}{X^{\zzthree}}
\newcommand{\Xfour}{X^{\zzfour}}
\newcommand{\XthreeRes}{X^{\zzthreereso}}
\newcommand{\XoneFourB}{X^{\zzfivebal}}
\newcommand{\XoneFourC}{X^{\zzfivecomb}}
\newcommand{\XtwoThree}{X^{\zzfivetwothree}}
\newcommand{\XoneFourBFull}{X^{\zzfivebalprod}}
\newcommand{\XoneFourCFull}{X^{\zzfivecombprod}}
\newcommand{\XtwoThreeFull}{X^{\zzfivetwothreeprod}}
\newcommand{\mpara}{\,\mathord{\prec\!\!\!\prec}\,}
\DeclareMathOperator{\PV}{PV}
\DeclareMathOperator{\supp}{supp}

\title[A Priori Estimates for Singular Fractional SBE]{\LARGE A Priori Estimates for Singular Fractional Stochastic Burgers Equations}
\author{Xiaohao Ji}
\date{\textnormal{August 30, 2026\qquad
Email: \href{mailto:jixh1020@gmail.com}{\texttt{jixh1020@gmail.com}}}}

\begin{document}
\begin{nouppercase}
\maketitle
\end{nouppercase}
\vspace{-2em}
\begin{center}
\small Institut f\"ur Mathematik, Freie Universit\"at Berlin,
14195 Berlin, Germany
\end{center}
\vspace{-0.5em}

\begin{abstract}
We consider the periodic fractional stochastic Burgers equation
\[
 (\partial_t+\Lambda^\gamma)u
 =\partial_x(u^2)+\abs{\partial_x}^{1-\alpha}\xi,
 \qquad 1<\gamma\le2,
\]
where
\(\widehat{\Lambda^\gamma f}(k)=\abs{2\pi k}^{\gamma}\widehat f(k)\)
and \(\xi\) is space--time white noise.  We prove pathwise \(L^1\), energy,
and Besov estimates for the transformed remainder under
\[
  \alpha>
  \max\left\{
    \frac{7-4\gamma}{2},
    \frac{15-8\gamma}{6}
  \right\}.
\]
At \(\gamma=2\), the admissibility condition becomes \(\alpha>-1/6\),
matching the first-order paracontrolled regularity range.  At the canonical
value \(\alpha=0\), the estimates yield the periodic KPZ/SBE global
consequence stated below.  The enhancement and Zvonkin constructions build
on \cite{GubinelliPerkowski2017KPZReloaded,ZhangZhuZhu2022SingularHJB}.
For \(1<\gamma<2\), a conservative Zvonkin transform produces a compensated
nonlocal diffusion, and a cubic-increment estimate adapted from the modified
K\'arm\'an--Howarth--Monin argument of
\cite{GoldmanJosienOtto2015BurgersKS} closes the energy bound.
\end{abstract}

\tableofcontents
\enlargethispage{2pt}
\setcounter{tocdepth}{2}
\addtocontents{toc}{\protect\setcounter{tocdepth}{1}}

\section{Introduction}
\label{sec:introduction}

The stochastic Burgers equation is a basic example of a singular stochastic
conservation law. We consider in this paper the periodic equation
\begin{equation}\label{eq:formal-sbe}
  (\partial_t+\Lambda^\gamma)u
  =\partial_x(u^2)+\abs{\partial_x}^{1-\alpha}\xi,
  \qquad 1<\gamma\le2,
\end{equation}
where \(\xi\) is space--time white noise.  Here \(\gamma\) is the dissipation
order and \(\alpha\) measures the smoothing of the forcing.  The point
\((\gamma,\alpha)=(2,0)\) is the canonical periodic stochastic Burgers
equation, or equivalently the spatial derivative of the KPZ equation
introduced in \cite{KardarParisiZhang1986}.  Its finite tree expansion,
resonant products, and paracontrolled calculus are developed in
\cite{GubinelliPerkowski2017KPZReloaded}.

More precisely, we work on the normalized torus \(\T=\R/\mathbb Z\).  For
\(s\in\R\) and \(k\in\mathbb Z\setminus\{0\}\), set
\[
 \widehat{\Lambda^s f}(k)=\abs{2\pi k}^s\widehat f(k),
 \qquad \abs{\partial_x}^s:=\Lambda^s,
\]
and set every homogeneous multiplier to zero on the spatial zero mode.
We regard \(\Lambda^s\) as annihilating affine functions, and hence as
acting on smooth affine-periodic functions; the precise convention is
recorded in \hyperref[app:notation]{Appendix~A}.  We also define
\begin{equation}\label{eq:Rgamma}
 \Rcal_\gamma:=\Lambda^{\gamma-2}\partial_x,
 \qquad \partial_x\Rcal_\gamma=-\Lambda^\gamma.
\end{equation}
For \(1<\gamma<2\), let
\begin{equation}\label{eq:periodic-fractional-kernel}
 K_\gamma^{\mathrm{per}}(r)
 :=\sum_{k\in\mathbb Z}\frac1{\abs{r+k}^{1+\gamma}},
 \qquad r\in\T\setminus\{0\}.
\end{equation}
Choose \(c_\gamma>0\) so that
\begin{equation}\label{eq:periodic-fractional-laplacian}
 \Lambda^\gamma f(x)
 =c_\gamma\PV\int_\T
 [f(x)-f(z)]K_\gamma^{\mathrm{per}}(x-z)\dd z.
\end{equation}
In \cite{GubinelliPerkowski2017KPZReloaded}, local paracontrolled
well-posedness at \((\gamma,\alpha)=(2,0)\) is formulated with a blow--up
criterion, while an optimal-control representation based on the
Bou\'e--Dupuis formula yields pathwise global existence.  Here we instead seek
a priori bounds for a suitable transformation of \eqref{eq:formal-sbe} at the energy level throughout the
range specified in \eqref{eq:admissible-alpha-gamma}, and the focus is to exploit
coercivity of the Burgers nonlinearity to derive a priori estimates under singular stochastic forcing even
in the relatively weak-dissipation regime of smaller \(\gamma\).

\subsection{Related Literature}

For the one-dimensional Burgers equation driven directly by additive
space--time white noise, one spatial derivative smoother than the conservative
forcing, classical mild well-posedness and a Cole--Hopf construction were
obtained in \cite{DaPratoDebusscheTemam1994,
BertiniCancriniJonaLasinio1994}, respectively.  At the conservative endpoint,
\cite{BertiniGiacomin1997} derives Burgers/KPZ limits from particle and
interface models.  Energy solutions arising from weakly asymmetric particle
systems and their uniqueness are developed in
\cite{GoncalvesJara2014,GubinelliPerkowski2015Energy}.  Pathwise solution
theories are developed in
\cite{Hairer2013SolvingKPZ,GubinelliPerkowski2017KPZReloaded}, and an
infinitesimal-generator and martingale-problem approach in
\cite{GubinelliPerkowski2018Generator}.  Earlier local constructions of
generalized KPZ were obtained by renormalization-group methods in
\cite{KupiainenMarcozzi2017GeneralizedKPZ} and through the
regularity-structure/BPHZ framework in
\cite{BrunedChandraChevyrevHairer2021}; the flow approach gives an alternative
local theory in \cite{ChandraFerdinand2024FlowGKPZ}.  

Stationary solutions on
the line,
jointly stationary periodic Burgers flows, large-torus equilibrium
fluctuations, and stationary smooth-noise Burgers evolutions in dimensions two
and three are studied, respectively, in
\cite{DunlapGrahamRyzhik2019,DunlapGu2024,DunlapGuKomorowski2023,Dunlap2019}.
Related stationary, weak-coupling, and Gaussian-fluctuation regimes for
two-dimensional anisotropic KPZ and stochastic Burgers in \(d\ge2\) are
treated in
\cite{CannizzaroErhardToninelli2020,CannizzaroErhardToninelli2021,
CannizzaroGubinelliToninelli2023}.  These works provide the classical
\(\gamma=2\) background and its neighboring stationary and scaling regimes.

The deterministic theory of Burgers equations with fractional dissipation,
including the criticality transition, is developed in
\cite{BilerFunakiWoyczynski1998,
KiselevNazarovShterenberg2008FractalBurgers}.  For stochastic variants,
global mild well-posedness is proved in \cite{BrzezniakDebbi2007}, H\"older-space
well-posedness and approximation in \cite{ArabDebbi2021}, and ergodic
properties in \cite{BrzezniakDebbiGoldys2011}.  Stationary energy solutions
for fractional Burgers equations and Hilbert-space extensions are developed
in \cite{GubinelliJara2012,GraefnerPerkowskiPopat2026}.  Fluctuation limits of
long-range interacting particle systems are studied in
\cite{Sethuraman2016,GoncalvesJara2018Density,CardosoGoncalves2024}.  Recent
well-posedness results for rough Burgers-type equations appear in
\cite{ZhangLuoYin2026FractionalRoughBurgers,Yamazaki2025GlobalBurgersNoise}.
Yamazaki adapts the dynamic high--low energy method of
\cite{HairerRosati2024SNS} for singular two-dimensional stochastic
Navier--Stokes; related stochastic energy estimates yield uniform-in-time
bounds and ergodicity under weak high-frequency forcing in
\cite{HairerZhao2025ErgodicitySNS}.

More broadly, global well-posedness for dynamic \(\Phi^4_2\), non-explosion
and coming down from infinity for \(\Phi^4_3\), and global a priori bounds for
the generalised parabolic Anderson model are established in
\cite{MourratWeber2017Global,MourratWeber2017Infinity,
ChandraDeLimaFeltesWeber2026}, respectively.  Space--time localisation and
pathwise a priori estimates, including full-subcritical, quasilinear, and
coercive settings, are developed in
\cite{MoinatWeber2018,ChandraMoinatWeber2019,OttoSauerSmithWeber2021,
ChevyrevGubinelli2025}.

Our cubic-increment estimate adapts the one-dimensional modified
K\'arm\'an--Howarth--Monin balance of
\cite[Lemma~2.4]{GoldmanJosienOtto2015BurgersKS} for the forced inviscid
Burgers equation.  This is a scalar finite-difference modification of the
classical correlation identities of
\cite{deKarmanHowarth1938IsotropicTurbulence,Monin1959LocallyIsotropicTurbulence}:
it replaces the signed third-order increment by a nonnegative cubic increment
and refines the coercive use of Burgers transport in the
Kuramoto--Sivashinsky bounds of \cite{Otto2009OptimalKS}.  Here this coercive
increment must be retained after the Zvonkin change of variables, which
transforms \(\Lambda^\gamma\) into a variable nonlocal operator.

The coordinate-change method originates in \cite{Zvonkin1974}; stable- and
L\'evy-driven variants are developed in
\cite{Priola2012StableSDE,ChenSongZhang2018LevyFlows}.  Modern Zvonkin
transformations for SDEs with singular and Lipschitz drifts are developed in
\cite{ZhangYuan2021Zvonkin}.  In the Laplacian setting,
\cite{ZhangZhuZhu2022SingularHJB} constructs a \(C^1\)-diffeomorphism from a
paracontrolled linear equation and uses it to remove the distribution-valued
part of the drift in singular HJB equations, with applications to KPZ on the
real line.  Initial-condition-independent lower and oscillation bounds for
torus KPZ are derived from a conditional stochastic-control representation in
\cite{PerkowskiVillanuevaMariz2025KPZControl}.  In the present fractional
setting the coordinate change produces a variable nonlocal operator;
controlling its compensated form links the Zvonkin reduction to the
cubic-increment estimate.

In the present framework, the range \eqref{eq:admissible-alpha-gamma} is set by
requiring the response--Zvonkin ansatz to produce coefficients above the
energy-duality threshold \(1-\gamma/2\); once this is achieved, the coercive
closure imposes no further restriction on \((\alpha,\gamma)\).  It would be
interesting to investigate whether these a priori bounds remain robust in more
singular regimes under a different local construction, for instance a
fractional adaptation of the flow approach in
\cite{ChandraFerdinand2024FlowGKPZ}.

\subsection{Outline and Main Result}
Readers familiar with
\cite{GubinelliPerkowski2017KPZReloaded} and the singular-HJB analysis of
\cite{ZhangZhuZhu2022SingularHJB} may proceed directly to the transformed
equation \eqref{eq:transformed-equation}.  The intermediate construction
follows those works, with fractional scaling and additional enhancement
sectors.  Notation is collected in \Cref{app:notation}. Set
\[
L_\gamma:=\partial_t+\Lambda^\gamma.
\]  The stochastic objects
used below are defined in \eqref{eq:base-stochastic-coordinates},
\eqref{eq:sbe-tree-coordinates}, and
\eqref{eq:sbe-tree-response-expansion}, and constructed in
\Cref{prop:fractional-enhancement}.

\noindent\emph{Step 1: Linear Ansatz.}
Following
\cite{DaPratoDebussche2003StochasticQuantization}, the linear
decomposition 
\[
u=X+\Xone+r
\] gives \eqref{eq:r-equation}, containing the
additive
stochastic forcing \(L_\gamma(2\Xtwo+\Xfour)\) and transport by the rough
coefficient \(X+\Xone\). The response equation \eqref{eq:ell-equation} then removes this forcing.  With
\(r=\ell+v\), it gives \eqref{eq:v-equation}; only the transport
\((X+\Xone)v\) remains singular.  The fluxes \(v^2\), \(2\ell v\), and
\(\ell^2\) are defined at the energy regularity.

\noindent\emph{Step 2: Zvonkin map and conservative change of variables.}
To remove \((X+\Xone)v\), solve \eqref{eq:zvonkin-equation} for
\[
 \Psi=x+\psi,
 \qquad J=\partial_x\Psi.
\]
The resolvent family \eqref{eq:resolvent-model-uniform} defines
\((X+\Xone)J\).  For sufficiently large \(\lambda\),
\Cref{prop:zvonkin} makes \(\Psi\) an orientation-preserving bi-Lipschitz map.
Set
\[
 v(t,x)=J(t,x)m(t,\Psi(t,x)),
\]
as in \eqref{eq:m-def}.  The rough transports in the equations for \(v\) and
\(J\) cancel, giving
\[
 m_t-\partial_y\Dcal_\Psi^\gamma m
 =\partial_y(am^2+2Q_\lambda m+R),
\]
namely \eqref{eq:transformed-equation}, with coefficients specified in
\eqref{eq:coefficients}.
At \(\gamma=2\), the ordinary chain rule gives the local divergence-form
flux
\[
 \Dcal_\Psi^{2}f=(J^2\circ\Psi^{-1})f_y,
\]
while for \(1<\gamma<2\), it is
the compensated nonlocal object
\[
 \Dcal_\Psi^\gamma f
 =\left[
   \Rcal_\gamma\bigl(J(f\circ\Psi)\bigr)
   +(f\circ\Psi)\Lambda^\gamma\psi
 \right]\circ\Psi^{-1},
\]
as in \eqref{eq:Dflux}.  Its paralinearization and form bounds are proved
in \Cref{prop:transformed-operator-paralinearization}, and its coercivity
in \Cref{prop:transformed-operator-coercivity}.

\noindent\emph{Step 3: deterministic a priori closure.}
The entropy estimate \Cref{prop:global-L1} gives an \(L^1\) bound independent
of the energy.  Direct interpolation closes the energy estimate in the
subrange \(\alpha>(9-5\gamma)/2\); the modified
K\'arm\'an--Howarth--Monin cubic-increment argument removes this additional
restriction and yields the Besov range of \Cref{thm:khm}.

Combining these steps, the variable estimated in the main result is reached
through the sequence
\[
 u=X+\Xone+r
   =X+\Xone+\ell+v
   =X+\Xone+\ell+J(m\circ\Psi).
\]

The admissible range is
\begin{equation}\label{eq:admissible-alpha-gamma}
 1<\gamma\le2,
 \qquad
 \alpha>\max\left\{
   \frac{7-4\gamma}{2},\frac{15-8\gamma}{6}
 \right\}.
\end{equation}
At \(\gamma=2\), this becomes \(\alpha>-1/6\), the first-order
paracontrolled SBE regime.  We can now state the main estimate.
\begin{samepage}
\begin{theorem}[Main a priori estimate]
\label{thm:main-result}
Let \((\alpha,\gamma)\) satisfy
\eqref{eq:admissible-alpha-gamma}, fix \(T>0\), and let \(m\) be a smooth
transformed solution on \([0,T]\), associated with a smooth enhancement in
the class of \Cref{prop:fractional-enhancement} and obtained through the
Da Prato--Debussche, linear response, and Zvonkin transformations of
\Cref{sec:enhanced-zvonkin}.  Initialize all stochastic coordinates at zero
and assume that \(m(0)=u_0\in C^\infty(\T)\).  Then
\[
 \norm m_{L^\infty(0,T;L^1)}
 {}+\norm m_{L^\infty(0,T;L^2)}
 {}+\norm m_{L^2(0,T;H^{\gamma/2})}
 {}+\norm m_{L^3(0,T;B^s_{3,3})}
\le C_{T,s}
\]
for every \(0<s<1/3\).  At \(\gamma=2\), the Besov term may be taken with
any \(0<s<1/2\).  The constant depends locally uniformly on \(T\), \(s\),
\(\norm{u_0}_{L^2}\), the enhanced-model norm, the uniform norm of the resolvent family,
and the resulting coefficient and Zvonkin norms.  If the resolvent parameter is
selected locally uniformly as in \Cref{prop:zvonkin}, the bound is therefore
uniform over bounded families of enhanced inputs.
\end{theorem}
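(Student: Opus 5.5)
The plan is to assemble the main estimate from the three deterministic pieces announced in the outline, all applied to the transformed equation \eqref{eq:transformed-equation},
\[
 m_t-\partial_y\Dcal_\Psi^\gamma m=\partial_y(am^2+2Q_\lambda m+R),
\]
with the coefficients $a,Q_\lambda,R$ of \eqref{eq:coefficients}. Because the enhancement is smooth and the stochastic coordinates start at zero, we have $\psi(0)=0$, hence $\Psi(0)=\mathrm{id}$, $J(0)=1$, and $\ell(0)=0$. Therefore $m(0)=u_0$. Every constant below is expressed only through the enhanced-model norm, the resolvent-family norm, and the coefficient and Zvonkin norms supplied by \Cref{prop:fractional-enhancement} and \Cref{prop:zvonkin}. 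This dependence is what makes the final bound locally uniform.

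\emph{$L^1$ bound.} We invoke \Cref{prop:global-L1}. The argument tests the equation against a smoothed sign $\eta_\delta'(m)$ and uses that the transport term $\partial_y(am^2)$ contributes only a null divergence under the sign, up to a term involving $\partial_y a$. The monotone (Kato-type) structure of the compensated nonlocal flux then gives the inequality $\int \eta'_\delta(m)\,\partial_y\Dcal^\gamma_\Psi m\le C\norm m_{L^1}$. Gronwall yields $\norm m_{L^\infty L^1}\le C$, independently of any energy quantity.

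\emph{Energy and Besov bounds.} We test against $m$. The coercivity statement \Cref{prop:transformed-operator-coercivity} gives
\[
 -\ip{\partial_y\Dcal_\Psi^\gamma m}{m}\ge c\norm m_{\dot H^{\gamma/2}}^2-C\norm m_{L^2}^2 ,
\]
and the paralinearization bounds of \Cref{prop:transformed-operator-paralinearization} control the commutator errors. The forcing terms are handled by duality. Since $Q_\lambda$ and $R$ sit above the threshold $1-\gamma/2$, we get $\abs{\ip{\partial_y(Q_\lambda m)}{m}}$ and $\abs{\ip{\partial_y R}{m}}$ bounded by $\eps\norm m_{H^{\gamma/2}}^2+C_\eps\norm m_{L^2}^2+C$. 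The cubic term $\ip{\partial_y(am^2)}{m}=-\frac13\int (\partial_y a)m^3$ is the critical one, because $a$ is not constant after the Zvonkin change.

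In the subrange $\alpha>(9-5\gamma)/2$ one could close with Gagliardo--Nirenberg, interpolating $L^3$ between $L^1$ and $H^{\gamma/2}$. In the full range we instead run the modified K\'arm\'an--Howarth--Monin argument, which is the content of \Cref{thm:khm}, adapted from \cite[Lemma~2.4]{GoldmanJosienOtto2015BurgersKS}. We take the increment $\delta_h m=m(\cdot+h)-m$ and compute the time derivative of $\int\!\!\int \delta_h m\,\phi(h)\,dy\,dh$ against a suitable odd weight. The Burgers flux then produces the nonnegative cubic increment $\int\!\!\int \abs{\delta_h m}^3 \abs h^{-1-3s}\,dh\,dy$. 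This controls $\norm m_{B^s_{3,3}}^3$ up to errors bounded by the $L^1$ bound, the energy, and the coefficient norms, with $s<1/3$; at $\gamma=2$ the local flux allows $s<1/2$.

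The resulting $L^3_tB^s_{3,3}$ control is fed back to bound $\int(\partial_y a)m^3$, using that $\partial_y a$ has regularity above $-s$. Combining the energy inequality with the cubic-increment inequality, together with Young's inequality and Gronwall, yields the $L^\infty L^2$, $L^2H^{\gamma/2}$ and $L^3B^s_{3,3}$ bounds simultaneously.

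\emph{Main obstacle.} The hard step is retaining the coercive cubic increment after the Zvonkin transform when $1<\gamma<2$. There the dissipation $\Dcal_\Psi^\gamma$ is a variable nonlocal compensated operator, and its increment interaction produces commutators between $\delta_h$ and $\Psi^{-1}$. These must be absorbed into the $H^{\gamma/2}$ dissipation using the bi-Lipschitz bounds on $\Psi$ and the Hölder regularity of $J$. My first attempt would be to write $\delta_h$ applied to $\Dcal_\Psi^\gamma m$ through the kernel representation \eqref{eq:periodic-fractional-laplacian} pulled back by $\Psi$. I would then split the kernel into near-diagonal and far parts relative to $\abs h$, estimating the near part by coercivity and the far part by the $L^2$ energy.
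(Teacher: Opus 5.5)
Your outline has the same architecture as the paper: an $L^1$ entropy bound, an energy inequality whose critical term is $\frac13\langle\partial_y a,m^3\rangle$, and a modified K\'arm\'an--Howarth--Monin estimate. However, the step that actually closes the estimate fails as you state it. You propose to feed the $L^3_tB^s_{3,3}$ bound back into $\langle\partial_ya,m^3\rangle$ ``using that $\partial_ya$ has regularity above $-s$''. Since $\partial_ya\in C^{\beta-1}$ and the KHM functional only ever yields $s<1/3$, this requires $\beta>1-s>2/3$. But $\beta<\min\{1,q_{\gamma,\alpha}\}$. Moreover, the KHM route is needed precisely when the direct Gagliardo--Nirenberg closure fails, i.e.\ when $q_{\gamma,\alpha}\le 2-\gamma$ (equivalently $\alpha\le(9-5\gamma)/2$). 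In that case $\beta<2-\gamma\le 2/3$ whenever $\gamma\ge4/3$. For example, at $\gamma=3/2$ and $\alpha\in(1/2,3/4]$ one has $\beta<q_{\gamma,\alpha}\le 1/2$, so your duality pairing is not available.

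The missing idea is \Cref{lem:cubic-besov-interpolation}(ii). There the $1-\beta$ derivatives in $Z_T^\beta=\int_0^T\norm{m^3}_{B^{1-\beta}_{1,1}}\dd t$ are supplied mainly by the $H^{\gamma/2}$ dissipation, which works because only $\beta>1-\gamma/2$ is needed. The $B^{s_0}_{3,3}$ norm and the $L^1$ bound supply only integrability. Choosing $\theta$ with $(1-\theta)\gamma/2+\theta s_0>1-\beta$ and a subcritical total time exponent gives $Z_T^\beta\le\varepsilon(E_T+K_T^{s_0})+C_\varepsilon$.

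You also understate the KHM estimate itself. With variable $a$, the identity \eqref{eq:variable-coefficient-khm-identity} produces $\delta_ha$- and $\partial_y\delta_ra$-terms. These are bounded only by $\norm a_{L^\infty_tC^\beta}Z_T^\beta$, not by energy and coefficient norms. The closure is therefore the simultaneous linear absorption of $E_T+K_T^{s_0}\lesssim 1+Z_T^\beta$ against $Z_T^\beta\le\varepsilon(E_T+K_T^{s_0})+C_\varepsilon$, not a Gronwall argument.

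The obstacle you single out does not actually arise. As written, your functional $\iint\delta_hm\,\phi(h)\dd y\dd h$ vanishes identically, since $\int_\T\delta_hm\dd y=0$. The correct procedure tests the increment equation with $\abs{\delta_rm}$, integrates over $r\in(0,h)$, and keeps $\delta_r(\Acal_\Psi^\gamma m)$ intact. Duality and the form bound \eqref{eq:A-form-bound} then give $\abs{\ip{\abs{\delta_rm}}{\delta_r(\Acal_\Psi^\gamma m)}}\lesssim\norm m_{H^{\gamma/2}}^2$. The resulting factor $h$ is integrable against $h^{-1-3s}$ because $3s<1$. No commutator, near/far kernel splitting, or coercivity of the increment diffusion is needed; the coercive cubic increment comes from the transport flux alone.

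That same factor $h$ caps the KHM route at $s<1/3$ for every $\gamma$. So the improved range at $\gamma=2$ does not come from ``the local flux''. It comes from interpolating $L^\infty_tL^2$ with $L^2_tH^1$ in \eqref{eq:energy-besov}. At $\gamma=2$ the energy in fact closes directly, since $\alpha>-1/6>(9-5\gamma)/2$.

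Finally, in the $L^1$ step the diffusion pairing simply has a sign. The real work is absorbing the flux terms, including $\partial_ya\in C^{\beta-1}$ and $R$, into the Stroock--Varopoulos dissipation of $\chi'(m)$ for a fixed smoothing $\chi$. Your description of this step does not capture that mechanism, and it is harmless only because you cite \Cref{prop:global-L1}.
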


\begin{proof}
The coefficient bounds \eqref{eq:verified-coefficient-bounds}, together
with \Cref{prop:transformed-operator-paralinearization,%
prop:transformed-operator-coercivity} when \(1<\gamma<2\) and
\eqref{eq:local-check} when \(\gamma=2\), verify the hypotheses of
\Cref{thm:khm}.  That theorem gives the stated \(L^1\) and energy estimates
together with the bound on \(K_T^s\).  The Besov estimate follows from
\eqref{eq:Besov-from-KHM}; at \(\gamma=2\), \eqref{eq:energy-besov} gives
the improved range.  The asserted dependence of the constants follows from
the local uniformity in
\Cref{prop:response,prop:zvonkin,prop:coefficient-output}.
\end{proof}
\end{samepage}

The routine smooth-approximation step for the finite models is understood as
in \cite[Section~3.1 and Theorem~3.7]{GubinelliPerkowski2017KPZReloaded}.
Canonical compactness is stated and proved in
\Cref{prop:canonical-compactness}.  We use throughout standard paracontrolled
conventions and collect the notation in \Cref{app:notation}.

\addtocontents{toc}{\protect\setcounter{tocdepth}{2}}
\section{Enhanced equation and Zvonkin transform}
\label{sec:enhanced-zvonkin}
\subsection{Burgers enhancement and linear response}
\label{sec:response}

We record only the enhanced coordinates and equations used below; the
fixed-point argument follows
\cite{GubinelliPerkowski2017KPZReloaded,ZhangZhuZhu2022SingularHJB}.
Recall \(L_\gamma=\partial_t+\Lambda^\gamma\).  On an interval starting at
\(t_0\), write the Duhamel operator as
\begin{equation}\label{eq:fractional-duhamel}
 \Ical_{\gamma,t_0}f(t)=\int_{t_0}^t
 e^{-(t-s)\Lambda^\gamma}f(s)\dd s,
 \qquad \Kcal_{\gamma,t_0}=\partial_x\Ical_{\gamma,t_0},
\end{equation}
and define \(\Ical_{\gamma,\lambda,t_0}\),
\(\Kcal_{\gamma,\lambda,t_0}\) by replacing \(\Lambda^\gamma\) with
\(\Lambda^\gamma+\lambda\).  When \(t_0\) is fixed, we suppress it from this
notation.  Throughout this subsection, every stochastic object introduced
below and each of its mollified approximations has zero initial condition at
\(t_0\); every Duhamel integration in its recursive definition starts at
\(t_0\).  For homogeneity bookkeeping, set
\[
 \delta_\gamma:=\gamma-1,
 \qquad
 q_{\gamma,\alpha}:=\alpha+\frac{3\gamma-5}{2}.
\]
Under fractional parabolic scaling, space--time white noise has homogeneity
\(-(\gamma+1)/2-\).  Applying
\(\abs{\partial_x}^{1-\alpha}\) and then the stochastic convolution gives
\[
 -\frac{\gamma+1}{2}-(1-\alpha)+\gamma
 =\alpha+\frac{\gamma-3}{2}
 =q_{\gamma,\alpha}-\delta_\gamma,
\]
up to an arbitrarily small loss.  Thus
\[
X\in\Ccal^{q_{\gamma,\alpha}-\delta_\gamma-}.
\]
Each further Burgers interaction
\(Y\mapsto\Kcal_\gamma(XY)\) raises the formal homogeneity by
\(q_{\gamma,\alpha}\).  The two base coordinates are
\begin{equation}\label{eq:base-stochastic-coordinates}
 \begin{gathered}
 L_\gamma X=\abs{\partial_x}^{1-\alpha}\xi,
 \qquad
 \Xone=\Kcal_\gamma\mathopen{:}X^2\mathclose{:}.
 \end{gathered}
\end{equation}
The colon denotes Wick centering.  Define the rough drift by
\begin{equation}\label{eq:rough-drift}
 B:=X+\Xone.
\end{equation}
Here an \emph{enhancement} means the finite list of stochastic fields that
must be supplied in order to give every singular product below a canonical
meaning.  Whenever the enhancement supplies the resonant product
\(F\odot_{\rm ren}G\), write
\[
 F\diamond G:=F\prec G+F\succ G+F\odot_{\rm ren}G,
 \qquad F^{\diamond2}:=F\diamond F.
\]
Here every occurrence of \(F\odot_{\rm ren}G\) in the stochastic coordinate
denotes the stored canonical limit of the corresponding mollified
Bony resonances. The remaining coordinates generated by the equation with at most four noise leaves are
\begin{equation}\label{eq:sbe-tree-coordinates}
\begin{gathered}
 \Xtwo=\Kcal_\gamma(X\diamond\Xone),
 \qquad \Xfour=\Kcal_\gamma((\Xone)^{\diamond2}),
 \\
 Q_X=\Kcal_\gamma X,
 \qquad \mathbb X_Q=Q_X\odot_{\rm ren}X,
 \qquad \XthreeRes=\Kcal_\gamma(\Xtwo\odot_{\rm ren}X).
\end{gathered}
\end{equation}
All products in \eqref{eq:base-stochastic-coordinates} and
\eqref{eq:sbe-tree-coordinates} are constructed recursively from mollified
noise. We
use the \(X^{\mathsf t}\)-notation for these coordinates in prose and
displayed formulas.  These six stochastic coordinates are the fractional
counterparts of the Burgers enhancement in
\cite[Definition~3.2 and Theorem~9.1]{GubinelliPerkowski2017KPZReloaded}
and of the stochastic inputs used in
\cite{ZhangZhuZhu2022SingularHJB}; only the semigroup multiplier and the
regularity exponents change.

We next motivate the additional coordinates at the mollified level and
suppress the mollification parameter in this calculation.  Applying the Da Prato--Debussche
decomposition to \eqref{eq:formal-sbe}, write
\[
 u=X+\Xone+r,
 \qquad r(t_0)=u(t_0).
\]
Substitution, with \(B\) from \eqref{eq:rough-drift}, gives
\begin{equation}\label{eq:r-equation}
 L_\gamma r=\partial_x(r^2+2Br)+L_\gamma(2\Xtwo+\Xfour).
\end{equation}
Following the linear-response strategy of
\cite{ZhangZhuZhu2022SingularHJB}, we remove the singular inhomogeneous term
by solving
\begin{equation}\label{eq:ell-equation}
 L_\gamma\ell=2\partial_x(B\ell)
 +L_\gamma(2\Xtwo+\Xfour),
 \qquad \ell(t_0)=0,
\end{equation}
or, equivalently,
\begin{equation}\label{eq:ell-duhamel}
 \ell=2\Xtwo+\Xfour+2\Kcal_\gamma(B\ell).
\end{equation}
By symmetry, the derived full field at the next level is
\[
 \Xthree:=\Kcal_\gamma(X\diamond\Xtwo)
 =\Kcal_\gamma(\Xtwo\prec X+\Xtwo\succ X)+\XthreeRes.
\]
The standard modified-paraproduct ansatz closes from the preceding
enhancement when the next root resonances have positive regularity.  The
remaining response regime is
\begin{equation}\label{eq:sbe-tree-response-regime}
 \frac{12}{7}<\gamma\le2,
 \qquad
 \frac{15-8\gamma}{6}<\alpha
 \le\frac{21-11\gamma}{10}.
\end{equation}
Equivalently, \(5q_{\gamma,\alpha}\le2\delta_\gamma\).  To close the ansatz
throughout the full admissible range and use one model space, we introduce
the three further stored resonances
\begin{equation}\label{eq:sbe-tree-response-expansion}
 \XoneFourB=\Kcal_\gamma(X\odot_{\rm ren}\Xfour),\qquad
 \XoneFourC=\Kcal_\gamma(X\odot_{\rm ren}\Xthree),\qquad
 \XtwoThree=\Kcal_\gamma(\Xone\odot_{\rm ren}\Xtwo).
\end{equation}
Outside the regime \eqref{eq:sbe-tree-response-regime}, they
are canonically reconstructed from the lower enhancement; we retain them in
every regime so that the model space is fixed.  The enhancement and its
assigned spatial regularities are
\begin{equation}\label{eq:fractional-model-summary}
\begin{gathered}
 \mathcal T_{\mathrm{SBE}}:=
 \{X,\Xone,\Xtwo,\Xfour,\mathbb X_Q,\XthreeRes\}
 \cup\{\XoneFourB,\XoneFourC,\XtwoThree\},\\[1ex]
 s_{\alpha,\gamma}:\mathcal T_{\mathrm{SBE}}\longrightarrow\mathbb R,
 \qquad
 s_{\alpha,\gamma}(\tau):=
 \begin{cases}
  q_{\gamma,\alpha}-\delta_\gamma,
    &\tau=X,\\
  \min\{2q_{\gamma,\alpha}-\delta_\gamma,q_{\gamma,\alpha}\},
    &\tau=\Xone,\\
  \min\{3q_{\gamma,\alpha}-\delta_\gamma,q_{\gamma,\alpha}\},
    &\tau=\Xtwo,\\
  \min\{4q_{\gamma,\alpha}-\delta_\gamma,
          2q_{\gamma,\alpha},q_{\gamma,\alpha}+\delta_\gamma\},
    &\tau=\Xfour,\\
  2q_{\gamma,\alpha}-\delta_\gamma,
    &\tau=\mathbb X_Q,\\
  \min\{4q_{\gamma,\alpha}-\delta_\gamma,2q_{\gamma,\alpha}\},
    &\tau=\XthreeRes,\\
  \min\{5q_{\gamma,\alpha}-\delta_\gamma,
          3q_{\gamma,\alpha},2q_{\gamma,\alpha}+\delta_\gamma\},
    &\tau\in\{\XoneFourB,\XtwoThree\},\\
  2q_{\gamma,\alpha},
    &\tau=\XoneFourC.
 \end{cases}\\[1ex]
 \mathbb X_{\alpha,\gamma}:=
 \bigl(
 X,\Xone,\Xtwo,\Xfour,\mathbb X_Q,\XthreeRes,
 \XoneFourB,\XoneFourC,\XtwoThree
 \bigr).
\end{gathered}
\end{equation}
The value \(s_{\alpha,\gamma}(\tau)\) is the assigned spatial regularity of
the stored coordinate \(\tau\); the minima account for contributions of
different regularities.  For \(I=[t_0,T]\) and every sufficiently small
\(\kappa>0\), set
\[
 \mathbb X_{\alpha,\gamma}\in
 \mathcal M_{\alpha,\gamma}^\kappa(I)
 :=\prod_{\tau\in\mathcal T_{\mathrm{SBE}}}
 C\bigl(I;\Ccal^{s_{\alpha,\gamma}(\tau)-\kappa}\bigr),
 \qquad
 Q_X\in C(I;\Ccal^{q_{\gamma,\alpha}-\kappa}).
\]

\begin{proposition}[Construction and convergence of the fractional Burgers enhancement]
\label{prop:fractional-enhancement}
Let \((\alpha,\gamma)\) satisfy
\eqref{eq:admissible-alpha-gamma}.  Fix a smooth even spatial mollifier
\(\phi\) of unit mass, set
\(\xi^\eps=\phi_\eps*_x\xi\), and construct on \([t_0,T]\) the spatially
mollified approximation of the tuple
\(\mathbb X^\eps_{\alpha,\gamma}\) in
\eqref{eq:fractional-model-summary} according to the standing initial-time
convention.

For every sufficiently small \(\kappa>0\) and every \(p<\infty\), there is
a limit \(\mathbb X_{\alpha,\gamma}\) such that, as \(\eps\downarrow0\),
\[
 \mathbb X_{\alpha,\gamma}^\eps
 \longrightarrow\mathbb X_{\alpha,\gamma}
 \quad\text{in}\quad
 L^p\bigl(\Omega;\mathcal M_{\alpha,\gamma}^\kappa(I)\bigr).
\]
The limit is independent of the mollifier, and the convergence is uniform
over restart times in compact sets.  The same convergence holds with the
coordinatewise time moduli specified in
\Cref{app:fractional-enhancement-proof}.
\end{proposition}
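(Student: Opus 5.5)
The plan is the standard Wiener-chaos route of \cite[Section~9]{GubinelliPerkowski2017KPZReloaded}, with the heat kernel replaced by the fractional multiplier \(e^{-t\abs{2\pi k}^\gamma}\).

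Every coordinate \(\tau^\eps\in\mathcal T_{\mathrm{SBE}}\) is a polynomial in the Gaussian field \(\xi^\eps\) with at most five leaves. After Wick centering, each decomposes into finitely many homogeneous chaos components \(\tau^\eps_n\). For each component I write the Fourier coefficient \(\widehat{\Delta_j\tau^\eps_n}(t,k)\) as an iterated stochastic integral against \(\hat\xi\), with kernel built from
\[
 \abs{2\pi k}^{1-\alpha}\,\phi^\wedge(\eps k),\qquad
 2\pi i k\,e^{-(t-s)\abs{2\pi k}^\gamma},\qquad
 \text{and the Bony resonance weights } \sum_{\abs{i-j}\le1}\rho_i\rho_j .
\]
The constant (zeroth-chaos) components are exactly what Wick centering, and the renormalised resonances \(\odot_{\rm ren}\), subtract. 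The main ones are \(\mathbb E[X^2]\) and the expectations in \(\mathbb X_Q\) and \(\XthreeRes\). As in the heat case, they vanish or are constant in \(x\) by parity: \(\phi\) is even and the multiplier \(\partial_x\) is odd. So the renormalised objects are defined by subtracting these constants, and no \(\eps\)-dependent counterterm survives in the limit.

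The quantitative core is the second-moment bound
\[
 \mathbb E\abs{\Delta_j\tau^\eps_n(t,x)}^2\lesssim 2^{-2j(s_{\alpha,\gamma}(\tau)-\kappa/2)}
\]
together with the difference bound
\[
 \mathbb E\abs{\Delta_j(\tau^\eps_n-\tau^{\eps'}_n)(t,x)}^2\lesssim (\eps\vee\eps')^{\theta}\,2^{-2j(s_{\alpha,\gamma}(\tau)-\kappa/2)}
\]
for some \(\theta>0\). I would add analogous time-increment bounds with a factor \(\abs{t-t'}^{\theta}\). Given these, Gaussian hypercontractivity upgrades them to all \(L^p(\Omega)\). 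Summing over Littlewood--Paley blocks with Besov embedding \(B^{s}_{p,p}\hookrightarrow\Ccal^{s-1/p}\), and applying Kolmogorov in time, then gives convergence in \(L^p(\Omega;\mathcal M^\kappa_{\alpha,\gamma}(I))\).

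Uniformity in the restart time \(t_0\) holds because all kernels depend only on \(t-t_0\) and every bound is uniform over compact time sets. Independence of the mollifier follows because the limit kernels do not involve \(\phi\); the only place \(\phi\) could enter is the renormalisation constants, and these are excluded by parity.

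The second-moment bounds reduce, after integrating out the time variables, to discrete convolution sums of the form
\[
 \sum_{k_1+\dots+k_n=k}\ \prod_i \abs{k_i}^{2(1-\alpha)-\gamma}\times(\text{products of } \abs{k_{i}+\dots}^{2-2\gamma}\text{ factors}).
\]
Each one is bounded by iterating the elementary lemma
\[
 \sum_{l}\abs{l}^{-a}\abs{k-l}^{-b}\lesssim\abs{k}^{1-a-b},\qquad a,b<1,\ a+b>1,
\]
with logarithmic or positive-power variants when \(a\) or \(b\ge1\). Power counting reproduces the homogeneities \(q_{\gamma,\alpha}\) and \(\delta_\gamma\). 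The minima in \(s_{\alpha,\gamma}\) correspond to the cases where a subsum is summable, so that the lower chaos saturates at \(q_{\gamma,\alpha}\), \(2q_{\gamma,\alpha}\) or \(q_{\gamma,\alpha}+\delta_\gamma\), rather than divergent.

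I expect the main obstacle to be the five-leaf resonances \(\XoneFourB\), \(\XoneFourC\), \(\XtwoThree\) in the regime \eqref{eq:sbe-tree-response-regime}. There, \(X\odot\Xthree\) and \(\Xone\odot\Xtwo\) have contractions producing subdivergent second-chaos and first-chaos terms; these are the analogues of the paired-leaf terms in the KPZ \(\Xfour\) analysis. I would first check that every contraction of a leaf pair across the resonance gives either a constant killed by parity or a kernel whose power count equals \(2q_{\gamma,\alpha}\) or \(3q_{\gamma,\alpha}\). I would then check that the admissibility \eqref{eq:admissible-alpha-gamma} makes all exponents in the convolution lemma subcritical; its condition \(\alpha>(15-8\gamma)/6\) is precisely \(5q_{\gamma,\alpha}-\delta_\gamma>-\ldots\) positivity of the relevant subsums.
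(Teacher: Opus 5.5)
\textbf{There is a genuine gap: the second-moment estimates, as you describe them, do not close.} Your overall architecture matches the paper's: chaos decomposition, blockwise second moments, hypercontractivity plus Kolmogorov, and parity for mollifier independence. The failure is in the quantitative core, for two concrete reasons.

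\textbf{First, the decoupled spatial sums diverge.} After integrating out $\omega$, a noise leaf carries $\abs{k_i}^{2(1-\alpha)-\gamma}=\abs{k_i}^{-a}$ with $a=1+2(q_{\gamma,\alpha}-\delta_\gamma)$. For every $\gamma>3/2$, near the lower end of the $\alpha$-range one has $a\le\frac12$; at the canonical point $(\gamma,\alpha)=(2,0)$, $a=0$. Then already for $\Xone$ your sum $\sum_{k_1+k_2=k}\abs{k_1}^{-a}\abs{k_2}^{-a}\abs{k}^{2-2\gamma}$ diverges, and your lemma, which needs $a+b>1$, does not apply. Integrating out time exactly gives instead the coupled factor $\abs{k}^{2-\gamma}(\abs{k}^\gamma+\abs{k_1}^\gamma+\abs{k_2}^\gamma)^{-1}$. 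It is precisely this suppression of the region $\abs{k_1}\asymp\abs{k_2}\gg\abs{k}$ that makes the sum converge, and replacing the factor by $\abs{k}^{2-2\gamma}$ throws it away. The paper keeps this information by working in space--time frequencies with the weights $\langle\zeta\rangle_\gamma$ and the convolution bound \eqref{eq:fractional-annular-convolution} in effective dimension $\gamma+1$. There a leaf has weight $\mu=\gamma+1+2(q_{\gamma,\alpha}-\delta_\gamma)$, and $2\mu>\gamma+1$ holds on the whole admissible range.

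\textbf{Second, and more seriously, the subdivergences are not only zeroth-chaos constants.} In the singular regime $2q_{\gamma,\alpha}\le\delta_\gamma$, the relevant objects are the one-loop subkernels carrying a single differentiated multiplier:
\begin{enumerate}[label=(\roman*),leftmargin=*,nosep]
\item the contracted vertex $L^\eps(\zeta)=\int\abs{H_\alpha^\eps(\lambda)}^2M_\gamma(\zeta+\lambda)\dd\lambda$, in the first chaos of $\Xtwo$ and inside the contractions $\XoneFourC(34)$, $\XtwoThree(35)$, $\XoneFourC(13)$;
\item the resonant loop $\int\abs{H_\alpha^\eps(\zeta)}^2\chi_\odot(-k,k+k')M_\gamma(\zeta+\zeta')\dd\zeta$, in $\XoneFourC(45)$ and in several two-pair contractions.
\end{enumerate}
Their integrands have size $\langle\lambda\rangle_\gamma^{-(\gamma+1)-(2q_{\gamma,\alpha}-\delta_\gamma)}$, so their absolute integrals blow up as $\eps\downarrow0$. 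They also depend genuinely on the external frequency. They are therefore neither ``a constant killed by parity'' nor ``a kernel with good power count'', and the check you propose would only detect divergences.

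The missing step has two parts. Freeze the external frequency at zero, where the loop vanishes by oddness of $\abs{H_\alpha^\eps}^2M_\gamma$ in $k$. For the resonant loop this holds only block by block on $1+\abs k\asymp K$, since $\chi_\odot(-k,k+k')$ is even only at $k'=0$. Then bound the difference on the far blocks $K\gtrsim\langle\zeta'\rangle_\gamma$, using the derivative bounds on $M_\gamma$ and the $O(K^{-1})$ variation of the Littlewood--Paley symbols. This yields the finite growth $\langle\zeta'\rangle_\gamma^{\delta_\gamma-2q_{\gamma,\alpha}}$ in \eqref{eq:Xtwo-loop-bound} and \eqref{eq:lower-contraction-cutoff-loop}, which the outer multipliers and convolutions then absorb.

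\textbf{A smaller point.} The condition $\alpha>(15-8\gamma)/6$ is equivalent to $3q_{\gamma,\alpha}-\delta_\gamma>1-\gamma/2$, the response/energy threshold behind the choice of $\rho$. It is not a positivity condition for these sums, which in the response regime use only the weaker condition $3q_{\gamma,\alpha}>\delta_\gamma$.
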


The stored resonant coordinates determine the corresponding complete
fifth-order Duhamel fields by
\[
 \begin{aligned}
 \XoneFourBFull
 &:=\Kcal_\gamma(X\prec\Xfour+X\succ\Xfour)+\XoneFourB,\\
 \XoneFourCFull
 &:=\Kcal_\gamma(X\prec\Xthree+X\succ\Xthree)+\XoneFourC,\\
 \XtwoThreeFull
 &:=\Kcal_\gamma(\Xone\prec\Xtwo+\Xone\succ\Xtwo)+\XtwoThree.
 \end{aligned}
\]

The proof is postponed to \Cref{app:fractional-enhancement-proof}; see
\eqref{eq:sbe-basic-multipliers}, \eqref{eq:worked-balanced-kernel}, and
\eqref{eq:sbe-tree-covariance-kernel} for the multiplier, recursive-kernel,
and covariance estimates.  Write \(\norm{\mathbb X_{\alpha,\gamma}}_T\) for
the corresponding model norm.  Then, for every \(p<\infty\),
\[
 \mathbb E\,\norm{\mathbb X_{\alpha,\gamma}}_T^p<\infty.
\]
We now fix this canonical enhancement.  Below, \(\mathfrak C_T\) denotes a
locally bounded increasing function of
\(\norm{\mathbb X_{\alpha,\gamma}}_T\) and may change from line to line; after
the resolvent lift its argument is \(\mathfrak N_{T,\kappa}^{\rm Zv}\).

We now proceed to construct the linear response $\ell$, and we shall use the modified semigroup commutator
\[
 \mathrm{Com}_{\Kcal_\gamma}(f,g)
 :=\Kcal_{\gamma,t_0}(f\prec g)
   -f\mpara\Kcal_{\gamma,t_0}g,
\]
where \(\mpara\) is the time-modified paraproduct defined in
\Cref{app:notation}.  The commutator estimate is recorded in part~(i) of
\Cref{lem:fractional-commutators}.

\begin{proposition}[Well-posedness and regularity of the linear response equation]\label{prop:response}
Let \((\alpha,\gamma)\) satisfy
\eqref{eq:admissible-alpha-gamma}, and let
\(\mathbb X_{\alpha,\gamma}\) be the canonical enhancement constructed in
\Cref{prop:fractional-enhancement}.  Then
\eqref{eq:ell-equation} has a unique modified-paraproduct solution. 
\begin{samepage}
Set
\[
 \theta_{\alpha,\gamma}:=
 \min\bigl\{1,q_{\gamma,\alpha},
          3q_{\gamma,\alpha}-\delta_\gamma\bigr\}.
\]
Then
\begin{equation}\label{eq:ell-response-estimate}
 \norm\ell_{C([t_0,T];C^{\theta_{\alpha,\gamma}-})}
 +[\ell]_{C^{\frac{\theta_{\alpha,\gamma}}{\gamma}-}([t_0,T];L^\infty)}
 \lesssim \mathfrak C_T(\norm{\mathbb X_{\alpha,\gamma}}_T),
\end{equation}
and the solution map is locally Lipschitz, uniformly over restart times on
compact intervals.
\end{samepage}
\end{proposition}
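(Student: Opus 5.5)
The proof is a Gubinelli--Perkowski paracontrolled fixed point for the Duhamel form \eqref{eq:ell-duhamel}, run on a short interval and then iterated. Write \(\ell = 2\Xtwo+\Xfour+2\Kcal_\gamma(B\ell)\). Since \(B=X+\Xone\) has regularity \(q_{\gamma,\alpha}-\delta_\gamma-\), the product \(B\ell\) is singular only through its resonant part, and only where \(\theta+q_{\gamma,\alpha}-\delta_\gamma\le0\).

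We would make the ansatz
\[
 \ell = 2\Xtwo+\Xfour+2\,\ell\mpara Q_X+\ell^\sharp,
\]
with \(Q_X=\Kcal_\gamma X\), plus, when needed, explicit higher tree corrections from the stored fields. Expanding \(\ell\) once more around its leading stochastic part \(2\Xtwo+\Xfour\) gives terms such as \(\Xthree\), \(\XoneFourBFull\) and \(\XtwoThreeFull\). The remainder \(\ell^\sharp\) is then measured in \(C\Ccal^{\beta}\) with \(\beta>\delta_\gamma-q_{\gamma,\alpha}\), so that \(\ell^\sharp\odot X\) is classically defined.

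The resonant term \(\ell\odot X\) is handled as follows.
\begin{itemize}
\item The paraproduct part \((\ell\mpara Q_X)\odot X\) is treated by the commutator lemma (\Cref{lem:fractional-commutators}~(i)) and the usual resonant commutator \(C(f,g,h)=(f\prec g)\odot h - f(g\odot h)\). This reduces it to \(\ell\,\mathbb X_Q\) plus smoother terms.
\item The stochastic parts \(\Xtwo\odot X\) and \(\Xfour\odot X\) are supplied by the enhancement: \(\XthreeRes\) and \(\XoneFourB\).
\item The pieces \(\Xthree\odot X\) and \(\Xone\odot\Xtwo\), which arise from \(\Xone\ell\) and from the second-level expansion, are given by \(\XoneFourC\) and \(\XtwoThree\). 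This is exactly why \eqref{eq:sbe-tree-response-expansion} was stored for the regime \eqref{eq:sbe-tree-response-regime}.
\item The product \(\Xone\ell\) needs \(s(\Xone)+\theta>0\), which holds because \(\theta\ge\) roughly \(q_{\gamma,\alpha}\) and \(\Xone\) has regularity at least \(2q_{\gamma,\alpha}-\delta_\gamma\). Admissibility \eqref{eq:admissible-alpha-gamma} should guarantee this sum is positive.
\end{itemize}

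Regularity bookkeeping gives the value of \(\theta_{\alpha,\gamma}\).
\begin{itemize}
\item \(\Xtwo\) has regularity \(\min\{3q_{\gamma,\alpha}-\delta_\gamma,q_{\gamma,\alpha}\}\).
\item \(\ell\mpara Q_X\) has regularity \(q_{\gamma,\alpha}\).
\item The remainder gains \(\gamma-1\) from \(\Kcal_\gamma\) acting on products of regularity above \(\ldots\), capped at \(1\) because we only claim \(C^{1-}\).
\end{itemize}
Hence \(\ell\in C\Ccal^{\theta-}\) with \(\theta=\min\{1,q_{\gamma,\alpha},3q_{\gamma,\alpha}-\delta_\gamma\}\). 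The time Hölder bound in \(L^\infty\) of order \(\theta/\gamma\) follows from the standard semigroup estimates \(\|(e^{-h\Lambda^\gamma}-1)f\|_{L^\infty}\lesssim h^{\theta/\gamma}\|f\|_{\Ccal^\theta}\), applied to each term, together with the time moduli of the enhancement given by \Cref{prop:fractional-enhancement}.

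To close the argument, the map \((\ell,\ell')\mapsto\) (new \(\ell\), new derivative) is linear in \(\ell\). Schauder estimates for \(\Kcal_{\gamma}\) give a factor \(T^{\epsilon}\) from the loss of an \(\epsilon\) of regularity, so the map is a contraction on a short interval whose length depends only on \(\mathfrak C_T\). Linearity means no blow-up can occur, so iterating over restart times gives a global solution on \([t_0,T]\) with the bound \eqref{eq:ell-response-estimate}. Constants grow at most exponentially in the number of steps, which is a function of the model norm. Uniqueness and local Lipschitz dependence on the model follow from the same contraction, because the map is multilinear in \(\mathbb X_{\alpha,\gamma}\).

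The main obstacle is the regime \eqref{eq:sbe-tree-response-regime}. There a single paracontrolled layer does not push \(\ell^\sharp\) above \(\delta_\gamma-q_{\gamma,\alpha}\). One must check carefully that every resonant product left over after the second-order expansion is either among the stored fields or has positive total regularity. The bound \(5q_{\gamma,\alpha}>\) (lower admissibility threshold) is what should make the fifth-order remainder positive.
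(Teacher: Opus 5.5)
There is a genuine gap, and it sits in the regime \eqref{eq:sbe-tree-response-regime} that you flag as the main obstacle. That is the only case where the proposition needs more than the standard one-layer construction; outside it your scheme does close. Write \(q:=q_{\gamma,\alpha}\) and \(\delta:=\delta_\gamma\). In this regime \(\delta/3<q\le2\delta/5\) and \(\theta_{\alpha,\gamma}=3q-\delta<q\).

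Your ansatz uses \(2\ell\) as the paracontrolled derivative, and \(2\ell\) contains \(2\Xtwo\), so it is only in \(C^{3q-\delta-}\). Three steps of your proposal fail as a result:
\begin{enumerate}[label=(\arabic*),leftmargin=*]
\item Part~(ii) of \Cref{lem:fractional-commutators} does not apply to \((\ell\mpara Q_X)\odot X\). Its hypothesis \(a+b+c>0\) reads \((3q-\delta)+q+(q-\delta)=5q-2\delta>0\), which fails here. The would-be main term \(\ell\,\mathbb X_Q\) pairs regularities \(3q-\delta\) and \(2q-\delta\), with the same nonpositive total, so it is undefined.
\item \(\Xone\ell\) is not classical, since \(s_{\alpha,\gamma}(\Xone)+\theta_{\alpha,\gamma}=5q-2\delta\le0\). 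Your claim that \(\theta\) is ``roughly \(q\)'' is false in this regime. It also contradicts your own remark that \(\Xone\odot\Xtwo\) must be supplied by \(\XtwoThree\).
\item As you note yourself, \(\ell^\sharp\) contains \(2\Kcal_\gamma(X\prec\ell)\in C^{4q-\delta-}\), and \(4q-\delta\) is not above \(\delta-q\).
\end{enumerate}
So the step ``expand once more'' is where the whole proof lives, and the proposal leaves it unspecified.

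The missing idea is to keep \(\Xtwo\) out of every paracontrolled derivative and every \(\Xone\)-product. The paper does this by extracting \(4\Xthree\) explicitly: \(\ell=2\Xtwo+\Xfour+4\Xthree+S\), where \(S\) solves \eqref{eq:response-sbe-tree-remainder-equation}.
\begin{itemize}
\item The stored fifth-order fields \(\XoneFourBFull,\XoneFourCFull,\XtwoThreeFull\) now enter only as forcing. By the commutator identities leading to \eqref{eq:sbe-tree-model-space}, this forcing is itself controlled by \(Q_X\), with derivative \(2(\Xfour+4\Xthree)\).
\item The fixed point is run for \(S=S'\mpara Q_X+S^\sharp\), with \(S'=2(\Xfour+4\Xthree+S)\in C^{q-}\) and \(S^\sharp\in C^{2q-}\). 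The derivative therefore has regularity \(q\) instead of \(3q-\delta\).
\item The only singular resonance, \(S\odot_{\rm ren}X=S'\mathbb X_Q+\mathrm{Com}_{\mpara}(S',Q_X,X)+S^\sharp\odot X\), has total regularity \(3q-\delta>0\).
\item The term \(\Kcal_\gamma(X\prec S)\) lies in \(C^{2q-}\).
\item Every \(\Xone\)-product pairs \(2q-\delta\) with a factor in \(C^{q-}\) or better.
\end{itemize}
What closes the argument is therefore \(3q>\delta\), which admissibility guarantees because it forces \(3q-\delta>1-\gamma/2\). It is not a ``\(5q>\cdots\)'' condition. In this regime \(5q\le2\delta\), and that is exactly why the fifth-order trees are stored rather than reconstructed. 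With this ansatz in place, your short-time contraction, linear restart, time-H\"older and Lipschitz-dependence arguments go through as you describe.
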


\begin{proof}
Without loss of generality, assume
\[
 \frac{\delta_\gamma}{3}<q_{\gamma,\alpha}
 \le\frac{2\delta_\gamma}{5},
 \qquad
 \theta_{\alpha,\gamma}
 =3q_{\gamma,\alpha}-\delta_\gamma>0.
\]
Indeed, outside this regime the resonant products in the response equation
are reconstructed from the lower enhancement, and the proof reduces to the
standard modified-paraproduct construction for the linear Zvonkin equation in
\cite[Section~5.1 and the proof of
Lemma~5.5]{ZhangZhuZhu2022SingularHJB}, with the heat Schauder estimate
replaced by its fractional counterpart.

Since \(Q_X=\Kcal_\gamma X\), the commutator identity gives
\[
 \begin{aligned}
 \XoneFourBFull-\Xfour\mpara Q_X
 &=\Kcal_\gamma(X\prec\Xfour)
   +\mathrm{Com}_{\Kcal_\gamma}(\Xfour,X)+\XoneFourB,\\
 \XoneFourCFull-\Xthree\mpara Q_X
 &=\Kcal_\gamma(X\prec\Xthree)
   +\mathrm{Com}_{\Kcal_\gamma}(\Xthree,X)+\XoneFourC.
\end{aligned}
\]
The coordinatewise time moduli in \Cref{prop:fractional-enhancement} supply
the corresponding hypotheses of the commutator estimate and will be
suppressed below.  The two identities, the analogous paraproduct estimate
for \(\XtwoThreeFull\), and the model regularities place the first
difference in \(C_t\Ccal^{5q_{\gamma,\alpha}-\delta_\gamma-}\) and the
other two terms in \(C_t\Ccal^{2q_{\gamma,\alpha}-}\).  Consequently,
\begin{equation}\label{eq:sbe-tree-model-space}
 \begin{aligned}
 &2\XoneFourBFull+8\XoneFourCFull+4\XtwoThreeFull\\
 &\qquad
 -2(\Xfour+4\Xthree)\mpara Q_X
 \in C([t_0,T];\Ccal^{2q_{\gamma,\alpha}-}).
 \end{aligned}
\end{equation}

We use the single controlled high-response term \(S\):
\begin{equation}\label{eq:response-sbe-tree-space}
 \begin{aligned}
 \ell&=2\Xtwo+\Xfour+4\Xthree+S,\\
 S-2(\Xfour+4\Xthree+S)\mpara Q_X
 &\in C([t_0,T];\Ccal^{2q_{\gamma,\alpha}-}).
 \end{aligned}
\end{equation}
Indeed, substituting this ansatz into \eqref{eq:ell-duhamel} and collecting
the extracted third-, fourth-, and fifth-order terms gives
\begin{equation}\label{eq:response-sbe-tree-remainder-equation}
 \begin{aligned}
 S={}&2\XoneFourBFull+8\XoneFourCFull+4\XtwoThreeFull\\
 &+2\Kcal_\gamma\bigl(
 X\diamond S
 +\Xone\diamond(\Xfour+4\Xthree+S)
 \bigr).
 \end{aligned}
\end{equation}

For the contraction, use controlled pairs
\[
 S=S'\mpara Q_X+S^\sharp,\qquad
 S'\in C_t\Ccal^{q_{\gamma,\alpha}-},\qquad
 S^\sharp\in C_t\Ccal^{2q_{\gamma,\alpha}-},
\]
with the corresponding \(C_t^{q_{\gamma,\alpha}/\gamma-}L^\infty\)
moduli for \(S\) and \(S'\) included in the norm.  The only singular
resonance is reconstructed by
\[
 S\odot_{\rm ren}X
 =S'\mathbb X_Q
 +\mathrm{Com}_{\mpara}(S',Q_X,X)
 +S^\sharp\odot X.
\]
All three terms have regularity
\(3q_{\gamma,\alpha}-\delta_\gamma->0\) by part~(ii) of
\Cref{lem:fractional-commutators}.  Given a controlled pair, let
\(\widetilde S\) be the right-hand side of
\eqref{eq:response-sbe-tree-remainder-equation} and set
\[
 \widetilde S'=2(\Xfour+4\Xthree+S).
\]
Then
\begin{align*}
 \widetilde S-\widetilde S'\mpara Q_X
 ={}&\bigl[
 2\XoneFourBFull+8\XoneFourCFull+4\XtwoThreeFull
 -2(\Xfour+4\Xthree)\mpara Q_X
 \bigr]\\
 &+2\mathrm{Com}_{\Kcal_\gamma}(S,X)
 +2\Kcal_\gamma(X\prec S+S\odot_{\rm ren}X)\\
 &+2\Kcal_\gamma\bigl(
 \Xone\diamond(\Xfour+4\Xthree+S)
 \bigr).
\end{align*}
Each line belongs to \(C_t\Ccal^{2q_{\gamma,\alpha}-}\) by
\eqref{eq:sbe-tree-model-space}, the two commutator estimates, and the
fractional Schauder estimate.  For the last line one uses
\[
 (2q_{\gamma,\alpha}-\delta_\gamma)+q_{\gamma,\alpha}
 =3q_{\gamma,\alpha}-\delta_\gamma>0.
\]
The usual short-time factors therefore give a contraction on controlled
pairs in the standard weighted norm.  At its fixed point,
\(S'=2(\Xfour+4\Xthree+S)\), so the controlled relation is exactly
\eqref{eq:response-sbe-tree-space}.  Restart iteration and the corresponding
time-increment estimate yield
\eqref{eq:ell-response-estimate}, locally Lipschitz in the enhanced model
and uniformly over restart times on compact intervals.
\end{proof}

If \(q_{\gamma,\alpha}>\delta_\gamma/2\), all relevant resonances are classical,
whereas for \(2\delta_\gamma/5<q_{\gamma,\alpha}\le\delta_\gamma/2\),
parts~(i) and~(ii) of \Cref{lem:fractional-commutators} give
\begin{equation}\label{eq:Xthree-controlled}
 \begin{aligned}
 \Xthree&=\Xtwo\mpara Q_X+(\Xthree)^\sharp,
 &(\Xthree)^\sharp
 &\in C([t_0,T];\Ccal^{4q_{\gamma,\alpha}-\delta_\gamma-}),\\
 \Xthree\odot X
 &=\Xtwo\,\mathbb X_Q
 +\mathrm{Com}_{\mpara}(\Xtwo,Q_X,X)
 +(\Xthree)^\sharp\odot X.
\end{aligned}
\end{equation}
At the endpoint \(q_{\gamma,\alpha}=\delta_\gamma/2\), the implicit losses are
chosen sufficiently small so that the strict hypotheses in part~(ii) remain
valid.
The commutator inputs have strictly positive total regularity
\((3q_{\gamma,\alpha}-\delta_\gamma)+q_{\gamma,\alpha}
+(q_{\gamma,\alpha}-\delta_\gamma)
=5q_{\gamma,\alpha}-2\delta_\gamma>0\); the product
\(\Xtwo\mathbb X_Q\) and the resonance
\((\Xthree)^\sharp\odot X\) have the same total.  Likewise,
\[
 (q_{\gamma,\alpha}-\delta_\gamma)
 +(4q_{\gamma,\alpha}-\delta_\gamma)
 =(2q_{\gamma,\alpha}-\delta_\gamma)
 +(3q_{\gamma,\alpha}-\delta_\gamma)
 =5q_{\gamma,\alpha}-2\delta_\gamma>0.
\]
Hence the other two fifth-order resonances are ordinary Bony
reconstructions, and the standard one-level modified-paraproduct ansatz closes
using only the lower enhancement.

With \(\ell\) now constructed, write \(r=\ell+v\).  Subtracting
\eqref{eq:ell-equation} from
\eqref{eq:r-equation} gives
\begin{equation}\label{eq:v-equation}
 L_\gamma v-2\partial_x(Bv)
 =\partial_x(v^2+2\ell v+\ell^2).
\end{equation}
Here \(v(t_0)=u(t_0)\), since \(\ell(t_0)=0\).
By \eqref{eq:ell-response-estimate},
\(v^2\), \(\ell v\), and \(\ell^2\) are canonically defined at the energy
regularity.

\subsection{Zvonkin transform and the transformed fractional operators}
\label{sec:zvonkin}

After the response transform, the singularity is concentrated in the
transport term.  The Zvonkin equation constructs a controlled Jacobian
\(J\) for which \(B\diamond J\) is defined in a paracontrolled sense; the
resulting change of variables cancels the transport exactly.
Let
\[
  \Psi(t,x)=x+\psi(t,x),
  \qquad J=\partial_x\Psi=1+\partial_x\psi.
\]
For a parameter \(\lambda\ge0\), solve
\begin{equation}\label{eq:zvonkin-equation}
  (\partial_t+\Lambda^\gamma+\lambda)\psi=2BJ,
  \qquad \psi(t_0)=0.
\end{equation}
After differentiation,
\begin{equation}\label{eq:J-equation}
  J_t+\Lambda^\gamma J-2\partial_x(BJ)=-\lambda(J-1).
\end{equation}
With the resolvent version of the Duhamel operators in
\eqref{eq:fractional-duhamel},
the Zvonkin equation can equivalently be written as
\begin{equation}\label{eq:zvonkin-duhamel}
  \psi=2\Ical_{\gamma,\lambda}(BJ),
  \qquad
  J-1=2\Kcal_{\gamma,\lambda}(BJ).
\end{equation}

\paragraph{Resolvent extension of \(\mathbb X_Q\).}
For \(\lambda\ge0\), set
\[
 Q_{X,\lambda}^\eps:=\Kcal_{\gamma,\lambda}X^\eps,
 \qquad
 Q_{X,\lambda}:=\Kcal_{\gamma,\lambda}X.
\]
The existing resonance \(\mathbb X_Q\) extends canonically to the family
\begin{equation}\label{eq:resolvent-model-uniform}
 \mathbb X_{Q,\lambda}^\eps
 :=Q_{X,\lambda}^\eps\odot_{\rm ren}X^\eps
 \longrightarrow \mathbb X_{Q,\lambda},
 \qquad \lambda\ge0,
 \qquad
 \mathbb X_{Q,0}=\mathbb X_Q.
\end{equation}
For every \(p<\infty\) and every sufficiently small \(\kappa>0\), the
convergence is uniform in \(\lambda\ge1\) and over restart times on compact
intervals.  Moreover, the Zvonkin input norm
\[
 \mathfrak N_{T,\kappa}^{\rm Zv}
 :=\norm{\mathbb X_{\alpha,\gamma}}_T
   +\sup_{\lambda\ge1}
 \norm{\mathbb X_{Q,\lambda}}_
 {C(I;\Ccal^{2q_{\gamma,\alpha}-\delta_\gamma-\kappa})}
 <\infty
\]
has moments of every finite order.
The uniform second-chaos estimate and mollifier convergence are recorded in
\eqref{eq:uniform-resolvent-second-chaos}. Together with \(\mathbb X_{\alpha,\gamma}\), this family supplies the controlled
interpretation of \(BJ\) used below; the required regularity margin is verified
in the proof of \Cref{prop:zvonkin}.
For a smooth approximation, positivity of \(J\) gives an inverse map;
\Cref{prop:zvonkin} guarantees the same property for the limiting map on
the intervals considered below.  Henceforth write
\[
 \Phi:=\Psi^{-1},
 \qquad
 f^\Theta:=f\circ\Theta
\]
for every scalar function \(f\) and coordinate map \(\Theta\), with
composition in the spatial variable.  The
conservative transformed unknown is defined by
\begin{equation}\label{eq:m-def}
  v=Jm^\Psi.
\end{equation}
Since \(\dd y=J\dd x\), this is the natural push-forward of a density.
For the Cauchy problem and the global argument we take \(t_0=0\).  The
standing stochastic initial-time convention, together with \(\ell(0)=0\)
and \(\psi(0)=0\), gives
\begin{equation}\label{eq:transformed-initial-data}
 X(0)=\Xone(0)=\ell(0)=0,
 \qquad \Psi(0)=\mathrm{id},
 \qquad J(0)=1,
 \qquad m(0)=v(0)=r(0)=u_0.
\end{equation}

With the odd Fourier multiplier \(\Rcal_\gamma\) from \eqref{eq:Rgamma},
equation \eqref{eq:v-equation} is the conservation law
\begin{equation}\label{eq:v-conservation}
  v_t=\partial_x\bigl[
    \Rcal_\gamma v+2Bv+v^2+2\ell v+\ell^2
  \bigr].
\end{equation}

\begin{proposition}[Zvonkin change-of-variables identity]\label{prop:transformed}
For smooth data, the variable \(m\) from \eqref{eq:m-def} solves
\begin{equation}\label{eq:transformed-equation}
  m_t-\partial_y\Dcal_{\Psi}^{\gamma}m
  =\partial_y\bigl(a m^2+2Q_\lambda m+R\bigr),
\end{equation}
where
\begin{equation}\label{eq:coefficients}
\begin{aligned}
  a&=(J^2)^\Phi,
  &\qquad Q_0&=(\ell J)^\Phi,\\
  Q_\lambda&=Q_0+\frac\lambda2\psi^\Phi,
  &\qquad R&=(\ell^2)^\Phi.
\end{aligned}
\end{equation}
For a scalar field \(f\), define the transformed flux operator by
\begin{equation}\label{eq:Dflux}
  \Dcal_{\Psi}^{\gamma}f
  =\bigl[
    \Rcal_\gamma(Jf^\Psi)
    +f^\Psi\Lambda^\gamma\psi
  \bigr]^\Phi.
\end{equation}
Throughout, \(\Dcal_\Psi^\gamma\) and
\(\Acal_{\Psi}^{\gamma}:=-\partial_y\Dcal_{\Psi}^{\gamma}\) are,
respectively, the transformed flux and diffusion operators.
Here \(a=(J^2)^\Phi\) is the coefficient of the quadratic transport
flux; the principal diffusion coefficient is
\((J^\gamma)^\Phi\).
\end{proposition}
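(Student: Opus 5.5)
The plan is to derive \eqref{eq:transformed-equation} in weak form, testing against an arbitrary smooth periodic $\varphi=\varphi(y)$ and using the pushforward identity $m\,\dd y=v\,\dd x$. For smooth data we assume, as stated before \eqref{eq:m-def}, that $J>0$ on the interval considered, so that $\Psi(t,\cdot)$ is a smooth diffeomorphism of $\T$. Since $\psi$ is periodic, $\Psi(t,x+1)=\Psi(t,x)+1$, and therefore $\varphi^\Psi$ is periodic in $x$. The substitution $y=\Psi(t,x)$, $\dd y=J\dd x$, together with \eqref{eq:m-def}, gives
\[
 \int_\T m\varphi\dd y=\int_\T J\,m^\Psi\varphi^\Psi\dd x=\int_\T v\,\varphi^\Psi\dd x .
\]
Differentiating in time, with $\partial_t\Psi=\psi_t$, yields
\[
 \frac{\dd}{\dd t}\int_\T m\varphi\dd y
 =\int_\T v_t\,\varphi^\Psi\dd x+\int_\T v\,(\varphi')^\Psi\,\psi_t\dd x .
\]

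In the first term we insert the conservation law \eqref{eq:v-conservation} and integrate by parts, using $\partial_x(\varphi^\Psi)=J(\varphi')^\Psi$. This gives
\[
 -\int_\T (\varphi')^\Psi\, J\bigl[\Rcal_\gamma v+2Bv+v^2+2\ell v+\ell^2\bigr]\dd x .
\]
In the second term we insert \eqref{eq:zvonkin-equation}, namely $\psi_t=-\Lambda^\gamma\psi-\lambda\psi+2BJ$. The key observation is that the rough transport cancels identically: the contribution $-2BJv(\varphi')^\Psi$ from the first term is exactly offset by $+2BJv(\varphi')^\Psi$ from the second. This is the purpose of the Zvonkin equation, and no product involving $B$ survives. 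We collect what remains as
\[
 -\int_\T(\varphi')^\Psi\Bigl(J\bigl[\Rcal_\gamma(Jm^\Psi)+m^\Psi\Lambda^\gamma\psi\bigr]
 +J^3(m^\Psi)^2+2\ell J^2m^\Psi+J\ell^2+\lambda\psi Jm^\Psi\Bigr)\dd x ,
\]
where we have substituted $v=Jm^\Psi$ throughout. The linear part has been grouped so that $J$ factors out; this grouping produces exactly the bracket in \eqref{eq:Dflux}.

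Next we change variables back using $\dd x=(1/J)^\Phi\dd y$. Every term above carries one factor of $J$, which cancels against this Jacobian. The result is
\[
 -\int_\T\varphi'\Bigl(\Dcal_\Psi^\gamma m+(J^2)^\Phi m^2+2(\ell J)^\Phi m+(\ell^2)^\Phi+\lambda\psi^\Phi m\Bigr)\dd y .
\]
By \eqref{eq:coefficients}, the integrand is $-\varphi'\bigl(\Dcal_\Psi^\gamma m+am^2+2Q_\lambda m+R\bigr)$. Indeed, $\lambda\psi^\Phi m=2\cdot\frac\lambda2\psi^\Phi m$ is absorbed into $Q_\lambda$. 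Since $\varphi$ is arbitrary and all quantities are smooth, the weak identity upgrades to the pointwise equation \eqref{eq:transformed-equation}.

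The only delicate point is bookkeeping rather than analysis. We must ensure that $\Rcal_\gamma$ and $\Lambda^\gamma$ are applied to genuinely periodic functions, namely $Jm^\Psi$ and $\psi$, so that the zero-mode and affine conventions of Appendix~A cause no ambiguity. We must also group the linear terms before dividing by $J$, so that the nonlocal flux appears in the compensated form \eqref{eq:Dflux} rather than as a formal chain rule for $\Lambda^\gamma$, which is unavailable for $\gamma<2$. At $\gamma=2$, the same bracket reduces to $(J^2)^\Phi m_y$ by the ordinary chain rule, consistent with the introduction.
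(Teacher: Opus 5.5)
Your proposal is correct and is essentially the paper's argument written in weak form. The paper works pointwise with \(J\,(m_t)^\Psi=v_t-\partial_x(m^\Psi\Psi_t)\) and \([J^{-1}\partial_xF]^\Phi=\partial_y(F^\Phi)\), which are dual to your identities \(\int_\T m\varphi\,\dd y=\int_\T v\varphi^\Psi\,\dd x\) and \(\partial_x(\varphi^\Psi)=J(\varphi')^\Psi\); the substantive steps coincide, namely substituting \eqref{eq:v-conservation} and \(\Psi_t=-\Lambda^\gamma\psi-\lambda\psi+2BJ\), the exact cancellation \(2Bv-2m^\Psi BJ=0\), and keeping \(\Rcal_\gamma(Jm^\Psi)+m^\Psi\Lambda^\gamma\psi\) together as the compensated flux.
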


\begin{proof}
Since \(v=Jm^\Psi\), while
\(J_t=\partial_x\Psi_t\) and
\(\partial_x(m^\Psi)=J(m_y)^\Psi\),
\[
 \begin{aligned}
 v_t
 &=J_tm^\Psi+J\,(m_t)^\Psi+J\,(m_y)^\Psi\Psi_t,\\
 \partial_x(m^\Psi\Psi_t)
 &=J_tm^\Psi+J\,(m_y)^\Psi\Psi_t.
 \end{aligned}
\]
Hence
\(J\,(m_t)^\Psi=v_t-\partial_x(m^\Psi\Psi_t)\).  Substituting
\eqref{eq:v-conservation} and
\(\Psi_t=-\Lambda^\gamma\psi-\lambda\psi+2BJ\) gives
\[
 \begin{aligned}
 J\,(m_t)^\Psi
 &=\partial_x\bigl[
   \Rcal_\gamma v+2Bv+v^2+2\ell v+\ell^2-m^\Psi\Psi_t
   \bigr]\\
 &=\partial_x\bigl[
   \Rcal_\gamma v+m^\Psi\Lambda^\gamma\psi+\lambda m^\Psi\psi
   +v^2+2\ell v+\ell^2
   \bigr].
 \end{aligned}
\]
Here the rough transport cancels exactly because
\(2Bv-2m^\Psi BJ=0\).  Division by \(J\) followed by composition with
\(\Phi\) uses the coordinate identity
\[
 [J^{-1}\partial_x F]^\Phi
 =\partial_y(F^\Phi).
\]
The first two terms in the last flux, kept together, give
\(\Dcal_\Psi^\gamma m\) by \eqref{eq:Dflux}.  The remaining terms satisfy
\[
 \begin{aligned}
 (v^2)^\Phi&=am^2,
 &\qquad (2\ell v)^\Phi&=2Q_0m,\\
 (\ell^2)^\Phi&=R,
 &\qquad \lambda(m^\Psi\psi)^\Phi
 &=\lambda\psi^\Phi m.
 \end{aligned}
\]
Since \(2Q_\lambda=2Q_0+\lambda\psi^\Phi\), these identities
give \eqref{eq:transformed-equation}.
\end{proof}

\begin{remark}[Regularity notation]
We use \(\rho\) for the regularity of
\(\ell,Q_0,Q_\lambda,R\), and \(\beta\) for that of
\(J-1\) and \(a\).  Choose
\begin{equation}\label{eq:coefficient-exponent-choice}
\begin{aligned}
 1-\frac\gamma2<\rho
 &<\theta_{\alpha,\gamma},\\
 \max\{\rho,\delta_\gamma-2q_{\gamma,\alpha}\}<\beta
 &<\min\{1,q_{\gamma,\alpha}\}.
\end{aligned}
\end{equation}
The bound \(\rho>1-\gamma/2\) is the dual-flux threshold
\(C^\rho\hookrightarrow H^{1-\gamma/2}\), while
\(\rho<\theta_{\alpha,\gamma}\) comes from the response estimate.
The leading controlled term in \(J-1\) is
\(J\mpara Q_{X,\lambda}\), with
\(Q_{X,\lambda}\in C^{q_{\gamma,\alpha}-}\); hence
\(\beta<q_{\gamma,\alpha}\).  The bound \(\beta<1\) keeps the construction
in the first-order H\"older regime,
\(\beta>\delta_\gamma-2q_{\gamma,\alpha}\) makes the final Jacobian
resonance regular, and \(\beta>\rho\) preserves the \(C^\rho\) regularity
of \(\ell J\). The interval for \(\rho\) is nonempty precisely when
\[
 \alpha>\frac{7-4\gamma}{2}
 \quad\text{and}\quad
 \alpha>\frac{15-8\gamma}{6}.
\]
They also make the interval for \(\beta\) nonempty, so
\eqref{eq:coefficient-exponent-choice} adds no restriction to
\eqref{eq:admissible-alpha-gamma}.  The deterministic argument uses only
\[
 a\in C^\beta,
 \qquad Q_0,Q_\lambda,R\in C^\rho,
 \qquad \beta,\rho>1-\frac\gamma2.
\]
The corresponding stochastic homogeneities are verified in
\Cref{app:fractional-enhancement-proof}.
\end{remark}

At each fixed time, lift \(m\) periodically to \(\R\) and set
\[
 \bar m:=\int_0^1m(y)\dd y,
 \qquad
 M(y):=\int_0^y m(z)\dd z.
\]
Then \(M_y=m\) and \(M(y+1)=M(y)+\bar m\).  Changing the lower limit
changes \(M\) only by a constant and is therefore immaterial.  Since the
degree-one lift \(\Psi=x+\psi\) satisfies
\(\Psi(x+1)=\Psi(x)+1\), the pullback \(M^\Psi\) is
affine-periodic with increment \(\bar m\).  Recalling the affine-periodic
convention in \hyperref[app:notation]{Appendix~A}, for smooth \(m\) and \(\Psi\) we have
\[
 \Rcal_\gamma(Jm^\Psi)
 =\Rcal_\gamma\partial_x(M^\Psi)
 =-\Lambda^\gamma(M^\Psi),
 \qquad
 \Lambda^\gamma\Psi=\Lambda^\gamma\psi.
\]
The first identity holds on every nonzero Fourier mode.  On the zero mode,
\(\Rcal_\gamma\) annihilates the mean of
\(\partial_x(M^\Psi)\), while \(\Lambda^\gamma\) annihilates the
affine part of \(M^\Psi\).  Consequently,
\begin{equation}\label{eq:chain-commutator}
  \Dcal_{\Psi}^{\gamma}m
  =-\bigl[
    \Lambda^\gamma(M^\Psi)
    -m^\Psi\Lambda^\gamma\Psi
  \bigr]^\Phi.
\end{equation}
The bracket in \eqref{eq:chain-commutator} is periodic and extends
continuously as a single compensated object to rough regularity; its summands need not then
be meaningful separately.  In particular,
\(\Dcal_{\Psi}^{\gamma}m\) is generally neither a scalar coefficient times
\(\Lambda^{\gamma-1}m\) nor
\(a\Lambda^{\gamma-2}\partial_y m\).

When \(\gamma=2\), the ordinary chain rule reduces the compensated flux to
\begin{equation}\label{eq:local-check}
  \Dcal_{\Psi}^{2}f=af_y,
  \qquad
  \Acal_{\Psi}^{2}f=-\partial_y(af_y),
  \qquad
  a=(J^2)^\Phi.
\end{equation}
Consequently, the transformed equation becomes
\begin{equation}\label{eq:local-transformed-equation}
 m_t-\partial_y(am_y)
 =\partial_y(am^2+2Q_\lambda m+R).
\end{equation}
This is the Laplacian conservative Zvonkin equation.
\begin{samepage}
For \(f,g\in H^1(\T)\), integration by parts gives the symmetric form and
coercivity of \(\Acal_\Psi^2\):
\begin{align}
 \ip g{\Acal_{\Psi}^{2}f}
 &=\int_\T a\,g_yf_y\dd y,
 \label{eq:local-symmetric-form}\\
 \ip f{\Acal_{\Psi}^{2}f}
 &=\int_\T a\abs{f_y}^2\dd y
 \asymp_\Psi\norm f_{\dot H^1}^2.
 \notag
\end{align}
The same bounds on \(a\) also yield
\begin{equation}\label{eq:local-form-bound}
 \norm{\Dcal_{\Psi}^{2}f}_{L^2}
 +\norm{\Acal_{\Psi}^{2}f}_{H^{-1}}
 \lesssim_\Psi\norm f_{H^1}.
\end{equation}
Both operators depend continuously, with a local H\"older modulus, on
\(\Psi\) in these operator norms on bounded \(C^{1+\beta}\)
bi-Lipschitz sets.  The dependence is locally Lipschitz under bounded
\(C^{2+\beta}\) control.
\end{samepage}

For \(\gamma=2\),
\eqref{eq:zvonkin-equation} is the standard singular linear heat equation
for the Jacobian.  Its paracontrolled Schauder estimate and large-resolvent
diffeomorphism construction follow, after periodic specialization, from
\cite[Sections~3 and~5.1, in particular Lemmas~5.5--5.6]
{ZhangZhuZhu2022SingularHJB}; we do not repeat that fixed-point proof.
For \(-1/6<\alpha<0\), the drift index in the notation of that work
is \(1/2-\alpha\in(1/2,2/3)\).  The canonical value \(\alpha=0\) is covered after
an arbitrarily small H\"older loss, while \(\alpha>0\) is smoother.

\begin{proposition}[Solvability and Jacobian bounds for the Zvonkin equation]\label{prop:zvonkin}
Assume \eqref{eq:admissible-alpha-gamma}, choose \((\rho,\beta)\) as in
\eqref{eq:coefficient-exponent-choice}, and take the canonical enhancement
of \Cref{prop:fractional-enhancement} together with its resolvent family
\eqref{eq:resolvent-model-uniform}.  For all sufficiently large \(\lambda\),
equation \eqref{eq:zvonkin-equation} has a unique
paracontrolled solution.  The solutions built from smooth enhanced
approximations
converge in \(C([t_0,T];C^{1+\beta})\) to a map
\(\Psi=x+\psi\), and
\begin{equation}\label{eq:J-bounds}
 0<J_*\le J(t,x)\le J^*<\infty
\end{equation}
uniformly on finite time intervals.  In particular, \(\Psi\) and
\(\Phi\) are uniformly bi-Lipschitz.  The convergence
\begin{equation}\label{eq:zvonkin-schauder-estimate}
 \norm{\psi}_{C([t_0,T];C^{1+\beta})}
 +\norm{J-1}_{C([t_0,T];C^\beta)}
\longrightarrow0
\qquad\text{as }\lambda\to\infty
\end{equation}
is locally uniform on sets where the enhanced-model norm and the uniform
resolvent-family norm are bounded.
Consequently \(\lambda\) may be chosen so that
\(\norm{J-1}_{L^\infty}\le1/2\), which implies \eqref{eq:J-bounds}; alternatively,
with \(\lambda=0\), the same smallness is obtained on a sufficiently short
restarted interval.
\end{proposition}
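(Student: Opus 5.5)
The plan is to solve the Duhamel form \eqref{eq:zvonkin-duhamel},
\(J-1=2\Kcal_{\gamma,\lambda}(BJ)\),
by a paracontrolled fixed point in the same way as for \(S\) in \Cref{prop:response}, with \(\Kcal_\gamma\) replaced by its resolvent version and \(\mathbb X_Q\) replaced by the family \(\mathbb X_{Q,\lambda}\) from \eqref{eq:resolvent-model-uniform}. We look for controlled pairs
\[
 J-1=2J\mpara Q_{X,\lambda}+J^\sharp,\qquad
 J^\sharp\in C_t\Ccal^{\beta'},
\]
where \(\beta<\beta'\) and \(\beta'\) is slightly below \(\min\{2q_{\gamma,\alpha}-\delta_\gamma+\gamma-1,\,1+\text{small}\}\). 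The space carries the time moduli \(C_t^{\beta/\gamma-}L^\infty\), as in the response proof.

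\emph{Step 1: define the product \(BJ\).} Split \(BJ = X\diamond J + \Xone J\). The product \(\Xone J\) is classical, since \(s(\Xone)+\beta>0\) by the choice \(\beta>\delta_\gamma-2q_{\gamma,\alpha}\) in \eqref{eq:coefficient-exponent-choice}; when \(q_{\gamma,\alpha}\ge\delta_\gamma/2\) the regularity \(q_{\gamma,\alpha}\) already suffices. The resonance with \(X\) is reconstructed as
\[
 J\odot_{\rm ren}X
 =2J\,\mathbb X_{Q,\lambda}
 +2\,\mathrm{Com}_{\mpara}(J,Q_{X,\lambda},X)
 +J^\sharp\odot X .
\]
Each term has total regularity at least \(\beta+2q_{\gamma,\alpha}-\delta_\gamma>0\), or \(\beta'+q_{\gamma,\alpha}-\delta_\gamma>0\), by part~(ii) of \Cref{lem:fractional-commutators}.

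\emph{Step 2: set up the contraction.} Apply \(2\Kcal_{\gamma,\lambda}\) to the result. We write \(\Kcal_{\gamma,\lambda}(J\prec X)=J\mpara Q_{X,\lambda}+\mathrm{Com}\), and estimate the commutator by part~(i) of \Cref{lem:fractional-commutators} for the resolvent semigroup. Everything else enters \(J^\sharp\) through the resolvent Schauder estimate. The key quantitative point is that the resolvent Schauder bound gains a factor \(\lambda^{-\epsilon}\) whenever we give up \(\gamma\epsilon\) derivatives:
\[
 \norm{\Kcal_{\gamma,\lambda}f}_{C_t\Ccal^{s+\gamma-1-\gamma\epsilon}}\lesssim\lambda^{-\epsilon}\norm f_{C_t\Ccal^{s}} .
\]
The same gain holds for the commutator and for \(\norm{Q_{X,\lambda}}_{\Ccal^{q-\gamma\epsilon}}\). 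Since every input of the map is linear in \(J\), the solution map is affine, with linear part of norm \(\lesssim\lambda^{-\epsilon}\mathfrak N^{\rm Zv}_{T,\kappa}\). This norm is below \(1/2\) for \(\lambda\) large, locally uniformly in the model norm, which gives existence and uniqueness on the whole interval \([t_0,T]\) without restarting.

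The fixed point then satisfies
\[
 \norm{J-1}_{C_t C^\beta}\lesssim\lambda^{-\epsilon}\,\mathfrak C(\mathfrak N^{\rm Zv})\to0 .
\]
Because \(\psi=2\Ical_{\gamma,\lambda}(BJ)\) gains one more derivative, the bound \(\norm\psi_{C_tC^{1+\beta}}\to0\) follows in the same way, and this proves \eqref{eq:zvonkin-schauder-estimate}. Choosing \(\lambda\) with \(\norm{J-1}_\infty\le1/2\) gives \(J_*=1/2\) and \(J^*=3/2\). Then \(\Psi\) is a degree-one lift with \(J>0\), hence a \(C^{1+\beta}\) diffeomorphism, and \(\Phi\) is bi-Lipschitz with constant at most \(2\). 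Convergence of the smooth approximations follows from local Lipschitz dependence on \((\mathbb X^\eps,\mathbb X^\eps_{Q,\lambda})\) together with the uniform convergence in \(\lambda\ge1\) stated after \eqref{eq:resolvent-model-uniform}. For the alternative with \(\lambda=0\), the short-time factor \(T^{\epsilon}\) replaces \(\lambda^{-\epsilon}\).

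\emph{Main obstacle.} The hardest step is keeping the commutator and Schauder estimates uniform in \(\lambda\) while extracting the decay \(\lambda^{-\epsilon}\) at the rough level. In particular, the estimate on \(\mathbb X_{Q,\lambda}\) is only uniform, not decaying. I would handle this by putting \(\mathbb X_{Q,\lambda}\) inside \(\Kcal_{\gamma,\lambda}\), so that the decay is always taken from the outer resolvent Duhamel operator. This costs \(\gamma\epsilon\) regularity, which fits inside the strict margins of \eqref{eq:coefficient-exponent-choice}.
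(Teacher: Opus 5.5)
Your proposal is correct and follows the paper's proof essentially step by step. It uses the same controlled ansatz \(J=1+2J\mpara Q_{X,\lambda}+J^\sharp\), reconstructs the resonance from the uniform resolvent family \(\mathbb X_{Q,\lambda}\) and part~(ii) of \Cref{lem:fractional-commutators}, obtains the contraction for large \(\lambda\) from the resolvent gain \(\lambda^{-\epsilon}\) at a cost of \(\gamma\epsilon\) derivatives (keeping \(\mathbb X_{Q,\lambda}\) inside the outer \(\Kcal_{\gamma,\lambda}\)), and then derives the Jacobian bounds, the bi-Lipschitz inversion, convergence of the approximations via local Lipschitz dependence, and the short-time alternative at \(\lambda=0\).

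The differences are only cosmetic. You take \(\psi=2\Ical_{\gamma,\lambda}(B\diamond J)\) directly, so its zero mode carries the factor \(\lambda^{-1}\) automatically; the paper instead takes the zero-mean primitive of \(J-1\) and fixes the mean through the scalar ODE \eqref{eq:psi-zero-mode}. Also, since you track \(J\) only in \(C^\beta\), the terms \(\mathrm{Com}_{\Kcal_\gamma}(J,X)\) and \(\Kcal_{\gamma,\lambda}(X\prec J)\) place \(J^\sharp\) in \(\Ccal^{\beta+q_{\gamma,\alpha}-}\), as in the paper, rather than just below \(2q_{\gamma,\alpha}\). This changes nothing in the argument.
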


\begin{proof}[Proof Outline]
For \(\gamma=2\), this is the cited Laplacian construction; so we assume
\(1<\gamma<2\) below.  The bookkeeping indices introduced above satisfy
\[
 \theta_{\alpha,\gamma}
 =\min\{1,q_{\gamma,\alpha},3q_{\gamma,\alpha}-\delta_\gamma\}.
\]
Choose
\[
 \beta<\beta_+<\min\{1,q_{\gamma,\alpha}\}.
\]
For every sufficiently small \(\kappa>0\),
\Cref{prop:fractional-enhancement} and the uniform resolvent estimate
\eqref{eq:uniform-resolvent-second-chaos} give
\[
 \begin{gathered}
 B\in C([t_0,T];\Ccal^{q_{\gamma,\alpha}-\delta_\gamma-\kappa}),
 \qquad
 Q_{X,\lambda}:=\Kcal_{\gamma,\lambda}X
 \in C([t_0,T];\Ccal^{q_{\gamma,\alpha}-\kappa}),\\
 \Xone\in
 C([t_0,T];\Ccal^{2q_{\gamma,\alpha}-\delta_\gamma-\kappa}),
 \qquad
 \sup_{\lambda\ge1}
 \norm{\mathbb X_{Q,\lambda}}_
 {C([t_0,T];\Ccal^{2q_{\gamma,\alpha}-\delta_\gamma-\kappa})}
 \le \mathfrak N_{T,\kappa}^{\rm Zv}.
 \end{gathered}
\]

Differentiating \eqref{eq:zvonkin-duhamel} gives
\begin{equation}\label{eq:abstract-J-fixed-point}
  J-1=2\Kcal_{\gamma,\lambda}(B\diamond J).
\end{equation}
Use the controlled ansatz
\[
  J=1+2J\mpara Q_{X,\lambda}+J^\sharp,
  \qquad
  J^\sharp\in C([t_0,T];\Ccal^{\beta+q_{\gamma,\alpha}-}).
\]
Substitution produces the usual controlled-product commutator and
\[
 Q_{X,\lambda}\odot_{\rm ren}B
 =\mathbb X_{Q,\lambda}+Q_{X,\lambda}\odot\Xone.
\]
When \(2q_{\gamma,\alpha}\le\delta_\gamma\), the commutator is controlled by
part~(ii) of \Cref{lem:fractional-commutators}; when
\(2q_{\gamma,\alpha}>\delta_\gamma\), ordinary Bony continuity suffices.
The first term is supplied by the uniform resolvent family
\eqref{eq:resolvent-model-uniform} when
\(2q_{\gamma,\alpha}\le\delta_\gamma\), and is otherwise classical.  Since
\(\rho<3q_{\gamma,\alpha}-\delta_\gamma\), the second has regularity
\[
  3q_{\gamma,\alpha}-\delta_\gamma-\kappa>\rho>0
\]
for sufficiently small \(\kappa\).  Moreover, \(B\odot J^\sharp\) is
classical because \(\beta+2q_{\gamma,\alpha}-\delta_\gamma>0\).  Thus no new
Jacobian-specific SBE tree is needed. The controlled fixed point reduces to the resolvent gain, on frequency
\(2^j\),
\[
 \frac{2^{j(\gamma-(\beta_+-\beta))}}
 {\lambda+2^{\gamma j}}
 \lesssim\lambda^{-(\beta_+-\beta)/\gamma}.
\]
Hence \eqref{eq:abstract-J-fixed-point} is contractive for large
\(\lambda\), and
\[
 \norm{J-1}_{C([t_0,T];C^\beta)}
 \lesssim\lambda^{-(\beta_+-\beta)/\gamma}
 \mathfrak C_T\bigl(\mathfrak N_{T,\kappa}^{\rm Zv}\bigr),
\]
with local Lipschitz dependence.  The short-time Schauder factor gives the
restarted assertion at \(\lambda=0\).

Let \(\psi_0\) be the zero-mean primitive of \(J-1\), and define its missing
zero mode by
\begin{equation}\label{eq:psi-zero-mode}
  \dot{\bar\psi}(t)+\lambda\bar\psi(t)
  =2\ip{B\diamond J}{1},
  \qquad \bar\psi(t_0)=0,
  \qquad \psi:=\psi_0+\bar\psi.
\end{equation}
Equation \eqref{eq:abstract-J-fixed-point} makes the residual spatially
constant, and \eqref{eq:psi-zero-mode} cancels its mean.  The scalar
resolvent formula gives
\[
  \abs{\bar\psi(t)}
  \lesssim\lambda^{-1}\mathfrak C_T
    \bigl(\mathfrak N_{T,\kappa}^{\rm Zv}\bigr)
  \lesssim\lambda^{-(\beta_+-\beta)/\gamma}\mathfrak C_T
    \bigl(\mathfrak N_{T,\kappa}^{\rm Zv}\bigr),
\]
because \(0<(\beta_+-\beta)/\gamma<1\).  The primitive estimate yields
\begin{equation}\label{eq:zvonkin-rate}
 \norm{\psi}_{C([t_0,T];C^{1+\beta})}
 +\norm{J-1}_{C([t_0,T];C^\beta)}
 \lesssim
 \lambda^{-(\beta_+-\beta)/\gamma}
 \mathfrak C_T\bigl(\mathfrak N_{T,\kappa}^{\rm Zv}\bigr).
\end{equation}
This implies \eqref{eq:zvonkin-schauder-estimate}.  At \(\lambda=0\), use
\(\bar\psi(t)=2\int_{t_0}^t\langle B\diamond J,1\rangle\dd s\).

Let \((\psi^n,J^n)\) be the solutions associated with the smooth enhanced
approximations in the proposition, and set
\(\Phi^n:=(\Psi^n)^{-1}\).  The difference estimate gives
\(J^n\to J\) in
\(C([t_0,T];C^\beta)\) and \(\psi^n\to\psi\) in
\(C([t_0,T];C^{1+\beta})\).  Choose \(\lambda\) so that
\eqref{eq:zvonkin-rate} is at most \(1/2\).  Then
\(1/2\le J^n,J\le3/2\), proving \eqref{eq:J-bounds}.  Finally,
\[
  \partial_y\Phi=(J^\Phi)^{-1},
\]
and the \(C^\beta\) composition estimate gives uniform bi-Lipschitz bounds
and uniform convergence of \(\Phi^n\) to \(\Phi\).
\end{proof}

\begin{samepage}
We next identify the transformed flux operator for \(1<\gamma<2\).  Define the
odd periodic primitive of the fractional kernel by
\[
 Q_\gamma^{\mathrm{per}}(r)
 :=\frac1\gamma\sum_{n\in\mathbb Z}
 \frac{\operatorname{sgn}(r+n)}{\abs{r+n}^{\gamma}},
 \qquad r\notin\mathbb Z.
\]
\end{samepage}

\begin{figure}[!ht]
\centering
\includegraphics[width=.82\textwidth]{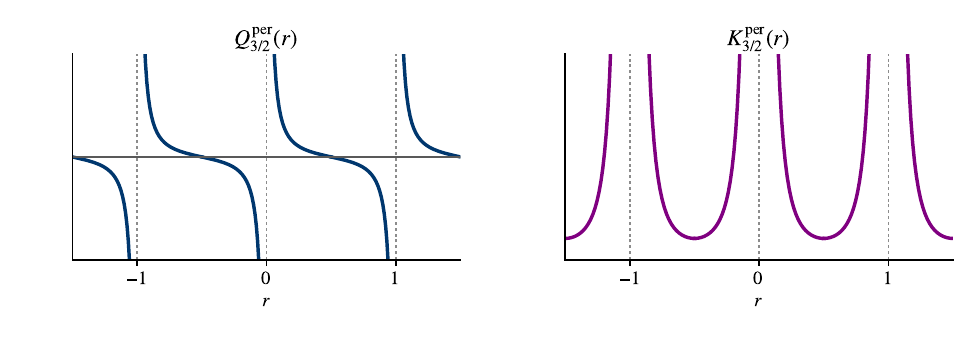}
\caption{Three periods of the kernels at \(\gamma=\frac{3}{2}\); the
vertical ranges are truncated at the singularities \(r\in\mathbb Z\).
The primitive
\(Q_\gamma^{\mathrm{per}}\) is odd and decreases on each side of every
singularity, while
\(K_\gamma^{\mathrm{per}}=-\partial_rQ_\gamma^{\mathrm{per}}\) is positive
and even.}
\label{fig:periodic-fractional-kernels}
\end{figure}

\begin{samepage}
Integration by parts in the singular-integral formula for
\(\Rcal_\gamma\) gives, for smooth \(f\),
\begin{equation}\label{eq:R-compensated-kernel}
 \Rcal_\gamma f(y)
 =c_\gamma\int_{-\frac{1}{2}}^{\frac{1}{2}}
 [f(y)-f(y-h)]Q_\gamma^{\mathrm{per}}(h)\dd h.
\end{equation}
We shall use the properties
\[
 \partial_rQ_\gamma^{\mathrm{per}}=-K_\gamma^{\mathrm{per}},
 \qquad
 Q_\gamma^{\mathrm{per}}(-r)=-Q_\gamma^{\mathrm{per}}(r),
 \qquad
 Q_\gamma^{\mathrm{per}}(\pm\tfrac12)=0.\footnote{At
 \(r=\tfrac12\), pair
 the summand indexed by \(n\) with that indexed by \(-n-1\).  Their
 arguments are opposite, so the two terms cancel.  This rearrangement is
 legitimate because the series is absolutely convergent for
 \(\gamma>1\).}
\]
\end{samepage}

The transformed flux operator admits the following \(y\)-coordinate kernel
representation.

\begin{lemma}[Kernel of the transformed fractional flux operator]
\label{lem:transformed-flux-kernel}
Let \(1<\gamma<2\), \(0<\beta<1\), and let
\(\Psi\in C^{1+\beta}(\T)\) be an orientation-preserving bi-Lipschitz map.
Choose degree-one lifts of \(\Psi\) and \(\Phi\) to \(\mathbb R\).
For every smooth \(f\),
\begin{equation}\label{eq:D-transformed-kernel}
\begin{aligned}
 \Dcal_\Psi^\gamma f(y)
 =c_\gamma\int_{-\frac{1}{2}}^{\frac{1}{2}}
 [f(y)-f(y-h)]
 Q_\gamma^{\mathrm{per}}
 \bigl(\Phi(y)-\Phi(y-h)\bigr)\dd h.
\end{aligned}
\end{equation}
Moreover,
\begin{equation}\label{eq:pullback-Q-kernel-error}
 \bigl|Q_\gamma^{\mathrm{per}}
  \bigl(\Phi(y)-\Phi(y-h)\bigr)
  -(J^\gamma)^\Phi(y)Q_\gamma^{\mathrm{per}}(h)\bigr|
 \lesssim_\Psi\abs h_\T^{-\gamma+\beta}.
\end{equation}
\end{lemma}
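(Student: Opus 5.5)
The plan is to start from the compensated form \eqref{eq:chain-commutator} rather than from \eqref{eq:Dflux}. Its bracket is a single singular integral whose integrand vanishes to second order on the diagonal. An integration by parts against the primitive kernel then produces a first-order difference, and the substitution \(y'=\Psi(z)\) absorbs the Jacobian.

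\textbf{Step 1: non-periodic kernels.} To avoid affine-periodic bookkeeping at the edges of a fundamental domain, work on \(\R\). Write \(\Lambda^\gamma g(x)=c_\gamma\PV\int_\R[g(x)-g(z)]\abs{x-z}^{-1-\gamma}\dd z\) for periodic, and more generally affine-periodic, smooth \(g\). Since \(\Lambda^\gamma\) annihilates affine functions, this is consistent with \eqref{eq:periodic-fractional-laplacian} after summing over translates. Put \(Q_\gamma(r):=\gamma^{-1}\operatorname{sgn}(r)\abs r^{-\gamma}\), so that \(\partial_r Q_\gamma=-\abs r^{-1-\gamma}\). With \(M' = f\) and \(x=\Phi(y)\), \eqref{eq:chain-commutator} becomes
\[
 -\Dcal_\Psi^\gamma f(y)=c_\gamma\PV\int_\R
 \bigl[M(\Psi x)-M(\Psi z)-f(\Psi x)(\Psi x-\Psi z)\bigr]\abs{x-z}^{-1-\gamma}\dd z.
\]
\begin{itemize}
\item Near \(z=x\), the bracket is \(O(\abs{\Psi x-\Psi z}^2)=O(\abs{x-z}^2)\), because \(f\) is smooth and \(\Psi\) is Lipschitz.
\item At infinity, the bracket grows at most linearly.
\end{itemize}
Hence the integral converges absolutely, because \(\gamma<2\) near the diagonal and \(\gamma>1\) at infinity.

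\textbf{Step 2: integration by parts and change of variables.} Write \(\abs{x-z}^{-1-\gamma}=\partial_z[Q_\gamma(x-z)]\). The \(z\)-derivative of the bracket is \(J(z)[f(\Psi x)-f(\Psi z)]\). The boundary terms vanish for two reasons:
\begin{itemize}
\item at \(z\to x\), the bracket is \(O(\abs{x-z}^2)\) while \(Q_\gamma\sim\abs{x-z}^{-\gamma}\) and \(\gamma<2\);
\item at \(\abs z\to\infty\), the bracket is \(O(\abs z)\) while \(Q_\gamma\sim\abs z^{-\gamma}\) and \(\gamma>1\).
\end{itemize}
The integration by parts gives
\[
 \Dcal_\Psi^\gamma f(y)=c_\gamma\int_\R J(z)[f(\Psi x)-f(\Psi z)]Q_\gamma(x-z)\dd z .
\]
This integral converges absolutely, with integrand \(\lesssim\abs{x-z}^{1-\gamma}\) near the diagonal and \(\lesssim\abs{z}^{-\gamma}\) at infinity. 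Substituting \(y'=\Psi(z)\), \(J\dd z=\dd y'\), and then \(h=y-y'\), gives
\[
 \Dcal_\Psi^\gamma f(y)=c_\gamma\int_\R[f(y)-f(y-h)]\,Q_\gamma\bigl(\Phi(y)-\Phi(y-h)\bigr)\dd h.
\]
To periodize, split \(\R=\bigcup_n([-\tfrac12,\tfrac12]+n)\). Then use periodicity of \(f\) and the lift relation \(\Phi(y-h-n)=\Phi(y-h)-n\). Summing the absolutely convergent series over \(n\) yields \(Q^{\mathrm{per}}_\gamma(\Phi(y)-\Phi(y-h))\), which is \eqref{eq:D-transformed-kernel}.

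\textbf{Step 3: the kernel error \eqref{eq:pullback-Q-kernel-error}.} Because \(\Phi\) is a degree-one increasing lift, for \(\abs h\le\tfrac12\) we have \(\Phi(y)-\Phi(y-h)\in(-1,1)\). Bi-Lipschitz bounds give \(\abs{\Phi(y)-\Phi(y-h)}\asymp\abs h\). The only singularity of \(Q^{\mathrm{per}}_\gamma\) reached is therefore at the origin. Split \(Q^{\mathrm{per}}_\gamma=Q_\gamma+\widetilde Q_\gamma\), where \(\widetilde Q_\gamma\) is smooth and bounded on \((-1,1)\).
\begin{itemize}
\item \emph{Smooth part.} The contributions of \(\widetilde Q_\gamma\), and of \((J^\gamma)^\Phi\widetilde Q_\gamma(h)\), are \(O_\Psi(1)\le O(\abs h^{-\gamma+\beta})\).
\item \emph{Singular part.} Since \(\Phi\in C^{1+\beta}\) with \(\Phi'=(J^\Phi)^{-1}\), we have \(\Phi(y)-\Phi(y-h)=(J^\Phi(y))^{-1}h+O(\abs h^{1+\beta})\). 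Homogeneity gives \(Q_\gamma\bigl((J^\Phi)^{-1}h\bigr)=(J^\gamma)^\Phi Q_\gamma(h)\). By the mean value theorem on the segment joining the two arguments, which keeps the sign of \(h\) and stays of size \(\asymp\abs h\), the difference is \(\lesssim\abs h^{1+\beta}\cdot\abs h^{-1-\gamma}=\abs h^{-\gamma+\beta}\).
\end{itemize}

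I expect the main obstacle to be the justification in Step 2. This means the cancellation of the boundary terms at the diagonal and at infinity for the affine-periodic \(M^\Psi\), and the exchange of periodization with the conditionally defined principal value. I would first do everything with absolutely convergent truncations \(\abs{x-z}>\eps\). I would then check that the \(\eps\)-boundary terms are \(O(\eps^{2-\gamma})\), and let \(\eps\to0\).
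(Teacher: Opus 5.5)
Steps 1--2 are correct and use the same mechanism as the paper. You start from the compensated integrand of \eqref{eq:chain-commutator}, integrate by parts against the primitive of the kernel, and change variables by $\Psi$. The difference is bookkeeping:
\begin{itemize}
\item You unfold to $\R$ with the pure power kernel, let the boundary terms at infinity vanish because $\gamma>1$, and periodize at the end via $\sum_n Q_\gamma(r+n)=Q_\gamma^{\mathrm{per}}(r)$, with Fubini justified by absolute convergence.
\item The paper stays on the centred fundamental interval with $K_\gamma^{\mathrm{per}}$ and substitutes $h=y-\Psi(x-\rho)$ \emph{before} integrating by parts, so the domain becomes a moving interval of length one. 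It then uses $-\partial_hQ_\gamma^{\mathrm{per}}(\Phi(y)-\Phi(y-h))=K_\gamma^{\mathrm{per}}(\Phi(y)-\Phi(y-h))/J(\Phi(y-h))$. The endpoint terms vanish because the argument of $Q_\gamma^{\mathrm{per}}$ there is $\mp\frac12$ and $Q_\gamma^{\mathrm{per}}(\pm\frac12)=0$. Finally, $1$-periodicity in $h$ returns the integral to $[-\frac12,\frac12]$.
\end{itemize}
Your route avoids the moving interval and the vanishing at $\pm\frac12$; the paper's avoids the periodization and Fubini step. Both are fine.

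Two points need repair.

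\textbf{Boundedness of $\widetilde Q_\gamma$ in Step 3.} The claim that $\widetilde Q_\gamma=Q_\gamma^{\mathrm{per}}-Q_\gamma$ is bounded on $(-1,1)$ is false: the terms $n=\mp1$ blow up like $(1\mp r)^{-\gamma}$ as $r\to\pm1$. The comparability $\abs{\Phi(y)-\Phi(y-h)}\asymp\abs h$ alone does not keep the argument away from $\pm1$ once the Lipschitz constant of $\Phi$ exceeds $2$. What you need is the uniform separation
\[
 1-\abs{\Phi(y)-\Phi(y-h)}\ge\frac{1-\abs h}{J^*}\ge\frac1{2J^*},
 \qquad \abs h\le\tfrac12,\quad J^*:=\sup J .
\]
This follows from the degree-one relation together with $\Phi'\ge1/J^*$. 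For instance, for $0\le h\le\frac12$ one has $1-(\Phi(y)-\Phi(y-h))=\Phi(y-h)-\Phi(y-1)$, and the case $h<0$ is symmetric. With this, $\widetilde Q_\gamma$ is bounded on the relevant compact set by a constant depending on $J^*$, and the rest of Step 3 goes through.

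\textbf{Regularity of $\Psi$ in Step 1.} You write $\Lambda^\gamma(M^\Psi)$ and $\Lambda^\gamma\Psi$ separately as pointwise principal values. For $\Psi\in C^{1+\beta}$ with $\beta\le\gamma-1$ these need not converge, since the symmetric second difference of $\Psi$ is only $O(\rho^{1+\beta})$. Moreover, \eqref{eq:chain-commutator} was derived only for smooth $\Psi$. The fix is:
\begin{itemize}
\item Run Steps 1--2 for smooth $\Psi$.
\item Approximate $\Psi$ in $C^{1+\beta}$ by smooth degree-one maps with uniform bi-Lipschitz bounds.
\item The right side of \eqref{eq:D-transformed-kernel} converges by dominated convergence with majorant $\lesssim\abs h^{1-\gamma}$, which uses the separation above uniformly.
\item The left side converges by continuity of the compensated object in $\Psi$.
\end{itemize}
This is exactly the paper's closing approximation step. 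Step 3 is proved directly for $\Psi\in C^{1+\beta}$ and needs no approximation.
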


\begin{proof}
It is enough first to take \(f\) and \(\Psi\) smooth.  Set
\[
 F(y):=\int_0^y f(z)\dd z.
\]
Then \(F'=f\), and \(F\), \(F^\Psi\), and \(\Psi\) are smooth
affine-periodic functions.  Fix \(y\) and put \(x=\Phi(y)\).
The calculation leading to \eqref{eq:chain-commutator}, with \(F\) in
place of \(M\), gives
\[
 \Dcal_\Psi^\gamma f(y)
 =-\Lambda^\gamma(F^\Psi)(x)
  +f(y)\Lambda^\gamma\Psi(x).
\]
By the affine-periodic principal-value convention in
\hyperref[app:notation]{Appendix~A}, this is directly
\[
\begin{aligned}
 \Dcal_\Psi^\gamma f(y)
 =c_\gamma\PV\int_{-\frac{1}{2}}^{\frac{1}{2}}
 \biggl[
 &F(\Psi(x-\rho))-F(y)\\
 &+f(y)[y-\Psi(x-\rho)]
 \biggr]K_\gamma^{\mathrm{per}}(\rho)\dd\rho .
\end{aligned}
\]
Here \(F(y)\) is the compensating constant in the
singular-integral formula, permitted by
\(\Lambda^\gamma1=0\); it is not integrated separately against the
singular kernel.  The affine parts of \(F^\Psi\) and \(\Psi\)
contribute only multiples of
\(\rho K_\gamma^{\mathrm{per}}(\rho)\), whose symmetric principal values
vanish.  The two first-order increments in the displayed bracket cancel, so the
combined expression is \(O(\rho^2)\) at the origin and the displayed
integral is in fact absolutely convergent. We next set
\[
 h=y-\Psi(x-\rho).
\]
Since
\[
 \frac{\dd h}{\dd\rho}=J(x-\rho)>0,
 \qquad
 x-\rho=\Phi(y-h),
\]
the centered \(\rho\)-interval is mapped onto
\(
 (
 y-\Psi (x+\frac{1}{2}),
 y-\Psi (x-\frac{1}{2})
 ]
\),
whose length is one. Moreover, Taylor's formula gives
\(F(y-h)-F(y)+hf(y)=O(h^2)\).
The change of variables therefore yields the absolutely convergent
integral
\[
\begin{aligned}
 \Dcal_\Psi^\gamma f(y)
 =c_\gamma\int_{y-\Psi(x+\frac{1}{2})}
                     ^{y-\Psi(x-\frac{1}{2})}
 &[F(y-h)-F(y)+hf(y)]\\
 &{}\times
 \frac{K_\gamma^{\mathrm{per}}
  \bigl(\Phi(y)-\Phi(y-h)\bigr)}
 {J\bigl(\Phi(y-h)\bigr)}
 \dd h.
\end{aligned}
\]

Finally, the two differentiated factors satisfy
\[
\begin{aligned}
 -\partial_h Q_\gamma^{\mathrm{per}}
 \bigl(\Phi(y)-\Phi(y-h)\bigr)
 &=
 \frac{K_\gamma^{\mathrm{per}}
  \bigl(\Phi(y)-\Phi(y-h)\bigr)}
 {J\bigl(\Phi(y-h)\bigr)},\\
 \partial_h
 [F(y-h)-F(y)+hf(y)]
 &=f(y)-f(y-h).
\end{aligned}
\]
At \(h=0\), the boundary product is
\(O(\abs h^{2-\gamma})\).  At the two endpoints, the argument of
\(Q_\gamma^{\mathrm{per}}\) is respectively
\(-\frac{1}{2}\) and \(\frac{1}{2}\), where it vanishes.  Integration by
parts gives
\[
\begin{aligned}
 \Dcal_\Psi^\gamma f(y)
 =c_\gamma
 \int_{y-\Psi(x+\frac{1}{2})}
      ^{y-\Psi(x-\frac{1}{2})}
 [f(y)-f(y-h)]Q_\gamma^{\mathrm{per}}
 \bigl(\Phi(y)-\Phi(y-h)\bigr)\dd h.
\end{aligned}
\]
This proves \eqref{eq:D-transformed-kernel}, since the integrand is
one-periodic in \(h\).  It remains to freeze the kernel.  The fundamental
theorem of calculus
gives
\[
 \Phi(y)-\Phi(y-h)
 =h\int_0^1\Phi'(y-th)\dd t
 =\frac h{J^\Phi(y)}+O_\Psi(\abs h_\T^{1+\beta}).
\]
Near zero,
\[
 Q_\gamma^{\mathrm{per}}(r)
 =\frac1\gamma\frac{\operatorname{sgn}r}{\abs r^\gamma}
  +\text{a smooth function}.
\]
The pulled-back increment has the same sign as \(h\).  The mean-value theorem
therefore gives \eqref{eq:pullback-Q-kernel-error}; away from zero the
kernel is smooth.  Approximation of \(\Psi\) by smooth degree-one lifts
completes the proof.
\end{proof}

\begin{proposition}[Paralinearization of the transformed fractional flux operator]
\label{prop:transformed-operator-paralinearization}
Let \(1<\gamma<2\), \(0<\beta<1\), and let
\(\Psi\in C^{1+\beta}(\T)\) be an orientation-preserving bi-Lipschitz map
with Jacobian satisfying \eqref{eq:J-bounds}.
The operator \(\Dcal_{\Psi}^{\gamma}\) defined in
\eqref{eq:Dflux} admits the paralinearization
\begin{equation}\label{eq:D-paralinearization}
  \Dcal_{\Psi}^{\gamma}f
  =(J^\gamma)^\Phi\prec\Rcal_\gamma f
  +\mathcal S_\Psi f,
\end{equation}
where \(\prec\) is the spatial Bony paraproduct.  For every
\(f\in H^{\gamma/2}\) and every
\begin{equation}\label{eq:S-output-index}
 \sigma<\min\{\gamma/2,1-\gamma/2+\beta\},
\end{equation}
the remainder satisfies
\begin{equation}\label{eq:S-form-bound}
  \norm{\mathcal S_\Psi f}_{H^\sigma}
  \lesssim\norm f_{H^{\gamma/2}}.
\end{equation}
Thus \(\mathcal S_\Psi\) has order at most
\(\max\{0,\gamma-1-\beta\}+\).  Consequently,
\begin{align}
  \norm{\Dcal_{\Psi}^{\gamma}f}_{H^{1-\gamma/2}}
  &\lesssim\norm f_{H^{\gamma/2}},
  \label{eq:D-form-bound}\\
  \norm{\Acal_{\Psi}^{\gamma}f}_{H^{-\gamma/2}}
  &\lesssim\norm f_{H^{\gamma/2}}.
  \label{eq:A-form-bound}
\end{align}
The map
\[
 \Psi\longmapsto\Acal_\Psi^\gamma
 \quad\text{with values in}\quad
 \mathcal L(H^{\gamma/2},H^{-\gamma/2})
\]
is continuous on bounded subsets of \(C^{1+\beta}\) with fixed
bi-Lipschitz constants.  After any loss from \(\beta\) to
\(\beta_0<\beta\), this dependence has a local H\"older modulus.  All
displayed bounds are locally uniform under these assumptions.
\end{proposition}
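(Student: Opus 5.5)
The plan is to start from the kernel representation \eqref{eq:D-transformed-kernel} of \Cref{lem:transformed-flux-kernel} and split the kernel into a frozen principal part and a remainder:
\[
 Q_\gamma^{\mathrm{per}}\bigl(\Phi(y)-\Phi(y-h)\bigr)
 =a_\gamma(y)Q_\gamma^{\mathrm{per}}(h)+E(y,h),
 \qquad a_\gamma:=(J^\gamma)^\Phi,
\]
with \(\abs{E(y,h)}\lesssim_\Psi\abs h_\T^{-\gamma+\beta}\) by \eqref{eq:pullback-Q-kernel-error}. Comparing with \eqref{eq:R-compensated-kernel}, the frozen part contributes exactly \(a_\gamma\Rcal_\gamma f\). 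Hence
\[
 \Dcal_\Psi^\gamma f=a_\gamma\Rcal_\gamma f+\mathcal E f,
 \qquad
 \mathcal Ef(y)=c_\gamma\int[f(y)-f(y-h)]E(y,h)\dd h .
\]
Writing \(a_\gamma\Rcal_\gamma f=a_\gamma\prec\Rcal_\gamma f+a_\gamma\succ\Rcal_\gamma f+a_\gamma\odot\Rcal_\gamma f\), we set
\[
 \mathcal S_\Psi f:=a_\gamma\succ\Rcal_\gamma f+a_\gamma\odot\Rcal_\gamma f+\mathcal E f .
\]
Since \(J\in C^\beta\) with \(J_*\le J\le J^*\) and \(\Phi\) is bi-Lipschitz, we have \(a_\gamma\in C^\beta\). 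Also \(\Rcal_\gamma f\in H^{1-\gamma/2}\) for \(f\in H^{\gamma/2}\), because \(\Rcal_\gamma\) has order \(\gamma-1\).

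The paraproduct pieces are handled by Bony estimates. The term \(a_\gamma\succ\Rcal_\gamma f\) lies in \(H^{\beta+(1-\gamma/2)}\) when \(1-\gamma/2<0\) fails. In fact \(1-\gamma/2>0\), so we use the estimate \(\norm{g\succ h}_{H^{s}}\lesssim\norm g_{C^\beta}\norm h_{H^{s-\beta}}\) for \(s<\beta\) together with the \(L^2\to\) version. Together these place it in \(H^{\min(\beta,\,1-\gamma/2+\beta)-}\supset H^\sigma\). The resonant term \(a_\gamma\odot\Rcal_\gamma f\) lies in \(H^{1-\gamma/2+\beta}\), since \(\beta+1-\gamma/2>0\). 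All resulting exponents dominate \(\sigma\) in \eqref{eq:S-output-index} except possibly the cap \(\gamma/2\). That cap comes from \(\mathcal E\), as follows.

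The main step is the bound on \(\mathcal E\). The pointwise bound on \(E\) gives
\[
 \abs{\mathcal Ef(y)}\lesssim\int\abs{f(y)-f(y-h)}\abs h^{-\gamma+\beta}\dd h .
\]
By Minkowski and \(\norm{f-f(\cdot-h)}_{L^2}\lesssim\abs h^{\min(\gamma/2,1)}\norm f_{H^{\gamma/2}}\), this yields the \(L^2\) bound whenever \(\gamma/2-\gamma+\beta>-1\), that is \(\beta>\gamma/2-1\), which always holds. To get the positive Sobolev index \(\sigma\), I plan to estimate the increments \(\mathcal Ef(y+k)-\mathcal Ef(y)\) in \(L^2\). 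This requires a Hölder-in-\(y\) regularity of \(E(y,h)\) of the form
\[
 \abs{E(y+k,h)-E(y,h)}\lesssim\abs k^{\beta'}\abs h^{-\gamma+\beta-\beta'} ,
\]
which is obtained from the same Taylor expansion of \(\Phi(y)-\Phi(y-h)\) using \(\Phi\in C^{1+\beta}\). I then split the \(h\)-integral at \(\abs h\sim\abs k\): for \(\abs h<\abs k\) use the pointwise bound on each term, and for \(\abs h>\abs k\) use the difference bound. This Besov-characterization argument gives \(\mathcal E f\in B^{\sigma}_{2,2}\) for \(\sigma<\min\{\gamma/2,1-\gamma/2+\beta\}\). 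The balance of exponents is \(\gamma/2+(\beta-\gamma)+1=\sigma_{\max}\). I expect this double-increment bookkeeping, especially near \(\beta'\to\beta\), to be the main obstacle. If it fails directly, I would fall back on a dyadic decomposition of \(E\) in \(h\) and treat each shell as a Coifman–Meyer-type bilinear operator.

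The consequences then follow quickly. For \eqref{eq:D-form-bound}, \(a_\gamma\prec\Rcal_\gamma f\in H^{1-\gamma/2}\) because \(a_\gamma\in L^\infty\), and \(\sigma\) can be chosen \(\ge1-\gamma/2\) since \(\gamma/2>1-\gamma/2\) and \(\beta>0\). Estimate \eqref{eq:A-form-bound} follows by applying \(\partial_y\). For the order statement, \(\gamma-1-\sigma\) gives \(\max\{0,\gamma-1-\beta\}+\). Continuity in \(\Psi\) comes from repeating the estimates for the differences \(a_\gamma^{(1)}-a_\gamma^{(2)}\) and \(E^{(1)}-E^{(2)}\), which are small in \(C^{\beta_0}\) and in the weighted kernel norms when \(\Psi^{(1)}-\Psi^{(2)}\) is small in \(C^{1+\beta}\). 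Interpolation from \(\beta\) to \(\beta_0\) supplies the Hölder modulus.
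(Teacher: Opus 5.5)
\textbf{There is a genuine gap: your splitting of \(\mathcal S_\Psi f\) fails whenever \(\beta<\gamma/2\).} Since \(1-\gamma/2>0\), in that case the target index \(\min\{\gamma/2,1-\gamma/2+\beta\}\) is strictly larger than \(\beta\). You freeze the whole coefficient \(a_\gamma=(J^\gamma)^\Phi\), not just its low frequencies. As a result both \(a_\gamma\succ\Rcal_\gamma f\) and \(\mathcal Ef\) carry the full \(C^\beta\) roughness of \(a_\gamma\).

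Your own Bony bound gives \(a_\gamma\succ\Rcal_\gamma f\in H^{\min(\beta,1-\gamma/2+\beta)-}=H^{\beta-}\). That does not yield the \(H^\sigma\) bound for \(\sigma\in(\beta,\min\{\gamma/2,1-\gamma/2+\beta\})\), so the claim that all exponents dominate \(\sigma\) is false there.

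The increment argument for \(\mathcal E\) has the same defect. In the region \(\abs h>\abs k\), your difference bound gives \(\abs k^{\beta'}\int_{\abs k}^{1/2}h^{-\gamma/2+\beta-\beta'}\dd h\sim\abs k^{\beta'}\) with \(\beta'\le\beta\), because \(-\gamma/2+\beta-\beta'>-1\). The exponent \(1-\gamma/2+\beta\) only comes from the region \(\abs h<\abs k\).

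This is not a weakness of the technique, so the Coifman--Meyer fallback cannot rescue it. Take \(f=\sin(2\pi y)\). Then \(\Rcal_\gamma f=(2\pi)^{\gamma-1}\cos(2\pi y)\), hence
\[
 \mathcal Ef=\Dcal_\Psi^\gamma f-(2\pi)^{\gamma-1}a_\gamma\cos(2\pi\cdot).
\]
By the kernel representation \eqref{eq:D-transformed-kernel}, \(\Dcal_\Psi^\gamma f\in C^{\min\{2-\gamma+\beta,1\}-}\). Also \(a_\gamma\prec\Rcal_\gamma f\) is smooth, since \(\Rcal_\gamma f\) has finitely many modes. So \(\mathcal Ef\) and \(a_\gamma\succ\Rcal_\gamma f+a_\gamma\odot\Rcal_\gamma f\) are each, in general, no more regular than \(a_\gamma\). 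Only their sum lies in \(H^\sigma\), and separate estimates cannot see this cancellation.

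\textbf{The missing idea is to never freeze the high \(y\)-frequencies of the coefficient.} This is what the paper does. After removing the regular part of the kernel, it keeps the singular kernel in divided-difference form \(\frac{\vartheta(h)\operatorname{sgn}h}{\gamma\abs h^{\gamma}}(\nabla_h^-\Phi(y))^{-\gamma}\), where \(\nabla_h^-\Phi(y)=\int_0^1\Phi'(y-th)\dd t\). It then Bony-splits the \(y\)-dependence of \((\nabla_h^-\Phi)^{-\gamma}\) relative to each block \(f_k\).

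Only the blocks \(j\ll k\) are frozen, to \(\Delta_{<k-1}(\Phi')^{-\gamma}\). This gives \(a_\gamma\prec\Rcal_\gamma f\) up to a smoothing kernel swap. The freezing error is \(\lesssim\abs h^{\beta_0}\), which costs \(2^{k(\gamma-1-\beta_0)_+}\norm{f_k}_{L^2}\).

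The blocks \(j\gg k\) correspond in your splitting to \(a_\gamma\succ\Rcal_\gamma f\) plus the matching part of \(\mathcal Ef\). The paper estimates them with the unfrozen coefficient. Because \(\nabla_h^-\Phi\) averages \(\Phi'\) over scale \(\abs h\), one has \(\norm{\Delta_j(\nabla_h^-\Phi)^{-\gamma}}_{L^\infty}\lesssim\min\{2^{-\beta_0j},2^{-j}\abs h^{\beta_0-1}\}\). The extra decay for \(\abs h\gg2^{-j}\) produces the factor \(2^{-(2-\gamma+\beta_0)(j-k)}\) in \eqref{eq:divided-difference-high}, and this is exactly what lifts the output regularity from \(\beta\) to \(1-\gamma/2+\beta\).

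Your splitting does suffice when \(\beta\ge\gamma/2\), since then the target is \(\gamma/2\le\beta\). But the proposition covers all \(0<\beta<1\), and is needed for \(\beta<\gamma/2\), for instance throughout the singular regime where \(\beta<q_{\gamma,\alpha}\le\gamma-1<\gamma/2\). Your continuity argument inherits the same problem. Separately, the order of \(\mathcal S_\Psi\) is \(\gamma/2-\sigma\), not \(\gamma-1-\sigma\).
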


\begin{proof}
It suffices first to take smooth \(f\) and \(\Psi\).  Recall that
\[
 \Phi'=(J^\Phi)^{-1},
 \qquad
 (\Phi')^{-\gamma}=(J^\gamma)^\Phi.
\]
For \(h\ne0\), set
\[
 \nabla_h^-\Phi(y)
 :=\frac{\Phi(y)-\Phi(y-h)}h
 =\int_0^1\Phi'(y-th)\dd t,
\]
and set \(\nabla_0^-\Phi:=\Phi'\).
We use the increment convention from
\hyperref[app:notation]{Appendix~A} in the form
\(\delta_hf(y-h)=f(y)-f(y-h)\).

The only singularity of the pulled-back kernel is at \(h=0\).  Choose a
fixed smooth even cutoff \(\vartheta\), equal to one near zero and supported
in a neighborhood on which the pulled-back increment meets no other lattice
singularity.  Its support can be chosen uniformly from the bi-Lipschitz
bounds.  Since
\[
 \Phi(y)-\Phi(y-h)=h\nabla_h^-\Phi(y)
\]
and \(\Phi'>0\), the pulled-back increment has the same sign as \(h\).  Moreover,
for \(\abs r\) sufficiently small,
\[
 Q_\gamma^{\mathrm{per}}(r)
 -\frac1\gamma\frac{\operatorname{sgn}r}{\abs r^\gamma}
 =\frac1\gamma\sum_{n\ne0}
 \frac{\operatorname{sgn}(r+n)}{\abs{r+n}^\gamma}
 \in C^\infty.
\]
Away from the support of \(\vartheta\), the pulled-back increment is uniformly
separated from the lattice singularities.  Consequently,
\begin{equation}\label{eq:divided-difference-singular-kernel}
 Q_\gamma^{\mathrm{per}}\bigl(\Phi(y)-\Phi(y-h)\bigr)
 -\frac{\vartheta(h)}{\gamma}
  \frac{\operatorname{sgn}h}{\abs h^\gamma}
  \bigl(\nabla_h^-\Phi(y)\bigr)^{-\gamma}
 \in C^{1+\beta}(\T_y\times\T_h).
\end{equation}
By \eqref{eq:D-transformed-kernel}, the standard variable-kernel estimate
applied to the difference in
\eqref{eq:divided-difference-singular-kernel} gives, for every
\(\sigma<\gamma/2\),
\[
\norm[\Big]{
 \Dcal_\Psi^\gamma f
 -\frac{c_\gamma}{\gamma}
 \int_{-\frac12}^{\frac12}
 \delta_hf(\cdot-h)
 \frac{\vartheta(h)\operatorname{sgn}h}{\abs h^\gamma}
 \bigl(\nabla_h^-\Phi(\cdot)\bigr)^{-\gamma}
 \dd h
 }_{H^\sigma}
 \lesssim\norm f_{H^{\gamma/2}}.
\]
Thus this regular kernel contributes to \(\mathcal S_\Psi f\), and it
remains to paralinearize
\begin{equation}
\begin{aligned}
 \frac{c_\gamma}{\gamma}
 \int_{-\frac12}^{\frac12}
 &\delta_hf(y-h)
 \frac{\vartheta(h)\operatorname{sgn}h}{\abs h^\gamma}
 \bigl(\nabla_h^-\Phi(y)\bigr)^{-\gamma}
 \dd h.
\end{aligned}
\label{eq:reduced-divided-difference-integral}
\end{equation}
Let \(f_k=\Delta_kf\).  For each \(f_k\), Bony's decomposition splits the
\(y\)-frequencies of the coefficient according to
\(j\ll k\), \(j\sim k\), and \(j\gg k\) respectively.  The terms
with \(j\ll k\) are
\[
\begin{aligned}
 \frac{c_\gamma}{\gamma}\sum_{k\ge-1}
 \int_{-\frac12}^{\frac12}
 \frac{\vartheta(h)\operatorname{sgn}h}{\abs h^\gamma}
 \delta_hf_k(y-h)
 \Delta_{<k-1}
 \bigl(\nabla_h^-\Phi(y)\bigr)^{-\gamma}
 \dd h.
\end{aligned}
\]
In this expression, add and subtract
\[
 \Delta_{<k-1}(\Phi')^{-\gamma}
 =\Delta_{<k-1}\bigl((J^\gamma)^\Phi\bigr).
\]
The resulting frozen term is
\begin{equation}
\begin{aligned}
 \sum_{k\ge-1}
 &\Delta_{<k-1}\bigl((J^\gamma)^\Phi\bigr)(y)
 \frac{c_\gamma}{\gamma}
 \int_{-\frac{1}{2}}^{\frac{1}{2}}
 \frac{\vartheta(h)\operatorname{sgn}h}{\abs h^\gamma}
 \delta_hf_k(y-h)\dd h.
\end{aligned}
\label{eq:leading-low-high-kernel}
\end{equation}
On the other hand, \eqref{eq:R-compensated-kernel} gives
\[
\begin{aligned}
&(J^\gamma)^\Phi\prec\Rcal_\gamma f(y)
\\
&\quad=c_\gamma\sum_{k\ge-1}
 \Delta_{<k-1}\bigl((J^\gamma)^\Phi\bigr)(y)
 \int_{-\frac12}^{\frac12}
 \delta_hf_k(y-h)Q_\gamma^{\mathrm{per}}(h)\dd h.
\end{aligned}
\]
Their difference is
\[
\begin{aligned}
c_\gamma\sum_{k\ge-1}
 \Delta_{<k-1}\bigl((J^\gamma)^\Phi\bigr)(y)
 \int_{-\frac12}^{\frac12}\delta_hf_k(y-h)
\biggl(
 \frac{\vartheta(h)}{\gamma}
 \frac{\operatorname{sgn}h}{\abs h^\gamma}
 -Q_\gamma^{\mathrm{per}}(h)
 \biggr)\dd h.
\end{aligned}
\]
The parenthesized kernel is a smooth odd periodic function and therefore has zero
mean.  Its Fourier coefficients decrease faster than every power, so for
every \(N>0\),
\[
\norm[\Big]{
 \int_{-\frac12}^{\frac12}
 \delta_hf_k(\cdot-h)
 \biggl(
 \frac{\vartheta(h)}{\gamma}
 \frac{\operatorname{sgn}h}{\abs h^\gamma}
 -Q_\gamma^{\mathrm{per}}(h)
 \biggr)\dd h
}_{L^2}
\lesssim_N 2^{-Nk}\norm{f_k}_{L^2}.
\]
Thus \eqref{eq:leading-low-high-kernel} equals
\((J^\gamma)^\Phi\prec\Rcal_\gamma f\) modulo
\(\mathcal S_\Psi f\).

We now choose the parameters needed to estimate the difference left by the
preceding freezing.  If \(\sigma\le0\), it suffices to prove the estimate
at any positive admissible exponent and use Sobolev embedding.  We may
therefore assume \(\sigma>0\), and choose
\[
 0<\beta_0<\beta,\qquad
 \beta_0\ne\gamma-1,\qquad
 \sigma<\min\{\gamma/2,1-\gamma/2+\beta_0\}.
\]
Such a choice is possible by \eqref{eq:S-output-index}; excluding
\(\beta_0=\gamma-1\) avoids one logarithmic integral.  All constants below
are locally uniform under the stated \(C^{1+\beta}\) and bi-Lipschitz
bounds.  Since \(r\mapsto r^{-\gamma}\) is Lipschitz on the fixed positive
range of \(\nabla_h^-\Phi\),
\begin{equation}\label{eq:divided-difference-freeze}
\begin{aligned}
 \norm{
 \bigl(\nabla_h^-\Phi\bigr)^{-\gamma}
 -(\Phi')^{-\gamma}
 }_{L^\infty}
 &\lesssim
 \sup_y\int_0^1
 \abs{\Phi'(y-th)-\Phi'(y)}\dd t
 \lesssim\abs h^{\beta_0}.
\end{aligned}
\end{equation}
For \(k\ge0\), \eqref{eq:divided-difference-freeze}, translation
invariance, and Bernstein's inequality give
\[
 \norm{\delta_hf_k(\cdot-h)}_{L^2}
 =\norm{\delta_hf_k}_{L^2}
 \lesssim\min\bigl\{1,2^k\abs h\bigr\}\norm{f_k}_{L^2}
\]
and hence
\begin{equation}\label{eq:divided-difference-low-high}
\begin{aligned}
&\Bigl\|
 \int_{-\frac12}^{\frac12}
 \frac{\vartheta(h)\operatorname{sgn}h}{\abs h^\gamma}
 \delta_hf_k(\cdot-h) \cdot
 \Delta_{<k-1}\!\bigl(
 \bigl(\nabla_h^-\Phi\bigr)^{-\gamma}
 -(\Phi')^{-\gamma}
 \bigr)\dd h
 \Bigr\|_{L^2}
\\
&\quad\lesssim
 2^{k(\gamma-1-\beta_0)_+}\norm{f_k}_{L^2}.
\end{aligned}
\end{equation}

This bound follows by splitting the resulting \(h\)-integral at
\(h=2^{-k}\). The expression in \eqref{eq:divided-difference-low-high} is Fourier
localized in an annulus at scale \(2^k\).
Since
\(\sigma+(\gamma-1-\beta_0)_+<\gamma/2\), annular
Littlewood--Paley summation places this error in
\(\mathcal S_\Psi f\).  Thus the terms with \(j\ll k\) equal
\((J^\gamma)^\Phi\prec\Rcal_\gamma f\) modulo
\(\mathcal S_\Psi f\). The dyadic calculations are given in
\Cref{app:paralinearization-dyadic-calculations}; here we retain the
two remaining estimates and their frequency supports.  For \(j\sim k\),
\begin{equation}\label{eq:divided-difference-comparable}
\begin{aligned}
&\Bigl\|
 \int_{-\frac12}^{\frac12}
 \frac{\vartheta(h)\operatorname{sgn}h}{\abs h^\gamma}
 \Delta_j\bigl(\nabla_h^-\Phi\bigr)^{-\gamma}
 \delta_hf_k(\cdot-h)\dd h
 \Bigr\|_{L^2}\lesssim
 2^{k(\gamma-1-\beta_0)_+}\norm{f_k}_{L^2},
\end{aligned}
\end{equation}
and its output has Fourier support in a ball at scale \(2^k\).
For \(k\ge0\) and \(j\gg k\), the corresponding estimate is
\begin{equation}\label{eq:divided-difference-high}
 \begin{aligned}
&\Bigl\|
 \int_{-\frac12}^{\frac12}
 \frac{\vartheta(h)\operatorname{sgn}h}{\abs h^\gamma}
 \Delta_j\bigl(\nabla_h^-\Phi\bigr)^{-\gamma}
 \delta_hf_k(\cdot-h)\dd h
 \Bigr\|_{L^2}
\\
&\qquad\lesssim
 \begin{cases}
  2^{k(\gamma-1-\beta_0)}
  2^{-(2-\gamma+\beta_0)(j-k)}\norm{f_k}_{L^2},
  &\beta_0<\gamma-1,\\[2mm]
  2^{-(j-k)}\norm{f_k}_{L^2},
 &\beta_0>\gamma-1.
 \end{cases}
 \end{aligned}
\end{equation}
Its output is Fourier localized in an annulus at scale \(2^j\).  The
estimates \eqref{eq:divided-difference-low-high} and
\eqref{eq:divided-difference-comparable} are summable because
\[
 \sigma+(\gamma-1-\beta_0)_+<\frac\gamma2.
\]
The factors in \(j-k\) in \eqref{eq:divided-difference-high} are summable
because
\[
 \begin{cases}
 2-\gamma+\beta_0-\sigma>0,
 &\beta_0<\gamma-1,\\
 1-\sigma>0,
 &\beta_0>\gamma-1,
 \end{cases}
\]
while the input-frequency powers are controlled by the preceding
inequality.  The annular and ball Littlewood--Paley summations, the
bottom input block, and the two Schur sums are carried out in
\Cref{app:paralinearization-dyadic-calculations}.  They place
\eqref{eq:divided-difference-comparable} and
\eqref{eq:divided-difference-high} in \(\mathcal S_\Psi f\).
Together with the regular kernel in
\eqref{eq:divided-difference-singular-kernel}, the smooth cutoff
replacement, and \eqref{eq:divided-difference-low-high}, this proves
\eqref{eq:D-paralinearization} and \eqref{eq:S-form-bound}.

It remains to derive the operator order, form bounds, and dependence on
\(\Psi\).
The order of the remainder follows from
\[
 \frac\gamma2-
 \min\{\gamma/2,1-\gamma/2+\beta\}
 =\max\{0,\gamma-1-\beta\}.
\]
Since \(\Rcal_\gamma\) has order \(\gamma-1\),
\[
 \norm{(J^\gamma)^\Phi
 \prec\Rcal_\gamma f}_{H^{1-\gamma/2}}
 \lesssim
 \norm{(J^\gamma)^\Phi}_{L^\infty}
 \norm{\Rcal_\gamma f}_{H^{1-\gamma/2}}
 \lesssim\norm f_{H^{\gamma/2}}.
\]
The admissible interval contains \(1-\gamma/2\), since
\[
 1-\frac\gamma2<\frac\gamma2
 \quad(\gamma>1),
 \qquad
 1-\frac\gamma2<1-\frac\gamma2+\beta.
\]
Taking
\[
 1-\frac\gamma2
 <\sigma<
 \min\{\gamma/2,1-\gamma/2+\beta\}
\]
in \eqref{eq:S-form-bound} proves \eqref{eq:D-form-bound}.  Finally,
\[
\begin{aligned}
 \norm{\Acal_\Psi^\gamma f}_{H^{-\gamma/2}}
 =\norm{\partial_y\Dcal_\Psi^\gamma f}_{H^{-\gamma/2}} \lesssim
 \norm{\Dcal_\Psi^\gamma f}_{H^{1-\gamma/2}}
 \lesssim\norm f_{H^{\gamma/2}},
\end{aligned}
\]
which proves \eqref{eq:A-form-bound}.

It remains to record the dependence needed for the approximation limit.
Let \(\Psi_1,\Psi_2\) belong to a bounded \(C^{1+\beta}\) set with common
bi-Lipschitz bounds, and write \(\Phi_i:=\Psi_i^{-1}\).  Subtracting the
two kernel decompositions controls
the smooth kernel correction by
\(\norm{\Phi_1-\Phi_2}_{C^{1+\beta_0}}\), while the principal
paraproduct and the cutoff-kernel correction are controlled by
\[
 \norm{(J_1^\gamma)^{\Phi_1}
       -(J_2^\gamma)^{\Phi_2}}_{C^{\beta_0}}.
\]
The difference estimates in
\Cref{app:paralinearization-dyadic-calculations} give the same factors for
the three frequency relations.  Apply these estimates at any exponent
strictly between
\[
 1-\frac\gamma2
 \quad\text{and}\quad
 \min\{\gamma/2,1-\gamma/2+\beta_0\}
\]
and differentiate the resulting flux bound.  This gives
\begin{equation}\label{eq:transformed-operator-difference}
\begin{aligned}
\norm{\Acal_{\Psi_1}^\gamma-\Acal_{\Psi_2}^\gamma}_
 {\mathcal L(H^{\gamma/2},H^{-\gamma/2})}
\lesssim
 \norm{\Phi_1-\Phi_2}_{C^{1+\beta_0}}
 +
 \norm{(J_1^\gamma)^{\Phi_1}
       -(J_2^\gamma)^{\Phi_2}}_{C^{\beta_0}}.
\end{aligned}
\end{equation}
Standard inversion and composition estimates yield, when
\(\norm{\Psi_1-\Psi_2}_{C^{1+\beta}}\le1\),
\[
\begin{aligned}
&\norm{\Phi_1-\Phi_2}_{C^{1+\beta_0}}
 +
 \norm{(J_1^\gamma)^{\Phi_1}
       -(J_2^\gamma)^{\Phi_2}}_{C^{\beta_0}}
\lesssim
 \norm{\Psi_1-\Psi_2}_{C^{1+\beta}}^{\beta-\beta_0}.
\end{aligned}
\]
Approximation of \(\Psi\) by smooth degree-one diffeomorphisms in
\(C^{1+\beta_0}\), with uniform \(C^{1+\beta}\) and bi-Lipschitz bounds,
followed by density of smooth \(f\) in \(H^{\gamma/2}\), completes the
proof.
\end{proof}

\begin{lemma}[Entropy and energy identities for $\Acal_\Psi^\gamma$]
\label{lem:transformed-operator-entropy-identity}
Let \(1<\gamma\le2\), \(0<\beta<1\), and let
\(\Psi\in C^{1+\beta}(\T)\) be an orientation-preserving bi-Lipschitz map
with Jacobian satisfying \eqref{eq:J-bounds}.  Given \(\eta\in C^2(\R)\),
write
\[
 \mathfrak B_\eta(u\mid v)
 :=\eta(u)-\eta(v)-\eta'(v)(u-v)
\]
for its Bregman remainder.  If \(1<\gamma<2\), then every smooth
real-valued function \(f\) satisfies
\[
 \ip{\eta'(f)}{\Acal_\Psi^\gamma f}
 =c_\gamma\iint_{\T^2}
 J(x)\mathfrak B_\eta\bigl(f^\Psi(x)\mid f^\Psi(z)\bigr)
 K_\gamma^{\mathrm{per}}(x-z)\dd x\dd z.
\]
In particular, this pairing is nonnegative when \(\eta\) is convex.
At \(\gamma=2\), with \(a=(J^2)^\Phi\), the corresponding local identity is
\[
 \ip{\eta'(f)}{\Acal_\Psi^2 f}
 =\int_\T a\,\eta''(f)\abs{f_y}^2\dd y
 =\int_\T J\,\eta''(f^\Psi)
   \abs{\partial_x(f^\Psi)}^2\dd x,
\]
and is again nonnegative for convex \(\eta\).  Returning to
\(1<\gamma<2\), take \(\eta(r)=r^2/2\).  The resulting quadratic
identity extends by density to every
real-valued \(f\in H^{\gamma/2}(\T)\):
\begin{equation}\label{eq:transformed-operator-energy-kernel}
\begin{aligned}
 \ip f{\Acal_\Psi^\gamma f}
 =\frac{c_\gamma}{4}\iint_{\T^2}
 [J(x)+J(z)]\abs{f^\Psi(x)-f^\Psi(z)}^2
 K_\gamma^{\mathrm{per}}(x-z)\dd x\dd z.
\end{aligned}
\end{equation}
Its polarization is, for real-valued
\(f,g\in H^{\gamma/2}(\T)\),
\begin{equation}\label{eq:transformed-operator-polarized-kernel}
\begin{aligned}
 \frac12\bigl(
 \ip f{\Acal_\Psi^\gamma g}
 +\ip g{\Acal_\Psi^\gamma f}\bigr)
 =\frac{c_\gamma}{4}\iint_{\T^2}
 &[J(x)+J(z)]
 [f^\Psi(x)-f^\Psi(z)]\\
 &{}\times[g^\Psi(x)-g^\Psi(z)]
 K_\gamma^{\mathrm{per}}(x-z)\dd x\dd z.
\end{aligned}
\end{equation}
\end{lemma}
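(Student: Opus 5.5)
The plan is to pull the pairing back to \(x\)-coordinates and use the compensated-commutator form \eqref{eq:chain-commutator}. For smooth \(f\) and \(\Psi\), set \(F(y)=\int_0^y f\). Integration by parts on \(\T\) gives
\[
\ip{\eta'(f)}{\Acal_\Psi^\gamma f}
=\int_\T \partial_y\eta'(f)\,\Dcal_\Psi^\gamma f\dd y .
\]
We substitute \(y=\Psi(x)\), so that \(\dd y=J\dd x\) and \((\partial_y g)^\Psi J=\partial_x(g^\Psi)\). Combined with \eqref{eq:chain-commutator}, this yields
\[
\ip{\eta'(f)}{\Acal_\Psi^\gamma f}
=-\int_\T \partial_x\bigl(\eta'(f^\Psi)\bigr)\bigl[\Lambda^\gamma(F^\Psi)-f^\Psi\Lambda^\gamma\Psi\bigr]\dd x .
\]
Undoing the integration by parts in \(x\) gives
\(\int\eta'(f^\Psi)\,\partial_x[\Lambda^\gamma(F^\Psi)-f^\Psi\Lambda^\gamma\Psi]\dd x\). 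We then write both terms with the kernel \eqref{eq:periodic-fractional-laplacian}. Using \(\partial_x F^\Psi=Jf^\Psi\) and \(\partial_x\Lambda^\gamma=\Lambda^\gamma\partial_x\) on affine-periodic functions, the bracket derivative becomes
\[
\Lambda^\gamma(Jf^\Psi)-\partial_x(f^\Psi)\Lambda^\gamma\Psi-f^\Psi\Lambda^\gamma J .
\]
Writing \(g:=f^\Psi\) and expanding each term with the kernel, the first and third combine to \(c_\gamma\PV\int[J(x)g(x)-J(z)g(z)-g(x)(J(x)-J(z))]K\dd z=c_\gamma\PV\int J(z)[g(x)-g(z)]K\dd z\).

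The delicate point is the middle term \(-\partial_xg\,\Lambda^\gamma\Psi\). Pairing with \(\eta'(g)\) turns it into \(-\int\partial_x\eta(g)\,\Lambda^\gamma\Psi\), and a further integration by parts moves the derivative onto \(\Lambda^\gamma\Psi\), giving \(\int\eta(g)\Lambda^\gamma J\). In kernel form this is
\[
c_\gamma\iint \eta(g(x))[J(x)-J(z)]K\dd x\dd z
=c_\gamma\iint J(z)[\eta(g(z))-\eta(g(x))]K\dd x\dd z
\]
after antisymmetrizing. Adding the first-and-third contribution paired with \(\eta'(g(x))\), we get
\[
c_\gamma\iint J(z)\bigl[\eta(g(z))-\eta(g(x))-\eta'(g(x))(g(z)-g(x))\bigr]K(x-z)\dd x\dd z .
\]
Relabelling \(x\leftrightarrow z\) and using that \(K\) is even, this is exactly \(c_\gamma\iint J(x)\mathfrak B_\eta(g(x)\mid g(z))K\). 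The sign bookkeeping across these integrations by parts is the main obstacle, and I expect to check it against the local case. Absolute convergence justifies splitting the principal values, since \(\mathfrak B_\eta=O(|g(x)-g(z)|^2)=O(|x-z|^2)\). A cleaner alternative for the whole computation is to start directly from the kernel formula \eqref{eq:D-transformed-kernel} after changing variables.

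At \(\gamma=2\), \eqref{eq:local-check} gives \(\ip{\eta'(f)}{-\partial_y(af_y)}=\int a\eta''(f)f_y^2\) by one integration by parts. The second form follows from \(y=\Psi(x)\): \(a^\Psi=J^2\), \(f_y^\Psi=\partial_x(f^\Psi)/J\), and \(\dd y=J\dd x\). Convexity then gives nonnegativity in both cases. For \(\eta(r)=r^2/2\), we have \(\mathfrak B_\eta(u\mid v)=\tfrac12|u-v|^2\). Symmetrizing the weight \(J(x)\) under \(x\leftrightarrow z\) produces \(\tfrac14[J(x)+J(z)]\) and hence \eqref{eq:transformed-operator-energy-kernel}. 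This quadratic form is bounded on \(H^{\gamma/2}\): the left side by \eqref{eq:A-form-bound}, and the right side because \(J\) is bounded and \(f\mapsto f^\Psi\) is bounded on \(H^{\gamma/2}\) for bi-Lipschitz \(C^{1+\beta}\) maps, while the double integral is the Gagliardo seminorm. Density of smooth \(f\), together with approximation of \(\Psi\) using the continuity statement of \Cref{prop:transformed-operator-paralinearization}, extends the identity. Polarization \eqref{eq:transformed-operator-polarized-kernel} is the standard identity applied to \(f\pm g\).
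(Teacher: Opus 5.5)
Your proposal is correct and follows essentially the paper's route. Going through \eqref{eq:chain-commutator} and re-differentiating the bracket produces exactly the paper's two terms \(\int_\T\eta'(f^\Psi)\Lambda^\gamma(Jf^\Psi)\dd x+\int_\T[\eta(f^\Psi)-f^\Psi\eta'(f^\Psi)]\Lambda^\gamma J\dd x\), which the paper obtains directly from \eqref{eq:Dflux} and \(\partial_x\Rcal_\gamma=-\Lambda^\gamma\); the kernel symmetrization, the local case, and the density/polarization step then match. Two small points: the intermediate splittings should be done on the symmetric truncation \(\abs{x-z}>\eps\), since only the final Bregman integrand is absolutely convergent; and the general-\(\eta\) identity for \(\Psi\in C^{1+\beta}\) needs the same smooth approximation of \(\Psi\) (with dominated convergence) that you invoke only in the quadratic case.
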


\begin{proof}
Assume first that \(\Psi\) is smooth and that \(1<\gamma<2\).  Periodic
integration by parts, the change of variables
\(y=\Psi(x)\), and \eqref{eq:Dflux} give
\[
\begin{aligned}
 \ip{\eta'(f)}{\Acal_\Psi^\gamma f}
 &=\int_\T \partial_x\eta'(f^\Psi)
 \bigl[
  \Rcal_\gamma(Jf^\Psi)+f^\Psi\Lambda^\gamma\psi
 \bigr]\dd x\\
 &=\int_\T
 \eta'(f^\Psi)\Lambda^\gamma(Jf^\Psi)\dd x\\
 &\quad+\int_\T
 [\eta(f^\Psi)-f^\Psi\eta'(f^\Psi)]\Lambda^\gamma J\dd x.
\end{aligned}
\]
Here we used \(\partial_x\Rcal_\gamma=-\Lambda^\gamma\),
\(\partial_x\Lambda^\gamma\psi=\Lambda^\gamma J\), and
\(f^\Psi\partial_x\eta'(f^\Psi)
=\partial_x[f^\Psi\eta'(f^\Psi)-\eta(f^\Psi)]\).
Use the periodic kernel
representation of \(\Lambda^\gamma\) and symmetrize in \(x\) and \(z\).
After extracting the common factor \(c_\gamma/2\), set
\(u=f^\Psi(x)\) and \(v=f^\Psi(z)\).  The coefficient of \(J(x)\) is
\[
\begin{aligned}
 [\eta'(u)-\eta'(v)]u
 &+\eta(u)-u\eta'(u)-\eta(v)+v\eta'(v)\\
 &=\eta(u)-\eta(v)-\eta'(v)(u-v)
 =\mathfrak B_\eta(u\mid v),
\end{aligned}
\]
and the coefficient of \(J(z)\) is
\(\mathfrak B_\eta(v\mid u)\).  Thus the pairing equals
\[
 \frac{c_\gamma}{2}\iint_{\T^2}
 \begin{aligned}[t]
 \Bigl\{&J(x)\mathfrak B_\eta\bigl(f^\Psi(x)\mid f^\Psi(z)\bigr)\\
 &{}+J(z)\mathfrak B_\eta\bigl(f^\Psi(z)\mid f^\Psi(x)\bigr)\Bigr\}
 K_\gamma^{\mathrm{per}}(x-z)\dd x\dd z.
 \end{aligned}
\]
Since the kernel is even, exchanging \(x\) and \(z\) in the second
term turns it into the first and proves the stated identity.  The local
identity follows from \eqref{eq:local-check}, an integration by parts,
and the change of variables \(y=\Psi(x)\).

For \(\eta(r)=r^2/2\), one has
\(\mathfrak B_\eta(r\mid s)=\abs{r-s}^2/2\).  The diagonal identity and
its polarization give \eqref{eq:transformed-operator-energy-kernel} and
\eqref{eq:transformed-operator-polarized-kernel}, respectively.  If
\(I_f\subset\R\) contains the range of \(f^\Psi\), then for
\(r,s\in I_f\), Taylor's formula gives
\(\abs{\mathfrak B_\eta(r\mid s)}
\le\frac12\norm{\eta''}_{L^\infty(I_f)}\abs{r-s}^2\).
Smooth approximation of \(\Psi\) and dominated convergence extend the
nonlocal identity to the stated maps; the local identity passes directly.
In the quadratic case, boundedness of bi-Lipschitz composition on
\(H^{\gamma/2}\) and density extend
\eqref{eq:transformed-operator-energy-kernel}--
\eqref{eq:transformed-operator-polarized-kernel} to the stated Sobolev
spaces.
\end{proof}

\begin{proposition}[Coercivity of the transformed fractional diffusion operator]
\label{prop:transformed-operator-coercivity}
Let \(1<\gamma<2\), \(0<\beta<1\), and let
\(\Psi\in C^{1+\beta}(\T)\) be an orientation-preserving bi-Lipschitz map
with Jacobian satisfying \eqref{eq:J-bounds}.  For
\(f\in H^{\gamma/2}(\T)\),
\begin{equation}\label{eq:A-coercive-form}
  \ip f{\Acal_{\Psi}^{\gamma}f}
  \asymp_\Psi\norm f_{\dot H^{\gamma/2}}^2.
\end{equation}
\end{proposition}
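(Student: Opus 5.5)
The plan is to read the coercivity estimate directly off the energy kernel identity \eqref{eq:transformed-operator-energy-kernel} of \Cref{lem:transformed-operator-entropy-identity}, and then reduce to the flat Gagliardo seminorm by a bi-Lipschitz change of variables. For real-valued \(f\in H^{\gamma/2}(\T)\), that identity gives
\[
 \ip f{\Acal_\Psi^\gamma f}
 =\frac{c_\gamma}{4}\iint_{\T^2}[J(x)+J(z)]\abs{f^\Psi(x)-f^\Psi(z)}^2K_\gamma^{\mathrm{per}}(x-z)\dd x\dd z.
\]
By \eqref{eq:J-bounds} we have \(2J_*\le J(x)+J(z)\le2J^*\). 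Since \(K_\gamma^{\mathrm{per}}>0\), the weight can be removed at the cost of constants, leaving
\[
 \ip f{\Acal_\Psi^\gamma f}\asymp\iint_{\T^2}\abs{f^\Psi(x)-f^\Psi(z)}^2K_\gamma^{\mathrm{per}}(x-z)\dd x\dd z .
\]
By \eqref{eq:periodic-fractional-laplacian} applied to \(g=f^\Psi\) and symmetrized, this double integral equals \((2/c_\gamma)\ip{g}{\Lambda^\gamma g}=(2/c_\gamma)\norm{f^\Psi}_{\dot H^{\gamma/2}}^2\). This step is just the \(\Psi=\mathrm{id}\) case of the same lemma.

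It therefore remains to show \(\norm{f^\Psi}_{\dot H^{\gamma/2}}\asymp_\Psi\norm f_{\dot H^{\gamma/2}}\). I would prove this with the Gagliardo form, not with Fourier multipliers.

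First, there is a two-sided comparison \(K_\gamma^{\mathrm{per}}(r)\asymp\abs r_\T^{-1-\gamma}\) for \(r\in\T\setminus\{0\}\). The \(k\) with \(\abs{r+k}=\abs r_\T\) dominates, and the remaining terms of the series are bounded above and below uniformly, because \(\gamma>1\). Hence the seminorm is equivalent to \(\iint\abs{g(x)-g(z)}^2\abs{x-z}_\T^{-1-\gamma}\dd x\dd z\).

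Next, change variables \(y=\Psi(x)\), \(w=\Psi(z)\), with Jacobian \(\dd x\dd z=(\Phi'(y)\Phi'(w))\dd y\dd w\). Both factors are bounded above and below by \eqref{eq:J-bounds}. The degree-one lift gives \(\abs{\Phi(y)-\Phi(w)}_\T\asymp\abs{y-w}_\T\), since \(\Phi\) is uniformly bi-Lipschitz and commutes with integer translations. Together these give
\[
 \iint\abs{f^\Psi(x)-f^\Psi(z)}^2\abs{x-z}_\T^{-1-\gamma}\dd x\dd z\asymp\iint\abs{f(y)-f(w)}^2\abs{y-w}_\T^{-1-\gamma}\dd y\dd w\asymp\norm f_{\dot H^{\gamma/2}}^2 .
\]
All constants depend only on \(J_*\), \(J^*\), \(\gamma\).

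The only point needing care, and the one I expect to be the main obstacle, is the torus distance under \(\Phi\). The quantity \(\abs{\Phi(y)-\Phi(w)}_\T\) must be computed by minimizing over lattice translates. Since \(\Phi(y+k)=\Phi(y)+k\), we have \(\abs{\Phi(y)-\Phi(w)}_\T=\min_k\abs{\Phi(y)-\Phi(w+k)}\). Each term in this minimum is comparable to \(\abs{y-w-k}\), so the minima are comparable.

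Finally, the identity was already extended to \(H^{\gamma/2}\) in the lemma, and composition with \(\Psi\) preserves \(H^{\gamma/2}\) by the same comparison. Hence no further density argument is required.
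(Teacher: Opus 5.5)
Your proposal is correct and is essentially the paper's proof: the energy kernel identity \eqref{eq:transformed-operator-energy-kernel} together with \(J_*\le J\le J^*\) reduces the form to \(\norm{f^\Psi}_{\dot H^{\gamma/2}}^2\). The bi-Lipschitz change of variables \(y=\Psi(x)\), \(w=\Psi(z)\), with \(K_\gamma^{\mathrm{per}}(\Phi(y)-\Phi(w))\asymp_\Psi K_\gamma^{\mathrm{per}}(y-w)\), then finishes the argument; you simply make explicit the comparison \(K_\gamma^{\mathrm{per}}(r)\asymp\abs r_\T^{-1-\gamma}\) and the lattice-translate argument for the torus distance, both of which the paper leaves implicit.
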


\begin{proof}
By \eqref{eq:transformed-operator-energy-kernel} and the bounds
\(J_*\le J\le J^*\), the energy form is comparable to
\(\norm{f^\Psi}_{\dot H_x^{\gamma/2}}^2\).
Under the change of variables
\(y=\Psi(x)\), \(w=\Psi(z)\), the Jacobian factors are uniformly bounded
and
\[
 K_\gamma^{\mathrm{per}}
 \bigl(\Phi(y)-\Phi(w)\bigr)
 \asymp_\Psi K_\gamma^{\mathrm{per}}(y-w).
\]
Thus \(\norm{f^\Psi}_{\dot H_x^{\gamma/2}}\asymp_\Psi
\norm f_{\dot H_y^{\gamma/2}}\), which proves \eqref{eq:A-coercive-form}.
\end{proof}

\begin{proposition}[H\"older estimates for the transformed coefficients]
\label{prop:coefficient-output}
Assume the hypotheses of \Cref{prop:response,prop:zvonkin} and let
\(0<\rho\le\beta<1\).  Then the coefficients in
\eqref{eq:coefficients} satisfy
\begin{align}
  [a]_{C^\beta}
  &\lesssim_\Psi\norm{J-1}_{C^\beta},
  \label{eq:a-output-estimate}\\
  \norm{Q_0}_{C^\rho}
  &\lesssim_\Psi
  \norm{\ell}_{C^\rho}
  \bigl(1+\norm{J-1}_{C^\beta}\bigr),
  \\
  \norm R_{C^\rho}
  &\lesssim_\Psi\norm{\ell}_{L^\infty}\norm{\ell}_{C^\rho},
  \label{eq:R-output-estimate}\\
  \norm{Q_\lambda}_{C^\rho}
  &\lesssim_\Psi \norm{Q_0}_{C^\rho}
  +\lambda\norm{\psi}_{C^{1+\beta}}.
  \label{eq:Qlambda-output-estimate}
\end{align}
The implicit constants are locally uniform on bounded sets of maps with a
fixed positive Jacobian lower bound.
\end{proposition}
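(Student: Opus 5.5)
All four bounds follow from two elementary H\"older facts, applied to the coefficient definitions \eqref{eq:coefficients}. The first is the algebra property
\[
\norm{fg}_{C^\rho}\lesssim\norm f_{L^\infty}\norm g_{C^\rho}+\norm f_{C^\rho}\norm g_{L^\infty}.
\]
The second is the composition estimate with a bi-Lipschitz map,
\[
[f^\Phi]_{C^\rho}\le\norm{\Phi'}_{L^\infty}^\rho[f]_{C^\rho},\qquad \norm{\Phi'}_{L^\infty}=\norm{(J^\Phi)^{-1}}_{L^\infty}\le J_*^{-1}.
\]
By \Cref{prop:zvonkin}, both \(\Psi\) and \(\Phi=\Psi^{-1}\) are uniformly bi-Lipschitz, so composing with \(\Phi\) costs only the factor \(J_*^{-\rho}\) on seminorms. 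It leaves \(L^\infty\) norms unchanged. All constants then depend only on \(J_*\), \(J^*\) and the exponent, which gives the asserted local uniformity.

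For \eqref{eq:a-output-estimate}, write \(J^2-1=(J-1)(J+1)\). Then
\[
[J^2]_{C^\beta}\le\norm{J+1}_{L^\infty}[J-1]_{C^\beta}+\norm{J-1}_{L^\infty}[J]_{C^\beta}\lesssim(1+J^*)\norm{J-1}_{C^\beta},
\]
and composing with \(\Phi\) gives \([a]_{C^\beta}\lesssim_\Psi\norm{J-1}_{C^\beta}\). For \(Q_0=(\ell J)^\Phi\), split \(\ell J=\ell+\ell(J-1)\) and use \(\rho\le\beta\):
\[
\norm{\ell J}_{C^\rho}\lesssim\norm\ell_{C^\rho}+\norm\ell_{C^\rho}\norm{J-1}_{C^\rho}\le\norm\ell_{C^\rho}\bigl(1+\norm{J-1}_{C^\beta}\bigr).
\]
Composition then gives the stated bound, with the \(L^\infty\) part unchanged. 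For \(R=(\ell^2)^\Phi\), the algebra bound gives \([\ell^2]_{C^\rho}\le2\norm\ell_{L^\infty}[\ell]_{C^\rho}\) and \(\norm{\ell^2}_{L^\infty}\le\norm\ell_{L^\infty}\norm\ell_{C^\rho}\), so \eqref{eq:R-output-estimate} follows after composing.

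For \eqref{eq:Qlambda-output-estimate}, the identity \(Q_\lambda=Q_0+\tfrac\lambda2\psi^\Phi\) reduces the claim to bounding \(\norm{\psi^\Phi}_{C^\rho}\). Since \(\rho<1\), we have \([\psi^\Phi]_{C^\rho}\lesssim\norm{\partial_y(\psi^\Phi)}_{L^\infty}\) with \(\partial_y(\psi^\Phi)=(\partial_x\psi)^\Phi\,\Phi'\), so
\[
\norm{\psi^\Phi}_{C^\rho}\lesssim(1+J_*^{-1})\norm\psi_{C^1}\le(1+J_*^{-1})\norm\psi_{C^{1+\beta}}.
\]
There is one point to check here. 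The lift \(\psi\) is periodic, because \(\Psi\) has degree one and \(\psi=\Psi-\mathrm{id}\). It carries the zero mode \(\bar\psi\) from \eqref{eq:psi-zero-mode}, which is included in \(\norm\psi_{C^{1+\beta}}\) via the sup norm. Consequently \(\psi^\Phi\) is an honest periodic function and its \(C^\rho\) norm is well defined.

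I do not expect a real obstacle; this proposition is bookkeeping. The one point needing care is the justification for using the smooth-approximation limits. The estimates are proved first for the smooth approximants \((\ell^n,\Psi^n)\) of \Cref{prop:response,prop:zvonkin}. They pass to the limit because \(J^n\to J\) in \(C^\beta\), \(\psi^n\to\psi\) in \(C^{1+\beta}\) and \(\Phi^n\to\Phi\) uniformly with uniform bi-Lipschitz bounds. Hence the compositions converge in \(C^{\rho'}\) for \(\rho'<\rho\), and the \(C^\rho\) bounds persist by lower semicontinuity of H\"older norms under uniform convergence.
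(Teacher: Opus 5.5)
Your proof is correct and follows the paper's own argument: the Hölder algebra property (using $\rho\le\beta$) and the invariance of $C^\rho$ under composition with the bi-Lipschitz map $\Phi$, with \eqref{eq:Qlambda-output-estimate} read off from $Q_\lambda=Q_0+\frac\lambda2\psi^\Phi$. Your closing smooth-approximation step is harmless but unnecessary, because both elementary inequalities hold pointwise for the limiting Hölder and Lipschitz data directly.
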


\begin{proof}
For \(0<\rho\le\beta<1\), multiplication by a \(C^\beta\) function is
bounded on \(C^\rho\), and composition with a \(C^{1+\beta}\)
bi-Lipschitz diffeomorphism preserves \(C^\rho\).  Apply these facts to
\[
  a=(J^2)^\Phi,
  \qquad Q_0=(\ell J)^\Phi,
  \qquad R=(\ell^2)^\Phi.
\]
This proves \eqref{eq:a-output-estimate}--\eqref{eq:R-output-estimate};
\eqref{eq:Qlambda-output-estimate} follows from the definition of
\(Q_\lambda\).
\end{proof}

Combining \Cref{prop:fractional-enhancement,prop:response,prop:zvonkin}
with \Cref{prop:coefficient-output} yields, on every finite interval,
\begin{equation}
\label{eq:verified-coefficient-bounds}
\begin{aligned}
 &0<J_*\le J\le J^*<\infty,
 &&\Psi,\Phi\ \text{uniformly bi-Lipschitz},\\
 &a\in L^\infty(0,T;C^\beta),
 &&Q_0,Q_\lambda\in
   L^{\gamma/(\gamma-1)}(0,T;C^\rho),\\
 &R\in L^2(0,T;H^{1-\gamma/2}).
\end{aligned}
\end{equation}
Here \(\rho,\beta>1-\gamma/2\).  The remaining Sobolev bounds used below
follow from \(C^\rho\hookrightarrow H^{1-\gamma/2}\) and finite-interval
time embeddings.

\section{A priori estimates}
\label{sec:deterministic-estimates}

\subsection{Entropy and \texorpdfstring{\(L^1\)}{L1} estimate}
\label{sec:global-L1}

We first derive an \(L^1\) bound without using the quadratic energy.  It will
serve as the low-order input for both the direct and mixed interpolation
arguments.  Fix once and for all an even, smooth, convex approximation
\(\chi\) of the absolute value such that
\begin{equation}\label{eq:chi-properties}
  \abs{\chi'}\le1,
  \qquad
  0\le\chi''\lesssim1,
  \qquad
  \supp\chi''\subset\{\abs z\le1\},
  \qquad
  0\le\chi(z)-\abs z\lesssim1.
\end{equation}
Set
\begin{equation}
  N(t):=\int_\T\chi(m(t,y))\dd y
  =\int_\T J(t,x)\chi\bigl(m^\Psi(t,x)\bigr)\dd x.
\end{equation}

\begin{proposition}[Entropy inequality and \(L^1\) bound]
\label{prop:global-L1}
There exists \(c_\Psi>0\), depending only on \(\gamma\) and the uniform
Jacobian and bi-Lipschitz bounds of \(\Psi\), such that every smooth solution
satisfies
\begin{equation}\label{eq:L1-differential-bound}
  \begin{aligned}
  N'(t)
  +c_\Psi\norm{\chi'(m(t))}_{\dot H^{\gamma/2}}^2
  \lesssim_\Psi{}&\norm{a(t)}_{\dot H^{1-\gamma/2}}^2
  +\norm{Q_0(t)}_{H^{1-\gamma/2}}^2\\
  &+\norm{R(t)}_{H^{1-\gamma/2}}^2
  +\lambda\norm{J(t)-1}_{L^1}.
  \end{aligned}
\end{equation}
Consequently, if the quantities on the right are integrable on \((0,T)\),
define
\begin{equation}
  \begin{aligned}
  N_T:={}&N(0)
  +\norm{a}_{L^2(0,T;\dot H^{1-\gamma/2})}^2
  +\norm{Q_0}_{L^2(0,T;H^{1-\gamma/2})}^2\\
  &+\norm{R}_{L^2(0,T;H^{1-\gamma/2})}^2
  +\lambda\norm{J-1}_{L^1(0,T;L^1)}.
  \end{aligned}
\end{equation}
Then
\begin{equation}\label{eq:L1-uniform-bound}
  \sup_{t\le T}\norm{m(t)}_{L^1}
  \le\sup_{t\le T}N(t)
  \lesssim_\Psi N_T<\infty.
\end{equation}
This bound depends on the initial data and the displayed coefficient norms,
but not on the \(L^2\)-energy of \(m\).
\end{proposition}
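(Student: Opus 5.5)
Test the transformed equation \eqref{eq:transformed-equation} against \(\chi'(m)\). The identity \(\frac{d}{dt}\int\chi(m)=\ip{\chi'(m)}{m_t}\) gives
\[
 N'(t)+\ip{\chi'(m)}{\Acal_\Psi^\gamma m}
 =-\int_\T\chi''(m)m_y\bigl(am^2+2Q_\lambda m+R\bigr)\dd y .
\]

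\emph{Dissipative term.} Apply \Cref{lem:transformed-operator-entropy-identity} with \(\eta=\chi\), which writes the pairing as \(c_\gamma\iint J\,\mathfrak B_\chi(m^\Psi(x)\mid m^\Psi(z))K_\gamma^{\mathrm{per}}\). Since \(\chi''\lesssim1\), the Bregman remainder dominates \(\tfrac{1}{C}\abs{\chi'(u)-\chi'(v)}^2\); more precisely, \(\mathfrak B_\chi(u\mid v)\ge\frac1{2\sup\chi''}\abs{\chi'(u)-\chi'(v)}^2\), the standard cocoercivity of a convex function with Lipschitz gradient. With \(J\ge J_*\), the comparison of \(K_\gamma^{\mathrm{per}}\) under \(\Phi\) from the proof of \Cref{prop:transformed-operator-coercivity}, and the resulting comparison of \(\dot H^{\gamma/2}\) norms, this gives the lower bound \(c_\Psi\norm{\chi'(m)}_{\dot H^{\gamma/2}}^2\). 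For \(\gamma=2\), the local identity gives \(\int a\chi''(m)\abs{m_y}^2\ge c\int\abs{\partial_y\chi'(m)}^2\) directly, since \(\chi''\lesssim1\).

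\emph{Flux terms.} Each flux term has the form \(\int G(m)_y\,c\), with \(c\in\{a,Q_\lambda,R\}\). Define \(G_2'(r)=\chi''(r)r^2\), \(G_1'(r)=\chi''(r)r\) and \(G_0'(r)=\chi''(r)\). Since \(\chi''\) is supported in \([-1,1]\), all three functions \(G_i\) are bounded. The key structural point is the identity \(G_i'=\Theta_i'(\chi')\cdot(\chi')'\), with \(\Theta_i\) Lipschitz on the range of \(\chi'\) after choosing \(\chi\) with \(\chi''>0\) on \((-1,1)\). This lets me write \(G_i(m)=\Theta_i(\chi'(m))\), and Lipschitz composition preserves \(\dot H^{\gamma/2}\) for \(\gamma/2<1\), so \(\norm{G_i(m)}_{\dot H^{\gamma/2}}\lesssim\norm{\chi'(m)}_{\dot H^{\gamma/2}}\). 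If the \(r\mapsto\chi'(r)\) inversion is awkward, a fallback is to choose \(\chi\) so that \(\chi'\) is strictly monotone on \([-1,1]\), e.g. \(\chi'(r)=\sin(\pi r/2)\) there, so \(\Theta_i\) is smooth. Duality then gives
\[
 \Bigl|\int G_i(m)_y\,c\Bigr|\le\norm{G_i(m)}_{\dot H^{\gamma/2}}\norm c_{\dot H^{1-\gamma/2}},
\]
and Young's inequality absorbs half the dissipation, leaving \(\norm a_{\dot H^{1-\gamma/2}}^2+\norm{Q_0}^2+\norm R^2\). Only the homogeneous seminorm of \(a\) enters here, because constants are killed by the derivative.

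\emph{The \(\lambda\) term.} The part \(\lambda\psi^\Phi\) of \(Q_\lambda\) is not in the right-hand side of \eqref{eq:L1-differential-bound} as an \(H^{1-\gamma/2}\) norm, so I treat it separately by integrating by parts back: \(-\lambda\int G_1(m)_y\psi^\Phi=\lambda\int G_1(m)\,\partial_y(\psi^\Phi)\). The identity \(\partial_y\psi^\Phi=(J-1)^\Phi/J^\Phi\) and the boundedness of \(G_1\) bound this by \(\lambda\norm{J-1}_{L^1}\) after changing variables. I expect this, together with arranging \(\Theta_i\) Lipschitz, to be the only delicate bookkeeping.

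\emph{Integration in time.} Integrating in time gives \(\sup_t N(t)\le N_T\), and \(\abs m\le\chi(m)\) gives \eqref{eq:L1-uniform-bound}. The energy of \(m\) is never used, since every nonlinear factor was bounded through \(\chi'(m)\).
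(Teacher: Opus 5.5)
Your proposal is correct and follows the paper's proof essentially step by step. You test against \(\chi'(m)\), use the Bregman identity of \Cref{lem:transformed-operator-entropy-identity} for the dissipation, bound the flux terms through the Lipschitz dependence of \(\int_0^m r^k\chi''(r)\dd r\) on \(\chi'(m)\) followed by duality and Young, and control the resolvent part by the boundedness of \(\chi(z)-z\chi'(z)\). For the dissipation you apply the one-sided bound \(\mathfrak B_\chi(u\mid v)\ge(2\sup\chi'')^{-1}\abs{\chi'(u)-\chi'(v)}^2\), whereas the paper symmetrizes in \(x,z\) and invokes Stroock--Varopoulos; both are fine.

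Requiring \(\chi''>0\) on \((-1,1)\) is unnecessary. The increment bound \(\abs{\int_w^z r^k\chi''(r)\dd r}\le\abs{\chi'(z)-\chi'(w)}\) already makes your \(\Theta_i\) well defined and \(1\)-Lipschitz for any admissible \(\chi\). Your \(\sin\) fallback would in any case only give a \(C^2\) profile rather than a smooth one.
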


\begin{proof}
Testing \eqref{eq:transformed-equation} against \(\chi'(m)\) and changing
variables \(y=\Psi(t,x)\) gives
\begin{align*}
 N'(t)
 &+\int_\T\Bigl\{
 \chi'(m^\Psi)\Lambda^\gamma(Jm^\Psi)
 +[\chi(m^\Psi)-m^\Psi\chi'(m^\Psi)]\Lambda^\gamma J
 \Bigr\}\dd x\\
 &=-\int_\T(Jm^\Psi+\ell)^2
 \chi''(m^\Psi)\partial_x(m^\Psi)\dd x\\
 &\quad
 -\lambda\int_\T(J-1)
 [\chi(m^\Psi)-m^\Psi\chi'(m^\Psi)]\dd x.
\end{align*}
The calculation in
\Cref{lem:transformed-operator-entropy-identity} identifies the diffusion
integral on the left with
\(\ip{\chi'(m)}{\Acal_\Psi^\gamma m}\).  Moreover,
\[
 (am^2+2Q_0m+R)^\Psi=(Jm^\Psi+\ell)^2,
 \qquad
 \partial_x[\chi(m^\Psi)-m^\Psi\chi'(m^\Psi)]
 =-m^\Psi\chi''(m^\Psi)\partial_x(m^\Psi),
\]
and the second identity, together with \(\psi_x=J-1\), yields the
\(\lambda\)-term after periodic integration by parts.  For
\(1<\gamma<2\), \Cref{lem:transformed-operator-entropy-identity} gives
\[
 \ip{\chi'(m)}{\Acal_\Psi^\gamma m}
 =c_\gamma\iint_{\T^2}
 J(x)\mathfrak B_\chi\bigl(m^\Psi(x)\mid m^\Psi(z)\bigr)
 K_\gamma^{\mathrm{per}}(x-z)\dd x\dd z.
\]
Average this identity with its \(x,z\)-exchange.  Since \(J\ge J_*\)
and
\[
 \mathfrak B_\chi(u\mid v)+\mathfrak B_\chi(v\mid u)
 = [\chi'(u)-\chi'(v)](u-v),
\]
we obtain
\begin{align*}
 \ip{\chi'(m)}{\Acal_\Psi^\gamma m}
 &\ge\frac{c_\gamma J_*}{2}\iint_{\T^2}
 [\chi'(m^\Psi(x))-\chi'(m^\Psi(z))]\\
 &\hspace{39mm}{}\times[m^\Psi(x)-m^\Psi(z)]
 K_\gamma^{\mathrm{per}}(x-z)\dd x\dd z\\
 &\gtrsim_\Psi\norm{\chi'(m^\Psi)}_{\dot H^{\gamma/2}_x}^2.
\end{align*}
The last step is the elementary kernel form of the
Stroock--Varopoulos inequality; it uses that \(\chi'\) is nondecreasing
and uniformly Lipschitz.  See
\cite[Lemma~7.4]{BilerKarchMonneau2010Dislocation}.  At \(\gamma=2\), the
local part of \Cref{lem:transformed-operator-entropy-identity} and
\(0\le\chi''\lesssim1\) give
\[
\begin{aligned}
 \ip{\chi'(m)}{\Acal_\Psi^2 m}
 =\int_\T J\chi''(m^\Psi)\abs{\partial_x(m^\Psi)}^2\dd x
 \gtrsim_\Psi\norm{\partial_x\chi'(m^\Psi)}_{L^2}^2.
\end{aligned}
\]
Bi-Lipschitz composition makes the resulting homogeneous Sobolev seminorm
of \(\chi'(m^\Psi)\) comparable to
\(\norm{\chi'(m)}_{\dot H^{\gamma/2}_y}^2\) in both cases.  Returning the
nonlinear-flux term to \(y\)-coordinates, set
\[
 (\star):=-\int_\T(am^2+2Q_0m+R)\chi''(m)m_y\dd y.
\]
For \(k=0,1,2\), the support of \(\chi''\) gives
\[
 \left|\int_w^z r^k\chi''(r)\dd r\right|
 \le \left|\int_w^z\chi''(r)\dd r\right|
 =\abs{\chi'(z)-\chi'(w)}.
\]
Hence the difference characterization
\eqref{eq:besov-difference-characterization} of fractional Sobolev spaces
(and the ordinary chain rule when \(\gamma=2\)) gives
\[
 \left\|\int_0^f r^k\chi''(r)\dd r\right\|_{\dot H^{\gamma/2}}
 \lesssim\norm{\chi'(f)}_{\dot H^{\gamma/2}}.
\]
Each of the three flux summands is a coefficient multiplied by
\(\partial_y\int_0^m r^k\chi''(r)\dd r\), with \(k=2,1,0\), respectively.
The constant part of \(a\) drops out by periodicity.  Duality now
gives
\begin{align*}
 \abs{(\star)} \lesssim
 \bigl(\norm a_{\dot H^{1-\gamma/2}}
       +\norm{Q_0}_{H^{1-\gamma/2}}
       +\norm R_{H^{1-\gamma/2}}\bigr)
 \norm{\chi'(m)}_{\dot H^{\gamma/2}},
\end{align*}
and therefore
\begin{align*}
 \abs{(\star)}
 \le\frac{c_\Psi}{2}
 \norm{\chi'(m)}_{\dot H^{\gamma/2}}^2
 +C_\Psi\bigl(\norm a_{\dot H^{1-\gamma/2}}^2
 +\norm{Q_0}_{H^{1-\gamma/2}}^2
 +\norm R_{H^{1-\gamma/2}}^2\bigr).
\end{align*}
Finally,
\(\abs{\chi(z)-z\chi'(z)}\lesssim1\), so the resolvent term
is \(\lesssim\lambda\norm{J-1}_{L^1}\).  This proves
\eqref{eq:L1-differential-bound}; integration in time and
\eqref{eq:chi-properties} give \eqref{eq:L1-uniform-bound}.
\end{proof}

\subsection{Energy and Besov estimates}
\label{sec:energy}

The \(Q_\lambda\)- and \(R\)-terms are lower order, while the rough
coefficient \(a\) produces the cubic Besov functional \(Z_T^\beta\).  Testing
\eqref{eq:transformed-equation} against \(m\), using
\eqref{eq:A-coercive-form} for \(1<\gamma<2\) and
\eqref{eq:local-symmetric-form} at \(\gamma=2\), gives
\begin{equation}\label{eq:energy-identity}
 \frac12\frac{\dd}{\dd t}\norm m_{L^2}^2
 +\ip m{\Acal_\Psi^\gamma m}
 =-\int_\T(am^2+2Q_\lambda m+R)m_y\dd y,
\end{equation}
where
\[
 \ip m{\Acal_\Psi^\gamma m}
 \asymp_\Psi\norm m_{\dot H^{\gamma/2}}^2.
\]
At \(\gamma=2\), the term on the left is simply
\(\int_\T a\abs{m_y}^2\dd y\).  We shall also use the one-dimensional
Gagliardo--Nirenberg inequality: for \(p\in[1,\infty]\), with
\(1/\infty=0\),
\begin{equation}\label{eq:GN-general}
  \norm m_{L^p}
  \lesssim_p
  \norm m_{L^1}^{1-\frac{2(1-1/p)}{\gamma+1}}
  \norm m_{\dot H^{\gamma/2}}^{\frac{2(1-1/p)}{\gamma+1}}
  +\norm m_{L^1}.
\end{equation}

\begin{proposition}[Energy inequality with a cubic Besov term]
\label{prop:energy-cubic}
Let the transformed solution and coefficients be smooth, and let
\eqref{eq:verified-coefficient-bounds} hold with
\(1-\gamma/2<\beta<1\) and \(\rho>1-\gamma/2\).  Define the energy and
cubic Besov functionals by
\begin{align}
  E_T(m)
  &:=\sup_{t\le T}\norm{m(t)}_{L^2}^2
  +\int_0^T\norm{m(t)}_{H^{\gamma/2}}^2\dd t,
  \\
  Z_T^\beta(m)
  &:=\int_0^T\norm{m(t)^3}_{B^{1-\beta}_{1,1}}\dd t.
\end{align}
Once \(m\) is fixed, we suppress its argument; the same convention will be
used for \(K_T^s(m)\) below.
Then
\begin{equation}\label{eq:energy-with-Z}
  E_T
  \lesssim
    \norm{m(0)}_{L^2}^2+\norm R_{L^2(0,T;H^{1-\gamma/2})}^2
    +\left(\sup_{t\le T}[a(t)]_{C^\beta}\right)
       Z_T^\beta,
\end{equation}
where the implicit constant depends on the ellipticity and bi-Lipschitz
constants and on
\[
  \int_0^T\left[
    1+\norm{Q_\lambda(t)}_{C^\rho}^{\gamma/(\gamma-1)}
  \right]\dd t,
\]
but not on \(Z_T^\beta\).
\end{proposition}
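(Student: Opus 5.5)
We start from the energy identity \eqref{eq:energy-identity} and estimate the three flux terms on the right separately, then integrate in time and absorb.

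\emph{Lower-order terms.} The \(R\)-term is \(-\int R\,m_y\dd y\). By duality,
\[
\norm{R}_{H^{1-\gamma/2}}\norm{m}_{H^{\gamma/2}}
\le\eps\norm m_{H^{\gamma/2}}^2+C_\eps\norm R_{H^{1-\gamma/2}}^2 .
\]
The \(Q_\lambda\)-term is \(-2\int Q_\lambda m m_y=\int(\partial_yQ_\lambda)m^2\), understood as \(-\ip{\partial_y Q_\lambda}{m^2}\). Since \(Q_\lambda\in C^\rho\) with \(\rho>1-\gamma/2\), we bound it by
\[
\norm{Q_\lambda}_{C^\rho}\norm{m^2}_{B^{1-\rho}_{1,1}}.
\]
The paraproduct (or difference) estimate gives \(\norm{m^2}_{B^{1-\rho}_{1,1}}\lesssim\norm m_{L^2}\norm m_{H^{1-\rho+}}\). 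Interpolating \(H^{1-\rho+}\) between \(L^2\) and \(H^{\gamma/2}\) yields the exponent \(\theta=2(1-\rho)/\gamma+<2-\gamma/\gamma\cdots\). More simply, since \(1-\rho<\gamma/2\), we get \(\theta<1\), and more precisely \(\theta\le 2(1-\rho)/\gamma\le (2-\gamma)/\gamma\cdot\ldots\). The worst case is \(\theta\to 2/\gamma-1\). Young's inequality then gives
\[
\eps\norm m_{H^{\gamma/2}}^2+C\norm{Q_\lambda}_{C^\rho}^{2/(2-2\theta)\cdot\ldots}\norm m_{L^2}^2 .
\]
The worst exponent is \(1/(1-\theta)\le\gamma/(2\gamma-2)\). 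This is at most \(\gamma/(\gamma-1)\), so it is controlled by \(1+\norm{Q_\lambda}^{\gamma/(\gamma-1)}\), which matches the announced dependence. We also add \(\norm m_{L^2}^2\) to pass from \(\dot H^{\gamma/2}\) to \(H^{\gamma/2}\) coercivity via \eqref{eq:A-coercive-form}.

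\emph{Cubic term.} Write \(-\int a m^2m_y=-\frac13\int a\,\partial_y(m^3)=\frac13\ip{\partial_y a}{m^3}\). The constant part of \(a\) drops out. Duality between \(B^{\beta-1}_{\infty,\infty}\) and \(B^{1-\beta}_{1,1}\) gives
\[
\abs{\ip{\partial_y a}{m^3}}\lesssim[a]_{C^\beta}\norm{m^3}_{B^{1-\beta}_{1,1}},
\]
where \(\partial_y a\in C^{\beta-1}\) with norm controlled by \([a]_{C^\beta}\). Integrating in time produces \(\sup_t[a]_{C^\beta}\,Z_T^\beta\) with no attempt to absorb it.

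\emph{Closing.} We choose \(\eps\) small relative to the coercivity constant \(c_\Psi\) of \eqref{eq:A-coercive-form} and absorb. This gives
\[
\frac{\dd}{\dd t}\norm m_{L^2}^2+c\norm m_{H^{\gamma/2}}^2\lesssim g(t)\norm m_{L^2}^2+\norm R_{H^{1-\gamma/2}}^2+[a]_{C^\beta}\norm{m^3}_{B^{1-\beta}_{1,1}},
\]
with \(g=1+\norm{Q_\lambda}_{C^\rho}^{\gamma/(\gamma-1)}\). Grönwall's inequality then yields \eqref{eq:energy-with-Z}, with constant \(\exp\int_0^Tg\). At \(\gamma=2\) the same argument applies with \eqref{eq:local-symmetric-form}.

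The main obstacle is the \(Q_\lambda\) product estimate: we must choose exponents so that the fractional Leibniz bound for \(m^2\) in \(B^{1-\rho}_{1,1}\) uses only \(L^2\) and \(H^{\gamma/2}\), with Young exponent at most \(\gamma/(\gamma-1)\). If this is tight, we would instead put \(Q_\lambda m\in H^{1-\gamma/2}\) against \(m_y\in H^{\gamma/2-1}\)-dual. The bound \(\norm{Q_\lambda m}_{H^{1-\gamma/2}}\lesssim\norm{Q_\lambda}_{C^\rho}\norm m_{H^{1-\gamma/2}}\), together with interpolation of \(H^{1-\gamma/2}\), gives exponent \(1/(1-\theta)\) with \(\theta=(2-\gamma)/\gamma\). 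That exponent is exactly \(\gamma/(2\gamma-2)\le\gamma/(\gamma-1)\).
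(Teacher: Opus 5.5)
Your proof is correct. Your treatment of the cubic term (rewrite it as \(\frac13\ip{a_y}{m^3}\), then use endpoint H\"older--Besov duality), the \(R\)-term, the coercivity/\(L^2\)-shift and the Gr\"onwall closure is exactly the paper's. Your fallback for the \(Q_\lambda\)-term is also precisely the paper's argument: duality against \(m_y\), the multiplier bound on \(H^{1-\gamma/2}\), and interpolation.

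Two pieces of your exponent bookkeeping are wrong, although neither affects the conclusion.

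\emph{The fallback.} Young's inequality acts on \(\norm{Q_\lambda}_{C^\rho}\norm m_{L^2}^{1-\theta}\norm m_{H^{\gamma/2}}^{1+\theta}\) with conjugate exponents \(2/(1+\theta)\) and \(2/(1-\theta)\). With \(\theta=(2-\gamma)/\gamma\), the power of \(\norm{Q_\lambda}_{C^\rho}\) is therefore \(2/(1-\theta)=\gamma/(\gamma-1)\), not \(1/(1-\theta)=\gamma/(2\gamma-2)\). The announced dependence is attained exactly, with no slack.

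\emph{The primary route.} The interpolation index is \(\theta'=2(1-\rho+\kappa)/\gamma\). It tends to \(1\), not to \(2/\gamma-1\), as \(\rho\downarrow1-\gamma/2\). Also, \(2(1-\rho)/\gamma\le(2-\gamma)/\gamma\) fails unless \(\rho\ge\gamma/2\). The correct Young exponent on \(\norm{Q_\lambda}_{C^\rho}\) is \(2/(2-\theta')\).

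Once repaired, your primary route is a genuine and slightly sharper alternative to the paper's. You pair \(\partial_yQ_\lambda\in C^{\rho-1}\) with \(m^2\in B^{1-\rho}_{1,1}\), using \(\rho<1\), and the difference bound \(\norm{m^2}_{B^{1-\rho}_{1,1}}\lesssim\norm m_{L^2}\norm m_{H^{1-\rho+\kappa}}\). This spends only \(1-\rho<\gamma/2\) derivatives, so it gives the exponent \(2/(2-\theta')<2\le\gamma/(\gamma-1)\). It would therefore only need \(Q_\lambda\in L^{2/(2-\theta')}_tC^\rho\).

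The paper's route spends a full derivative, split as \(\gamma/2+(1-\gamma/2)\), and this is why it lands on \(\gamma/(\gamma-1)\). Its advantage is economy. The same estimate \(\norm{Q_\lambda m}_{H^{1-\gamma/2}}\lesssim\norm{Q_\lambda}_{C^\rho}\norm m_{H^{1-\gamma/2}}\) controls the forcing \(G=2Q_\lambda m+R\) in \(L^2_tH^{1-\gamma/2}\) in the proof of \Cref{thm:khm}. There Hölder in time is taken against \(L^{2\gamma/(2-\gamma)}_tH^{1-\gamma/2}\). Hence the single hypothesis \(Q_\lambda\in L^{\gamma/(\gamma-1)}_tC^\rho\) in \eqref{eq:verified-coefficient-bounds} serves both places.
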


\begin{proof}
For \(0<\beta<1\), the endpoint H\"older--Besov duality estimate
\begin{equation}\label{eq:endpoint-holder-besov}
 \abs{\ip{\partial_xa}{F}}
 \lesssim [a]_{C^\beta}\norm F_{B^{1-\beta}_{1,1}}
\end{equation}
is standard; see
\cite{BahouriCheminDanchin2011Fourier,
GubinelliImkellerPerkowski2015Paracontrolled,
GubinelliPerkowski2017KPZReloaded}.
Applying it after periodic integration by parts gives
\[
  -\int_\T am^2m_y\dd y
  =\frac13\ip{a_y}{m^3},
  \qquad
  \abs{\ip{a_y}{m^3}}
  \lesssim[a]_{C^\beta}\norm{m^3}_{B^{1-\beta}_{1,1}}.
\]

For the additive lower-order flux, Sobolev duality and Young's inequality
imply
\[
  \abs*{\int_\T Rm_y\dd y}
  \lesssim\norm R_{H^{1-\gamma/2}}
            \norm m_{\dot H^{\gamma/2}}
  \le\varepsilon\norm m_{\dot H^{\gamma/2}}^2
  +C_\varepsilon\norm R_{H^{1-\gamma/2}}^2.
\]

For the linear lower-order flux, multiplication by
\(Q_\lambda\in C^\rho\), \(\rho>1-\gamma/2\), is bounded on
\(H^{1-\gamma/2}\).  Interpolation between \(L^2\) and
\(H^{\gamma/2}\) gives
\[
 \norm m_{H^{1-\gamma/2}}
 \lesssim
 \norm m_{L^2}^{2(\gamma-1)/\gamma}
 \norm m_{H^{\gamma/2}}^{(2-\gamma)/\gamma}
 +\norm m_{L^2}.
\]
Sobolev duality therefore yields
\[
\begin{aligned}
 \abs*{\int_\T Q_\lambda m m_y\dd y}
 &\lesssim \norm m_{H^{\gamma/2}}
             \norm{Q_\lambda m}_{H^{1-\gamma/2}}\lesssim\norm{Q_\lambda}_{C^\rho}
 \norm m_{H^{\gamma/2}}
 \bigl(\norm m_{H^{1-\gamma/2}}+\norm m_{L^2}\bigr)\\
 &\lesssim\norm{Q_\lambda}_{C^\rho}
 \left(
   \norm m_{L^2}^{2(\gamma-1)/\gamma}
   \norm m_{H^{\gamma/2}}^{2/\gamma}
   +\norm m_{L^2}\norm m_{H^{\gamma/2}}
 \right)\\
 &\le\varepsilon\norm m_{\dot H^{\gamma/2}}^2
 +C_\varepsilon\left(1+
   \norm{Q_\lambda}_{C^\rho}^{\gamma/(\gamma-1)}\right)\norm m_{L^2}^2.
\end{aligned}
\]

Choosing \(\varepsilon\) small in the preceding two lower-order estimates, the
energy identity becomes
\[
\begin{aligned}
 \frac{\dd}{\dd t}\norm{m(t)}_{L^2}^2
 +c_\Psi\norm{m(t)}_{\dot H^{\gamma/2}}^2
 \lesssim_\Psi{}&\left[1+
 \norm{Q_\lambda(t)}_{C^\rho}^{\gamma/(\gamma-1)}\right]\norm{m(t)}_{L^2}^2\\
 &+\norm{R(t)}_{H^{1-\gamma/2}}^2
 +[a(t)]_{C^\beta}\norm{m(t)^3}_{B^{1-\beta}_{1,1}}.
\end{aligned}
\]
The integrating-factor argument and
\[
  \int_0^T\norm{m(t)}_{H^{\gamma/2}}^2\dd t
  \lesssim\int_0^T\norm{m(t)}_{\dot H^{\gamma/2}}^2\dd t
  +T\sup_{t\le T}\norm{m(t)}_{L^2}^2
\]
prove \eqref{eq:energy-with-Z}.
\end{proof}

\subsection{Cubic increment estimate and closure}
\label{sec:khm}

The preceding energy quantities already give, by interpolation and the
one-dimensional Besov embedding,
\begin{equation}\label{eq:energy-besov}
 \norm m_{L^3(0,T;B^s_{3,3})}^3
 \lesssim
 \norm m_{L^\infty(0,T;L^2)}
 \norm m_{L^2(0,T;H^{\gamma/2})}^2
 \lesssim E_T^{3/2},
 \qquad 0<s<\frac{2\gamma-1}{6}.
\end{equation}
In particular, this yields \(s<1/2\) when \(\gamma=2\).  The quadratic
transport flux provides a further coercive mechanism in the increment
variable through the K\'arm\'an--Howarth--Monin identity.  We now use it to
estimate the cubic increments directly.  In contrast with the
\(E_T^{3/2}\)-dependence above, the resulting estimate is linear in \(E_T\),
\(Z_T^\beta\), and the squared norm of the lower-order forcing.

For \(0<s<1/3\), define the cubic increment functional
\begin{equation}\label{eq:KHM-functional}
 K_T^s(m):=
 \int_0^T\int_0^1 h^{-1-3s}
 \norm{\delta_hm(t)}_{L^3}^3\dd h\dd t.
\end{equation}

The periodic difference characterization
\eqref{eq:besov-difference-characterization} and the corresponding
\(L^1\)-Poincar\'e estimate give
\begin{equation}\label{eq:Besov-from-KHM}
 \norm m_{L^3(0,T;B^s_{3,3})}^3
 \lesssim_s
 K_T^s(m)
 +T\sup_{t\le T}\norm{m(t)}_{L^1}^3,
 \qquad 0<s<\frac13.
\end{equation}

We first estimate \(K_T^s\) directly from the transformed equation.  Set
\(G:=2Q_\lambda m+R\); then
\begin{equation}\label{eq:A-equation}
  m_t+\Acal_{\Psi}^{\gamma}m
  =\partial_y(am^2+G).
\end{equation}
We keep \(\delta_h(\Acal_{\Psi}^{\gamma}m)\) intact and apply
\eqref{eq:A-form-bound}, or \eqref{eq:local-form-bound} at \(\gamma=2\).

\begin{proposition}[Cubic increment estimate]
\label{prop:fractional-khm-base}
Let \(0<s<1/3\), \(0<\beta<1\), and suppose
\(a\in L^\infty(0,T;C^\beta)\) with \(a\ge a_*>0\) and
\(G\in L^2(0,T;H^{1-\gamma/2})\).  Assume that
\(\Acal_\Psi^\gamma\) satisfies \eqref{eq:A-form-bound} when
\(1<\gamma<2\), and \eqref{eq:local-form-bound} when \(\gamma=2\).  Every
smooth solution of
\eqref{eq:A-equation} satisfies
\begin{equation}\label{eq:khm-base}
\begin{aligned}
  K_T^s(m)
  \lesssim_{a_*,\Psi}\biggl[{}&
    \norm{m(0)}_{L^2}^2+\norm{m(T)}_{L^2}^2
    +\int_0^T\norm{m(t)}_{H^{\gamma/2}}^2\dd t\\
    &{}+\norm a_{L^\infty(0,T;C^\beta)}Z_T^\beta
    +\norm G_{L^2(0,T;H^{1-\gamma/2})}^2
  \biggr].
\end{aligned}
\end{equation}
\end{proposition}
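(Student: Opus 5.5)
We follow the modified Kármán--Howarth--Monin argument of \cite[Lemma~2.4]{GoldmanJosienOtto2015BurgersKS}. The idea is to test the equation \eqref{eq:A-equation} against a cubic function of the increment, chosen so that the Burgers flux \(\partial_y(am^2)\) yields a coercive term of the form \(\int\abs{\delta_hm}^3\). Fix \(h\in(0,1)\) and write \(w:=\delta_hm\), so \(w(y)=m(y+h)-m(y)\). Differencing \eqref{eq:A-equation} gives
\[
 w_t+\delta_h(\Acal_\Psi^\gamma m)=\partial_y\delta_h(am^2)+\partial_y\delta_hG .
\]
As in the scalar case, we test against the primitive-type weight. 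Set \(W_h(y):=\int_0^h w(y-\eta)\,\dd\eta\), or equivalently use the GJO quantity \(\int_0^h\!\int_\T m(y)m(y+\eta)\dd y\,\dd\eta\), i.e. the correlation integrated in the shift. The time derivative of this correlation is controlled by \(\norm{m}_{L^2}^2\) at the endpoints. The transport term, for the frozen coefficient \(a\equiv\bar a\), produces exactly \(\frac{\bar a}{3}\int_\T(\delta_hm)^3\dd y\) up to a sign; this is the KHM identity \(\partial_h\int m(y)m(y+h)\sim\frac13\int(\delta_hm)^3\).

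The signed cubic is then converted into the nonnegative \(\int\abs{\delta_hm}^3\) by the GJO modification. We test instead with a weight such as \(\int_0^h\bigl(\delta_\eta m\bigr)\) combined with the one-sided structure of Burgers. Concretely, we use the identity \(\frac{\dd}{\dd t}\int(\delta_hm)^2\) and integrate in \(h\) against \(h^{-1-3s}\) only after the estimate in the form
\[
 \int_\T\abs{\delta_hm}^3\lesssim \partial_h\Bigl(\text{correlation}\Bigr)+\text{errors},
\]
which is what GJO obtain from the convexity of \(r\mapsto r^2\) and the entropy-type inequality. We then integrate in time and multiply by \(h^{-1-3s}\) and integrate over \(h\in(0,1)\). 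The restriction \(s<1/3\) is exactly what makes \(h^{-3s}\) integrable after one \(h\)-integration by parts, since \(\partial_h\) is traded for a factor \(h^{-1-3s}\to h^{-3s}\).

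The error terms are bounded as follows. The variable part \(a-\bar a\), after integration by parts, gives \(\ip{a_y}{\text{cubic in }m}\) localized at scale \(h\). By \eqref{eq:endpoint-holder-besov} this is bounded by \([a]_{C^\beta}\norm{m^3}_{B^{1-\beta}_{1,1}}\), uniformly in the weighted \(h\)-integral, which gives the \(\norm a_{L^\infty C^\beta}Z_T^\beta\) term; the lower bound \(a\ge a_*\) fixes the coercivity constant. For the diffusion term we keep \(\delta_h(\Acal^\gamma_\Psi m)\) intact. We pair it with the \(h\)-integrated test function, bound it by \eqref{eq:A-form-bound} in \(H^{-\gamma/2}\) against \(H^{\gamma/2}\), and note \(\norm{W_h}_{H^{\gamma/2}}\lesssim h\norm m_{H^{\gamma/2}}\), which is integrable against \(h^{-3s}\). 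At \(\gamma=2\) we use \eqref{eq:local-form-bound} instead. This yields \(\int_0^T\norm m_{H^{\gamma/2}}^2\). The \(G\)-flux is handled by duality \(H^{1-\gamma/2}\times H^{\gamma/2}\) and Young's inequality, giving \(\norm G^2_{L^2H^{1-\gamma/2}}\) plus energy. The boundary terms at \(t=0,T\) give \(\norm{m(0)}_{L^2}^2+\norm{m(T)}_{L^2}^2\).

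The main obstacle is the second step: producing the nonnegative \(\abs{\delta_hm}^3\) rather than the signed cube while \(a\) is nonconstant. Our first attempt will reproduce the GJO argument with \(a\) frozen at \(a(y)\) locally. The commutator \(\delta_h(am^2)-a\,\delta_h(m^2)=(\delta_ha)m^2(\cdot+h)\) is \(O(h^\beta)\) and must be absorbed into the \(Z_T^\beta\) term rather than requiring \(L^\infty\) control of \(m\). If this fails, we would fall back on bounding the commutator via \(h^\beta\norm m_{L^3}^3\), which is integrable against \(h^{-1-3s}\) for small \(s\), and then use the Gagliardo--Nirenberg inequality \eqref{eq:GN-general}.
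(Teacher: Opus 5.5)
There is a genuine gap. Your outline has the right skeleton: difference the equation, keep \(\delta_h(\Acal_\Psi^\gamma m)\) intact and use \eqref{eq:A-form-bound}, treat \(G\) by \(H^{1-\gamma/2}\)--\(H^{\gamma/2}\) duality, push the variable-coefficient error into \(Z_T^\beta\) through \eqref{eq:endpoint-holder-besov}, and use \(3s<1\) for the weight. But the two steps that actually make the argument work are missing.

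\textbf{The test function.} You never fix one, and every candidate you list leads to the classical signed cube \(\int_\T(\delta_hm)^3\dd y\), which is not coercive. Those candidates are the shift-integrated correlation \(\int_0^h\int_\T m\,m_\eta\dd y\dd\eta\), the linear weight \(W_h\), and \(\frac{\dd}{\dd t}\int_\T(\delta_hm)^2\dd y\). The modification of \cite{GoldmanJosienOtto2015BurgersKS} is not a convexity or entropy argument. One pairs the increment equation at shift \(r\) with a smooth approximation of \(\abs{\delta_rm}\).

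\begin{itemize}
\item The time term is then \(\frac12\frac{\dd}{\dd t}\int_\T\delta_rm\abs{\delta_rm}\dd y\). After integrating in \(t\) and in \(r\in(0,h)\), it is bounded by a multiple of \(h\bigl(\norm{m(0)}_{L^2}^2+\norm{m(T)}_{L^2}^2\bigr)\).
\item For constant \(a\), one has \(\abs{\delta_rm}\,\partial_y\delta_r(am^2)=\frac a3\partial_r\abs{\delta_rm}^3+\partial_y(\cdots)\). So the \(r\)-integral over \((0,h)\) produces \(\int\abs{\delta_hm}^3\) directly.
\item There is no integration by parts in \(h\). The factor \(h\) coming from \(\int_0^h\dd r\) is what makes \(h^{-1-3s}\) integrable.
\item With this test function, the diffusion and \(G\) pairings need \(\norm{\abs{\delta_rm}}_{H^{\gamma/2}}\lesssim\norm m_{H^{\gamma/2}}\), which uses \(\gamma/2\le1\).
\end{itemize}

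\textbf{The variable coefficient.} Your handling of it does not close. Freezing \(a\) at a constant leaves an \(O(1)\) error. The local commutator \((\delta_ra)m_r^2\) is \(O(h^\beta)\) only pointwise: it sits under \(\partial_y\) and is tested against \(\abs{\delta_rm}\).

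\begin{itemize}
\item If the derivative falls on \(m\), you have control only in \(H^{\gamma/2}\). This produces cubic terms that are not among those in \eqref{eq:khm-base}.
\item If it falls on \(\delta_ra\), then \(\partial_y\delta_ra\) is merely bounded (not small) in \(C^{\beta-1}\). It is paired with \(\abs{\delta_rm}\,m_r^2\), which couples \(m^2\) at one point with an increment anchored at another. Its \(B^{1-\beta}_{1,1}\)-norm is not controlled by \(\norm{m^3}_{B^{1-\beta}_{1,1}}\).
\end{itemize}

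Your fallback via \(h^\beta\norm m_{L^3}^3\) and \eqref{eq:GN-general} ignores this derivative. It would also bring in the \(L^1\) bound, so it does not give \eqref{eq:khm-base}.

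The missing ingredient is the exact identity \eqref{eq:variable-coefficient-khm-identity}. Start from the symmetric split \(\delta_r(am^2)=\frac{a+a_r}{2}\delta_r(m^2)+\frac{\delta_ra}{2}(m_r^2+m^2)\). Every term containing \(\partial_ym_r\) is then absorbed into a periodic divergence plus the \(r\)-derivative of a density:
\[
 \int_\T\abs{\delta_rm}\,\partial_y\delta_r(am^2)\dd y
 =\partial_r\int_\T\Bigl[\frac{a+a_r}{6}\abs{\delta_rm}^3
 +\frac{\delta_ra}{2}\abs{\delta_rm}\,\delta_r(m^2)\Bigr]\dd y
 +\frac13\ip{\partial_y\delta_ra}{\abs{\delta_r(m^3)}}.
\]
After integration in \(r\), the three pieces are handled as follows.
\begin{itemize}
\item The \(\delta_ha\)-correction carries no derivative. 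It is bounded by \([a]_{C^\beta}h^\beta\norm{\delta_h(m^3)}_{L^1}\) via \(\abs{\delta_hm}\,\abs{\delta_h(m^2)}\lesssim\abs{\delta_h(m^3)}\). Since \(h^{\beta-1-3s}\le h^{\beta-2}\), this integrates to the difference seminorm of \(m^3\) in \(B^{1-\beta}_{1,1}\).
\item The remaining pairing is bounded by \eqref{eq:endpoint-holder-besov}, because \(\norm{\abs{\delta_r(m^3)}}_{B^{1-\beta}_{1,1}}\lesssim\norm{m^3}_{B^{1-\beta}_{1,1}}\).
\item The coercive density satisfies \(\frac{a+a_h}{6}\abs{\delta_hm}^3\ge\frac{a_*}{3}\abs{\delta_hm}^3\). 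This is where \(a\ge a_*\) enters.
\end{itemize}
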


\begin{proof}
We first perform the calculation for smooth coefficients.  The resulting
bounds depend only on the norms in \eqref{eq:khm-base}, so the stated
estimate for \(a\in C^\beta\) and \(G\in H^{1-\gamma/2}\) follows by
approximation.  Taking an increment of \eqref{eq:A-equation} gives
\[
  \partial_t\delta_hm+\delta_h(\Acal_{\Psi}^{\gamma}m)
  =\partial_y\delta_h(am^2)+\partial_y\delta_hG.
\]
Pair this equation with a smooth approximation of
\(\abs{\delta_hm}\) and integrate in \(y\).
The variable flux satisfies the identity
\begin{equation}
\label{eq:variable-coefficient-khm-identity}
\begin{aligned}
 &\int_\T\abs{\delta_rm}\,\partial_y\delta_r(am^2)\dd y\\
 &\quad=\partial_r\int_\T\left[
   \frac{a+a_r}{6}\abs{\delta_rm}^3
   +\frac{\delta_ra}{2}\abs{\delta_rm}\,\delta_r(m^2)
 \right]\dd y
 +\frac13\ip{\partial_y\delta_ra}{\abs{\delta_r(m^3)}}.
\end{aligned}
\end{equation}
To prove it, use the symmetric decomposition of the variable-flux increment:
\[
 \delta_r(am^2)
 =\frac{a+a_r}{2}\delta_r(m^2)
  +\frac{\delta_ra}{2}(m_r^2+m^2).
\]
On the other hand, we can rewrite the density inside
the \(r\)-derivative in \eqref{eq:variable-coefficient-khm-identity} as
\[
\frac{a+a_r}{6}\abs{\delta_rm}^3
 +\frac{\delta_ra}{2}\abs{\delta_rm}\,\delta_r(m^2)
 =\frac13\delta_rm\abs{\delta_rm}
 \bigl[a_r(m+2m_r)-a(2m+m_r)\bigr].
\]
Since \(\partial_rm_r=\partial_ym_r\) and
\(\partial_ra_r=\partial_ya_r\), the two factors on the right-hand side of
the preceding identity satisfy
\[
\begin{aligned}
 \partial_r\bigl(\delta_rm\abs{\delta_rm}\bigr)
 &=2\abs{\delta_rm}\partial_ym_r,\\
 \partial_r\bigl[a_r(m+2m_r)-a(2m+m_r)\bigr]
 &=(\partial_ya_r)(m+2m_r)+(2a_r-a)\partial_ym_r.
\end{aligned}
\]
After the Leibniz rule, the two contributions containing
\(\abs{\delta_rm}\partial_ym_r\) can be merged as
\[
\begin{aligned}
 &\frac13\abs{\delta_rm}\partial_ym_r
 \Bigl\{
  2\bigl[a_r(m+2m_r)-a(2m+m_r)\bigr]
  +\delta_rm(2a_r-a)
 \Bigr\}\\
 &\qquad=
 \abs{\delta_rm}\bigl[2a_rm_r-a(m+m_r)\bigr]\partial_ym_r.
\end{aligned}
\]
Keeping the remaining \(\partial_ya_r\)-term gives
\[
\begin{aligned}
 &\partial_r\left[
   \frac{a+a_r}{6}\abs{\delta_rm}^3
   +\frac{\delta_ra}{2}\abs{\delta_rm}\,\delta_r(m^2)
 \right]\\
 &\quad=\abs{\delta_rm}\left[
   \bigl(2a_rm_r-a(m+m_r)\bigr)\partial_ym_r
   +\frac{\delta_rm}{3}(\partial_ya_r)(m+2m_r)
 \right]\\
 &\quad=\abs{\delta_rm}\,\partial_y\delta_r(am^2)
 -\frac13(\partial_y\delta_ra)\abs{\delta_rm}
   (m^2+mm_r+m_r^2)\\
 &\qquad
 -\frac13\partial_y\left[
   a\,\delta_rm\abs{\delta_rm}(2m+m_r)
 \right].
\end{aligned}
\]
The final term is a spatial divergence and integrates to zero by periodicity.
Since
\(\abs{\delta_rm}(m^2+mm_r+m_r^2)=\abs{\delta_r(m^3)}\),
integration of the preceding identity proves
\eqref{eq:variable-coefficient-khm-identity}.
Combining the tested equation with
\eqref{eq:variable-coefficient-khm-identity}, integrating over
\((t,r)\in(0,T)\times(0,h)\), and using \(\delta_0m=0\) yields
\begin{equation}
\begin{aligned}
 \frac16\int_0^T\int_\T
 (a+a_h)\abs{\delta_hm}^3\dd y\dd t=\mathrm{(I)}+\mathrm{(II)}+\mathrm{(III)}+\mathrm{(IV)},
\end{aligned}
\end{equation}
where
\begin{align*}
 \mathrm{(I)}
 &:={}
 \frac12\left[
   \int_0^h\int_\T
   \delta_r m\abs{\delta_r m}\dd y\dd r
 \right]_{t=0}^{t=T}, \quad
 \mathrm{(II)}
 :={}
 \int_0^T\int_0^h
 \ip{\abs{\delta_r m}}
    {\delta_r(\Acal_{\Psi}^{\gamma}m)}
 \dd r\dd t,\\
 \mathrm{(III)}
 &:={}
 -\int_0^T\int_0^h
 \ip{\abs{\delta_r m}}{\partial_y\delta_r G}
 \dd r\dd t,\\
 \mathrm{(IV)}
 &:={}
 -\frac12\int_0^T\int_\T
 (\delta_ha)\abs{\delta_hm}\,\delta_h(m^2)\dd y\dd t\\
 &\quad
 -\frac13\int_0^T\int_0^h
 \ip{\partial_y\delta_ra}{\abs{\delta_r(m^3)}}\dd r\dd t.
\end{align*}
Multiply by \(h^{-1-3s}\) and integrate over \(h\in(0,1)\).
Since \((a+a_h)/6\ge a_*/3\), the left-hand side controls
\(K_T^s(m)\).  For \(\mathrm{(I)}\),
\[
 \abs{\mathrm{(I)}}
 \lesssim
 h\sum_{t\in\{0,T\}}\norm{m(t)}_{L^2}^2.
\]
Consequently, since \(3s<1\),
\[
 \int_0^1h^{-1-3s}\abs{\mathrm{(I)}}\dd h
 \lesssim
 \sum_{t\in\{0,T\}}\norm{m(t)}_{L^2}^2
 \int_0^1h^{-3s}\dd h.
\]
For \(\mathrm{(II)}\), translations are isometries on \(H^r(\T)\) for
every \(r\in\R\).  Since \(\gamma/2\le1\), Lipschitz composition by the
absolute value, with the weak chain rule at \(\gamma=2\), gives
\[
 \begin{aligned}
 \norm{\abs{\delta_r m}}_{H^{\gamma/2}}
 &\lesssim\norm{\delta_r m}_{H^{\gamma/2}}
 \lesssim\norm m_{H^{\gamma/2}},\\
 \norm{\delta_r(\Acal_\Psi^\gamma m)}_{H^{-\gamma/2}}
 &\lesssim\norm{\Acal_\Psi^\gamma m}_{H^{-\gamma/2}}
 \lesssim_\Psi\norm m_{H^{\gamma/2}}.
 \end{aligned}
\]
Sobolev duality therefore gives
\[
 \abs{\ip{\abs{\delta_r m}}
 {\delta_r(\Acal_\Psi^\gamma m)}}
 \lesssim_\Psi\norm m_{H^{\gamma/2}}^2.
\]
This argument keeps the translated output
\(\delta_r(\Acal_\Psi^\gamma m)\) intact and introduces no commutator.
It follows directly that
\[
 \abs{\mathrm{(II)}}
 \lesssim_\Psi
 h\int_0^T\norm m_{H^{\gamma/2}}^2\dd t.
\]
Hence
\[
 \int_0^1h^{-1-3s}\abs{\mathrm{(II)}}\dd h
 \lesssim_\Psi
 \int_0^T\norm m_{H^{\gamma/2}}^2\dd t
 \int_0^1h^{-3s}\dd h.
\]

For \(\mathrm{(III)}\), Sobolev duality and the same composition bound lead to
\[
 \begin{aligned}
 \abs{\ip{\abs{\delta_r m}}{\partial_y\delta_r G}}
 \lesssim
 \norm{\delta_r m}_{H^{\gamma/2}}
 \norm{\delta_r G}_{H^{1-\gamma/2}}\lesssim
 \norm m_{H^{\gamma/2}}^2+\norm G_{H^{1-\gamma/2}}^2,
 \end{aligned}
\]
and therefore
\[
 \abs{\mathrm{(III)}}
 \lesssim h\left(
 \int_0^T\norm m_{H^{\gamma/2}}^2\dd t
 +\norm G_{L^2_tH^{1-\gamma/2}}^2
 \right).
\]
Thus
\[
 \int_0^1h^{-1-3s}\abs{\mathrm{(III)}}\dd h
 \lesssim
 \left(
 \int_0^T\norm m_{H^{\gamma/2}}^2\dd t
 +\norm G_{L^2_tH^{1-\gamma/2}}^2
 \right)
 \int_0^1h^{-3s}\dd h.
\]

For the first term in \(\mathrm{(IV)}\), the factorization of the
cubic increment and an elementary quadratic bound give
\[
 \abs{\delta_hm}\,\abs{\delta_h(m^2)}
 \lesssim\abs{\delta_h(m^3)}.
\]
Consequently, using
\(\abs{\delta_ha}\le[a]_{C^\beta}h^\beta\) and
\(h^{\beta-1-3s}\le h^{\beta-2}\),
its scale integral is bounded by
\[
 \begin{aligned}
 \norm a_{L^\infty_tC^\beta}
 \int_0^T\int_0^1
 h^{\beta-1-3s}\norm{\delta_h(m^3)}_{L^1}\dd h\dd t
 \lesssim
 \norm a_{L^\infty_tC^\beta}Z_T^\beta.
 \end{aligned}
\]
For the second term, since \(0<1-\beta<1\), boundedness under translations and
Lipschitz composition by the absolute value give
\[
 \begin{aligned}
 \norm{\partial_y\delta_r a}_{C^{\beta-1}}
 &\lesssim[a]_{C^\beta},\\
 \norm{\abs{\delta_r(m^3)}}_{B^{1-\beta}_{1,1}}
 &\lesssim\norm{\delta_r(m^3)}_{B^{1-\beta}_{1,1}}
 \lesssim\norm{m^3}_{B^{1-\beta}_{1,1}}.
 \end{aligned}
\]
Applying \eqref{eq:endpoint-holder-besov} to \(\delta_r a\) therefore gives
at each time, uniformly in \(r\),
\[
 \abs{\ip{\partial_y\delta_r a}{\abs{\delta_r(m^3)}}}
 \lesssim
 [a]_{C^\beta}\norm{m^3}_{B^{1-\beta}_{1,1}}.
\]
Its \(r\)-integral is thus bounded by \(h\) times the right-hand side.
After integration in \(h\) and \(t\), the two parts give
\[
 \int_0^1h^{-1-3s}\abs{\mathrm{(IV)}}\dd h
 \lesssim
 \norm a_{L^\infty_tC^\beta}Z_T^\beta.
\]
Combining the estimates of \(\mathrm{(I)}\)--\(\mathrm{(IV)}\) proves
\eqref{eq:khm-base}.
\end{proof}

With the Burgers normalization
\(m\partial_ym=\frac12\partial_y(m^2)\), the transport calculation in the
proof of
\cite[Lemma~4.1, equations~(34)--(35)]{GoldmanJosienOtto2015BurgersKS}
yields
\[
\begin{aligned}
 \frac12\abs{\delta_rm}\,\partial_y\delta_r(m^2)
 ={}&\frac16\partial_r\abs{\delta_rm}^3\\
 &+\frac12\partial_y\left(
   m\abs{\delta_rm}\delta_rm+\frac13\abs{\delta_rm}^3
 \right).
\end{aligned}
\]
Periodic integration in \(y\) removes the spatial-divergence term and leaves
\[
 \frac12\int_\T\abs{\delta_rm}\,\partial_y\delta_r(m^2)\dd y
 =\frac16\partial_r\int_\T\abs{\delta_rm}^3\dd y.
\]
This is exactly \eqref{eq:variable-coefficient-khm-identity} when
\(a\equiv\frac12\): both the \(\delta_ra\)-correction in the scale flux and
the final \(\partial_y\delta_ra\)-pairing vanish.  The Zvonkin change of
coordinates makes \(a\) spatially variable, so these two terms must instead be
retained and are precisely the two contributions estimated in
\(\mathrm{(IV)}\).

\medskip
Returning to the closure, it now suffices to control \(Z_T^\beta\).  The
following deterministic interpolation lemma treats the two parameter
regimes.

\begin{lemma}[Interpolation bounds for the cubic Besov functional]
\label{lem:cubic-besov-interpolation}
Let \(1<\gamma\le2\), \(1-\gamma/2<\beta<1\), and let \(m\) be smooth
with \(\sup_{t\le T}\norm{m(t)}_{L^1}\le M\).
Let \(E_T\) and \(Z_T^\beta\) be as in
\Cref{prop:energy-cubic}.  For every \(\varepsilon>0\), the following
bounds hold with a finite constant \(C_\varepsilon\), whose value may
change from line to line and may depend on \(T,M,\gamma,\beta\), but not on
\(m\).
\begin{enumerate}[label=(\roman*),leftmargin=*]
\item If \(\beta>2-\gamma\), then
 \begin{equation}\label{eq:direct-energy-absorption}
 Z_T^\beta
 \le\varepsilon E_T+C_\varepsilon.
\end{equation}

\item If \(1<\gamma<2\) and
\(1-\gamma/2<\beta\le2-\gamma\), then there exists
\(s_0\in(1/12,1/3)\), depending only on
\(\gamma\) and \(\beta\), such that
 \begin{equation}\label{eq:mixed-cubic-young}
 Z_T^\beta
 \le\varepsilon\bigl(E_T+K_T^{s_0}\bigr)
 +C_\varepsilon.
 \end{equation}
\end{enumerate}
\end{lemma}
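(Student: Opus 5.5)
Both parts rest on a pointwise-in-time bound for \(\norm{m^3}_{B^{1-\beta}_{1,1}}\) in terms of \(\norm m_{L^1}\le M\), \(\norm m_{\dot H^{\gamma/2}}\), and, for (ii), the cubic increment quantity.

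\emph{Step 1: product estimate.} Use the fractional Leibniz rule in \(B^{\sigma}_{1,1}\) with \(\sigma:=1-\beta\in(0,1)\):
\[
 \norm{m^3}_{B^{\sigma}_{1,1}}\lesssim \norm{m}_{B^{\sigma}_{p_1,1}}\norm m_{L^{p_2}}^2,
 \qquad \tfrac1{p_1}+\tfrac2{p_2}=1.
\]
The natural choice is \(p_1=p_2=3\). Equivalently, one can work directly from the difference characterization, \(\abs{\delta_h(m^3)}\lesssim\abs{\delta_hm}(m^2+m_h^2)\), and apply Hölder.

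\emph{Step 2: part (i).} Embed \(H^{\gamma/2}\) into \(B^{\sigma+}_{3,1}\) when \(\gamma/2-1/6>\sigma\). Then use Gagliardo--Nirenberg \eqref{eq:GN-general} and interpolation between \(L^1\) and \(\dot H^{\gamma/2}\) to get
\[
 \norm{m^3}_{B^{1-\beta}_{1,1}}\lesssim M^{3-\theta}\norm m_{\dot H^{\gamma/2}}^{\theta}+M^3 .
\]
Counting scaling in one dimension, \(m^3\in B^{\sigma}_{1,1}\) costs \(3\cdot\) (derivative order \(\sigma/3\) per factor plus integrability). With the interpolation parameter \(\vartheta=2/(\gamma+1)\) per half derivative, the exponent is
\[
 \theta=\frac{2(\sigma+2)}{\gamma+1}.
\]
Then \(\theta<2\) iff \(\sigma<\gamma-1\), i.e.\ \(\beta>2-\gamma\); this is exactly the condition in (i). I would verify this bookkeeping by exact Lebesgue exponents, choosing the Hölder split so each factor interpolates between \(L^1\) and \(\dot H^{\gamma/2}\). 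Then Young's inequality in time gives \(\int_0^T\norm m_{\dot H^{\gamma/2}}^\theta\le\varepsilon\int_0^T\norm m_{\dot H^{\gamma/2}}^2+C_\varepsilon\), which yields \eqref{eq:direct-energy-absorption}.

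\emph{Step 3: part (ii).} When \(\beta\le2-\gamma\), the energy alone is too weak, so I would trade some derivatives for \(K_T^{s_0}\). Write
\[
 \norm{m^3}_{B^{\sigma}_{1,1}}\approx\int_0^1h^{-1-\sigma}\norm{\delta_h(m^3)}_{L^1}\,dh+\norm{m^3}_{L^1},
\]
and bound \(\norm{\delta_h(m^3)}_{L^1}\le\norm{\delta_hm}_{L^3}\norm{m}_{L^3}^2\). Split off a fraction of \(\norm{\delta_hm}_{L^3}\), interpolating between \(\norm{\delta_hm}_{L^3}^3h^{-1-3s_0}\) (the integrand of \(K\)) and a higher regularity quantity \(\norm{\delta_h m}_{L^2}\lesssim h^{\gamma/2}\norm m_{\dot H^{\gamma/2}}\) or \(\norm{\delta_h m}_{L^\infty}\) controlled by Gagliardo--Nirenberg. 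Hölder in \((h,t)\) then gives
\[
 Z_T^\beta\lesssim (K_T^{s_0})^{a}\,E_T^{b}\,M^{c}+\dots,
\]
with \(a+b<1\) for a suitable \(s_0\) near \(1/3\). Young's inequality then yields \eqref{eq:mixed-cubic-young}.

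The main obstacle is the exponent bookkeeping in (ii). One must exhibit a single \(s_0\in(1/12,1/3)\) for which the combined homogeneity is strictly subcritical, i.e.\ \(a+b<1\), throughout \(1-\gamma/2<\beta\le2-\gamma\). I would first try \(s_0\) close to \(1/3\) and all non-\(K\) factors measured in \(L^\infty_tL^2\cap L^2_t H^{\gamma/2}\), then check that the lower boundary \(\beta>1-\gamma/2\), i.e.\ \(\sigma<\gamma/2\), is exactly what makes the scaling count close.
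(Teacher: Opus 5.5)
Your architecture for (ii) matches the paper's: H\"older on $\delta_h(m^3)$, trading part of the increment for $K_T^{s_0}$ with $s_0$ near $1/3$, then Young in time. However, the way you fix the Lebesgue exponents loses $1/6$ of a derivative, and the argument does not close.

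\textbf{Where it fails.} You take $\norm{\delta_h(m^3)}_{L^1}\lesssim\norm{\delta_hm}_{L^3}\norm m_{L^3}^2$, so you must control $\int_0^1h^{-1-\sigma}\norm{\delta_hm}_{L^3}\dd h$ with $\sigma=1-\beta$. In $L^3$-differences, $K_T^{s_0}$ gives regularity $s_0<1/3$, and $H^{\gamma/2}$ gives at most $\gamma/2-1/6$; this embedding is sharp. Routing $\norm{\delta_hm}_{L^3}$ through $\norm{\delta_hm}_{L^2}$ requires a higher Lebesgue norm, and taking $L^\infty$ via Gagliardo--Nirenberg yields only $h^{\gamma/3}$. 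Convergence at small $h$ therefore forces $1-\beta<(1-\theta)(\gamma/2-1/6)+\theta s_0\le\gamma/2-1/6$. This fails on the subrange $1-\gamma/2<\beta<\min\{7/6-\gamma/2,\,2-\gamma\}$ of (ii), which is nonempty for every $\gamma\in(1,2)$ and is all of (ii) when $\gamma>5/3$.

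No choice of $s_0$ rescues your chain. Take time-independent $m=g+\lambda^{(1-\gamma)/2}b(\lambda\,\cdot)$ with fixed smooth $g\ne0$ and a fixed bump $b$. As $\lambda\to\infty$, the quantities $M$, $E_T$, $K_T^{s_0}$ (and $Z_T^\beta$ itself) stay bounded, while $\norm m_{B^\sigma_{3,1}}\norm m_{L^3}^2\sim\lambda^{\sigma+1/6-\gamma/2}\to\infty$.

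The same loss affects your ``natural'' split $p_1=p_2=3$ in (i). It needs $1-\beta<\gamma/2-1/6$ and fails for $\gamma>5/3$, $2-\gamma<\beta\le 7/6-\gamma/2$. The paper's split $\norm{m^3}_{B^{1-\beta}_{1,1}}\lesssim\norm m_{L^4}^2\norm m_{B^{1-\beta}_{2,1}}$ needs only $1-\beta<\gamma/2$. Your hedge about choosing the split covers this, and your exponent $2(\sigma+2)/(\gamma+1)=(6-2\beta)/(\gamma+1)$ is the paper's.

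\textbf{The missing idea.} Measure the increment strictly between $L^2$ and $L^3$. For $\theta\in(0,1)$, use $\norm{m^3}_{B^{1-\beta}_{1,1}}\lesssim\norm m_{L^{12/(3+\theta)}}^2\norm m_{B^{1-\beta}_{6/(3-\theta),1}}$ together with $\norm m_{B^{1-\beta}_{6/(3-\theta),1}}\lesssim\norm m_{H^{\gamma/2}}^{1-\theta}\norm m_{B^{s_0}_{3,3}}^{\theta}$. This needs only $(1-\theta)\gamma/2+\theta s_0>1-\beta$.

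The factors $\norm m_{L^{12/(3+\theta)}}$ must then be interpolated between $L^1$ and $B^{s_0}_{3,3}$, not between energy norms as you propose. Using $L^\infty_tL^2$ pushes the total power of $E_T$ above one. Using $L^2_tH^{\gamma/2}$ forces $\theta>3(2-\gamma)/(\gamma+2)$, which is incompatible with the previous condition as $\beta\downarrow1-\gamma/2$.

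With $B^{s_0}_{3,3}$, time integrability becomes $\theta>(1-3s_0)/(1+s_0)$. This tends to $0$ as $s_0\uparrow1/3$, so an admissible $\theta$ exists. The bound $\norm m_{L^3_tB^{s_0}_{3,3}}^3\lesssim K_T^{s_0}+TM^3$ then finishes the proof.

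This also corrects your closing diagnosis. The condition $\beta>1-\gamma/2$ is not what makes the scaling/time-integrability count close; that count imposes nothing on $\beta$ once $s_0$ is near $1/3$. It is the small-$h$ summability condition, and it can be met only because the increment is measured in $L^p$ with $p$ close to $2$.
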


\begin{proof}
For part~(i), the difference characterization
\eqref{eq:besov-difference-characterization}, H\"older's inequality, and the
factorization
\(\delta_h(m^3)=\delta_hm\,(m_h^2+m_hm+m^2)\) give
\[
 \norm{m^3}_{B^{1-\beta}_{1,1}}
 \lesssim \norm m_{L^4}^2\norm m_{B^{1-\beta}_{2,1}}.
\]
Since \(1-\beta<\gamma/2\), the Besov--Gagliardo--Nirenberg interpolation
\cite[Sections~2.4--2.5]{BahouriCheminDanchin2011Fourier} gives
\[
 \norm m_{B^{1-\beta}_{2,1}}
 \lesssim
 \norm m_{L^1}^{1-\frac{3-2\beta}{\gamma+1}}
 \norm m_{H^{\gamma/2}}^{\frac{3-2\beta}{\gamma+1}}
 +\norm m_{L^1}.
\]
Combining these estimates with \eqref{eq:GN-general} at \(p=4\) and the
assumed \(L^1\) bound yields
\[
 \norm{m(t)^3}_{B^{1-\beta}_{1,1}}
 \lesssim_M
 1+\norm{m(t)}_{H^{\gamma/2}}^{\frac{6-2\beta}{\gamma+1}}.
\]
The exponent is strictly below \(2\) precisely when
\(\beta>2-\gamma\).  Integration in time and Young's inequality prove
\eqref{eq:direct-energy-absorption}. For part~(ii), choose \(s_0\in(1/12,1/3)\) sufficiently close to \(1/3\),
and then choose \(\theta\) such that
\[
 \frac{1-3s_0}{1+s_0}<\theta<
 \min\left\{
 1,\frac{\beta-(1-\gamma/2)}{\gamma/2-s_0}
 \right\}.
\]
Such a choice is possible because the left endpoint tends to zero as
\(s_0\uparrow1/3\), whereas the nontrivial upper bound remains positive
because \(\beta>1-\gamma/2\).  Fractional Leibniz and strict dyadic
interpolation \cite[Sections~2.4, 2.5 and~2.8]
{BahouriCheminDanchin2011Fourier} give
\[
 \norm{m^3}_{B^{1-\beta}_{1,1}}
 \lesssim
 \norm m_{L^{12/(3+\theta)}}^2
 \norm m_{B^{1-\beta}_{6/(3-\theta),1}},
\]
where we further have the controls
\[
\begin{aligned}
 \norm m_{B^{1-\beta}_{6/(3-\theta),1}}
 &\lesssim
 \norm m_{H^{\gamma/2}}^{1-\theta}
 \norm m_{B^{s_0}_{3,3}}^\theta,\\
 \norm m_{L^{12/(3+\theta)}}
 &\lesssim
 \norm m_{L^1}^{1-\frac{9-\theta}{4(3s_0+2)}}
 \norm m_{B^{s_0}_{3,3}}^{\frac{9-\theta}{4(3s_0+2)}}
 +\norm m_{L^1}.
\end{aligned}
\]
The interpolation exponent in the second line belongs to \((0,1)\):
its upper bound is equivalent to \(\theta>1-12s_0\), which follows
from \(s_0>1/12\) and \(\theta>0\).
For the middle estimate, the strict summability condition is
\[
 (1-\theta)\frac\gamma2+\theta s_0>1-\beta,
\]
which is exactly the stated upper bound on \(\theta\).  The preceding
estimates and the \(L^1\) bound imply
\[
\begin{aligned}
 \norm{m^3}_{B^{1-\beta}_{1,1}}
 \lesssim_M{}&1+
 \norm m_{H^{\gamma/2}}^{1-\theta}
 \left(
 1+\norm m_{B^{s_0}_{3,3}}^{
  \theta+\frac{9-\theta}{2(3s_0+2)}}
 \right).
\end{aligned}
\]
The only nonautomatic condition for time integrability is
\[
 \frac{1-\theta}{2}
 +\frac13\left(
  \theta+\frac{9-\theta}{2(3s_0+2)}
 \right)<1,
\]
and this is equivalent to the lower bound on \(\theta\).  H\"older's and
Young's inequalities therefore give
\[
 Z_T^\beta
 \le\varepsilon\left(
 E_T+\norm m_{L^3(0,T;B^{s_0}_{3,3})}^3
 \right)+C_\varepsilon.
\]
Using \eqref{eq:Besov-from-KHM} and relabeling \(\varepsilon\) after
absorbing its \(s_0\)-dependent constant proves
\eqref{eq:mixed-cubic-young}.
\end{proof}

\begin{theorem}[A priori bounds for the transformed equation]
\label{thm:khm}
Let \(m\) be a smooth transformed solution.  When \(1<\gamma<2\), assume
that \(\Acal_\Psi^\gamma\) satisfies
\Cref{prop:transformed-operator-paralinearization,%
prop:transformed-operator-coercivity}; when \(\gamma=2\), let it be the
Laplacian divergence-form operator in \eqref{eq:local-check}.  Assume also
that \eqref{eq:verified-coefficient-bounds} holds with
\[
 \rho>1-\frac\gamma2,
 \qquad
 1-\frac\gamma2<\beta<1.
\]
Then
\begin{equation}\label{eq:main-L1}
  \sup_{t\le T}\norm{m(t)}_{L^1}
  \lesssim_\Psi N_T<\infty.
\end{equation}
Moreover, there is a finite constant \(C_T\), depending only on \(T\), the
initial norms, and the coefficient, ellipticity, bi-Lipschitz, and operator
bounds in these assumptions, such that
\begin{equation}\label{eq:main-energy}
  \sup_{t\le T}\norm{m(t)}_{L^2}^2
  +\int_0^T\norm{m(t)}_{H^{\gamma/2}}^2\dd t
  +Z_T^\beta(m)
  \le C_T.
\end{equation}
For every \(0<s<1/3\), there is a finite constant \(C_{T,s}\), with the
same dependence as \(C_T\) and additionally on \(s\), such that
\begin{equation}\label{eq:khm-conclusion}
  K_T^s(m)\le C_{T,s}<\infty.
\end{equation}
\end{theorem}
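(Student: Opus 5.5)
The proof assembles the three deterministic ingredients already established. First, the \(L^1\) bound \eqref{eq:main-L1} is exactly \Cref{prop:global-L1}. Its hypotheses require the coercive entropy identity of \Cref{lem:transformed-operator-entropy-identity}, which only uses \eqref{eq:J-bounds}. They also require integrability of \(\norm a_{\dot H^{1-\gamma/2}}^2\), \(\norm{Q_0}_{H^{1-\gamma/2}}^2\), \(\norm R_{H^{1-\gamma/2}}^2\) and \(\lambda\norm{J-1}_{L^1}\) on \((0,T)\). Each of these follows from \eqref{eq:verified-coefficient-bounds} via \(C^\beta,C^\rho\hookrightarrow H^{1-\gamma/2}\) (since \(\beta,\rho>1-\gamma/2\)) and Hölder in time, using \(\gamma/(\gamma-1)\ge2\). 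This fixes \(M:=\sup_{t\le T}\norm{m(t)}_{L^1}\lesssim N_T\), independently of the energy, which is the input for \Cref{lem:cubic-besov-interpolation}.

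Next, the energy bound. \Cref{prop:energy-cubic} gives
\(E_T\le C_1+C_2 Z_T^\beta\), with \(C_1\) depending on \(\norm{u_0}_{L^2}\), \(\norm R_{L^2H^{1-\gamma/2}}\) and \(\int_0^T\norm{Q_\lambda}_{C^\rho}^{\gamma/(\gamma-1)}\), and with \(C_2\simeq\sup_t[a]_{C^\beta}\). I then split into the two regimes of \Cref{lem:cubic-besov-interpolation}. If \(\beta>2-\gamma\) (always possible at \(\gamma=2\)), part~(i) with \(\varepsilon C_2\le1/2\) gives \(E_T\le 2C_1+2C_2C_\varepsilon\), and then \(Z_T^\beta\le\varepsilon E_T+C_\varepsilon\) is bounded as well. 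In the complementary regime \(1-\gamma/2<\beta\le2-\gamma\), part~(ii) gives \(Z_T^\beta\le\varepsilon(E_T+K_T^{s_0})+C_\varepsilon\). To control \(K_T^{s_0}\) I use \Cref{prop:fractional-khm-base} with \(G=2Q_\lambda m+R\). Here \(a\ge a_*>0\) follows from \(a=(J^2)^\Phi\) and \eqref{eq:J-bounds}, and \eqref{eq:A-form-bound} is supplied by \Cref{prop:transformed-operator-paralinearization}. The term \(\norm G_{L^2H^{1-\gamma/2}}^2\) is bounded by \(\norm R^2_{L^2H^{1-\gamma/2}}\) plus \(\int\norm{Q_\lambda}_{C^\rho}^2\norm m_{H^{1-\gamma/2}}^2\). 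The interpolation used in \Cref{prop:energy-cubic} bounds the latter integrand by \(\norm{Q_\lambda}_{C^\rho}^2\norm m_{L^2}^{4(\gamma-1)/\gamma}\norm m_{H^{\gamma/2}}^{2(2-\gamma)/\gamma}\). After Young this is \(\le \eta\int\norm m_{H^{\gamma/2}}^2+C_\eta\sup_t\norm m_{L^2}^2\int\norm{Q_\lambda}^{\gamma/(\gamma-1)}_{C^\rho}\), which is \(\lesssim E_T\). Hence
\(K_T^{s_0}\le C_3(1+E_T+Z_T^\beta)\), linear in \(E_T\) and \(Z_T^\beta\); this linearity is the point of the KHM estimate. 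Substituting gives \(Z_T^\beta\le\varepsilon(1+C_3)(E_T+Z_T^\beta)+C'_\varepsilon\). With \(\varepsilon\) small, \(Z_T^\beta\le \tfrac12 E_T/C_2+C\), and inserting this into \(E_T\le C_1+C_2Z_T^\beta\) closes \(E_T\le C\), hence \eqref{eq:main-energy}. Since \(m\) is smooth on \([0,T]\), all quantities are finite a priori, so the absorption is legitimate.

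Finally, \eqref{eq:khm-conclusion} for arbitrary \(0<s<1/3\) follows by applying \Cref{prop:fractional-khm-base} once more at this \(s\). Its right-hand side involves only \(\norm{m(0)}_{L^2}\), \(\norm{m(T)}_{L^2}\le E_T^{1/2}\), \(\int\norm m^2_{H^{\gamma/2}}\), \(\norm a_{L^\infty C^\beta}Z_T^\beta\) and \(\norm G^2\), all of which are now bounded. The constants depend only on the listed data, and I would track this explicitly.

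I expect the main obstacle to be the bookkeeping in the mixed regime. There I must check that the constant in \eqref{eq:khm-base} multiplying \(Z_T^\beta\), and the \(G\)-estimate, stay independent of \(\varepsilon\), so that the circular dependence \(Z\to K\to Z\) genuinely closes with one small parameter. I must also confirm that the \(L^1\) bound \(M\) entering \(C_\varepsilon\) was obtained without reference to \(E_T\).
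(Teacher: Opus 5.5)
Your proposal is correct and follows the paper's proof essentially step for step. The steps are: the \(L^1\) bound from \Cref{prop:global-L1}; the forcing estimate \(\norm G_{L^2_tH^{1-\gamma/2}}^2\lesssim 1+E_T\); the split into the regimes \(\beta>2-\gamma\) and \(1-\gamma/2<\beta\le2-\gamma\) of \Cref{lem:cubic-besov-interpolation}; closure in the second regime via \Cref{prop:fractional-khm-base} at \(s_0\); and a final application of that proposition at arbitrary \(s\). The only cosmetic difference is in the \(Q_\lambda m\) part of \(G\): you use pointwise-in-time Young's inequality, while the paper uses H\"older in time against \(\norm m_{L^{2\gamma/(2-\gamma)}_tH^{1-\gamma/2}}^2\lesssim E_T\), and the two give the same bound.
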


\begin{proof}
The global \(L^1\) estimate \Cref{prop:global-L1} gives
\eqref{eq:main-L1} independently of \(E_T\).  With the endpoint convention
\(2\gamma/(2-\gamma)=\infty\) at \(\gamma=2\), interpolation between
\(L^\infty_tL^2\) and \(L^2_tH^{\gamma/2}\) gives, for
\(1<\gamma\le2\),
\[
\norm m_{L^{2\gamma/(2-\gamma)}_tH^{1-\gamma/2}}^2
 \lesssim E_T.
\]
Since multiplication by \(C^\rho\), \(\rho>1-\gamma/2\), is bounded on
\(H^{1-\gamma/2}\), H\"older's inequality yields
\[
 \norm{Q_\lambda m}_{L^2_tH^{1-\gamma/2}}
 \lesssim
 \norm{Q_\lambda}_{L^{\gamma/(\gamma-1)}_tC^\rho}
 \norm m_{L^{2\gamma/(2-\gamma)}_tH^{1-\gamma/2}} \lesssim E_T^{1/2}.
\]
Consequently,
\[
 \norm G_{L^2_tH^{1-\gamma/2}}^2
 \lesssim 1+E_T,
\]
where the prescribed coefficient and \(R\)-norms are absorbed into the
implicit constant.  \Cref{prop:energy-cubic} gives
\[
 E_T\lesssim1+Z_T^\beta.
\]
Apply \Cref{lem:cubic-besov-interpolation} with
\(M:=\sup_{t\le T}\norm{m(t)}_{L^1}\).  By \eqref{eq:main-L1},
\(M\lesssim_\Psi N_T\).

Suppose first that \(\beta>2-\gamma\).  Part~(i) of the lemma yields
\[
 Z_T^\beta\le\varepsilon E_T+C_\varepsilon.
\]
Choosing \(\varepsilon\) after fixing the coefficient norms closes
\(E_T+Z_T^\beta\).

In the remaining case \(1<\gamma<2\) and
\(1-\gamma/2<\beta\le2-\gamma\), let \(s_0\) be supplied by part~(ii) of
\Cref{lem:cubic-besov-interpolation}.  Applying
\Cref{prop:fractional-khm-base} at \(s_0\) and using the forcing estimate
gives
\[
\begin{aligned}
 E_T+K_T^{s_0}&\lesssim1+Z_T^\beta,\\
 Z_T^\beta&\le
 \varepsilon\bigl(E_T+K_T^{s_0}\bigr)
 +C_\varepsilon.
\end{aligned}
\]
Choosing \(\varepsilon\) after the implicit constant closes
\(E_T+K_T^{s_0}+Z_T^\beta\).  Thus \eqref{eq:main-energy} holds in both
regimes.  Finally, for any \(0<s<1/3\), the forcing estimate and
\Cref{prop:fractional-khm-base} give \eqref{eq:khm-conclusion} from the
now established bounds on \(E_T\) and \(Z_T^\beta\).
\end{proof}

\subsection{Canonical approximation and global consequences}
\label{sec:canonical-approximation-global-consequences}

The main estimate also yields compactness of the canonical approximations.

\begin{proposition}[Canonical approximation and compactness]
\label{prop:canonical-compactness}
Let \((\alpha,\gamma)\) satisfy \eqref{eq:admissible-alpha-gamma}, fix
\(T>0\), and let \(\eps_n\downarrow0\).  Let
\(\mathbb X^n_{\alpha,\gamma}\) be the canonical mollified enhancements of
\Cref{prop:fractional-enhancement}, and let
\(\mathbb X^n_{Q,\lambda}\) be the associated resolvent families from
\eqref{eq:resolvent-model-uniform}.  Suppose that \(u_0^n\in C^\infty(\T)\)
converges strongly to \(u_0\) in \(L^2(\T)\). After passage to a deterministic subsequence, not relabeled, there is an
event \(\Omega_T\) of probability one for which the following hold.
\begin{enumerate}[label=(\roman*),leftmargin=*,nosep]
\item\label{item:canonical-inputs-uniform-estimates}
On \(\Omega_T\), the model convergence of
\Cref{prop:fractional-enhancement} and the uniform resolvent-family
convergence in \eqref{eq:resolvent-model-uniform} hold along the chosen
subsequence.  For \(\omega\in\Omega_T\), use the local uniformity in
\Cref{prop:zvonkin} to choose one resolvent parameter
\(\lambda=\lambda(T,\omega)\) for the limiting and all approximate Zvonkin
maps. For each \(n\), let \(u^n\) be the smooth solution of
\eqref{eq:formal-sbe} driven by \(\xi^{\eps_n}\), with initial condition
\(u_0^n\).  Let \(\ell^n\) be its response, and let
\((\Psi^n,J^n)\) be the Zvonkin pair constructed with the common value of
\(\lambda\).  Define \(m^n\) by
\[
 u^n
 =X^n+X^{\zzone,n}+\ell^n+J^n(m^n)^{\Psi^n}.
\]
Let \((\ell,\Psi,J)\) be the corresponding limiting objects and define
\(a,Q_\lambda,R\) by \eqref{eq:coefficients}.  Then \(m^n(0)=u_0^n\), and the
bounds of \Cref{thm:main-result} hold uniformly in \(n\) for this fixed
realization.

\item\label{item:pathwise-compactness-identification}
For each \(\omega\in\Omega_T\), there is a further subsequence, possibly
depending on \(\omega\), and a limit \(m\) such that
\begin{align}
 m^n&\longrightarrow m
 &&\text{strongly in }L^2(0,T;L^2),\label{eq:m-strong-limit}\\
 m^n&\rightharpoonup m
 &&\text{weakly in }L^2(0,T;H^{\gamma/2}),\notag\\
 m^n&\rightharpoonup^\ast m
 &&\text{in }L^\infty(0,T;L^2).\notag
\end{align}
The limit has a representative in \(C_{\mathrm w}([0,T];L^2)\) with
\(m(0)=u_0\), solves \eqref{eq:transformed-equation} in distributions on
\((0,T)\times\T\), with the flux \eqref{eq:Dflux} when \(1<\gamma<2\) and
\eqref{eq:local-check} when \(\gamma=2\), and inherits the bounds of
\Cref{thm:main-result}.  When \(\gamma=2\), this includes the Besov range
\(0<s<1/2\).
\end{enumerate}
\end{proposition}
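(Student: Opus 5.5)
For part~\ref{item:canonical-inputs-uniform-estimates} I would start from the $L^p(\Omega)$ convergence in \Cref{prop:fractional-enhancement} and in \eqref{eq:resolvent-model-uniform}. Convergence in probability lets me extract a deterministic subsequence along which $\mathbb X^n_{\alpha,\gamma}\to\mathbb X_{\alpha,\gamma}$ and $\sup_{\lambda\ge1}\norm{\mathbb X^n_{Q,\lambda}-\mathbb X_{Q,\lambda}}\to0$ almost surely. I take $\Omega_T$ to be the resulting full-measure event, intersected with the event on which $\mathfrak N^{\rm Zv}_{T,\kappa}<\infty$. On $\Omega_T$ the quantities $\sup_n\norm{\mathbb X^n_{\alpha,\gamma}}_T$ and the resolvent-family norms are finite.

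Local uniformity in \Cref{prop:zvonkin} then gives a single $\lambda(T,\omega)$ for which all $\norm{J^n-1}_{L^\infty}\le1/2$ and $\norm{J-1}_{L^\infty}\le1/2$. \Cref{prop:response}, \Cref{prop:zvonkin} and \Cref{prop:coefficient-output} yield uniform-in-$n$ coefficient bounds \eqref{eq:verified-coefficient-bounds} for $(a^n,Q^n_\lambda,R^n)$, as well as convergence $a^n\to a$ in $L^\infty C^{\beta'}$ and $Q^n_\lambda\to Q_\lambda$, $R^n\to R$ in $C^{\rho'}$ for slightly smaller exponents. Because the mollified noise is smooth, $u^n$ is smooth and \Cref{prop:transformed} applies to $m^n$. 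The initial conventions \eqref{eq:transformed-initial-data} give $m^n(0)=u^n_0$, and $\norm{u_0^n}_{L^2}$ is bounded. \Cref{thm:main-result} (equivalently \Cref{thm:khm}) then gives bounds uniform in $n$.

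For part~\ref{item:pathwise-compactness-identification}, fix $\omega$. The uniform bounds give weak and weak-$\ast$ compactness in $L^2H^{\gamma/2}$ and $L^\infty L^2$. For strong compactness I would apply Aubin--Lions: write $\partial_tm^n=-\Acal^\gamma_{\Psi^n}m^n+\partial_y(a^n(m^n)^2+2Q^n_\lambda m^n+R^n)$.
\begin{itemize}
\item The diffusion term is bounded in $L^2H^{-\gamma/2}$ by \eqref{eq:A-form-bound} or \eqref{eq:local-form-bound}.
\item $(m^n)^2$ is bounded in $L^1_tL^1$ and, through \eqref{eq:GN-general}, in some $L^p_tL^1$ with $p>1$, so its derivative is bounded in $L^p_tH^{-1-}$.
\item The linear and forcing fluxes are bounded in $L^2_tH^{-\gamma/2}$ as in the proof of \Cref{thm:khm}.
\end{itemize}
Hence $\partial_tm^n$ is bounded in $L^1(0,T;H^{-N})$. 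Combined with compactness of $H^{\gamma/2}\hookrightarrow L^2$, this gives \eqref{eq:m-strong-limit}, and equicontinuity in $H^{-N}$ with the $L^\infty L^2$ bound gives a $C_{\rm w}([0,T];L^2)$ representative with $m(0)=u_0$.

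I expect identifying the limit equation to be the main obstacle, because the diffusion operator itself depends on $n$. I would test against $\varphi\in C_c^\infty((0,T)\times\T)$ and write
\[
\ip{\Acal^\gamma_{\Psi^n}m^n}{\varphi}=\ip{(\Acal^\gamma_{\Psi^n}-\Acal^\gamma_\Psi)m^n}{\varphi}+\ip{\Acal^\gamma_\Psi m^n}{\varphi}.
\]
The first term vanishes by the operator-continuity bound \eqref{eq:transformed-operator-difference} of \Cref{prop:transformed-operator-paralinearization} (and its local analogue at $\gamma=2$), since $\Psi^n\to\Psi$ in $C^{1+\beta}$ and $m^n$ is bounded in $L^2H^{\gamma/2}$. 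The second term converges by weak convergence, with $\Acal^\gamma_\Psi$ bounded from $H^{\gamma/2}$ to $H^{-\gamma/2}$.

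The quadratic term $a^n(m^n)^2\to am^2$ in $L^1$ follows from strong $L^2$ convergence and uniform convergence of $a^n$. The linear terms are similar. The remaining convergence is $R^n\to R$.

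Finally, the bounds of \Cref{thm:main-result} pass to the limit by lower semicontinuity. For $K^s_T$ I would use Fatou along a further a.e.\ subsequence, since $m^n\to m$ in $L^2$ implies $\delta_hm^n\to\delta_hm$ in $L^3$ locally, or at least pointwise a.e. For $L^\infty L^1$ I would argue through weak-$\ast$ limits. The $\gamma=2$ Besov range comes from \eqref{eq:energy-besov} applied to the limit.
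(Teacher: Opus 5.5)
Your proposal is correct and follows essentially the same route as the paper. Both arguments use an almost-surely convergent deterministic subsequence with one common resolvent parameter, Aubin--Lions--Simon compactness, the splitting of the moving diffusion term through the operator-difference estimate \eqref{eq:transformed-operator-difference} plus weak convergence, and lower semicontinuity (Fatou for $K_T^s$) to transfer the bounds.

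Two small corrections. First, the uniform $L^\infty_tL^2$ bound already puts $(m^n)^2$ in $L^\infty_tL^1$, so $\partial_tm^n$ is bounded in $L^2(0,T;H^{-2})$, as in the paper; this is what your equicontinuity step needs, since an $L^1_t$ bound alone would not give it. Second, strong $L^2$ convergence does not give $L^3$ convergence of the increments, so the $K_T^s$ transfer should rest only on the a.e.\ convergence you mention.
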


\begin{proof}
For \ref{item:canonical-inputs-uniform-estimates}, choose
\((\rho,\beta)\) as in
\eqref{eq:coefficient-exponent-choice}.  By
\Cref{prop:fractional-enhancement} and the uniform resolvent convergence
\eqref{eq:resolvent-model-uniform}, quantified in
\eqref{eq:uniform-resolvent-second-chaos}, a standard summable-subsequence
argument yields a deterministic subsequence and an event \(\Omega_T\) of
probability one on which both enhanced inputs converge; their pathwise norms
are therefore uniformly bounded.  This is the fractional, higher-order
counterpart of the canonical Burgers-model convergence in
\cite[Definition~3.2 and Theorem~9.1]
{GubinelliPerkowski2017KPZReloaded}.

Fix \(\omega\in\Omega_T\).  The local uniformity in \Cref{prop:zvonkin}
permits one choice \(\lambda=\lambda(T,\omega)\) for the limiting and all
approximate Zvonkin maps.  Choose
\(1-\gamma/2<\rho'<\rho<\beta'<\beta\).  Local Lipschitz continuity of the
response and Zvonkin maps gives uniform Jacobian bounds and
\[
 \Psi^n\longrightarrow\Psi\text{ in }C_tC^{1+\beta'},
 \qquad
 \ell^n\longrightarrow\ell\text{ in }C_tC^{\rho'}.
\]
Multiplication and composition then give
\[
 a^n\longrightarrow a\text{ in }C_tC^{\beta'},
 \qquad
 (Q_0^n,Q_\lambda^n,R^n)\longrightarrow(Q_0,Q_\lambda,R)
 \text{ in }C_t(C^{\rho'})^3.
\]
Since \(\rho'>1-\gamma/2\), in particular
\(R^n\to R\) in \(L^2_tH^{1-\gamma/2}\).  For \(1<\gamma<2\), the
operator-difference estimate \eqref{eq:transformed-operator-difference},
applied with the same small H\"older loss, yields convergence of
\(\Acal_{\Psi^n}^{\gamma}\) to \(\Acal_{\Psi}^{\gamma}\) in the operator
norm from \(H^{\gamma/2}\) to \(H^{-\gamma/2}\).  At \(\gamma=2\),
\eqref{eq:local-check} gives, in one step,
\[
\begin{aligned}
 (\Acal_{\Psi^n}^{2}-\Acal_{\Psi}^{2})f
 &=-\partial_y\bigl((a^n-a)f_y\bigr),&
 \norm{(\Acal_{\Psi^n}^{2}-\Acal_{\Psi}^{2})f}_{H^{-1}}
 &\lesssim\norm{a^n-a}_{L^\infty}\norm f_{H^1}.
\end{aligned}
\]
Thus, in both cases,
\[
 \sup_{t\le T}
 \norm{\Acal_{\Psi^n(t)}^\gamma-\Acal_{\Psi(t)}^\gamma}
 _{\mathcal L(H^{\gamma/2},H^{-\gamma/2})}
 \longrightarrow0.
\]

For \ref{item:pathwise-compactness-identification}, the transformed
equation and form bound give \(\partial_tm^n\) bounded in
\(L^2(0,T;H^{-2})\).
The diffusion is bounded in \(L^2_tH^{-\gamma/2}\), while the uniform
\(L^\infty_tL^2\) bound gives
\(\partial_y(a^n(m^n)^2)\) in \(L^\infty_tH^{-2}\), and
\(\partial_y(Q_\lambda^nm^n)\) and \(\partial_yR^n\) are bounded in
\(L^2_tH^{-1}\) and \(L^2_tH^{-\gamma/2}\), respectively, by
\eqref{eq:verified-coefficient-bounds}.  Aubin--Lions--Simon, weak compactness,
and one extraction give \eqref{eq:m-strong-limit} and almost-everywhere
convergence.  Arzel\`a--Ascoli also gives
\(m^n\to m\) in \(C([0,T];H^{-2})\).  Together with the uniform
\(L^\infty_tL^2\) bound, this gives
\(m\in C_{\mathrm w}([0,T];L^2)\): first test against \(H^2\) functions
and then use their density in \(L^2\).  Since
\(m^n(0)=u_0^n\to u_0\) in \(L^2\), this gives the initial trace
\(m(0)=u_0\).

To identify the limit in \eqref{eq:m-strong-limit}, split the moving-operator
term as
\[
 \Acal_{\Psi^n}^{\gamma}m^n-\Acal_{\Psi}^{\gamma}m
 =(\Acal_{\Psi^n}^{\gamma}-\Acal_{\Psi}^{\gamma})m^n
  +\Acal_{\Psi}^{\gamma}(m^n-m).
\]
The first term converges strongly to zero in
\(L^2_tH^{-\gamma/2}\) by operator-norm convergence and the uniform energy
bound; the second converges weakly to zero there.  Moreover, strong
\(L^2_{t,y}\) convergence and strong convergence of the coefficients give
\[
 a^n(m^n)^2\longrightarrow am^2\quad\text{in }L^1_{t,y},
 \qquad
 Q_\lambda^nm^n\longrightarrow Q_\lambda m\quad\text{in }L^2_{t,y}.
\]
Together with \(R^n\to R\), these limits identify
\eqref{eq:transformed-equation}.  The uniform Jacobian bounds give
\[
 \norm{(m^n)^{\Psi^n}-m^{\Psi^n}}_{L^2_{t,x}}
 =\norm{(m^n-m)^{\Psi^n}}_{L^2_{t,x}}
 \lesssim\norm{m^n-m}_{L^2_{t,y}}\longrightarrow0.
\]
For \(g\in C^\infty([0,T]\times\T)\),
\[
\begin{aligned}
 \norm{m^{\Psi^n}-m^\Psi}_{L^2_{t,x}}
 &\lesssim \norm{m-g}_{L^2_{t,y}}
   +\norm{g^{\Psi^n}-g^\Psi}_{L^2_{t,x}}.
\end{aligned}
\]
Letting first \(n\to\infty\), and then \(g\to m\) in \(L^2_{t,y}\), gives
\((m^n)^{\Psi^n}\to m^\Psi\) strongly in \(L^2_{t,x}\).  Since
\(J^n\to J\) uniformly, it follows that
\(J^n(m^n)^{\Psi^n}\to Jm^\Psi\) strongly there.  After a further
subsequence, \((m^n)^{\Psi^n}\to m^\Psi\) almost everywhere on
\((0,T)\times\T\).

It remains to transfer the bounds in
\ref{item:pathwise-compactness-identification}.  The Lipschitz property of
\(\chi\) passes the \(L^1\) estimate to almost
every time.  The functional
\(f\mapsto\int_\T\chi(f)\dd x\) is convex and weakly lower semicontinuous on
\(L^2\); the representative \(m\in C_{\mathrm w}([0,T];L^2)\) therefore
extends this estimate to every time.  For \(1<\gamma<2\), Fatou's lemma
applied to the nonnegative kernel in
\eqref{eq:transformed-operator-energy-kernel}, together with
\(J^n\to J\) uniformly, gives
lower semicontinuity of the energy form.  For \(\gamma=2\), the uniform
convergence \(a^n\to a\) and the weak convergence of \(m^n\) in
\(L^2_tH^1\) imply
\[
 \sqrt{a^n}\,\partial_ym^n
 \rightharpoonup
 \sqrt a\,\partial_ym
 \quad\text{in }L^2((0,T)\times\T),
\]
and weak lower semicontinuity gives the corresponding energy bound.
Fatou's lemma applied to
\eqref{eq:besov-difference-characterization} for
\(B^{1-\beta}_{1,1}\) and \(B^s_{3,3}\) gives,
respectively,
\[
 Z_T^\beta(m)
 \lesssim\liminf_{n\to\infty}Z_T^\beta(m^n),
 \qquad
 K_T^s(m)\le
 \liminf_{n\to\infty}K_T^s(m^n)
 \quad(0<s<1/3).
\]
Together with \eqref{eq:Besov-from-KHM}, this transfers the
\(L^3_tB^s_{3,3}\) bound for \(0<s<1/3\).  At \(\gamma=2\), the inherited
\(L^\infty_tL^2\) and \(L^2_tH^1\) bounds, followed by
\eqref{eq:energy-besov}, give the stronger range \(0<s<1/2\).  Weak-star
lower semicontinuity and the weakly continuous representative transfer the
remaining energy bounds, completing the proof.
\end{proof}

\begin{corollary}[Global well-posedness for the periodic SBE]
\label{cor:laplacian-kpz-sbe}
Let \((\gamma,\alpha)=(2,0)\), let \(u_0\in L^2(\T)\) have zero spatial
mean, and let \(\mathbb X_{0,2}\) be the canonical periodic Burgers
enhancement.  Then the canonical stochastic Burgers equation
\[
 (\partial_t-\partial_x^2)u
 =\partial_x(u\diamond u)+\partial_x\xi,
 \qquad u(0)=u_0,
\]
admits, almost surely, a unique global paracontrolled solution in the sense of
\cite{GubinelliPerkowski2017KPZReloaded}.  On every finite interval it has
the representation
\[
 u=X+\Xone+\ell+Jm^\Psi,
\]
where
\[
 m\in L^\infty_tL^1\cap L^\infty_tL^2\cap L^2_tH^1,
 \qquad
 m\in L^3_tB^s_{3,3}\quad\text{for every }0<s<\frac12.
\]
Moreover, for every \(\eps_n\downarrow0\) and every sequence of smooth,
mean-zero initial data \(u_0^n\to u_0\) strongly in \(L^2\), the full
canonical mollified solution sequence converges to \(u\) in probability in
the local solution topology of
\cite[Theorem~3.7]{GubinelliPerkowski2017KPZReloaded} on each finite interval.
\end{corollary}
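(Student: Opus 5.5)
The plan is to combine the local paracontrolled theory of \cite{GubinelliPerkowski2017KPZReloaded} with the uniform a priori bounds of \Cref{thm:main-result} and the compactness of \Cref{prop:canonical-compactness}, specialized to \((\gamma,\alpha)=(2,0)\). This point lies in the admissible range \eqref{eq:admissible-alpha-gamma}, since \(0>-1/6\).

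\textbf{Local solution and blow-up alternative.} By \cite[Theorem~3.7]{GubinelliPerkowski2017KPZReloaded}, for a.e.\ \(\omega\) there is a unique paracontrolled solution \(u\) on a maximal interval \([0,T^*)\). Restarting is allowed because \(u_0\in L^2\) lies above the required \(\Ccal^{-1/2-}\) initial regularity. The solution is continuous with respect to initial data and enhancement, and if \(T^*<\infty\) then its paracontrolled norm blows up.

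\textbf{Representation on \([0,T^*)\wedge T\).} Fix \(T\) and take \(\omega\in\Omega_T\) from \Cref{prop:canonical-compactness}, with \(\lambda=\lambda(T,\omega)\) fixed. For mean-zero smooth \(u_0^n\to u_0\), let \(u^n\) be the mollified solutions. By local continuity, \(u^n\to u\) on every \([0,T']\) with \(T'<T^*\). On the other hand, the decomposition \(u^n=X^n+X^{\zzone,n}+\ell^n+J^n(m^n)^{\Psi^n}\) converges along a subsequence to \(X+\Xone+\ell+Jm^\Psi\) in \(L^2_{t,x}\), by \Cref{prop:canonical-compactness}. Hence the representation holds on \([0,T'\wedge T]\). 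The uniform bounds of \Cref{thm:main-result} pass to \(m\), including \(L^3_tB^s_{3,3}\) for \(s<1/2\) via \eqref{eq:energy-besov}. Zero mean is preserved by the conservative form, so the mean-zero assumption is compatible with the conventions in \cite{GubinelliPerkowski2017KPZReloaded}.

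\textbf{Excluding blow-up (main obstacle).} We must convert the \(L^\infty_tL^2\) bound on \(m\), uniform in \(T'<T^*\wedge T\), into control of the paracontrolled norm of \(u\) that \cite{GubinelliPerkowski2017KPZReloaded} uses for blow-up.
\begin{enumerate}[label=(\roman*),leftmargin=*,nosep]
\item Since \(J\in C_tC^\beta\) is bounded above and below and \(\Psi\) is bi-Lipschitz, we get \(\sup_{t<T^*\wedge T}\norm{v(t)}_{L^2}<\infty\).
\item Hence \(r=\ell+v\) is bounded in \(L^\infty_tL^2\hookrightarrow L^\infty_t\Ccal^{-1/2}\).
\item Restarting the local theory at times \(t_k\uparrow T^*\), the local existence time depends only on the norm of the initial datum in \(\Ccal^{-1/2-}\) (or in \(L^2\)) and on the restarted enhancement norm. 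The latter is uniform by the restart uniformity in \Cref{prop:fractional-enhancement}.
\item So the local existence time stays bounded below, and \(T^*>T\). Since \(T\) is arbitrary, the solution is global.
\end{enumerate}
Care is needed because the restarted stochastic objects start at zero at \(t_k\). The difference between the restarted and the original trees is a smooth semigroup term, which must be absorbed into the new initial datum; I expect the bookkeeping here to be the delicate part. Uniqueness is inherited from the local theory.

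\textbf{Convergence in probability.} On each \([0,T]\) the limit solution exists globally a.s., and the local solution map is continuous in \((u_0,\mathbb X)\). Since \(\mathbb X^{\eps_n}\to\mathbb X\) in probability and \(u_0^n\to u_0\), the mollified solutions converge to \(u\) in probability in the local solution topology. The subsequence extracted in \Cref{prop:canonical-compactness} is irrelevant here: the limit is identified uniquely, so the full sequence converges.
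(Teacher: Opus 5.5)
Your proposal is correct and follows the paper's argument. The shared steps are: identify the local paracontrolled solution with the compactness limit before $T^*$, bound $m$ in $L^\infty_tL^2$ on a fixed finite interval, and use $L^2\hookrightarrow\Ccal^{-\vartheta}$ with $\vartheta>1/2$ to keep $\sup_{t<T^*}\norm{u(t)}_{\Ccal^{-\vartheta}}$ finite, contradicting blow-up. Convergence in probability then comes from almost-sure model convergence along subsequences together with continuity of the solution map.

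The one difference is the restart step you flag as delicate. The paper does not redo it: it invokes \cite[Theorem~6.12]{GubinelliPerkowski2017KPZReloaded} directly as a blow-up alternative in the $\Ccal^{-\vartheta}$ norm of the singular-initial-data theory, so no bookkeeping of restarted trees arises. In any case, if restarted trees are rebuilt from zero at $t_k$, their difference from the original ones is not a pure semigroup term. Nothing needs absorbing, though, because the restarted remainder then simply starts from $u(t_k)$.
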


\begin{proof}
Apply \Cref{prop:canonical-compactness} on \([0,N]\) for
\(N\in\mathbb N\), using a diagonal deterministic extraction.  The endpoint
estimate \eqref{eq:energy-besov} gives the asserted range \(0<s<1/2\).
Model identification and the local theory are those of
\cite[Definition~3.2 and Theorems~3.5, 3.7, and~6.12]
{GubinelliPerkowski2017KPZReloaded}.  Work on the full-probability intersection
of the model-convergence events and let \(T^*\) be the maximal lifetime of
the local paracontrolled solution.  Local uniqueness identifies that solution
with the compactness limit on \([0,T]\) for every \(T<T^*\).
If \(T^*<\infty\), choose an integer \(N>T^*\) and use the common transform
fixed on \([0,N]\).  The initial-time identities
\eqref{eq:transformed-initial-data} give \(m^n(0)=u_0^n\), and
\(u_0^n\to u_0\) in \(L^2\).  The uniform a priori estimate on \([0,N]\)
therefore passes to the compactness limit in
\Cref{prop:canonical-compactness}, giving
\(m\in L^\infty(0,N;L^2)\).  By the preceding local identification,
\[
 \sup_{T<T^*}\norm m_{L^\infty(0,T;L^2)}
 \le \norm m_{L^\infty(0,N;L^2)}<\infty.
\]
Choose an exponent \(\vartheta>1/2\) in the admissible range of the
singular-initial-data theory in the cited work.  In one dimension,
\(L^2\hookrightarrow\Ccal^{-\vartheta}\), so the response estimate and the
bi-Lipschitz bounds for \(\Psi\) imply
\[
 \sup_{t<T^*}\norm{u(t)}_{\Ccal^{-\vartheta}}
 \lesssim
 1+\norm{\mathbb X_{0,2}}_{T^*}
  +\sup_{T<T^*}\norm m_{L^\infty(0,T;L^2)}
 <\infty.
\]
This contradicts \cite[Theorem~6.12]{GubinelliPerkowski2017KPZReloaded}, so
\(T^*=\infty\).  Local uniqueness gives global uniqueness.

For any deterministic subsequence, the \(L^p(\Omega)\) convergence of the
enhanced models and resolvent families yields a further subsequence converging
almost surely.  Local continuous dependence, patched using global existence
and uniqueness, then gives almost-sure solution convergence on each fixed
finite interval.  The subsequence criterion proves convergence in probability
of the full sequence.
\end{proof}

The endpoint \(\gamma=1\) lies outside this argument: the rough-drift gain
\(\gamma-1\) vanishes and the response and Zvonkin constructions become
genuinely critical.

\addtocontents{toc}{\protect\setcounter{tocdepth}{1}}
\appendix
\setlength{\parskip}{2.5pt plus .5pt}
\setlength{\abovedisplayskip}{5pt plus 1.5pt minus 2pt}
\setlength{\belowdisplayskip}{5pt plus 1.5pt minus 2pt}
\setlength{\abovedisplayshortskip}{2.5pt plus 1pt minus 1pt}
\setlength{\belowdisplayshortskip}{3.5pt plus 1pt minus 1.5pt}
\setlength{\jot}{2pt}

\section{Notation and auxiliary analytic estimates}
\label{app:notation}
\smallskip
\paragraph{\small\bfseries Function-space conventions.}
We use \(H^s=H^s(\T)\), \(B^s_{p,q}=B^s_{p,q}(\T)\), and
\(\Ccal^s=B^s_{\infty,\infty}\).  For \(h\in\T\), set
\[
f_h(x):=f(x+h)\qquad \delta_hf(x):=f_h(x)-f(x).
\]
For \(0<s<1\) and \(1\le p,q<\infty\), the periodic first-difference
characterization is
\begin{equation}\label{eq:besov-difference-characterization}
 \norm f_{B^s_{p,q}}
 \asymp
 \norm f_{L^p}
 +\left(
   \int_0^1 h^{-1-sq}\norm{\delta_hf}_{L^p}^q\dd h
  \right)^{1/q}.
\end{equation}
For positive noninteger \(s\), \(C^s\) denotes the equivalent classical
H\"older space.  A dot on a function-space symbol denotes the homogeneous
seminorm; the zero Fourier mode is ignored, without imposing a mean-zero
condition.  For a time interval \(I\) and
a Banach function space \(E\) on \(\T\), we abbreviate
\[
 L_t^pE:=L^p(I;E),\qquad
 C_tE:=C(I;E),\qquad
 C_t^\vartheta E:=C^\vartheta(I;E).
\]
Spatial subscripts are omitted when the coordinate is clear; subscripts
\(x\) and \(y\) are retained only when the original and transformed
coordinates occur in the same calculation.  In that case,
\(L^p_{t,x}:=L^p(I\times\T,\dd t\dd x)\), and similarly for \(y\).
For \(z\in\R\), set
\(z_+:=\max\{z,0\}\).  The notation \(s-\) (respectively \(s+\)) means any
fixed exponent below (respectively above) \(s\), and \(\ip f g\) denotes
the \(L^2\) or distributional duality pairing.  All unqualified norms are
spatial.  The symbols \(\lesssim\) and \(\asymp\)
suppress dependence on the fixed exponents, the torus, and locally bounded
enhanced-model and Zvonkin norms; a subscript is shown only when that
dependence matters.
For background on distributions, Sobolev and Besov spaces, Fourier
multipliers, paraproducts, and the difference characterization above, see
the lecture notes
\cite{vanZuijlen2022FunctionSpaces}.

\paragraph{\small\bfseries Affine-periodic extension.}
A smooth function \(F\) is affine-periodic if
\(F(x+1)=F(x)+c_F\) for some constant \(c_F\).  Declaring that
\(\Lambda^s\) annihilates affine functions, we define
\[
 \Lambda^sF:=\Lambda^s(F-c_Fx),
\]
where the right-hand side is the periodic Fourier multiplier defined in
the introduction.  This definition is independent of additive constants
and of the origin used for the affine part.  For \(1<\gamma<2\), the
periodic singular-integral formula extends to such \(F\) as
\[
 \Lambda^\gamma F(x)
 =c_\gamma\PV\int_{-\frac12}^{\frac12}
 [F(x)-F(x-\rho)]K_\gamma^{\mathrm{per}}(\rho)\dd\rho,
\]
with symmetric principal value.  Indeed, the affine contribution is a
multiple of \(\rho K_\gamma^{\mathrm{per}}(\rho)\), which is odd.

\paragraph{\small\bfseries Littlewood--Paley and Bony conventions.}
Fix an even spatial Littlewood--Paley partition
\((\Delta_j)_{j\ge-1}\), and write
\(\Delta_{<j-1}:=\sum_{i\le j-2}\Delta_i\).  Our Bony convention is
\[
 f\prec g:=\sum_{j\ge-1}(\Delta_{<j-1}f)\Delta_jg,
 \qquad f\succ g:=g\prec f,
 \qquad f\odot g:=\sum_{\abs{i-j}\le1}\Delta_if\,\Delta_jg,
\]
so that \(fg=f\prec g+f\odot g+f\succ g\).
Writing \(\rho_j\) for the Fourier multiplier of \(\Delta_j\), the
resonant multiplier is
\begin{equation}\label{eq:resonant-multiplier}
 \chi_\odot(k,\ell)
 :=\sum_{\abs{r-s}\le1}\rho_r(k)\rho_s(\ell).
\end{equation}

\paragraph{\small\bfseries Wick products and symmetrization.}
For jointly centered Gaussian first-chaos variables
\(Y_1,\ldots,Y_n\), the notation
\(\mathopen{:}Y_1\cdots Y_n\mathclose{:}\) denotes their Wick product,
namely the projection of \(Y_1\cdots Y_n\) onto the \(n\)-th homogeneous
Wiener chaos.  In particular,
\[
 \mathopen{:}Y_1Y_2\mathclose{:}
 =Y_1Y_2-\mathbb E[Y_1Y_2],
 \qquad
 \mathopen{:}(X^\eps)^2\mathclose{:}
 =(X^\eps)^2-\mathbb E[(X^\eps)^2].
\]
For random fields, colons denote the corresponding limit in the model
topology.  For any \(n\)-input function \(F\), its normalized
symmetrization is
\[
 \operatorname{Sym}_nF(\zeta_1,\ldots,\zeta_n)
 :=\frac1{n!}\sum_{\pi\in S_n}
 F(\zeta_{\pi(1)},\ldots,\zeta_{\pi(n)}).
\]

\paragraph{\small\bfseries Bony's products}
When the enhancement supplies a resonant product, write
\(f\odot_{\rm ren}g\) for that coordinate and
\(f\diamond g:=f\prec g+f\succ g+f\odot_{\rm ren}g\) for the complete
enhanced product. For the time-modified paraproduct, fix a nonnegative
\(\eta\in C_c^\infty(\R)\), supported in \([0,\infty)\), with
\(\int_\R\eta=1\).  On \(I=[t_0,T]\), extend
\(f\) by \(\bar f_I(s):=f((s\vee t_0)\wedge T)\), and set
\[
 \mathsf Q_j^\gamma f(t)
 :=\int_\R 2^{\gamma j}
   \eta\bigl(2^{\gamma j}(t-s)\bigr)\bar f_I(s)\dd s,
 \qquad
 f\mpara g
 :=\sum_{j\ge-1}
   \bigl(\mathsf Q_j^\gamma\Delta_{<j-1}f\bigr)\Delta_jg.
\]
Thus the low-frequency coefficient is averaged on the fractional parabolic
scale \(2^{-\gamma j}\).

\begin{lemma}[Fractional semigroup and central paracontrolled commutators]
\label{lem:fractional-commutators}
Let \(I=[t_0,T]\) and assume all functions considered are smooth.
\begin{enumerate}[label=(\roman*),leftmargin=*]
\item If \(0<a<1\) and \(b\in\R\), then
\[
\begin{aligned}
 \norm{\mathrm{Com}_{\Kcal_\gamma}(f,g)}
   _{C(I;\Ccal^{a+b+\gamma-1})}
 \lesssim
 \left(
  \norm f_{C(I;\Ccal^a)}
  +[f]_{C^{a/\gamma}(I;L^\infty)}
 \right)
 \norm g_{C(I;\Ccal^b)}.
\end{aligned}
\]
The same estimate holds with \(\Kcal_{\gamma,\lambda,t_0}\), uniformly in
\(\lambda\ge0\).

\item Let \(0<a<1\) and \(b,c\in\R\), with
\[
 b+c<0<a+b+c.
\]
Then the trilinear expression
\[
 \mathrm{Com}_{\mpara}(f,Q,g)
 :=(f\mpara Q)\odot g-f(Q\odot g),
\]
satisfies
\[
\begin{aligned}
 \norm{\mathrm{Com}_{\mpara}(f,Q,g)}
   _{C(I;\Ccal^{a+b+c})} \lesssim
 \left(
  \norm f_{C(I;\Ccal^a)}
  +[f]_{C^{a/\gamma}(I;L^\infty)}
 \right)
 \norm Q_{C(I;\Ccal^b)}
 \norm g_{C(I;\Ccal^c)}.
\end{aligned}
\]
\end{enumerate}
\end{lemma}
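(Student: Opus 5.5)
Both parts are dyadic kernel estimates on blocks, carried out in the spirit of the classical time-modified paraproduct commutators of Gubinelli--Imkeller--Perkowski, with the heat semigroup replaced by \(e^{-t\Lambda^\gamma}\) and the parabolic time scale \(2^{-2j}\) replaced by \(2^{-\gamma j}\).

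\emph{Part (i).} Write
\[
 \mathrm{Com}_{\Kcal_\gamma}(f,g)=\sum_j\Bigl[\Kcal_\gamma\bigl(\Delta_{<j-1}f\,\Delta_jg\bigr)-\mathsf Q_j^\gamma\Delta_{<j-1}f\;\Kcal_\gamma\Delta_jg\Bigr].
\]
Split each summand as \(A_j+B_j\):
\begin{itemize}
\item \(A_j=\Kcal_\gamma(\Delta_{<j-1}f\,\Delta_jg)-\Delta_{<j-1}f(t)\,\Kcal_\gamma\Delta_jg\), the commutator with the coefficient frozen at the output time \(t\);
\item \(B_j=\bigl(\Delta_{<j-1}f(t)-\mathsf Q_j^\gamma\Delta_{<j-1}f(t)\bigr)\Kcal_\gamma\Delta_jg\).
\end{itemize}

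For \(B_j\), bound \(\norm{\Kcal_\gamma\Delta_jg}_{L^\infty}\lesssim 2^{j(1-\gamma)}2^{-jb}\) using \(\int_0^\infty e^{-c s2^{\gamma j}}\,ds\lesssim 2^{-\gamma j}\). Since \(\eta\) has mass one and lives at scale \(2^{-\gamma j}\), the time Hölder seminorm gives \(\norm{f(t)-\mathsf Q_j^\gamma f(t)}_{L^\infty}\lesssim 2^{-ja}[f]_{C^{a/\gamma}L^\infty}\). Together these give \(2^{-j(a+b+\gamma-1)}\).

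For \(A_j\), first separate the spatial commutator of \(\partial_xe^{-(t-s)\Lambda^\gamma}\) with multiplication by \(\Delta_{<j-1}f(s)\). The block kernel of \(\partial_xe^{-(t-s)\Lambda^\gamma}\Delta_j\) has first moment of size \(2^{-j}\) times its \(L^1\) norm, so this contributes \(2^{-ja}\) via the spatial regularity \(a<1\). Then handle the time difference \(\Delta_{<j-1}f(s)-\Delta_{<j-1}f(t)\) with the Hölder seminorm: it produces \(\int_0^\infty s^{a/\gamma}2^je^{-cs2^{\gamma j}}ds\lesssim 2^{j(1-\gamma)}2^{-ja}\).

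The output of each summand is Fourier-supported in an annulus of size \(2^j\), so summing yields the \(\Ccal^{a+b+\gamma-1}\) bound. Replacing \(\Lambda^\gamma\) by \(\Lambda^\gamma+\lambda\) only adds the factor \(e^{-\lambda s}\le1\), which gives uniformity in \(\lambda\). Continuity in time follows by the same estimates applied to increments.

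\emph{Part (ii).} Decompose
\[
 (f\mpara Q)\odot g-f(Q\odot g)=\bigl[(f\mpara Q)-(f\prec Q)\bigr]\odot g+\bigl[(f\prec Q)\odot g-f(Q\odot g)\bigr].
\]
The second bracket is the classical GIP commutator. For \(0<a<1\) and \(b+c<0<a+b+c\) it is bounded in \(\Ccal^{a+b+c}\) by \(\norm f_{\Ccal^a}\norm Q_{\Ccal^b}\norm g_{\Ccal^c}\); this is quoted from Gubinelli--Imkeller--Perkowski, Lemma 2.4, applied pointwise in time.

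The first bracket is \(\sum_j(\mathsf Q_j^\gamma-1)\Delta_{<j-1}f\,\Delta_jQ\). By the same averaging bound as for \(B_j\), it lies in \(\Ccal^{a+b}\) with norm \(\lesssim[f]_{C^{a/\gamma}L^\infty}\norm Q_{\Ccal^b}\), because each summand is spectrally in an annulus at scale \(2^j\) with size \(2^{-j(a+b)}\). Its resonant product with \(g\) is then well defined by Bony's estimate, since \(a+b+c>0\), and lies in \(\Ccal^{a+b+c}\).

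\emph{Main obstacle.} The delicate step is \(A_j\) in part (i). The spatial and temporal variations of the low-frequency coefficient must be handled separately, and the temporal variation uses \(\int_0^\infty s^{a/\gamma}2^je^{-cs2^{\gamma j}}ds\), which converges only because \(a/\gamma<1\).

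A second point needs care: the exponential bound \(\norm{\Delta_je^{-s\Lambda^\gamma}}_{L^\infty\to L^\infty}\lesssim e^{-cs2^{\gamma j}}\) for \(j\ge0\). For \(1<\gamma<2\) the symbol \(|k|^\gamma\) is only smooth away from zero, but this is harmless on annuli. The block \(j=-1\) is handled separately by the finite time interval.
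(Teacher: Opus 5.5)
Your argument is correct, and part~(ii) follows the paper's route exactly. The paper combines the Gubinelli--Imkeller--Perkowski central commutator, applied at fixed time, with the modified-paraproduct bound \(\norm{(\mathsf Q_j^\gamma-1)\Delta_{<j-1}f}_{L^\infty}\lesssim 2^{-ja}[f]_{C^{a/\gamma}L^\infty}\), which you prove inline.

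For part~(i) you take a different path from the Laplacian proofs the paper cites. There the Duhamel commutator is obtained in two steps:
\begin{enumerate}
\item compute the operator commutator of \(L_\gamma\) with the modified paraproduct, using \(\partial_t\mathsf Q_j^\gamma\) at the scale \(2^{-\gamma j}\) together with a spatial commutator of the generator with the low-frequency coefficient;
\item apply a Schauder estimate.
\end{enumerate}
You instead freeze the coefficient at the output time and estimate the Duhamel integral block by block. The spatial commutator is controlled by the \(a\)-moment of the block kernel of \(\partial_x e^{-r\Lambda^\gamma}\), the time variation by \([f]_{C^{a/\gamma}L^\infty}\), and the modification separately in \(B_j\).

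Your route never commutes the nonlocal \(\Lambda^\gamma\) with a multiplication operator. For \(\gamma<2\) there is no Leibniz rule, so that step would otherwise need its own kernel estimate. Your route also never differentiates the coefficient in time, which is why the time modification enters only through \(B_j\).

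On the other hand, your route yields only the \(C(I;\Ccal^{a+b+\gamma-1})\) bound. That is exactly what the lemma states. The commutator-plus-Schauder route additionally gives the parabolic time modulus of the output, which is the kind of information the controlled norms in the paper's fixed-point arguments carry. Your remark that time continuity follows ``by the same estimates applied to increments'' does not give such a modulus without further work, but it is not needed for the statement.

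Two small corrections:
\begin{itemize}
\item The integral \(\int_0^\infty s^{a/\gamma}2^je^{-cs2^{\gamma j}}\dd s\) converges for every \(a\ge0\), so that is not where \(a<1\) is used. The restriction \(a<1\) enters through \(\norm{\partial_x\Delta_{<j-1}f}_{L^\infty}\lesssim 2^{j(1-a)}\norm f_{\Ccal^a}\), equivalently the H\"older increment bound in the spatial commutator, and through the central commutator in part~(ii).
\item With the convention \(\Delta_{<j-1}=\sum_{i\le j-2}\Delta_i\), the summands with \(j\le0\) vanish identically, so the bottom block needs no separate treatment.
\end{itemize}
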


In the Laplacian setting, part~(i) is
\cite[Lemmas~2.8--2.9]{GubinelliPerkowski2017KPZReloaded}, while part~(ii)
combines the central commutator of
\cite[Lemma~2.4]{GubinelliImkellerPerkowski2015Paracontrolled} with the
modified-paraproduct estimate
\cite[Lemma~2.8]{GubinelliPerkowski2017KPZReloaded}; see also
\cite[Remark~2.4]{GubinelliPerkowski2017KPZReloaded}.  The same
Littlewood--Paley arguments apply with the fractional parabolic scale
\(2^{-\gamma j}\); we therefore omit the proof.

If
\(u=u'\mpara Q+u^\sharp\) and the enhanced resonance
\(Q\odot_{\rm ren}g\) is given, define
\[
 u\odot_{\rm ren}g
 :=u'(Q\odot_{\rm ren}g)
   +\mathrm{Com}_{\mpara}(u',Q,g)+u^\sharp\odot g.
\]
This and the Bony decomposition define the controlled products used below.

\subsection{Dyadic estimates for the paralinearization}
\label{app:paralinearization-dyadic-calculations}

We record the estimates deferred from
\Cref{prop:transformed-operator-paralinearization}.  Fix
\[
 0<\beta_0<\beta,
 \qquad \beta_0\ne\gamma-1,
 \qquad
 0<\sigma<\min\left\{\frac\gamma2,
  1-\frac\gamma2+\beta_0\right\}.
\]
All Littlewood--Paley blocks below act in the \(y\)-variable.  Write
\(f_k=\Delta_kf\).  We use \(j\ll k\) for the low paraproduct range
\(j\le k-2\), \(j\sim k\) for the fixed comparable-frequency overlap,
and \(j\gg k\) for the remaining higher coefficient blocks.  For
\(h\ne0\), recall
\[
 \nabla_h^-\Phi(y)
 =\frac{\Phi(y)-\Phi(y-h)}h,
 \qquad
 \nabla_0^-\Phi=\Phi'.
\]
Since \(\Psi\in C^{1+\beta}\) is bi-Lipschitz,
\(\Phi'=(J^\Phi)^{-1}\in C^\beta\), and
\(\nabla_h^-\Phi\) stays in a fixed positive interval.  Moreover,
\[
 \partial_y\bigl(\nabla_h^-\Phi(y)\bigr)^{-\gamma}
 =-\gamma
 \bigl(\nabla_h^-\Phi(y)\bigr)^{-\gamma-1}
 \frac{\Phi'(y)-\Phi'(y-h)}h.
\]
Consequently,
\[
 \big\|
 \bigl(\nabla_h^-\Phi\bigr)^{-\gamma}
 \big\|_{C^{\beta_0}}
 \lesssim1,
 \qquad
 \big\|
 \partial_y\bigl(\nabla_h^-\Phi\bigr)^{-\gamma}
 \big\|_{L^\infty}
 \lesssim\abs h^{\beta_0-1}.
\]
For \(j\ge0\), the standard Littlewood--Paley estimates
\[
 \norm{\Delta_ja}_{L^\infty}
 \lesssim2^{-\beta_0j}\norm a_{C^{\beta_0}},
 \qquad
 \norm{\Delta_ja}_{L^\infty}
 \lesssim2^{-j}\norm{\partial_ya}_{L^\infty}
\]
therefore give
\begin{equation}\label{eq:appendix-divided-difference-block}
 \left\|\Delta_j
 \bigl(\nabla_h^-\Phi\bigr)^{-\gamma}
 \right\|_{L^\infty}
 \lesssim
 \min\{2^{-\beta_0j},2^{-j}\abs h^{\beta_0-1}\}.
\end{equation}

\begin{proof}[Proof of the comparable-frequency estimate
\eqref{eq:divided-difference-comparable}]
For \(k\ge0\),
\[
 \norm{\delta_hf_k(\cdot-h)}_{L^2}
 \lesssim\min\{1,2^k\abs h\}\norm{f_k}_{L^2}.
\]
Taking absolute values and using symmetry in \(h\),
\eqref{eq:appendix-divided-difference-block} reduces the estimate for
\(j\sim k\) to the scalar integral below.  Splitting it at
\(h=2^{-k}\) gives
\[
\begin{aligned}
\int_0^{\frac12}
 h^{-\gamma}\min\{1,2^kh\}
 \min\{2^{-\beta_0j},2^{-j}h^{\beta_0-1}\}\dd h &\lesssim
 2^{k(1-\beta_0)}
 \int_0^{2^{-k}}h^{1-\gamma}\dd h
 +2^{-k}\int_{2^{-k}}^{\frac12}
 h^{\beta_0-\gamma-1}\dd h
\\
& \lesssim
 2^{k(\gamma-1-\beta_0)}
 \le 2^{k(\gamma-1-\beta_0)_+}.
\end{aligned}
\]
Multiplication by \(\norm{f_k}_{L^2}\) proves
\eqref{eq:divided-difference-comparable}.  The product in that equation
is Fourier localized in a ball at scale \(2^k\).
\end{proof}

\begin{proof}[Proof of the high-coefficient estimate
\eqref{eq:divided-difference-high}]
Suppose that \(k\ge0\) and \(j\gg k\).  Split the \(h\)-integral at
\(2^{-j}\) and \(2^{-k}\).  On \((0,2^{-j})\), combine the
\(2^{-\beta_0j}\) coefficient bound with the linear input increment.
On \((2^{-j},2^{-k})\), combine the
\(2^{-j}h^{\beta_0-1}\) coefficient bound with the same input estimate.
On \((2^{-k},\frac12)\), retain that coefficient bound and use the
uniform translation estimate.  After factoring out
\(\norm{f_k}_{L^2}\), the three ranges give
\[
\begin{aligned}
 &2^k2^{-\beta_0j}
  \int_0^{2^{-j}}h^{1-\gamma}\dd h
 +2^{k-j}
  \int_{2^{-j}}^{2^{-k}}h^{\beta_0-\gamma}\dd h\\
 &\qquad+2^{-j}\int_{2^{-k}}^{\frac12}
  h^{\beta_0-\gamma-1}\dd h.
\end{aligned}
\]
Endpoint evaluation therefore bounds the left-hand side of
\eqref{eq:divided-difference-high} by
\[
\begin{cases}
 2^{k(\gamma-1-\beta_0)}
 2^{-(2-\gamma+\beta_0)(j-k)}\norm{f_k}_{L^2},
 &\beta_0<\gamma-1,\\
 2^{-(j-k)}\norm{f_k}_{L^2},&\beta_0>\gamma-1.
\end{cases}
\]
This proves \eqref{eq:divided-difference-high}.  Since
\(j\gg k\), the product is Fourier localized in an annulus at the
coefficient scale \(2^j\).
\end{proof}

\begin{proof}[Proof of the dyadic summation in
\Cref{prop:transformed-operator-paralinearization}]
The standard annular and ball Littlewood--Paley summation estimates,
the latter using \(\sigma>0\), reduce the low and comparable sectors to
\[
 \sum_{k\ge0}
 2^{2k[\sigma+(\gamma-1-\beta_0)_+]}
 \norm{f_k}_{L^2}^2
 \lesssim
 \sum_{k\ge0}2^{k\gamma}\norm{f_k}_{L^2}^2,
\]
which is bounded by \(\norm f_{H^{\gamma/2}}^2\) because
\(\sigma+(\gamma-1-\beta_0)_+<\gamma/2\).  For \(j\gg k\), after
extracting \(2^{k\gamma/2}\norm{f_k}_{L^2}\), the two geometric factors
are \(2^{-(2-\gamma+\beta_0-\sigma)(j-k)}\) and
\(2^{-(1-\sigma)(j-k)}\).  They are summable because
\(2-\gamma+\beta_0-\sigma>0\) and \(1-\sigma>0\), so Schur's test gives
the high-coefficient bound.  Interactions with the coefficient bottom
block \(j=-1\) are either included in the low sector or are among
finitely many comparable bottom interactions.  For the bottom input
\(k=-1\), nearby coefficient blocks are finite, while the
separated tail decays as \(2^{-(2-\gamma+\beta_0)j}\) when
\(\beta_0<\gamma-1\) and as \(2^{-j}\) when
\(\beta_0>\gamma-1\); the same two inequalities give summability.  This
proves the full \(H^{\gamma/2}\to H^\sigma\) summation.
\end{proof}

\begin{proof}[Proof of the dyadic difference estimate used in the
stability argument]
If \(\Psi_1,\Psi_2\) have common \(C^{1+\beta}\) and
bi-Lipschitz bounds, set
\[
 \Phi_i:=\Psi_i^{-1},\qquad i\in\{1,2\}.
\]
The same mean-value, chain-rule, and Littlewood--Paley estimates give
\[
 \left\|\Delta_j\left[
 (\nabla_h^-\Phi_1)^{-\gamma}
 -(\nabla_h^-\Phi_2)^{-\gamma}
 \right]\right\|_{L^\infty}
 \lesssim
 \norm{\Phi_1'-\Phi_2'}_{C^{\beta_0}}
 \min\{2^{-\beta_0j},2^{-j}\abs h^{\beta_0-1}\}.
\]
The freezing difference also satisfies
\[
\begin{aligned}
 &\Big\|
 [(\nabla_h^-\Phi_1)^{-\gamma}
  -(\nabla_0^-\Phi_1)^{-\gamma}]
 -[(\nabla_h^-\Phi_2)^{-\gamma}
  -(\nabla_0^-\Phi_2)^{-\gamma}]
 \Big\|_{L^\infty}\\
 &\qquad\lesssim
 \abs h^{\beta_0}
 \norm{\Phi_1'-\Phi_2'}_{C^{\beta_0}}.
\end{aligned}
\]
Substituting these bounds into the low, comparable, and high calculations
multiplies their common \(H^\sigma\)-bound by
\(\norm{\Phi_1'-\Phi_2'}_{C^{\beta_0}}\), which is the
difference estimate used in the continuity argument.
\end{proof}

\section{Fractional enhancement and resolvent estimate}
\label{app:fractional-enhancement-estimates}

\paragraph{Scope of the appendix.}
We work at the smooth mollified level and record only the Fourier-kernel
identities, contraction estimates, and fractional power counting.  Once
the Fourier second-moment and increment bounds are established, the
finite-chaos construction, the passage to spatial regularity and time
continuity, and convergence of the mollified models follow
\cite[Definition~3.2, Theorem~9.1 and Sections~9.1--9.5]
{GubinelliPerkowski2017KPZReloaded}.  For mollifier differences, one places
the difference on a single noise leaf.  Its Fourier factor remains even,
so every oddness subtraction used below is preserved, while the resulting
small frequency weight is absorbed by the arbitrary losses in the kernel
bounds.  We use this standard completion without further comment.

\subsection{Fractional Bounds}
\label{app:fractional-bound}

Throughout this appendix, abbreviate
\begin{equation}\label{eq:fractional-exponent-relations}
 \delta:=\delta_\gamma=\gamma-1,
 \qquad
 q:=q_{\gamma,\alpha}:=\alpha+\frac{3\gamma-5}{2}.
\end{equation}
A space--time frequency is
\[\zeta=(\omega,k)\in\R\times\mathbb Z,\] where \(\omega\) is the temporal
frequency and \(k\) is the spatial Fourier mode.  For the \(i\)-th noise
input we write
\(\zeta_i=(\omega_i,k_i)\); thus \(k_i\) is always the spatial component
of \(\zeta_i\).  For a set \(A\) of input labels, set
\(\zeta_A:=\sum_{i\in A}\zeta_i\) and \(k_A:=\sum_{i\in A}k_i\).
When \(A=\{i_1,\ldots,i_r\}\), we suppress braces and commas in
subscripts, writing \(\zeta_{i_1\cdots i_r}\) and
\(k_{i_1\cdots i_r}\).  For example,
\[
 \zeta_{123}=\zeta_1+\zeta_2+\zeta_3,
 \qquad
 k_{123}=k_1+k_2+k_3.
\]
We use the fractional-parabolic norms
\[
\abs{\zeta}_\gamma:=\abs{\omega}^{1/\gamma}+\abs{k}, \quad \langle\zeta\rangle_\gamma:=1+\abs{\zeta}_\gamma.
\]
Integration in \(\zeta\) means Lebesgue integration in \(\omega\) and
counting measure in \(k\). A dyadic fractional-parabolic block
\(\langle\zeta\rangle_\gamma\asymp L\), \(L\in2^{\mathbb N_0}\), has
measure \(O(L^{\gamma+1})\).  Noise inputs have \(k_i\ne0\); all
differentiated multipliers vanish at spatial mode \(0\).

For the kernel calculation we use the stationary versions of the kernel
functions on \(\R\times\T\) for notational simplicity, with the
\(2\pi\)-normalization factors suppressed.  They are obtained by extending
every Duhamel integral from the initial time to \(-\infty\).  At each fixed
output time, the kernel function of a tree with zero initial condition is
pointwise bounded in the integration variables by its stationary version.
Fourier transformation in the time variables and Plancherel then give the
corresponding \(L^2\)-kernel bound; see
\cite[Section~9.5, the induction leading to (87), and
(87)]{GubinelliPerkowski2017KPZReloaded}.  We will also repeatedly use the
following convolution bound.
\begin{lemma}
If \(a,b>0\),
\(a+b>\gamma+1\), then
\begin{subequations}\label{eq:fractional-convolution-bounds}
\begin{equation}\label{eq:fractional-annular-convolution}
 \int_{\R\times\mathbb Z}
 \frac{\dd\zeta'}
 {\langle\zeta'\rangle_\gamma^a
  \langle\zeta-\zeta'\rangle_\gamma^b}
 \lesssim
 \langle\zeta\rangle_\gamma^{
 -\min\{a,b,a+b-(\gamma+1)\}+}.
\end{equation}
If \(a,b,c>0\) and \(a+b+c>\gamma+1\), then also
\begin{equation}\label{eq:fractional-three-weight-convolution}
\begin{aligned}
 \int_{\R\times\mathbb Z}
 \frac{\dd\zeta}
 {\langle\zeta\rangle_\gamma^a
  \langle\zeta'-\zeta\rangle_\gamma^b
  \langle\zeta''-\zeta\rangle_\gamma^c} \lesssim
 \max\{\langle\zeta'\rangle_\gamma,
        \langle\zeta''\rangle_\gamma\}^{
 -\min\{a,b,c,a+b+c-(\gamma+1)\}+}.
\end{aligned}
\end{equation}
\end{subequations}
\end{lemma}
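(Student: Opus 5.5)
We prove both convolution bounds by a dyadic decomposition in the fractional-parabolic norm, using only the volume count that a block \(\{\langle\zeta\rangle_\gamma\asymp L\}\) has measure \(O(L^{\gamma+1})\). Here \(\gamma+1\) plays the role of the parabolic dimension. For \eqref{eq:fractional-annular-convolution}, first note that \(\abs{\cdot}_\gamma\) is a quasi-norm. Since \(t\mapsto t^{1/\gamma}\) is subadditive for \(\gamma\ge1\), it satisfies the genuine triangle inequality \(\abs{\zeta+\zeta'}_\gamma\le\abs\zeta_\gamma+\abs{\zeta'}_\gamma\), so \(\langle\cdot\rangle_\gamma\) is submultiplicative up to constants. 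Fix \(\zeta\) with \(\langle\zeta\rangle_\gamma\asymp N\), and split the \(\zeta'\)-integral into three regions.

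\emph{Region (a):} \(\langle\zeta'\rangle_\gamma\le N/4\). Here \(\langle\zeta-\zeta'\rangle_\gamma\asymp N\), so the contribution is bounded by \(N^{-b}\sum_{L\le N}L^{\gamma+1-a}\). This is \(\lesssim N^{-b}\max\{1,N^{\gamma+1-a}\}\log N\), the logarithm appearing only at \(a=\gamma+1\).

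\emph{Region (b):} \(\langle\zeta-\zeta'\rangle_\gamma\le N/4\). This is symmetric to (a) and gives \(N^{-a}\max\{1,N^{\gamma+1-b}\}\log N\).

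\emph{Region (c):} the remaining \(\zeta'\). Here \(\langle\zeta'\rangle_\gamma\gtrsim N\) and \(\langle\zeta-\zeta'\rangle_\gamma\gtrsim N\). If \(\langle\zeta'\rangle_\gamma\asymp L\gg N\), then also \(\langle\zeta-\zeta'\rangle_\gamma\asymp L\). The region is therefore bounded by \(\sum_{L\gtrsim N}L^{\gamma+1-a-b}\lesssim N^{\gamma+1-a-b}\), which is convergent precisely because \(a+b>\gamma+1\).

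Collecting exponents, each region is bounded by one of \(N^{-a}\), \(N^{-b}\), \(N^{\gamma+1-a-b}\) (up to logarithms). This gives the exponent \(-\min\{a,b,a+b-(\gamma+1)\}\), with the \(+\) absorbing the logarithm. The low-frequency case \(N\lesssim1\) is bounded by the region (c) sum with \(N=1\).

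For \eqref{eq:fractional-three-weight-convolution}, set \(N:=\max\{\langle\zeta'\rangle_\gamma,\langle\zeta''\rangle_\gamma\}\). Without loss of generality assume \(\langle\zeta'\rangle_\gamma\asymp N\); the other case is symmetric in \((b,c)\). The plan is to repeat the three-region decomposition, centred now at the point \(\zeta'\) where the large weight is concentrated, and simply discard the third weight where convenient.

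\emph{Region near 0:} \(\langle\zeta\rangle_\gamma\le N/4\). Then \(\langle\zeta'-\zeta\rangle_\gamma\asymp N\) and \(\langle\zeta''-\zeta\rangle_\gamma\gtrsim1\). Bounding the third weight by one gives a contribution \(\lesssim N^{-b}\max\{1,N^{\gamma+1-a}\}\log N\).

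\emph{Region near \(\zeta'\):} \(\langle\zeta'-\zeta\rangle_\gamma\le N/4\). Here \(\langle\zeta\rangle_\gamma\asymp N\). This is the delicate sector, because \(\zeta''\) may also lie in this ball, so the third weight cannot simply be discarded. Instead, apply \eqref{eq:fractional-annular-convolution} to the pair \((b,c)\) with the shifted variable \(\zeta'-\zeta\), obtaining \(N^{-a}\) times a bound that is \(O(1)\) when \(b+c>\gamma+1\). When \(b+c\le\gamma+1\), directly sum the dyadic shells of \(\zeta'-\zeta\) up to scale \(N\), keeping at least one weight. This yields \(N^{-a}N^{\gamma+1-b-c}\) (up to logarithms), which is \(\le N^{-(a+b+c-(\gamma+1))}\).

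\emph{Region far:} \(\langle\zeta\rangle_\gamma\gg N\). Then all three weights are \(\asymp L\) on the shell \(\langle\zeta\rangle_\gamma\asymp L\), and the sum is \(\sum_{L\gg N}L^{\gamma+1-a-b-c}\lesssim N^{\gamma+1-a-b-c}\).

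\emph{Region intermediate:} the leftover part with \(\langle\zeta\rangle_\gamma\asymp N\) but \(\zeta\) away from \(\zeta'\). Here the first two weights are \(\asymp N\). Discarding the third weight over a block of volume \(N^{\gamma+1}\) would be too crude, so instead bound it by \(N^{-a-b}\) times the \(c\)-weight integrated over the ball of radius \(\sim N\). This gives \(N^{-a-b}\max\{1,N^{\gamma+1-c}\}\), which is at most \(N^{-\min\{a,b,c,a+b+c-\gamma-1\}+}\) whenever \(c\le\gamma+1\). When \(c>\gamma+1\) it is already \(N^{-a-b}\), which is dominated by the bound.

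The main obstacle is this bookkeeping in the three-weight bound. One has to check that every region's exponent is dominated by \(\min\{a,b,c,a+b+c-(\gamma+1)\}\), including the degenerate cases where two of the exponents sum to below the dimension \(\gamma+1\). I would organize this by always retaining the two largest weights explicitly and integrating the third over the relevant ball, so that each region reduces to a single power \(N^{\gamma+1-(\text{sum of retained exponents})}\) or \(N^{-(\text{one exponent})}\). Each such power is then compared against the stated minimum, with all logarithms absorbed into the \(+\).
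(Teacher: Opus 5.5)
\paragraph{Part (a).} Your proof of \eqref{eq:fractional-annular-convolution} is the paper's three-region dyadic argument and is correct.

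\paragraph{Part (b): the near-origin region fails.} Your plan for \eqref{eq:fractional-three-weight-convolution}, with dyadic regions around the centres \(0,\zeta',\zeta''\), is what the paper's omitted proof intends. But one region, as you wrote it, does not give the claimed bound. On \(\{\langle\zeta\rangle_\gamma\le N/4\}\) you bound \(\langle\zeta''-\zeta\rangle_\gamma^{-c}\) by one and obtain \(N^{-b}\max\{1,N^{\gamma+1-a}\}\log N\). Its exponent \(\gamma+1-a-b\) is dominated by \(-\min\{a,b,c,a+b+c-(\gamma+1)\}\) only when \(a+b-(\gamma+1)\) is at least that minimum. This fails in general, and always fails when \(a+b<\gamma+1\).
\begin{itemize}
\item For \(\gamma=2\) and \(a=b=c=1.1\), the target is \(N^{-0.3+}\), but your bound is \(N^{0.8}\).
\item In the paper's own use of the estimate, \eqref{eq:lower-contraction-two-multiplier-loop}, one has \(a=\gamma+1+2(q-\delta)\) and \(b=c=\delta\). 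Your bound then becomes \(N^{\delta-2q}\), which grows because \(q\le 2\delta/5\), whereas \(N^{-2q+}\) is required.
\end{itemize}
The integral itself is fine. The loss comes entirely from discarding the \(c\)-weight, whose decay (concentrated at \(\zeta''\), which may lie inside this ball) must be combined with the \(a\)-weight rather than thrown away.

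\paragraph{The repair.} Never discard a weight: freeze only the weights whose bracket is \(\asymp N\) on the region, and integrate the product of the others jointly. For \(x,y>0\) one has \(x^{-a}y^{-c}\le x^{-(a+c)}+y^{-(a+c)}\). Also, \(\langle\zeta''\rangle_\gamma\le N\) together with your triangle inequality places \(\{\langle\zeta\rangle_\gamma\le N/4\}\) inside \(\{\langle\zeta''-\zeta\rangle_\gamma\le 2N\}\). The shell count therefore gives
\[
 \int_{\langle\zeta\rangle_\gamma\le N/4}
 \frac{\dd\zeta}{\langle\zeta\rangle_\gamma^{a}
 \langle\zeta''-\zeta\rangle_\gamma^{c}}
 \lesssim\max\{1,N^{\gamma+1-a-c}\}\log(2+N).
\]
So this region contributes \(N^{-\min\{b,\,a+b+c-(\gamma+1)\}+}\), which is admissible. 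Alternatively, when \(a+c>\gamma+1\), apply \eqref{eq:fractional-annular-convolution} to the pair \((a,c)\), exactly as you did near \(\zeta'\).

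The same merging is what actually justifies your near-\(\zeta'\) bound \(N^{-a}N^{\gamma+1-b-c}\) for \(b+c\le\gamma+1\). ``Keeping at least one weight'' only yields \(N^{-a}N^{\gamma+1-b}\) there.

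Your intermediate and far regions are correct as written. With these two adjustments the argument proves \eqref{eq:fractional-three-weight-convolution}.
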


\begin{proof}
For \(L\in2^{\mathbb N_0}\), let
\[
 A_L:=\{\zeta':\langle\zeta'\rangle_\gamma\asymp L\};
 \qquad \abs{A_L}\lesssim L^{\gamma+1}.
\]
In the region
\(\langle\zeta'\rangle_\gamma\ll\langle\zeta\rangle_\gamma\), the triangle
inequality gives
\(\langle\zeta-\zeta'\rangle_\gamma\asymp
  \langle\zeta\rangle_\gamma\).  Thus the integrand on \(A_L\) is bounded
by \(L^{-a}\langle\zeta\rangle_\gamma^{-b}\), and integration over the
\(L\)-block contributes at most
\(L^{\gamma+1-a}\langle\zeta\rangle_\gamma^{-b}\).
In the second region,
\(\langle\zeta-\zeta'\rangle_\gamma
 \ll\langle\zeta\rangle_\gamma\), the same argument holds by symmetry.
In the remaining region the two input brackets are comparable to a dyadic scale
\(L\gtrsim\langle\zeta\rangle_\gamma\), so its block contribution is
\(O(L^{\gamma+1-a-b})\).  Summing the block contributions gives
\[
\begin{aligned}
 \langle\zeta'\rangle_\gamma\ll\langle\zeta\rangle_\gamma:
 &\quad
 \langle\zeta\rangle_\gamma^{-b}
 \sum_{L\le\langle\zeta\rangle_\gamma}L^{\gamma+1-a}
 \lesssim
 \langle\zeta\rangle_\gamma^{-\min\{b,a+b-(\gamma+1)\}+},\\
 \langle\zeta-\zeta'\rangle_\gamma
 \ll\langle\zeta\rangle_\gamma:
 &\quad
 \langle\zeta\rangle_\gamma^{-a}
 \sum_{L\le\langle\zeta\rangle_\gamma}L^{\gamma+1-b}
 \lesssim
 \langle\zeta\rangle_\gamma^{-\min\{a,a+b-(\gamma+1)\}+},\\
 \langle\zeta'\rangle_\gamma
 \asymp\langle\zeta-\zeta'\rangle_\gamma
 \gtrsim\langle\zeta\rangle_\gamma:
 &\quad
 \sum_{L\ge\langle\zeta\rangle_\gamma}L^{\gamma+1-a-b}
 \lesssim
 \langle\zeta\rangle_\gamma^{-(a+b-(\gamma+1))}.
\end{aligned}
\]
Taking the largest of these three contributions proves
\eqref{eq:fractional-annular-convolution}.
The proof of \eqref{eq:fractional-three-weight-convolution} is the same
dyadic argument, now applied around the three centers
\(0,\zeta',\zeta''\), and is omitted.
\end{proof}
Estimate \eqref{eq:fractional-annular-convolution} is the fractional version
of \cite[Lemma~9.8]{GubinelliPerkowski2017KPZReloaded}.
If \(q>\delta\), then \(X\) has positive spatial regularity.  Its
convergence and time modulus follow from the standard fixed-time and
increment estimates for the stochastic convolution, while classical Bony
continuity constructs all higher driving terms and gives their convergence
and time moduli. Hence throughout the remainder of this appendix we
assume
\[
 q\le\delta
 \quad\Longleftrightarrow\quad
 \alpha\le\frac{3-\gamma}{2}<1.
\]
The first-chaos Fourier kernel of \(X^\eps\) is
\(H_\alpha^\eps\), while \(M_\gamma\) is the Fourier multiplier of
\(\Kcal_\gamma\).  For brevity, write
\begin{align}
H_i^\eps:=H_\alpha^\eps(\zeta_i), \quad
M_A:=M_\gamma(\zeta_A). 
\end{align}
Thus, for example, \(M_{123}=M_\gamma(\zeta_{123})\).

The following Lemma is the
fractional counterpart of the cancellation in
\cite[Lemma~9.5]{GubinelliPerkowski2017KPZReloaded}.

\begin{lemma}[Basic multipliers and a contracted vertex function]
\label{lem:sbe-basic-multipliers}
Let \(\phi\) be the smooth even spatial mollifier from
\Cref{prop:fractional-enhancement}.  Set
\(H_\alpha^\eps(\omega,0)=M_\gamma(\omega,0)=0\).  For \(k\ne0\), and
uniformly in \(\eps>0\),
\begin{subequations}\label{eq:sbe-basic-multipliers}
\begin{equation}\label{eq:sbe-H-multiplier}
 H_\alpha^\eps(\zeta)
 :=\frac{\abs{k}^{1-\alpha}\widehat\phi(\eps k)}
          {i\omega+\abs{k}^{\gamma}},
 \qquad
 \abs{H_\alpha^\eps(\zeta)}^2
 \lesssim
 \langle\zeta\rangle_\gamma^{-2(\alpha+\delta)}.
\end{equation}
\begin{equation}\label{eq:sbe-M-multiplier}
 M_\gamma(\zeta)
 :=\frac{ik}{i\omega+\abs{k}^{\gamma}},
 \qquad
 \abs{M_\gamma(\zeta)}^2
 \lesssim
 \langle\zeta\rangle_\gamma^{-2\delta}.
\end{equation}
\begin{equation}\label{eq:Xtwo-loop-bound}
\begin{aligned}
 L^\eps(\zeta)
 :=
 \int_{\R\times\mathbb Z}
 \abs{H_\alpha^\eps(\lambda)}^2
 M_\gamma(\zeta+\lambda)\,\dd\lambda,\quad
 \abs{L^\eps(\zeta)} \lesssim
 \langle\zeta\rangle_\gamma^{(\delta-2q)_++}.
\end{aligned}
\end{equation}
\end{subequations}
\end{lemma}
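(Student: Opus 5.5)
The two multiplier bounds are pointwise, and the loop bound reduces to a cancellation at \(\zeta=0\) plus the convolution bound \eqref{eq:fractional-annular-convolution}. For \eqref{eq:sbe-H-multiplier} and \eqref{eq:sbe-M-multiplier} I would use that, for \(k\ne0\),
\[
\abs{i\omega+\abs k^\gamma}^2=\omega^2+\abs k^{2\gamma}\asymp(\abs\omega+\abs k^\gamma)^2\asymp\langle\zeta\rangle_\gamma^{2\gamma},
\]
since \(\abs k\ge1\) absorbs the constant in \(\langle\zeta\rangle_\gamma\). Hence
\[
\abs{H_\alpha^\eps}^2\lesssim\abs k^{2-2\alpha}\langle\zeta\rangle_\gamma^{-2\gamma},
\qquad
\abs{M_\gamma}^2\lesssim\abs k^2\langle\zeta\rangle_\gamma^{-2\gamma}.
\]
These use only \(\abs{\widehat\phi}\le1\), so they hold uniformly in \(\eps\). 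In the standing regime \(q\le\delta\) we have \(\alpha<1\), so \(1-\alpha>0\), and \(\abs k\le\langle\zeta\rangle_\gamma\) converts both bounds into the stated powers \(-2(\alpha+\delta)\) and \(-2\delta\).

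For \eqref{eq:Xtwo-loop-bound}, a direct application of \eqref{eq:fractional-annular-convolution} with \(a=2(\alpha+\delta)\), \(b=\delta\) fails, because \(a+b\) need not exceed \(\gamma+1\). I therefore plan a renormalization-type cancellation in three steps.

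\emph{Step 1: \(L^\eps(0)=0\).} Since \(\phi\) is even, \(\abs{H_\alpha^\eps(\omega,k)}^2=\abs k^{2-2\alpha}\abs{\widehat\phi(\eps k)}^2/(\omega^2+\abs k^{2\gamma})\) is even in \(\omega\) and in \(k\) separately. Writing
\[
M_\gamma(\lambda)=\frac{k\omega+ik\abs k^\gamma}{\omega^2+\abs k^{2\gamma}},
\]
the real part is odd in \(\omega\) and the imaginary part is odd in \(k\). So \(L^\eps(0)=0\) after symmetric truncation in \(\omega\); the symmetric-in-\(\omega\) limit is the one produced by the stationary kernels.

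\emph{Step 2: split the integral.} Write \(L^\eps(\zeta)=\int\abs{H_\alpha^\eps(\lambda)}^2[M_\gamma(\zeta+\lambda)-M_\gamma(\lambda)]\dd\lambda\). On the near region \(\abs\lambda_\gamma\lesssim\abs\zeta_\gamma\), I bound the two terms separately by \eqref{eq:fractional-annular-convolution}, or simply by the block measure \(O(\langle\zeta\rangle_\gamma^{\gamma+1})\). This gives \(\langle\zeta\rangle_\gamma^{(\gamma+1-2\alpha-3\delta)_++}\). Note that
\[
\gamma+1-2\alpha-3\delta=4-2\alpha-2\gamma=\delta-2q,
\]
which is exactly the target exponent.

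\emph{Step 3: the far region.} On \(\abs\lambda_\gamma\gg\abs\zeta_\gamma\), I use the mean-value bound from the fractional-parabolic homogeneity of \(M_\gamma\): a spatial shift by \(k\) costs \(\abs k\langle\lambda\rangle_\gamma^{-\delta-1}\), and a temporal shift by \(\omega\) costs \(\abs\omega\langle\lambda\rangle_\gamma^{-\delta-\gamma}\le\abs\zeta_\gamma\langle\lambda\rangle_\gamma^{-\delta-1}\) in that region. This gives \(\abs{M_\gamma(\zeta+\lambda)-M_\gamma(\lambda)}\lesssim\abs\zeta_\gamma\langle\lambda\rangle_\gamma^{-\delta-1}\). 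Summing dyadic blocks \(L\gg\langle\zeta\rangle_\gamma\) gives
\[
\abs\zeta_\gamma\sum_L L^{\gamma+1-2\alpha-3\delta-1},
\]
which is \(\lesssim\langle\zeta\rangle_\gamma^{\delta-2q}\) when \(\delta-2q<1\), with a logarithm, absorbed by the \(+\), at equality.

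The main obstacle is the large-\(\lambda\) convergence when \(\delta-2q\ge1\); there the first-order difference is not summable uniformly in \(\eps\). I would handle this by symmetrizing over the four reflections \((\omega,k)\mapsto(\pm\omega,\pm k)\), all of which preserve \(\abs{H_\alpha^\eps}^2\). The first-order Taylor terms of \(M_\gamma\) at \(\lambda\) carry the parities of \(\partial_kM_\gamma\) and \(\partial_\omega M_\gamma\), and those components must cancel under this averaging. The averaged difference is then second order, \(\lesssim\abs\zeta_\gamma^2\langle\lambda\rangle_\gamma^{-\delta-2}\), which gains one more power and closes the sum whenever \(\delta-2q<2\). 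That holds on the admissible range, since \(\delta-2q\le\) the value at the lower limits, which is below \(2\). If the parity bookkeeping leaves a nonvanishing first-order piece, I would instead check that the corresponding contribution has an odd integrand in one coordinate and vanishes exactly.
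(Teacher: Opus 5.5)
Your Steps~1--3 are correct, and on the range where the lemma is used they already prove \eqref{eq:Xtwo-loop-bound}. The mechanism is the paper's, organized differently. The paper first integrates out \(\nu\) exactly via \(\int_\R\frac{\dd\nu}{(\nu^2+a^2)[b+i(\nu+\omega)]}=\frac{\pi}{a(a+b+i\omega)}\), which reduces \(L^\eps\) to a sum over \(\ell\). It then subtracts the odd summand \(\frac12\operatorname{sgn}(\ell)\abs{\ell}^{\delta-1-2q}\), which is exactly the \(\ell\)-th mode of your \(L^\eps(0)=0\). Finally it bounds the remainder by \(\abs{\ell}^{\delta-1-2q}\) for \(\abs{\ell}\le4\langle\zeta\rangle_\gamma\) and by \(\langle\zeta\rangle_\gamma\abs{\ell}^{\delta-2-2q}\) beyond.

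You perform the same subtraction at the space--time level, using fractional-parabolic blocks and a two-variable mean-value bound on \(M_\gamma\). This is the device the paper itself uses later in \Cref{lem:lower-contraction-loop-estimates}. Your route avoids the contour integral, while the paper's exact formula makes the cancellation and the summability threshold fully explicit. Your remark that \eqref{eq:sbe-H-multiplier} needs \(\alpha\le1\), which \(q\le\delta\) guarantees, is a correct detail the paper leaves implicit.

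Your last paragraph, however, should be replaced: it is both unnecessary and false.

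\emph{It is unnecessary.} The admissible range gives \(3q-\delta>1-\gamma/2>0\), hence \(\delta-2q<\delta/3\le1/3\); this is the condition \(2q>\delta-1\) the paper invokes. So your far-region sum \(\abs{\zeta}_\gamma\sum_{L\gg\langle\zeta\rangle_\gamma}L^{\delta-2q-1}\) always converges, and the ``main obstacle'' never occurs. Your aside about equality is also wrong: at \(\delta-2q=1\) the far sum is \(\sum L^0=\infty\) (a \(\log(1/\eps)\) divergence after mollification). It is not a logarithm in \(\langle\zeta\rangle_\gamma\) that the \(+\) could absorb.

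\emph{It is false.} The first-order term in \(k\) does not cancel under the reflections, because \(\partial_\ell M_\gamma\) is even in \(\ell\). By the formula above,
\[
 \int_\R\abs{H_\alpha^\eps(\nu,\ell)}^2\,\partial_\ell M_\gamma(\nu,\ell)\,\dd\nu
 =\frac{\pi i(2-\gamma)}{4}\,\abs{\ell}^{2-2\alpha-2\gamma}\abs{\widehat\phi(\eps\ell)}^2 .
\]
This is nonzero for \(\gamma<2\), and its sum over \(\abs{\ell}\gg\langle\zeta\rangle_\gamma\) diverges as \(\eps\downarrow0\) exactly when \(\delta-2q\ge1\). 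The zeroth-order and \(\omega\)-terms do cancel by oddness, and the second-order remainder converges for \(\delta-2q<2\). Hence for \(\gamma<2\) and \(1\le\delta-2q<2\), precisely the range your fix targets, \(L^\eps(\zeta)\) with \(k\ne0\) is not bounded uniformly in \(\eps\). Neither the reflection averaging nor the ``odd integrand'' fallback can rescue it. The correct fix is simply to record \(\delta-2q<1\) on the admissible range.
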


\begin{proof}
It is straightforward to see that
\[
 \abs{H_\alpha^\eps(\zeta)}^2
 \lesssim
 \langle\zeta\rangle_\gamma^{2(1-\alpha)-2\gamma}
 \overset{\eqref{eq:fractional-exponent-relations}}{=}
 \langle\zeta\rangle_\gamma^{-2(\alpha+\delta)},
 \qquad
 \abs{M_\gamma(\zeta)}^2
 \lesssim
 \langle\zeta\rangle_\gamma^{2-2\gamma}
 \overset{\eqref{eq:fractional-exponent-relations}}{=}
 \langle\zeta\rangle_\gamma^{-2\delta}.
\]

For the contracted vertex function, write
\(\zeta=(\omega,k)\) and \(\lambda=(\nu,\ell)\).
The modes \(\ell=0\) and \(\ell+k=0\) vanish because, respectively,
\(H_\alpha^\eps(\nu,0)=0\) and
\(M_\gamma(\nu+\omega,0)=0\).  On the remaining modes we use
\[
 \int_\R
 \frac{\dd\nu}
 {(\nu^2+a^2)[b+i(\nu+\omega)]}
 =
 \frac{\pi}{a(a+b+i\omega)},
 \qquad a,b>0.
\]
Taking \(a=\abs{\ell}^{\gamma}\) and
\(b=\abs{\ell+k}^{\gamma}\) in the formula above, we obtain
\[
\begin{aligned}
 L^\eps(\omega,k)
 &=
 \sum_{\substack{\ell\ne0\\ \ell+k\ne0}}
 i(\ell+k)\abs{\ell}^{2(1-\alpha)}
 \abs{\widehat\phi(\eps\ell)}^2
 \int_\R
 \frac{\dd\nu}
 {(\nu^2+\abs{\ell}^{2\gamma})
  [\abs{\ell+k}^{\gamma}+i(\nu+\omega)]}\\
 &=
 c\sum_{\ell\ne0}
 \frac{i(\ell+k)\abs{\ell}^{2(1-\alpha)-\gamma}
       \abs{\widehat\phi(\eps\ell)}^2}
 {i\omega+\abs{\ell}^{\gamma}+\abs{\ell+k}^{\gamma}}\\
 &\overset{\eqref{eq:fractional-exponent-relations}}{=}
 c\sum_{\ell\ne0}
 i\abs{\widehat\phi(\eps\ell)}^2
 \left[
  \frac{(\ell+k)\abs{\ell}^{2\delta-1-2q}}
       {i\omega+\abs{\ell}^{\gamma}+\abs{\ell+k}^{\gamma}}
  -\frac12\operatorname{sgn}(\ell)
   \abs{\ell}^{\delta-1-2q}
 \right],
\end{aligned}
\]
for some constant c. In the second line, the mode \(\ell=-k\) has been reinserted as a zero
summand.  In the final line, the second term in brackets has odd spatial
summand because \(\abs{\widehat\phi(\eps\ell)}^2\) is even and rapidly
decreasing; hence its full spatial sum vanishes.

We first record the two bounds required for the full summand:
\[
\begin{aligned}
 \abs{\ell}^{2\delta-1-2q}
 \abs{\widehat\phi(\eps\ell)}^2
 \left|
  \frac{\ell+k}
  {i\omega+\abs{\ell}^{\gamma}+\abs{\ell+k}^{\gamma}}
  -\frac{\ell}{2\abs{\ell}^{\gamma}}
 \right|\lesssim
 \begin{cases}
  \abs{\ell}^{\delta-1-2q},
  &0<\abs{\ell}\le4\langle\zeta\rangle_\gamma,\\
  \langle\zeta\rangle_\gamma
  \abs{\ell}^{\delta-2-2q},
  &\abs{\ell}>4\langle\zeta\rangle_\gamma.
 \end{cases}
\end{aligned}
\]
Assuming these bounds momentarily, we obtain
\[
\begin{aligned}
 \abs{L^\eps(\zeta)}
 &\lesssim
 \sum_{0<\abs\ell\le4\langle\zeta\rangle_\gamma}
 \abs{\ell}^{\delta-1-2q}
 +
 \langle\zeta\rangle_\gamma
 \sum_{\abs\ell>4\langle\zeta\rangle_\gamma}
 \abs{\ell}^{\delta-2-2q},
\end{aligned}
\]
where the second series is summable because the admissible parameter range
gives \(2q>\delta-1\).  The bound
\eqref{eq:Xtwo-loop-bound} follows directly.  It remains to prove the claimed bounds. For \(0<\abs\ell\le4\langle\zeta\rangle_\gamma\), the two terms in
brackets satisfy
\[
\begin{aligned}
 \left|
 \frac{\ell+k}
 {i\omega+\abs{\ell}^{\gamma}+\abs{\ell+k}^{\gamma}}
 \right|
 \le
 \frac{\abs{\ell+k}}
 {\abs{\ell}^{\gamma}+\abs{\ell+k}^{\gamma}}
 \lesssim
 \abs{\ell}^{1-\gamma}.
\end{aligned}
\]
For \(\abs\ell>4\langle\zeta\rangle_\gamma\),
\(\abs{\ell+\theta k}\asymp\abs\ell\) for \(0\le\theta\le1\).  The
difference in brackets is exactly
\[
\begin{aligned}
 &\int_0^1
 \partial_\theta
 \left[
 \frac{\ell+\theta k}
 {i\theta\omega+\abs{\ell}^{\gamma}
  +\abs{\ell+\theta k}^{\gamma}}
 \right]\dd\theta .
\end{aligned}
\]
The differentiated integrand is
\[
\begin{aligned}
 \frac{k}
 {i\theta\omega+\abs{\ell}^{\gamma}
  +\abs{\ell+\theta k}^{\gamma}}-
 \frac{(\ell+\theta k)
 \left[
 i\omega+\gamma k\operatorname{sgn}(\ell+\theta k)
 \abs{\ell+\theta k}^{\gamma-1}
 \right]}
 {\left[
 i\theta\omega+\abs{\ell}^{\gamma}
 +\abs{\ell+\theta k}^{\gamma}
 \right]^2}.
\end{aligned}
\]
Its modulus is bounded by
\[
\begin{aligned}
 \abs{k}\abs{\ell}^{-\gamma}
 +\abs{\omega}\abs{\ell}^{1-2\gamma}
 &\lesssim
 \langle\zeta\rangle_\gamma\abs{\ell}^{-\gamma},
\end{aligned}
\]
because
\(\abs{k}\le\langle\zeta\rangle_\gamma\),
\(\abs{\omega}\le\langle\zeta\rangle_\gamma^\gamma\), and
\(\langle\zeta\rangle_\gamma\lesssim\abs{\ell}\). This proves the second pointwise bound and completes
\eqref{eq:Xtwo-loop-bound}. 
\end{proof}

\begin{remark}[Chaos kernels in Fourier variables]
Every kernel \(G(\zeta_1,\ldots,\zeta_n)\) of a Wiener-chaos component
below is displayed in
the Fourier variables of its noise inputs.  We write
\(\mathcal I_n(G)\) for the usual physical-space Wiener--It\^o map
applied after inverse Fourier transformation in those \(n\) variables;
thus \(\mathcal I_n\) is not a separate Fourier-space stochastic
integral.  When the output is written as
\(\mathcal I_n(G)(\zeta)\), it denotes the Fourier transform of the
resulting physical-space random field.  The sign in a Fourier Wick
contraction follows from the bilinear Plancherel identity
\[
 \int f(z)g(z)\,\dd z
 =
 c\int\widehat f(\lambda)\widehat g(-\lambda)\,\dd\lambda.
\]
Consequently, the usual physical-space Wick contraction pairs the Fourier
frequencies \(\lambda\) and \(-\lambda\).  The output frequency of an
\(n\)-input kernel is \(\zeta_1+\cdots+\zeta_n\).  For a symmetric
kernel \(G\), the Wiener isometry gives the Fourier second-moment density
\[
 \mathcal C_G(\zeta)
 :=n!\int_{\zeta_1+\cdots+\zeta_n=\zeta}
 \abs{G(\zeta_1,\ldots,\zeta_n)}^2
 \dd\zeta_1\cdots\dd\zeta_{n-1}.
\]
In Fourier second-moment form,
\(\mathbb E\abs{\mathcal I_n(G)(\zeta)}^2=\mathcal C_G(\zeta)\).
\end{remark}

\subsection{Fractional Burgers enhancement}
\label{app:fractional-enhancement-proof}

We follow the Fourier-chaos organization of
\cite[Section~9.5]{GubinelliPerkowski2017KPZReloaded}, with fractional
parabolic multipliers and the additional response coordinates required here.
For every \(\eps>0\), the stationary driving terms
\((\Xone)^\eps=\Kcal_\gamma\mathopen{:}(X^\eps)^2\mathclose{:}\) and
\((\Xtwo)^\eps=\Kcal_\gamma(X^\eps\diamond(\Xone)^\eps)\) have the
following chaos decompositions.
\begin{equation}\label{eq:Xone-Xtwo-chaos-decompositions}
 (\Xone)^\eps
 =\mathcal I_2(G_{\Xone,2}^\eps),
 \qquad
 (\Xtwo)^\eps
 =\mathcal I_3(G_{\Xtwo,3}^\eps)
  +\mathcal I_1(G_{\Xtwo,1}^\eps),
\end{equation}
with
\begin{subequations}\label{eq:Xone-Xtwo-chaos-kernels}
\begin{equation}\label{eq:Xone-chaos-kernel}
 G_{\Xone,2}^\eps(\zeta_1,\zeta_2)
 =M_{12}H_1^\eps H_2^\eps.
\end{equation}
and
\begin{equation}\label{eq:Xtwo-chaos-kernels}
\begin{aligned}
 G_{\Xtwo,3}^\eps(\zeta_1,\zeta_2,\zeta_3)
 & =
 \operatorname{Sym}_3\left[
  M_{123}M_{12}H_1^\eps H_2^\eps H_3^\eps
 \right],\\
 G_{\Xtwo,1}^\eps(\zeta)
 &=2M_\gamma(\zeta)H_\alpha^\eps(\zeta)L^\eps(\zeta).
\end{aligned}
\end{equation}
\end{subequations}
Here \(L^\eps\) is the contracted vertex function defined in
\eqref{eq:Xtwo-loop-bound}.
Indeed, taking the space--time Fourier transform of the stationary equation
\(L_\gamma X^\eps=\abs{\partial_x}^{1-\alpha}\xi^\eps\) gives, for
\(k\ne0\),
\[
 (i\omega+\abs{k}^{\gamma})\widehat X^\eps(\omega,k)
 =
 \abs{k}^{1-\alpha}\widehat\phi(\eps k)\widehat\xi(\omega,k) \implies
 \widehat X^\eps(\zeta)
 =H_\alpha^\eps(\zeta)\widehat\xi(\zeta),
 \quad
 X^\eps=\mathcal I_1(H_\alpha^\eps).
\]
At the mollified level the Wiener product formula gives
\[
 (X^\eps)^2
 =\mathcal I_2(H_1^\eps H_2^\eps)
   +\mathbb E[(X^\eps)^2].
\]
Wick
centering removes the second term.  In Fourier variables,
\[
 \widehat{\Kcal_\gamma Y}(\zeta)
 =M_\gamma(\zeta)\widehat Y(\zeta),
 \qquad
 \zeta=\zeta_1+\zeta_2,
\]
so the Duhamel operator contributes
\(M_\gamma(\zeta_1+\zeta_2)=M_{12}\).  Consequently,
\(
 (\Xone)^\eps
 =\mathcal I_2(M_{12}H_1^\eps H_2^\eps)
\),
and this proves the first identities in
\eqref{eq:Xone-Xtwo-chaos-decompositions} and
\eqref{eq:Xone-chaos-kernel}. For
\((\Xtwo)^\eps=\Kcal_\gamma(X^\eps\diamond(\Xone)^\eps)\), the three
Bony pieces of the smooth product sum to
\(X^\eps(\Xone)^\eps\).  By the Fourier Wiener--It\^o convention above,
the elementary product formula\footnote{\interlinepenalty=10000
All tensor operations here act on the displayed Fourier kernels.  For
symmetric \(m\)- and \(n\)-input kernels,
\(f\otimes_{\mathrm{sym}}g\) is the normalized symmetrization
\(\operatorname{Sym}_{m+n}[f(\zeta_1,\ldots,\zeta_m)
g(\zeta_{m+1},\ldots,\zeta_{m+n})]\).  For a one-input kernel \(f\)
and a symmetric two-input kernel \(g\),
\((f\otimes_1g)(\zeta):=\int_{\R\times\mathbb Z}
f(\lambda)g(\zeta,-\lambda)\,\dd\lambda\).
}
\[
 \mathcal I_1(f)\mathcal I_2(g)
 =\mathcal I_3(f\otimes_{\mathrm{sym}}g)
  +2\mathcal I_1(f\otimes_1g)
\]
to \(f=H_\alpha^\eps\) and \(g=G_{\Xone,2}^\eps\) gives, before the
final \(\Kcal_\gamma\),
\[
 X^\eps(\Xone)^\eps
 =
 \mathcal I_3(
  H_\alpha^\eps\otimes_{\mathrm{sym}}G_{\Xone,2}^\eps)
 +2\mathcal I_1(
  H_\alpha^\eps\otimes_1G_{\Xone,2}^\eps).
\]
The one-index Wick contraction is
\begin{equation}\label{eq:Xtwo-one-contraction}
\begin{aligned}
 (H_\alpha^\eps\otimes_1G_{\Xone,2}^\eps)(\zeta)
 =
 \int_{\R\times\mathbb Z}
 H_\alpha^\eps(\lambda)
 G_{\Xone,2}^\eps(\zeta,-\lambda)\,\dd\lambda =
 H_\alpha^\eps(\zeta)L^\eps(\zeta).
\end{aligned}
\end{equation}
The second equality follows from
\eqref{eq:Xone-chaos-kernel},
\(H_\alpha^\eps(-\lambda)
=\overline{H_\alpha^\eps(\lambda)}\), the evenness of its modulus, and
\(\lambda\mapsto-\lambda\).  The factor \(2\) counts the two symmetric
inputs of \(G_{\Xone,2}^\eps\) that can be contracted. This proves the remaining identities in
\eqref{eq:Xone-Xtwo-chaos-decompositions} and
\eqref{eq:Xtwo-chaos-kernels}.

\begin{lemma}[Fourier second-moment bounds for \(\Xone\) and \(\Xtwo\)]
\label{lem:Xone-Xtwo-kernel-bounds}
The kernels in \eqref{eq:Xone-chaos-kernel} and
\eqref{eq:Xtwo-chaos-kernels} have Fourier
second-moment densities satisfying, uniformly in \(\eps\),
\begin{equation*}
\begin{aligned}
 \mathcal C_{G_{\Xone,2}^\eps}(\zeta)
 &\lesssim
 \langle\zeta\rangle_\gamma^{
  -(\gamma+1)-2(2q-\delta)+}, \notag \\
 \mathcal C_{G_{\Xtwo,j}^\eps}(\zeta) & \lesssim
 \langle\zeta\rangle_\gamma^{
  -(\gamma+1)-2\min\{3q-\delta,q\}+}, \quad j\in\{1,3\}.
\end{aligned}
\end{equation*}
\end{lemma}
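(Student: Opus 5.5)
We bound the three second-moment densities directly from the explicit kernels, using the multiplier bounds of \Cref{lem:sbe-basic-multipliers} and the convolution estimates \eqref{eq:fractional-convolution-bounds}. The exponents we will use repeatedly are
\[
\abs{H_i^\eps}^2\lesssim\langle\zeta_i\rangle_\gamma^{-2(\alpha+\delta)},
\qquad
\abs{M_A}^2\lesssim\langle\zeta_A\rangle_\gamma^{-2\delta},
\qquad
2(\alpha+\delta)=2q-\delta+(\gamma+1)-1 .
\]
The last identity follows from $q=\alpha+\frac{3\gamma-5}{2}$, so that $2\alpha+2\delta=2q+3-\gamma$. Thus each leaf has weight exponent $a_H:=2q+3-\gamma$. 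Since $q>\delta/3$ in the admissible range, this exceeds $(\gamma+1)/2$.

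\textbf{Step 1: $\Xone$.} By \eqref{eq:Xone-chaos-kernel},
\[
\mathcal C_{G_{\Xone,2}^\eps}(\zeta)\lesssim
\langle\zeta\rangle_\gamma^{-2\delta}
\int\langle\zeta_1\rangle_\gamma^{-a_H}
\langle\zeta-\zeta_1\rangle_\gamma^{-a_H}\dd\zeta_1 .
\]
We apply \eqref{eq:fractional-annular-convolution} with $a=b=a_H$. The minimum there equals $2a_H-(\gamma+1)=4q+5-3\gamma$, because $a_H<\gamma+1$ when $q\le\delta$; this is our standing assumption, and otherwise everything is classical. The total exponent is therefore $-2\delta-(4q+5-3\gamma)$. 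Using $5-3\gamma=(\gamma+1)-4\delta$, this equals $-(\gamma+1)-2(2q-\delta)$, as claimed.

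\textbf{Step 2: third chaos of $\Xtwo$.} We bound the symmetrized kernel by the sum of its permutations. For a single term, we integrate first over the inner pair $(\zeta_1,\zeta_2)$ with $\zeta_{12}$ fixed, exactly as in Step 1. This gives
\[
\langle\zeta_{12}\rangle_\gamma^{-(\gamma+1)-2(2q-\delta)+}
\]
for the factor $\abs{M_{12}H_1H_2}^2$. The remaining convolution in $\zeta_{12}$ against $\langle\zeta-\zeta_{12}\rangle_\gamma^{-a_H}$ is then handled by \eqref{eq:fractional-annular-convolution} once more, with $a=(\gamma+1)+2(2q-\delta)$ and $b=a_H$. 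Multiplying by the outer factor $\langle\zeta\rangle_\gamma^{-2\delta}$ and working through the three terms of the minimum gives:
\begin{itemize}
\item from $a+b-(\gamma+1)$, the exponent $-(\gamma+1)-2(3q-\delta)$;
\item from $b$, the exponent $-2\delta-a_H$, which is smaller than $-(\gamma+1)-2q$ in absolute value only when it is the dominant term, and is at least $-(\gamma+1)-2q$ after the algebra;
\item from $a$, a term that is better than the others.
\end{itemize}
Hence the bound $\min\{3q-\delta,q\}$. This requires $a>0$, i.e.\ $2q-\delta>-(\gamma+1)/2$, which is guaranteed by the admissible range. Cross terms arising from symmetrization are controlled by Cauchy--Schwarz with the diagonal terms.

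\textbf{Step 3: first chaos of $\Xtwo$.} Here there is no integral: by \eqref{eq:Xtwo-loop-bound},
\[
\abs{G_{\Xtwo,1}^\eps}^2\lesssim\langle\zeta\rangle_\gamma^{-2\delta-a_H+2(\delta-2q)_++}.
\]
A density with no convolution gains only the exponent itself, with no $-(\gamma+1)$ volume factor. So we need to compare $2\delta+a_H-2(\delta-2q)_+$ with $(\gamma+1)+2\min\{3q-\delta,q\}$; since $a_H-(\gamma+1)=2q+2-2\gamma$, this is pure arithmetic.
\begin{itemize}
\item If $2q\ge\delta$, the left side is $2\delta+2q+3-\gamma=(\gamma+1)+2q$. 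This matches $q=\min\{3q-\delta,q\}$.
\item If $2q<\delta$, we subtract $2\delta-4q$ and obtain $(\gamma+1)+6q-2\delta$, i.e.\ $3q-\delta$.
\end{itemize}

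\textbf{Main obstacle.} We expect the main obstacle to be the careful bookkeeping in Step 2, in particular ensuring that all the positivity hypotheses of the convolution lemma hold throughout the admissible range, rather than any new estimate.
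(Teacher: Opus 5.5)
Your proposal is correct and is essentially the paper's proof: one application of \eqref{eq:fractional-annular-convolution} for \(\Xone\); Jensen to remove the symmetrization, then nesting \(\mathcal C_{G_{\Xone,2}^\eps}\) against a single leaf for the third chaos of \(\Xtwo\); and the pointwise contracted-vertex bound \eqref{eq:Xtwo-loop-bound} for its first chaos. One slip to fix: the displayed identity \(2(\alpha+\delta)=2q-\delta+(\gamma+1)-1\) is false unless \(\gamma=2\) (the correct form is \((\gamma+1)+2(q-\delta)\)), but you immediately use the correct value \(a_H=2q+3-\gamma\); likewise your garbled ``from \(b\)'' case is simply the equality \(-2\delta-a_H=-(\gamma+1)-2q\).
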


\begin{proof}[Proof Outline]
In the present regime,
\[
 \abs{H_\alpha^\eps(\zeta)}^2
 \lesssim
 \langle\zeta\rangle_\gamma^{-2(\alpha+\delta)}
 \overset{\eqref{eq:fractional-exponent-relations}}{=}
 \langle\zeta\rangle_\gamma^{
  -(\gamma+1)-2(q-\delta)}.
\]
Since \(q\le\delta\), the convolution minimum in \eqref{eq:fractional-annular-convolution} is
\(\gamma+1+4(q-\delta)\), and therefore
\begin{align}
 \mathcal C_{G_{\Xone,2}^\eps}(\zeta)
 &=2\int_{\zeta_1+\zeta_2=\zeta}
   \abs{M_{12}H_1^\eps H_2^\eps}^2\,\dd\zeta_1 \notag \\ & \lesssim
 \langle\zeta\rangle_\gamma^{-2\delta}
 \int_{\R\times\mathbb Z}
 \frac{\dd\zeta_1}
 {\langle\zeta_1\rangle_\gamma^{
    \gamma+1+2(q-\delta)}
  \langle\zeta-\zeta_1\rangle_\gamma^{
    \gamma+1+2(q-\delta)}} 
 \lesssim
 \langle\zeta\rangle_\gamma^{
  -(\gamma+1)-2(2q-\delta)+}.
 \label{eq:Xone-Ito-bound}
\end{align}

For the third-chaos component, Jensen's inequality and permutation invariance
remove the normalized symmetrization.  Grouping its first two inputs
as the \(\Xone\)-subtree gives
\begin{align}
 \mathcal C_{G_{\Xtwo,3}^\eps}(\zeta)
 &\lesssim
 \abs{M_\gamma(\zeta)}^2
 \int_{\R\times\mathbb Z}
 \mathcal C_{G_{\Xone,2}^\eps}(\zeta')
 \abs{H_\alpha^\eps(\zeta-\zeta')}^2\,\dd\zeta'
 \lesssim
 \langle\zeta\rangle_\gamma^{
  -(\gamma+1)-2\min\{3q-\delta,q\}+},
 \label{eq:Xtwo-third-chaos-Ito-bound}
\end{align}
For the first-chaos component, the contracted vertex gives
\begin{align}
 \mathcal C_{G_{\Xtwo,1}^\eps}(\zeta)
 &=\abs{G_{\Xtwo,1}^\eps(\zeta)}^2
 =4\abs{M_\gamma(\zeta)H_\alpha^\eps(\zeta)}^2
   \abs{L^\eps(\zeta)}^2
 \lesssim
 \langle\zeta\rangle_\gamma^{
  -(\gamma+1)-2\min\{3q-\delta,q\}+}.
 \label{eq:Xtwo-first-chaos-Ito-bound}
\end{align}
\end{proof}

Since the first- and third-chaos components of \(\Xtwo\) are orthogonal,
\Cref{lem:Xone-Xtwo-kernel-bounds} also gives
\[
 s_{\alpha,\gamma}(\Xtwo)=\min\{3q-\delta,q\}.
\]

The preceding calculation gives
\(X^\eps=\mathcal I_1(H_\alpha^\eps)\) and the kernels of
\(\Xone\) and \(\Xtwo\) in
\eqref{eq:Xone-Xtwo-chaos-decompositions}--
\eqref{eq:Xtwo-chaos-kernels}.  For the other three driving terms
among the first six, the kernels of the highest-chaos components are
\[
\begin{aligned}
 G_{\mathbb X_Q,2}^\eps
 &=
 \operatorname{Sym}_2\left[
  \chi_\odot(k_1,k_2)M_1H_1^\eps H_2^\eps
 \right],\qquad
 G_{\Xfour,4}^\eps
 =
 \operatorname{Sym}_4\left[
  M_{1234}M_{12}M_{34}\prod_{i=1}^4H_i^\eps
 \right],\\
 G_{\XthreeRes,4}^\eps
 &=
 \operatorname{Sym}_4\left[
  \chi_\odot(k_4,k_{123})M_{1234}M_{123}M_{12}
  \prod_{i=1}^4H_i^\eps
 \right].
\end{aligned}
\]
Here \(\chi_\odot\) is the resonant multiplier defined in
\eqref{eq:resonant-multiplier}.

The zeroth-chaos component of \(\mathbb X_Q^\eps\) vanishes.  Indeed,
\[
 \int_\R
 M_\gamma(-\omega,-k)
 \abs{H_\alpha^\eps(\omega,k)}^2\,\dd\omega
 =-
 \frac{\pi ik\,\abs{k}^{2(1-\alpha)}
       \abs{\widehat\phi(\eps k)}^2}
      {2\abs{k}^{2\gamma}}.
\]
The right-hand side is odd in \(k\), whereas
\(\chi_\odot(-k,k)\) and the mollifier factor are even.  Its sum over
\(k\ne0\) is therefore zero.  Consequently,
\(\mathbb X_Q^\eps=\mathcal I_2(G_{\mathbb X_Q,2}^\eps)\).

For the direct construction of the remaining three driving coordinates,
assume without loss of generality that \(q\le\delta/2\).  Indeed, if
\(\delta/2<q\le\delta\), then \(2q-\delta>0\), and Bony continuity
together with the Schauder gain gives the continuous reconstructions
\[
 \mathbb X_Q\in\Ccal^{2q-\delta-},
 \qquad
 \Xfour,\XthreeRes\in\Ccal^{2q-}.
\]

\begin{lemma}[Kernel bounds for the first six driving coordinates]
\label{lem:first-six-enhancement-kernel-bounds}
Assume \(q\le\delta/2\).  In addition to
\eqref{eq:sbe-H-multiplier} and
\Cref{lem:Xone-Xtwo-kernel-bounds}, the remaining kernels satisfy,
uniformly in \(\eps\),
\[
 \mathcal C_{G_{\mathbb X_Q,2}^\eps}(\zeta)
 \lesssim
 \langle\zeta\rangle_\gamma^{-(\gamma+1)-2(2q-\delta)+}.
\]
Moreover, for \(\tau\in\{\Xfour,\XthreeRes\}\),
\[
 \sum_{n\in\{2,4\}}\mathcal C_{G_{\tau,n}^\eps}(\zeta)
 \lesssim
 \langle\zeta\rangle_\gamma^{-(\gamma+1)-2(4q-\delta)+}.
\]
The zeroth-chaos components of \(\Xfour\) and \(\XthreeRes\) vanish.
\end{lemma}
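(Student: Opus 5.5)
The three bounds follow from the same Fourier-chaos power counting used in \Cref{lem:Xone-Xtwo-kernel-bounds}, with the cancellation of zeroth-chaos terms treated first.

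\textbf{Zeroth chaos.} For \(\Xfour=\Kcal_\gamma((\Xone)^{\diamond2})\), the zeroth component of \((\Xone)^2\) is the constant \(2\int\abs{G^\eps_{\Xone,2}}^2\) at output frequency zero. Since \(M_\gamma(0,0)=0\), the map \(\Kcal_\gamma\) annihilates it. For \(\XthreeRes\), the zeroth component arises from \(\chi_\odot(k_4,k_{123})\) at \(k_{123}=-k_4\), so the output frequency is again \(\zeta=0\), which \(M_{1234}\) kills. No oddness argument is needed, unlike for \(\mathbb X_Q\).

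\textbf{Second moment of \(\mathbb X_Q\).} Drop the symmetrization by Jensen. On the support of \(\chi_\odot(k_1,k_2)\) we have \(\abs{k_1}\asymp\abs{k_2}\gtrsim\abs{k_{12}}\), and therefore \(\abs{M_1}^2\lesssim\langle\zeta_1\rangle_\gamma^{-2\delta}\). Putting \(M_1\) on the first weight, \eqref{eq:fractional-annular-convolution} is applied with \(a=\gamma+1+2(q-\delta)+2\delta\) and \(b=\gamma+1+2(q-\delta)\). The resonant restriction confines the integration to the third region of that lemma, \(\langle\zeta_1\rangle_\gamma\asymp\langle\zeta_2\rangle_\gamma\gtrsim\langle\zeta\rangle_\gamma\); the time frequencies need a small argument, but the spatial comparability together with the decay in \(\omega\) suffices up to an \(\eps\)-loss. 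This gives the exponent \(a+b-(\gamma+1)=\gamma+1+2(2q-\delta)\), as claimed. For \(q\le\delta/2\) this is the only term of this type to check, since the first-chaos component vanishes.

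\textbf{\(\Xfour\) and \(\XthreeRes\), fourth chaos.} Iterate the subtree bounds. For \(\Xfour\), bound
\[
\mathcal C_{G_{\Xfour,4}}(\zeta)\lesssim\abs{M(\zeta)}^2\int\mathcal C_{G_{\Xone,2}}(\zeta')\,\mathcal C_{G_{\Xone,2}}(\zeta-\zeta')\dd\zeta',
\]
and use the exponents \(\gamma+1+2(2q-\delta)\) from \eqref{eq:Xone-Ito-bound}. Since \(2q-\delta\le0\), the sum minus \((\gamma+1)\) is the minimum, so the output exponent is \(\gamma+1+4(2q-\delta)+2\delta=\gamma+1+2(4q-\delta)\). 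For \(\XthreeRes\), combine \eqref{eq:Xtwo-third-chaos-Ito-bound} for the \(\Xtwo\)-subtree with \(\abs{H_4}^2\). The resonant cutoff forces \(\langle\zeta_{123}\rangle\asymp\langle\zeta_4\rangle\gtrsim\langle\zeta\rangle\), so the sum-of-exponents case of \eqref{eq:fractional-annular-convolution} applies: \((3q-\delta)+(q-\delta)+\delta=4q-\delta\). This assumes \(3q-\delta\le q\), which holds since \(q\le\delta/2\).

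\textbf{Second chaos (main obstacle).} Here a single contraction pairs one leaf across the two subtrees. For \(\Xfour\), this pairs a leaf of each \(\Xone\); for \(\XthreeRes\), the contraction is either between leaf \(4\) and a leaf of the \(\Xtwo\)-subtree, or inside the \(\Xtwo\)-subtree via \(L^\eps\). The inner-subtree contraction is handled by \eqref{eq:Xtwo-loop-bound} together with \eqref{eq:Xtwo-first-chaos-Ito-bound}, which have the same homogeneity. For the cross contraction, I would write the contracted kernel as an integral over \(\lambda\) of \(\abs{H(\lambda)}^2\) times the multipliers, bound it pointwise by power counting, and then apply \eqref{eq:fractional-three-weight-convolution}. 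The naive bound may lose the extra factor that \(L^\eps\) gained through the oddness subtraction. If so, I would repeat that cancellation: subtract the \(\lambda\to\infty\) limit of the multiplier, which is odd in \(\ell\), so its sum vanishes as in \Cref{lem:sbe-basic-multipliers}. The remainder is then bounded by \(\langle\zeta\rangle_\gamma\abs\ell^{\delta-2-2q}\), and the \(\ell\)-sum converges because \(2q>\delta-1\). The total homogeneity \(4q-\delta\) is unchanged by contractions, since each contraction removes two leaves and one integration. Hence once this logarithmic or oddness issue is controlled, the same exponent follows.
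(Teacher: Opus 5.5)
Your overall architecture is the right one: Jensen to drop the symmetrization, iterated subtree covariances, and an oddness cancellation for the divergent loop. However, both resonant estimates rest on a false step.

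\textbf{The gap.} For \(\mathbb X_Q\) and for the fourth chaos of \(\XthreeRes\), you claim that \(\chi_\odot\) confines the integration to the comparable region \(\langle\zeta_1\rangle_\gamma\asymp\langle\zeta_2\rangle_\gamma\gtrsim\langle\zeta\rangle_\gamma\) of \eqref{eq:fractional-annular-convolution}, with only an \(\eps\)-loss from the time frequencies. But \(\chi_\odot\) compares only spatial modes, so one input can carry the entire output time frequency.

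Take \(\zeta=(\omega,k)\) with \(|k|\sim1\) and \(|\omega|^{1/\gamma}\gg1\). Let \(1\ll|k_1|\asymp|k_2|\asymp K\ll\langle\zeta\rangle_\gamma\), \(|\omega_1|\lesssim K^\gamma\), and \(\omega_2\approx\omega\). This configuration is resonant. On it, your isotropic majorant with \(a=\gamma+1+2q\) and \(b=\gamma+1+2(q-\delta)\) has size
\[
 \langle\zeta\rangle_\gamma^{-b}\sum_{K}K^{\gamma+1-a}\asymp\langle\zeta\rangle_\gamma^{-(\gamma+1)-2(q-\delta)}.
\]
This misses a full factor \(\langle\zeta\rangle_\gamma^{-2q}\), not an \(\eps\).

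The same configuration appears in your \(\XthreeRes\) bound, with \(\langle\zeta_{123}\rangle_\gamma\ll\langle\zeta\rangle_\gamma\) and \(\omega_4\approx\omega\). Using only \(|M_\gamma(\zeta)|^2\lesssim\langle\zeta\rangle_\gamma^{-2\delta}\), it gives \(\langle\zeta\rangle_\gamma^{-(\gamma+1)-2q}\), which is short of the claim by \(\langle\zeta\rangle_\gamma^{6q-2\delta}\), and \(6q-2\delta>0\). So isotropic power counting cannot prove either resonant bound. The missing ingredient is the spatial numerators of the symbols.

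\textbf{Repairing \(\XthreeRes\).} Use the root multiplier. On the support of \(\chi_\odot(k_4,k_{123})\) one has \(|k|\lesssim(1+|k_4|)\wedge(1+|k_{123}|)\), and \(|M_\gamma(\omega,k)|\lesssim|k|\langle\zeta\rangle_\gamma^{-\gamma}\). Hence
\[
 |\chi_\odot(k_4,k_{123})M_\gamma(\zeta)|^2\lesssim\langle\zeta\rangle_\gamma^{-2\delta}\min\bigl\{1,(\langle\zeta_4\rangle_\gamma\wedge\langle\zeta_{123}\rangle_\gamma)^2\langle\zeta\rangle_\gamma^{-2}\bigr\}.
\]
The three-region block sum from the proof of \Cref{lem:response-leading-kernel-bounds} then gives \(4q-\delta\). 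The same applies to the second-chaos piece built from \(G_{\Xtwo,1}^\eps\).

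\textbf{Repairing \(\mathbb X_Q\).} Here there is no outer multiplier, so you must keep the anisotropic bounds
\[
 |H_\alpha^\eps(\omega,k)|^2\lesssim|k|^{2(1-\alpha)}(|\omega|+|k|^\gamma)^{-2},
 \qquad
 |M_\gamma(\omega,k)|^2\lesssim k^2(|\omega|+|k|^\gamma)^{-2}.
\]
Integrate \(\omega_1\) first at fixed modes \(|k_1|\asymp|k_2|\asymp K\gtrsim1+|k|\). Each mode contributes \(K^{6-4\alpha-3\gamma}(|\omega|+K^\gamma)^{-2}\). Then sum the resonant spatial blocks, using \(7-4\alpha-3\gamma=3\delta-4q\):
\[
 \sum_{K\gtrsim1+|k|}K^{3\delta-4q}(|\omega|+K^\gamma)^{-2}\lesssim\langle\zeta\rangle_\gamma^{3\delta-4q-2\gamma}=\langle\zeta\rangle_\gamma^{-(\gamma+1)-2(2q-\delta)}.
\]
This uses \(0<3\delta-4q<2\gamma\). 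It is exactly the ``integrate the temporal frequency first, then sum the resonant spatial blocks'' step the paper points to.

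\textbf{A smaller point on the second chaos.} Two contractions close directly by \eqref{eq:lower-contraction-two-multiplier-loop}: the \(\Xfour\) cross contraction, and pairing leaf \(4\) of \(\XthreeRes\) with an \(\Xone\)-leaf. Only pairing leaf \(4\) with the root \(X\)-leaf of \(\Xtwo\) needs cancellation. There the majorant does not merely lose a factor; it diverges, since the exponents sum to \(\gamma+1+2q-\delta\le\gamma+1\). Moreover, \(\chi_\odot(\ell,k'-\ell)\) is not even in \(\ell\), so subtracting an odd limit is not enough by itself. You must compare blockwise with \(\chi_\odot(\ell,-\ell)\) and bound the cutoff difference by \(\min\{1,|k'|/K\}\), as in the proof of \eqref{eq:lower-contraction-cutoff-loop}.
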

In this range, the estimate for \(\mathbb X_Q\) is the fractional
space--time Fourier version of the \(Q\circ X\) calculation in
\cite[Section~9.5]{GubinelliPerkowski2017KPZReloaded}.  Because
\(\chi_\odot\) compares only spatial modes, one first integrates the
temporal frequency and then sums the resonant spatial blocks.  This
separation, which is the only essential difference of \Cref{lem:first-six-enhancement-kernel-bounds} from above, is carried out explicitly in the proof of
\Cref{lem:lower-contraction-loop-estimates}. We therefore omit the proof, and it remains to construct the response coordinates in the singular regime.
Indeed,
\[
 5q>2\delta
 \quad\Longrightarrow\quad
 \XoneFourB,\ \XtwoThree,\ \XoneFourC
 \text{ are continuous reconstructions from the lower enhancement},
\]
with the reconstruction of \(\XoneFourC\) following from
\eqref{eq:Xthree-controlled} and \(\mathbb X_Q\).  We can therefore restrict
to the regime \eqref{eq:sbe-tree-response-regime}, where
\[
\frac{\delta}{3}<q\le\frac{2\delta}{5},\qquad
\begin{aligned}
 s(X)&=q-\delta,&
 s(\Xone)&=s(\mathbb X_Q)=2q-\delta,\\
 s(\Xtwo)&=3q-\delta,&
 s(\Xfour)&=s(\XthreeRes)=4q-\delta.
\end{aligned}
\]

\subsection{fractional Burgers enhancement: new trees}

We now focus on the new trees not constructed in
\cite{GubinelliPerkowski2017KPZReloaded}.  The next lemma supplies the
lower-chaos contraction bounds needed for
Proposition~\ref{prop:fractional-enhancement}.

\begin{lemma}[Bounds for lower-chaos contraction kernels]
\label{lem:lower-contraction-loop-estimates}
Assume
\[
 \frac{\delta}{3}<q\le\frac{2\delta}{5},
\]
For \(\zeta=(\omega,k)\), \(\zeta'=(\omega',k')\), and
\(\zeta''=(\omega'',k'')\), uniformly in \(\eps\),
\begin{subequations}\label{eq:lower-contraction-loop-bounds}
\begin{equation}\label{eq:lower-contraction-cutoff-loop}
\begin{aligned}
 &\sum_{K\in2^{\mathbb N_0}}
 \left|
 \sum_{1+\abs k\asymp K}\int_\R
 \abs{H_\alpha^\eps(\zeta)}^2
 \chi_\odot(-k,k+k')
 M_\gamma(\zeta+\zeta')\,\dd\omega
 \right|
 \lesssim
 \langle\zeta'\rangle_\gamma^{\delta-2q}.
\end{aligned}
\end{equation}
and
\begin{equation}\label{eq:lower-contraction-two-multiplier-loop}
\begin{aligned}
 \left|
 \int_{\R\times\mathbb Z}
 \abs{H_\alpha^\eps(\zeta)}^2
 \chi_\odot(-k,k+k')
 M_\gamma(\zeta'-\zeta)M_\gamma(\zeta''-\zeta)\,\dd\zeta
 \right| \lesssim
 \max\left\{\langle\zeta'\rangle_\gamma,
        \langle\zeta''\rangle_\gamma\right\}^{-2q+}.
\end{aligned}
\end{equation}
\end{subequations}
\end{lemma}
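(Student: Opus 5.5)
The two bounds are of different nature. \eqref{eq:lower-contraction-two-multiplier-loop} should follow from absolute values alone. \eqref{eq:lower-contraction-cutoff-loop} needs the odd-symmetry cancellation already used for \(L^\eps\) in \Cref{lem:sbe-basic-multipliers}, now performed block by block because the absolute value sits outside each dyadic sum.

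For \eqref{eq:lower-contraction-two-multiplier-loop} I would bound \(\abs{\chi_\odot}\le C\) and insert the multiplier bounds of \Cref{lem:sbe-basic-multipliers}. This gives
\[
\abs{H_\alpha^\eps(\zeta)}^2\lesssim\langle\zeta\rangle_\gamma^{-a},
\qquad a=\gamma+1+2(q-\delta),
\]
and, for both multipliers,
\[
\abs{M_\gamma(\zeta'-\zeta)}\lesssim\langle\zeta'-\zeta\rangle_\gamma^{-\delta},
\qquad
\abs{M_\gamma(\zeta''-\zeta)}\lesssim\langle\zeta''-\zeta\rangle_\gamma^{-\delta}.
\]
I then apply \eqref{eq:fractional-three-weight-convolution} with \(b=c=\delta\). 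Here \(a+b+c-(\gamma+1)=2q>0\). Moreover \(a-2q=\gamma+1-2\delta=3-\gamma>0\) and \(\delta-2q>0\) because \(q\le2\delta/5\). So the minimum in \eqref{eq:fractional-three-weight-convolution} is \(2q\), which is exactly the claimed \(\max\{\langle\zeta'\rangle_\gamma,\langle\zeta''\rangle_\gamma\}^{-2q+}\).

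For \eqref{eq:lower-contraction-cutoff-loop} I first integrate out \(\omega\) with the residue formula from the proof of \Cref{lem:sbe-basic-multipliers}, taking \(a=\abs k^\gamma\) and \(b=\abs{k+k'}^\gamma\). The block sum then becomes, up to a constant,
\[
\sum_{1+\abs k\asymp K}\chi_\odot(-k,k+k')\,\abs{\widehat\phi(\eps k)}^2\,
\frac{i(k+k')\abs k^{2\delta-1-2q}}{i\omega'+\abs k^\gamma+\abs{k+k'}^\gamma}.
\]
The resonant cutoff forces \(\abs{k+k'}\asymp\abs k\). Each summand is therefore \(O(\abs k^{\delta-1-2q})\) and each block is \(O(K^{\delta-2q})\). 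Since \(\delta-2q>0\), summing over \(K\lesssim\langle\zeta'\rangle_\gamma\) gives \(\langle\zeta'\rangle_\gamma^{\delta-2q}\).

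The main obstacle is the tail \(K\gg\langle\zeta'\rangle_\gamma\), where the crude bound does not sum. In each such block I subtract the odd comparison term
\[
\tfrac12\operatorname{sgn}(k)\,\abs k^{\delta-1-2q}\,\abs{\widehat\phi(\eps k)}^2\,\chi_\odot(-k,k).
\]
Its block sum vanishes because \(\chi_\odot(-k,k)\) and the mollifier factor are even in \(k\) and the block is symmetric. Two errors remain.
\begin{itemize}
\item The rational factor minus \(\ell/(2\abs\ell^\gamma)\) is controlled by the \(\theta\)-derivative estimate of \Cref{lem:sbe-basic-multipliers}, giving \(\langle\zeta'\rangle_\gamma\abs k^{-\gamma}\) per summand.
\item The cutoff difference \(\chi_\odot(-k,k+k')-\chi_\odot(-k,k)\) is \(O(\abs{k'}/K)\), since the Littlewood--Paley symbols are smooth at scale \(K\). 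This step needs care at dyadic boundaries, but uniformity of the \(\rho_r\) derivatives should suffice.
\end{itemize}
Both errors give a block contribution \(\lesssim\langle\zeta'\rangle_\gamma K^{\delta-2q-1}\). This is summable over \(K\gtrsim\langle\zeta'\rangle_\gamma\) because \(\delta-2q<1\), and it again sums to \(\langle\zeta'\rangle_\gamma^{\delta-2q}\). All bounds are uniform in \(\eps\) because \(\abs{\widehat\phi}\le1\) is even.
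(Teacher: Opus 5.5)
Your proposal is correct and follows the paper's proof. Part (b) is the same direct application of \eqref{eq:fractional-three-weight-convolution} with exponents $\gamma+1+2(q-\delta),\delta,\delta$, and part (a) uses the same near/far block split, the same blockwise vanishing of the frozen odd term $\chi_\odot(-k,k)\int_\R|H_\alpha^\eps|^2M_\gamma\,\dd\omega$, and the same two errors (cutoff difference and multiplier difference); the only cosmetic difference is that you integrate out $\omega$ first via the residue formula and reuse the $\theta$-derivative bound from \Cref{lem:sbe-basic-multipliers}, whereas the paper compares $M_\gamma(\zeta+\zeta')$ with $M_\gamma(\zeta)$ pointwise before the $\omega$-integration and interpolates with an exponent $\vartheta\in(\delta-2q,1)$, which your full gain $\langle\zeta'\rangle_\gamma/K$ makes unnecessary.
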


\begin{proof}
Estimate \eqref{eq:lower-contraction-two-multiplier-loop} is a direct
application of \eqref{eq:fractional-three-weight-convolution}.  Indeed,
\(\abs{\chi_\odot}\lesssim1\) and
\eqref{eq:sbe-basic-multipliers} give
\[
\begin{aligned}
 &\left|\int
 \abs{H_\alpha^\eps(\zeta)}^2
 \chi_\odot(-k,k+k')
 M_\gamma(\zeta'-\zeta)M_\gamma(\zeta''-\zeta)\,\dd\zeta\right|\\
 &\quad\lesssim
 \int\frac{\dd\zeta}
 {\langle\zeta\rangle_\gamma^{\gamma+1+2(q-\delta)}
  \langle\zeta'-\zeta\rangle_\gamma^\delta
  \langle\zeta''-\zeta\rangle_\gamma^\delta}
 \lesssim
 \max\{\langle\zeta'\rangle_\gamma,
        \langle\zeta''\rangle_\gamma\}^{-2q+}.
\end{aligned}
\]
Only \(\abs{\chi_\odot}\lesssim1\) was used, so the same estimate holds
if \(\chi_\odot\) is omitted or its arguments are interchanged.  We
therefore focus on \eqref{eq:lower-contraction-cutoff-loop}. Fix \(K\in2^{\mathbb N_0}\) and restrict the spatial frequency to
\(1+\abs k\asymp K\). At the frozen external frequency \(\zeta'=0\), the temporal identity used
in the proof of
\Cref{lem:sbe-basic-multipliers} gives
\[
\begin{aligned}
 \int_\R
 \abs{H_\alpha^\eps(\omega,k)}^2M_\gamma(\omega,k)\,\dd\omega
 =
 c\,i\operatorname{sgn}(k)
 \abs{k}^{\delta-1-2q}
 \abs{\widehat\phi(\eps k)}^2.
\end{aligned}
\]
The right-hand side is odd in \(k\), whereas
\(\chi_\odot(-k,k)\), the mollifier factor, and the spatial block are
even.  Hence the frozen block sum vanishes:
\[
 \sum_{1+\abs k\asymp K}\int_\R
 \abs{H_\alpha^\eps(\omega,k)}^2
 \chi_\odot(-k,k)M_\gamma(\zeta)\,\dd\omega=0.
\]
For \(K\ge4\langle\zeta'\rangle_\gamma\), subtract this identity from
the shifted contraction kernel and change the two factors one at a time:
\[
\begin{aligned}
 \chi_\odot(-k,k+k')M_\gamma(\zeta+\zeta')
  -\chi_\odot(-k,k)M_\gamma(\zeta)
 & =
 [\chi_\odot(-k,k+k')-\chi_\odot(-k,k)]
 M_\gamma(\zeta+\zeta')\\
 &\quad+
 \chi_\odot(-k,k)
 [M_\gamma(\zeta+\zeta')-M_\gamma(\zeta)].
\end{aligned}
\]
In this regime, both \(1+\abs k\) and \(1+\abs{k+k'}\) are comparable
to \(K\).  Thus
\(\abs{M_\gamma(\zeta+\zeta')}+\abs{M_\gamma(\zeta)}
\lesssim K^{-\delta}\).
Only Littlewood--Paley symbols at scale \(K\) enter the difference of
the resonant cutoffs.  Since their derivatives are \(O(K^{-1})\),
for \(0<\vartheta<1\),
\[
 \abs{\chi_\odot(-k,k+k')-\chi_\odot(-k,k)}
 \lesssim
 \min\left\{1,\frac{\abs{k'}}K\right\}
 \lesssim
 \left(\frac{\langle\zeta'\rangle_\gamma}{K}\right)^\vartheta.
\]
Along the segment from \(\zeta\) to \(\zeta+\zeta'\), the spatial
frequency remains comparable to \(K\).  The continuous extension of
\(M_\gamma\) in that variable satisfies
\[
 \abs{\partial_kM_\gamma}\lesssim K^{-\delta-1},
 \qquad
 \abs{\partial_\omega M_\gamma}\lesssim K^{-\delta-\gamma}.
\]
The mean-value theorem, together with the trivial bound
\(\abs{M_\gamma}\lesssim K^{-\delta}\), therefore yields
\[
 \abs{M_\gamma(\zeta+\zeta')-M_\gamma(\zeta)}
 \lesssim
 K^{-\delta}
 \min\left\{1,
 \frac{\abs{k'}}K+\frac{\abs{\omega'}}{K^\gamma}\right\}
 \lesssim
 K^{-\delta}
 \left(\frac{\langle\zeta'\rangle_\gamma}{K}\right)^\vartheta.
\]
Combining these two estimates gives
\[
\begin{aligned}
 &\abs{\chi_\odot(-k,k+k')-\chi_\odot(-k,k)}
  \abs{M_\gamma(\zeta+\zeta')}\\
 &\quad+
 \abs{\chi_\odot(-k,k)}
 \abs{M_\gamma(\zeta+\zeta')-M_\gamma(\zeta)}
 \lesssim
 \left(\frac{\langle\zeta'\rangle_\gamma}{K}\right)^\vartheta
 K^{-\delta}.
\end{aligned}
\]
The mass of the noise kernel on the same spatial block satisfies
\[
\begin{aligned}
 \sum_{1+\abs k\asymp K}\int_\R
 \abs{H_\alpha^\eps(\omega,k)}^2\,\dd\omega
 &\lesssim
 \sum_{1+\abs k\asymp K}
 \abs k^{2(1-\alpha)-\gamma}
 \lesssim K^{1+2(1-\alpha)-\gamma}
 \overset{\eqref{eq:fractional-exponent-relations}}{=}
 K^{2\delta-2q}.
\end{aligned}
\]
Together with the preceding estimate, this bounds the absolute value of
the \(K\)-block in \eqref{eq:lower-contraction-cutoff-loop} by
\[
 \langle\zeta'\rangle_\gamma^\vartheta
 K^{\delta-2q-\vartheta}. 
 \]
 For \(K\le4\langle\zeta'\rangle_\gamma\), the support of
\(\chi_\odot(-k,k+k')\) gives the spatial localization
\(1+\abs{k+k'}\asymp1+\abs k\asymp K\).  Hence
\(\abs{M_\gamma(\zeta+\zeta')}\lesssim K^{-\delta}\), and direct
estimation bounds the \(K\)-block by \(K^{\delta-2q}\).  Choosing
\(\delta-2q<\vartheta<1\) and summing the near and far
blocks yields
\[
\begin{aligned}
 \sum_{K\le4\langle\zeta'\rangle_\gamma}K^{\delta-2q}
 +\langle\zeta'\rangle_\gamma^\vartheta
  \sum_{K\ge4\langle\zeta'\rangle_\gamma}
  K^{\delta-2q-\vartheta}
 \lesssim
 \langle\zeta'\rangle_\gamma^{\delta-2q}.
\end{aligned}
\]
This proves \eqref{eq:lower-contraction-cutoff-loop}.  It is the
resonant-multiplier version of the
oddness cancellation in \eqref{eq:Xtwo-loop-bound}.
\end{proof}

\begin{lemma}[Fifth-chaos kernels]
\label{lem:response-leading-kernel-bounds}
In the regime \eqref{eq:sbe-tree-response-regime}, the fifth-chaos
kernels of the three response coordinates are
\begin{subequations}\label{eq:fifth-chaos-tree-kernels}
\begin{align}
 G_{\XoneFourB,5}^\eps
 &=
 \operatorname{Sym}_5\left[
  \chi_\odot(k_5,k_{1234})
  M_{12345}M_{1234}M_{12}M_{34}
  \prod_{i=1}^5H_i^\eps
 \right],
 \label{eq:worked-balanced-kernel}\\
 G_{\XtwoThree,5}^\eps
 &=
 \operatorname{Sym}_5\left[
  \chi_\odot(k_{12},k_{345})
  M_{12345}M_{12}M_{345}M_{34}
  \prod_{i=1}^5H_i^\eps
 \right],
 \label{eq:worked-two-tree-kernel}\\
 G_{\XoneFourC,5}^\eps
 &=
 \operatorname{Sym}_5\left[
  \chi_\odot(k_5,k_{1234})
  M_{12345}M_{1234}M_{123}M_{12}
  \prod_{i=1}^5H_i^\eps
 \right].
 \label{eq:worked-comb-kernel}
\end{align}
\end{subequations}
Moreover, uniformly in \(\eps\),
\[
 \mathcal C_{G_{\XoneFourB,5}^\eps}(\zeta)
 +\mathcal C_{G_{\XtwoThree,5}^\eps}(\zeta)
 \lesssim
 \langle\zeta\rangle_\gamma^{
  -(\gamma+1)-2(5q-\delta)+},
\]
and
\[
 \mathcal C_{G_{\XoneFourC,5}^\eps}(\zeta)
 \lesssim
 \langle\zeta\rangle_\gamma^{-(\gamma+1)-4q+}.
\]
\end{lemma}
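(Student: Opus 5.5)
The kernel formulas come first.  The fifth-chaos component of each response coordinate is obtained by the recursive Fourier bookkeeping already used for \(\Xone\), \(\Xtwo\), \(\Xfour\), and \(\XthreeRes\).  At the mollified level, \(\Xthree=\Kcal_\gamma(X\Xtwo)\) has top kernel \(M_{1234}M_{123}M_{12}\prod H_i^\eps\), and \(\Xfour\) has top kernel \(M_{1234}M_{12}M_{34}\prod H_i^\eps\).  Multiplying by one more leaf \(H_5^\eps\), inserting the resonant multiplier \(\chi_\odot(k_5,k_{1234})\), and applying \(\Kcal_\gamma\) at output frequency \(\zeta_{12345}\) produces \eqref{eq:worked-comb-kernel} and \eqref{eq:worked-balanced-kernel}.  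Pairing the \(\Xone\) kernel in inputs \(1,2\) with the \(\Xtwo\) kernel in inputs \(3,4,5\) gives \eqref{eq:worked-two-tree-kernel}.  In each case the Wiener product formula isolates the highest chaos, and symmetrization is normalized as in \(\operatorname{Sym}_5\).

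For the second-moment bounds, Jensen and permutation invariance remove the symmetrization as in \Cref{lem:Xone-Xtwo-kernel-bounds}.  The kernel is then factored along the tree, and \(\abs{\chi_\odot}\lesssim1\) is used.

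\emph{Balanced tree.}  The kernel is \(M_{12345}\) times the \(\Xfour\) kernel in \(\zeta_{1234}\) times \(H_5\).  Hence
\[
\mathcal C_{\XoneFourB}(\zeta)\lesssim\abs{M(\zeta)}^2\int\mathcal C_{\Xfour}(\zeta')\abs{H(\zeta-\zeta')}^2\dd\zeta'.
\]
By \Cref{lem:first-six-enhancement-kernel-bounds}, \(\mathcal C_{\Xfour}\lesssim\langle\cdot\rangle^{-(\gamma+1)-2(4q-\delta)+}\), and \(\abs{H}^2\) has exponent \((\gamma+1)+2(q-\delta)\).  In the regime \(4q-\delta>0\) the minimum in \eqref{eq:fractional-annular-convolution} is the sum exponent \((\gamma+1)+2(5q-2\delta)\).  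Multiplying by \(\abs{M}^2\lesssim\langle\zeta\rangle^{-2\delta}\) gives \(-(\gamma+1)-2(5q-\delta)+\).

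\emph{Two-tree.}  The same argument applies with \(\mathcal C_{\Xone}\) (exponent \(2q-\delta\)) convolved against the third-chaos density of \(\Xtwo\) (exponent \(3q-\delta\)).  The sum exponent is again \(5q-2\delta\), provided the pairwise exponents exceed it.  This holds because \(2q-\delta<0<3q-\delta\) and \(q>\delta/3\), so \(\min\) equals the sum exponent \(\gamma+1+2(5q-2\delta)\), which is positive beyond \(\gamma+1\) only if \(5q>2\delta\) fails.  This calls for care: at \(5q\le2\delta\) the sum exponent drops below \(\gamma+1\), the convolution is only \(\lesssim\langle\zeta\rangle^{-(\gamma+1)-2(5q-2\delta)+}\) with a growing factor, and the outer \(\abs{M}^2\) supplies exactly the \(\delta\) gain needed.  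The needed hypothesis \(a+b>\gamma+1\) holds since both densities are integrable-order, \(2q-\delta+3q-\delta>-(\gamma+1)/2\).

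\emph{Comb tree.}  The outer resonance \(\chi_\odot(k_5,k_{1234})\) forces \(\abs{k_5}\asymp\abs{k_{1234}}\), so \(\abs{M_{1234}}\) cannot be traded freely.  Here I would expand \(\Xthree=\Xtwo\mpara Q_X+(\Xthree)^\sharp\) as in \eqref{eq:Xthree-controlled} at kernel level.  Equivalently, one bounds \(\abs{\chi_\odot M_{12345}M_{1234}}^2\abs{H_5}^2\) by \(\langle\zeta\rangle^{-2\delta}\langle\zeta_{1234}\rangle^{-2\delta}\langle\zeta_5\rangle^{-(\gamma+1)-2(q-\delta)}\) with \(\langle\zeta_5\rangle\gtrsim\langle\zeta_{1234}\rangle\).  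This is integrated against \(\mathcal C_{\Xtwo}\) in \(\zeta_{123}\) and \(\abs{H_4}^2\).  The resonance weight \(\langle\zeta_5\rangle^{-(\gamma+1)-2(q-\delta)}\) together with \(\langle\zeta_{1234}\rangle^{-2\delta}\) gives total output exponent \(-(\gamma+1)-4q+\), which is the stated \(2q\) regularity.  The resonance is what lets the two outer multipliers act jointly rather than through the sum exponent.

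\textbf{Main obstacle.}  The difficulty is not the top-chaos bound but verifying that it is the correct one.  In the comb case the off-resonant \(\Xtwo\mpara Q_X\) piece carries only \(2q\) regularity, which matches the assigned \(s(\XoneFourC)=2q\).  So the plan is to split \(\chi_\odot\) dyadically in \(K\asymp\abs{k_5}\) and sum \(K^{-4q}\) blocks, as in the proof of \Cref{lem:lower-contraction-loop-estimates}.  The temporal integral is performed first, because \(\chi_\odot\) is purely spatial.  The lower-chaos contractions, though not part of this statement, would use \eqref{eq:lower-contraction-loop-bounds}.
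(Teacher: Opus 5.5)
\textbf{There is a genuine gap in the second-moment bounds.} Your derivation of the three kernel formulas is fine. The trouble starts when you discard the resonant cutoff via \(\abs{\chi_\odot}\lesssim1\) and then misread the minimum in \eqref{eq:fractional-annular-convolution}.

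For \(\XoneFourB\) the exponents are \(a=\gamma+1+2(q-\delta)\) (noise leaf) and \(b=\gamma+1+2(4q-\delta)\) (the \(\Xfour\) subtree). Then \(a+b-(\gamma+1)=a+2(4q-\delta)>a\), precisely because \(4q>\delta\). So the minimum is \(a\), not the sum exponent. After the root factor \(\langle\zeta\rangle_\gamma^{-2\delta}\) you only get \(\langle\zeta\rangle_\gamma^{-(\gamma+1)-2q+}\), i.e.\ regularity \(q<5q-\delta\). The loss sits in the separated region \(\langle\zeta_{1234}\rangle_\gamma\ll\langle\zeta\rangle_\gamma\), which is the paraproduct sector \(\Xfour\prec X\).

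The same failure occurs in the other two trees:
\begin{itemize}
\item For \(\XtwoThree\) the minimum is \(\gamma+1+2(2q-\delta)\), since \(3q>\delta\). This gives only regularity \(2q<5q-\delta\), so your two-tree claim that ``the minimum equals the sum exponent'' is false.
\item For the comb the root inputs have regularities \(q-\delta\) and \(q\). Plain convolution gives only \(q<2q\).
\end{itemize}

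\textbf{The missing idea is to use the support of \(\chi_\odot\) at the root multiplier.} This is the entire content of the paper's proof. If \(\zeta'+\zeta''=\zeta\) and \(\chi_\odot(k',k'')\ne0\), then \(\abs k\lesssim\langle\zeta'\rangle_\gamma\wedge\langle\zeta''\rangle_\gamma\). Since \(\abs{M_\gamma(\omega,k)}\lesssim\abs k\,\langle\zeta\rangle_\gamma^{-\gamma}\), this gives
\[
 \abs{\chi_\odot(k',k'')M_\gamma(\zeta)}^2
 \lesssim
 \langle\zeta\rangle_\gamma^{-2\delta}
 \min\Bigl\{1,\frac{(\langle\zeta'\rangle_\gamma\wedge\langle\zeta''\rangle_\gamma)^2}{\langle\zeta\rangle_\gamma^{2}}\Bigr\}.
\]
This supplies two extra powers of the smaller scale in each separated region. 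The three-region dyadic sum then gives \(5q-\delta\) for \(\XoneFourB\) and \(\XtwoThree\) in the regime \eqref{eq:sbe-tree-response-regime}. It gives \(2q\) for \(\XoneFourC\), once \(\Xthree\) has been given regularity \(q-\) by two plain convolutions (using \(3q-\delta>0\)).

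\textbf{Your comb sketch does not supply this gain.}
\begin{itemize}
\item Spatial resonance does not imply \(\langle\zeta_5\rangle_\gamma\gtrsim\langle\zeta_{1234}\rangle_\gamma\), because temporal frequencies are unconstrained.
\item Putting the weight on \(M_{1234}\) rather than the root only reproduces the subtree's Schauder gain. In the bad region \(\langle\zeta_{1234}\rangle_\gamma\ll\langle\zeta\rangle_\gamma\) your bound still yields \(\langle\zeta\rangle_\gamma^{-(\gamma+1)-2q}\), not \(-(\gamma+1)-4q\).
\item The dyadic-\(K\) scheme of \Cref{lem:lower-contraction-loop-estimates} exploits an oddness cancellation inside a Wick contraction. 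It has no analogue for the nonnegative covariance integral of a top-chaos kernel, where the gain is a pure support effect at the root.
\end{itemize}
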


\begin{proof}
We first prove the estimate for \(\XoneFourB\).  Its final root joins a noise
leaf to \(\Xfour\).  By \eqref{eq:sbe-H-multiplier} and
\Cref{lem:first-six-enhancement-kernel-bounds}, these inputs have
regularities \(q-\delta\) and \(4q-\delta\), up to arbitrarily small
losses. It remains to estimate the final resonant root.  If
\(\zeta'+\zeta''=\zeta\) and
\(\chi_\odot(k',k'')\ne0\), then spatial resonance and
\(k=k'+k''\) give
\(\abs{k}\lesssim(1+\abs{k'})\wedge(1+\abs{k''})
\le\langle\zeta'\rangle_\gamma\wedge
\langle\zeta''\rangle_\gamma\).
Together with \eqref{eq:sbe-M-multiplier}, this gives
\[
\begin{aligned}
 &\abs{\chi_\odot(k',k'')M_\gamma(\zeta)}^2\\
 &\quad\lesssim
 \abs{\chi_\odot(k',k'')}^2
 \langle\zeta\rangle_\gamma^{-2\delta}
 \min\!\left\{
  1,
  \frac{(\langle\zeta'\rangle_\gamma
          \wedge\langle\zeta''\rangle_\gamma)^2}
       {\langle\zeta\rangle_\gamma^2}
 \right\}.
\end{aligned}
\]
The covariance at the root is therefore bounded by
\[
\begin{aligned}
 \langle\zeta\rangle_\gamma^{-2\delta}
 \int_{\zeta'+\zeta''=\zeta}
 \frac{\abs{\chi_\odot(k',k'')}^2}
 {\langle\zeta'\rangle_\gamma^{\gamma+1+2(q-\delta)}
  \langle\zeta''\rangle_\gamma^{\gamma+1+2(4q-\delta)}}\cdot
 \min\!\left\{
  1,
  \frac{(\langle\zeta'\rangle_\gamma
          \wedge\langle\zeta''\rangle_\gamma)^2}
       {\langle\zeta\rangle_\gamma^2}
 \right\}\dd\zeta'.
\end{aligned}
\]
After using resonance in this inequality, we only need
\(\abs{\chi_\odot}\lesssim1\).  We first integrate the temporal
frequency and then sum full fractional-parabolic blocks; a block of
scale \(L\) has mass \(O(L^{\gamma+1})\). If \(\langle\zeta'\rangle_\gamma
\le\frac12\langle\zeta\rangle_\gamma\), then
\(\langle\zeta''\rangle_\gamma\asymp
\langle\zeta\rangle_\gamma\).  The minimum supplies two additional
powers of the smaller scale, and these blocks contribute at most
\[
 \langle\zeta\rangle_\gamma^{
  -2\delta-(\gamma+1)-2(4q-\delta)-2}
 \sum_{L\le\langle\zeta\rangle_\gamma}L^{2-2(q-\delta)}.
\]
If instead \(\zeta''\) is the smaller input, the same argument gives
\[
 \langle\zeta\rangle_\gamma^{
  -2\delta-(\gamma+1)-2(q-\delta)-2}
 \sum_{L\le\langle\zeta\rangle_\gamma}L^{2-2(4q-\delta)}.
\]
In the remaining region the two input blocks are comparable at a scale
\(L\gtrsim\langle\zeta\rangle_\gamma\); bounding the minimum by one
gives
\[
 \langle\zeta\rangle_\gamma^{-2\delta}
 \sum_{L\ge\langle\zeta\rangle_\gamma}
 L^{-(\gamma+1)-2(q-\delta)-2(4q-\delta)}.
\]
In the regime \eqref{eq:sbe-tree-response-regime}, the first two sums
are controlled by their largest block and the last by its smallest
block.  Restoring the suppressed losses, all three are bounded by
\[
 \langle\zeta\rangle_\gamma^{-(\gamma+1)-2(5q-\delta)+},
\]
which proves the estimate for \(\XoneFourB\). The other two trees require no new estimate.  For \(\XtwoThree\), the
two root inputs have regularities \(2q-\delta\) and
\(3q-\delta\), up to arbitrarily small losses, by
\Cref{lem:Xone-Xtwo-kernel-bounds}.  Repeating the same three-region
argument adds the root gain \(\delta\), and hence
\[
 \mathcal C_{G_{\XtwoThree,5}^\eps}(\zeta)
 \lesssim
 \langle\zeta\rangle_\gamma^{-(\gamma+1)-2(5q-\delta)+}.
\]
Finally, two applications of \eqref{eq:fractional-annular-convolution}
give regularity \(q-\) for the four-leaf comb subtree of
\(\XoneFourC\), using \(3q-\delta>0\).  With regularity
\(q-\delta\) at the other input and gain \(\delta\) at the root, this
gives \(2q-\), as claimed.
\end{proof}

\begin{lemma}[Contracted response kernels]
\label{lem:response-contracted-kernel-bounds}
Let \(G_{\tau,3}^\eps\) and \(G_{\tau,1}^\eps\) denote the symmetric
kernels obtained from all one-pair and two-pair Wick contractions of
\eqref{eq:fifth-chaos-tree-kernels}.  In the regime
\eqref{eq:sbe-tree-response-regime}, uniformly in \(\eps\),
\[
 \mathcal C_{G_{\tau,3}^\eps}(\zeta)
 \lesssim
 \begin{cases}
  \langle\zeta\rangle_\gamma^{
   -(\gamma+1)-2(5q-\delta)+},
  &\tau\in\{\XoneFourB,\XtwoThree\},\\
  \langle\zeta\rangle_\gamma^{-(\gamma+1)-4q+},
  &\tau=\XoneFourC,
 \end{cases}
\]
and, for all three coordinates,
\[
 \mathcal C_{G_{\tau,1}^\eps}(\zeta)
 \lesssim
 \langle\zeta\rangle_\gamma^{
  -(\gamma+1)-2(5q-\delta)+}.
\]
\end{lemma}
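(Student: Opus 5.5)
The plan is to treat each coordinate \(\tau\in\{\XoneFourB,\XtwoThree,\XoneFourC\}\) by listing its Wick contractions according to the position of the contracted pair or pairs in the tree. For each contraction type we bound the resulting kernel by combining three ingredients:
\begin{itemize}
\item the loop estimates of \Cref{lem:lower-contraction-loop-estimates};
\item the contracted vertex bound \eqref{eq:Xtwo-loop-bound};
\item the convolution bounds \eqref{eq:fractional-convolution-bounds}.
\end{itemize}
Throughout we work with \(\delta/3<q\le2\delta/5\).

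Contractions lying entirely inside a lower subtree need no new estimate. A pair inside a \(\Xone\)-leaf vanishes by Wick centering. A pair joining the two sides of an inner \(\Kcal_\gamma\) vertex reproduces the first-chaos component \(G_{\Xtwo,1}^\eps=2M H L^\eps\) of \(\Xtwo\), or the lower-chaos components of \(\Xfour\) and \(\XthreeRes\). These are already controlled by \Cref{lem:Xone-Xtwo-kernel-bounds,lem:first-six-enhancement-kernel-bounds} with the same regularity as the full subtree. Substituting these lower-chaos kernels into the three-region root argument of \Cref{lem:response-leading-kernel-bounds} gives the asserted third- and first-chaos bounds, with exponents \(5q-\delta\), or \(2q\) for the comb. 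The three-region argument only uses the regularities of the two root inputs, and those are unchanged.

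The genuinely new contractions are the ones that cross the final resonant root \(\chi_\odot\). Take the leaf \(\zeta_5\) paired with a leaf of the partner subtree. For \(\XoneFourB\) and \(\XoneFourC\), integrating over the contracted frequency \(\zeta\) produces exactly the loop \(\int\abs{H_\alpha^\eps(\zeta)}^2\chi_\odot(-k,k+k')M_\gamma(\zeta+\zeta')\,\dd\zeta\) when the contracted leaf sits directly under an \(M_{12}\)-type vertex of \(\Xfour\) or \(\Xthree\). Here we use the dyadic-block oddness cancellation \eqref{eq:lower-contraction-cutoff-loop}, which gives a factor \(\langle\zeta'\rangle_\gamma^{\delta-2q}\), the same gain as \(L^\eps\). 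When the contracted leaf sits under two inner multipliers, as for the deeper leaves of the comb or of \(\Xfour\), we use \eqref{eq:lower-contraction-two-multiplier-loop} instead, which gives \(\max\{\langle\zeta'\rangle_\gamma,\langle\zeta''\rangle_\gamma\}^{-2q+}\). For \(\XtwoThree\), crossing pairs join \(\Xone\) and \(\Xtwo\) across the root, and the same two loop bounds apply. After the loop is replaced by its bound, the remaining kernel is a product of \(M\)'s and \(H\)'s with a loop weight in place of two leaves, and its covariance is estimated by repeated use of \eqref{eq:fractional-annular-convolution}. The power count is exactly the one expected: each contraction removes two leaves of homogeneity \(q-\delta\) and contributes a loop of homogeneity \(\delta-2q\) or \(-2q\). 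Hence the total stays at \(5q-\delta\) for the first chaos, and at \(2q\) or \(5q-\delta\) for the third. For two-pair contractions we apply the loop bounds twice, or one loop bound together with \(L^\eps\).

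The main obstacle is the crossing contraction that pairs the root leaf \(\zeta_5\) with a leaf adjacent to the resonant cutoff. In that case absolute values lose the cancellation: without oddness the loop diverges like \(K^{2\delta-2q}\) summed over blocks. The cancellation must therefore be used block by block, which is exactly why \eqref{eq:lower-contraction-cutoff-loop} is stated with the block sum outside the absolute value. To handle this I would first freeze the outer frequency inside the block and subtract the vanishing frozen sum. Then I would verify that the remaining Hölder increments in \(\chi_\odot\) and \(M_\gamma\) are integrable against the other tree weights once \(\vartheta\in(\delta-2q,1)\) is chosen, with \(2q>\delta-1\) ensuring convergence of the far blocks.
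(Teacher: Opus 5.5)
There is a genuine gap in how you handle the crossing contractions. You assume that every pair joining the two root inputs leaves at most two differentiated vertices on the loop, so that \eqref{eq:lower-contraction-cutoff-loop} or \eqref{eq:lower-contraction-two-multiplier-loop} applies. This fails for two orbits:
\begin{itemize}
\item \(\XoneFourC(15)\), where a leaf of the innermost \(\Xone\) of the comb is paired with the root leaf;
\item \(\XtwoThree(13)\), where a leaf of the \(\Xone\) input is paired with a leaf of the inner \(\Xone\) of the \(\Xtwo\) input.
\end{itemize}
In these orbits the loop frequency runs through three multipliers together with the cutoff: \(M_{12},M_{123},M_{1234}\) for the first, and \(M_{12},M_{34},M_{345}\) for the second. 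The root multiplier \(M_\gamma(\zeta)\) is the only factor left outside the loop, so the three unpaired leaves are coupled to each other only through the loop.

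This loop has homogeneity \(-2q-\delta\). By contrast, \eqref{eq:lower-contraction-two-multiplier-loop}, after discarding one factor via \(\abs{M_\gamma}\lesssim1\), yields only \(\max\{\cdot\}^{-2q}\). In the region where all leaf frequencies are large, the discarded factor costs \(\langle\zeta\rangle_\gamma^{2\delta}\) in the covariance. The result is \(\langle\zeta\rangle_\gamma^{-(\gamma+1)-2(5q-2\delta)}\), which is useless because \(5q\le2\delta\) in \eqref{eq:sbe-tree-response-regime}. It misses both the target \(5q-\delta\) for \(\XtwoThree\) and the weaker target \(2q\) for \(\XoneFourC\).

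Bounding the loop by any power of the largest external frequency alone does not work either. The absolute loop integral behaves like \(\langle\zeta\rangle_\gamma^{-\delta}L^{-2q}\) when the two inner external frequencies sit at a lower scale \(L\). It behaves like \(\langle\zeta\rangle_\gamma^{-2q-\delta}\) when all three are comparable. The covariance integral needs both regimes, so you need an estimate that keeps the joint dependence on all three external frequencies.

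The paper closes exactly these two orbits by squaring the kernel and running a dyadic block decomposition jointly over four variables: the two unpaired-leaf variables and the two copies of the loop frequency. The all-comparable region gives the target. The worst separated shells give \(\langle\zeta\rangle_\gamma^{-\mu-4\delta}\sum_{L}L^{4(\delta-2q)}\), with \(\mu=\gamma+1+2(q-\delta)\). Your proposal has no substitute for this step.

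Two smaller points.
\begin{itemize}
\item The obstacle you single out at the end, the oddness cancellation across the resonant root, is already settled block by block in \eqref{eq:lower-contraction-cutoff-loop}. You can cite it rather than re-prove it; it is not the new difficulty of this lemma.
\item The inner subtree of \(\XoneFourC\) is the full \(\Xthree\), not \(\XthreeRes\), so its contracted components are not literally covered by \Cref{lem:first-six-enhancement-kernel-bounds}. They do follow from \eqref{eq:Xtwo-loop-bound} and \eqref{eq:lower-contraction-two-multiplier-loop}. The \((34)\) and \((14)\) contractions have regularity \(4q-\delta\), and the nested \((13)\) contraction has regularity \(q\). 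The nested one is the only one-pair contraction that actually needs the weaker \(2q\) bound.
\end{itemize}
The rest of your classification matches the paper's treatment. This covers the cutoff and two-multiplier crossings \(\XoneFourC(45)\), \(\XoneFourB(15)\), \(\XoneFourC(35)\) and \(\XtwoThree(15)\), the contracted-vertex orbits, and the two-pair orbits via combined loop and \(L^\eps\) bounds.
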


\begin{proof}
An arc joining leaves \(i,j\) sets
\((\zeta_i,\zeta_j)=(\zeta,-\zeta)\) and integrates in \(\zeta\).

\begin{center}
\begin{minipage}{\textwidth}
Representative Wick contractions contributing to the lower-chaos
components are displayed below; a colored dashed arc represents a pairing
of two noise leaves.

\medskip
{\centering
\setlength{\tabcolsep}{7pt}
\begin{tabular}{cccccc}
 \includegraphics[height=1.19cm]{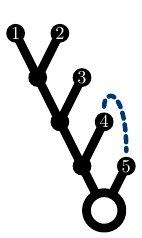}&
 \includegraphics[height=1.19cm]{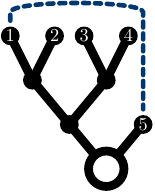}&
 \includegraphics[height=1.19cm]{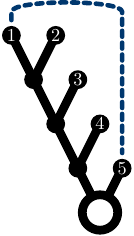}&
 \includegraphics[height=1.19cm]{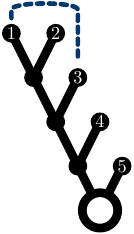}&
 \includegraphics[height=1.19cm]{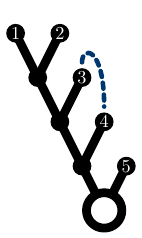}&
 \includegraphics[height=1.19cm]{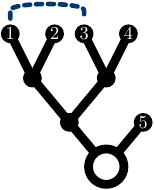}\\[-2pt]
 {\footnotesize\(\XoneFourC(45)\)}&
 {\footnotesize\(\XoneFourB(15)\)}&
 {\footnotesize\(\XoneFourC(15)\)}&
 {\footnotesize\(\XoneFourC(13)\)}&
 {\footnotesize\(\XoneFourC(34)\)}&
 {\footnotesize\(\XoneFourB(13)\)}
\end{tabular}

\vspace{5pt}
\begin{tabular}{ccccc}
 \includegraphics[height=1.19cm]{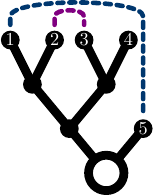}&
 \includegraphics[height=1.19cm]{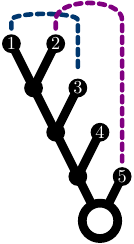}&
 \includegraphics[height=1.19cm]{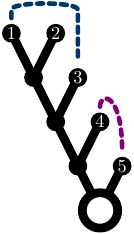}&
 \includegraphics[height=1.19cm]{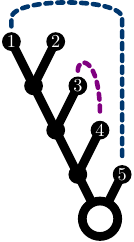}&
 \includegraphics[height=1.19cm]{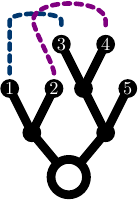}\\[-2pt]
 {\footnotesize\(\XoneFourB(15)(23)\)}&
 {\footnotesize\(\XoneFourC(13)(25)\)}&
 {\footnotesize\(\XoneFourC(13)(45)\)}&
 {\footnotesize\(\XoneFourC(15)(34)\)}&
 {\footnotesize\(\XtwoThree(13)(24)\)}
\end{tabular}
\par}
\end{minipage}
\end{center}

For an ordered one-pair contraction in the orbit \(\tau(ij)\), write
\(G_{\tau,3}^{\eps}(ij)\); for an ordered two-pair contraction in
\(\tau(ij)(k\ell)\), write \(G_{\tau,1}^{\eps}(ij)(k\ell)\).
As above, \(\mathcal C_G\) denotes the constrained \(L^2\)-density of
the indicated kernel. We now prove the kernel estimates for the contracted trees.

\smallskip
\noindent{\small\textbf{Single-multiplier one-loop contraction.}}

For the orbit \(\XoneFourC(45)\), substituting
\(\zeta_4=\zeta\) and \(\zeta_5=-\zeta\) in
\eqref{eq:worked-comb-kernel} bounds the absolute value of the resulting
ordered kernel, up to a harmless combinatorial factor, by
\[
 \abs{G_{\XoneFourC,3}^{\eps}(45)
       (\zeta_1,\zeta_2,\zeta_3)}
 \lesssim
 \abs{M_{123}}^2
 \abs{M_{12}H_1^\eps H_2^\eps H_3^\eps}
 \left|\int \abs{H_\alpha^\eps(\zeta)}^2
 \chi_\odot(-k,k+k_{123})
 M_\gamma(\zeta+\zeta_{123})\,\dd\zeta\right|.
\]
The loop in this expression is controlled by
\eqref{eq:lower-contraction-cutoff-loop} and contributes
\(\delta-2q\).  The remaining \(\Xone\)-subtree and noise leaf have
Fourier second-moment exponents
\(\gamma+1+2(2q-\delta)\) and
\(\gamma+1+2(q-\delta)\); hence \eqref{eq:fractional-annular-convolution},
together with the two outer \(M_{123}\)-factors, gives
\[
 \mathcal C_G(\zeta)
 \lesssim
 \langle\zeta\rangle_\gamma^{
 -(\gamma+1)-2(5q-\delta)+},
 \qquad G=G_{\XoneFourC,3}^{\eps}(45).
\]

\smallskip
\noindent{\small\textbf{Two-multiplier one-loop contractions.}}

For the representative orbit \(\XoneFourB(15)\), contracting leaves
\(1\) and \(5\) sets \(\zeta_5=-\zeta_1\).  Renaming
\(\zeta_1=\zeta\), we obtain, up to a harmless combinatorial factor,
\[
\begin{aligned}
 &\abs{G_{\XoneFourB,3}^{\eps}(15)
 (\zeta_2,\zeta_3,\zeta_4)}\\
 &\quad\lesssim
 \abs{M_{234}M_{34}H_2^\eps H_3^\eps H_4^\eps}
 \left|\int \abs{H_\alpha^\eps(\zeta)}^2
 \chi_\odot(-k,k+k_{234})
 M_\gamma(\zeta+\zeta_{234})
 M_\gamma(\zeta+\zeta_2)\,\dd\zeta\right|.
\end{aligned}
\]
After reversing \(\zeta\) and using \(\abs{\chi_\odot}\lesssim1\), the
absolute-value argument in
\eqref{eq:lower-contraction-two-multiplier-loop} bounds the integral by
\[\max\{\langle\zeta_{234}\rangle_\gamma,
\langle\zeta_2\rangle_\gamma\}^{-2q+}.\]  After relabelling,
the same argument applies to \(\XoneFourC(35)\) and
\(\XtwoThree(15)\).  A final application of
\eqref{eq:fractional-annular-convolution} gives
\[
 \mathcal C_G(\zeta)
 \lesssim
 \langle\zeta\rangle_\gamma^{
 -(\gamma+1)-2(5q-\delta)+},
 \quad
 G\in\left\{
 G_{\XoneFourB,3}^{\eps}(15),
 G_{\XoneFourC,3}^{\eps}(35),
 G_{\XtwoThree,3}^{\eps}(15)
 \right\}.
\]

For the orbits \(\XoneFourB(13)\) and \(\XoneFourC(14)\), the
cutoff-free or interchanged form of
\eqref{eq:lower-contraction-two-multiplier-loop} gives the same estimate:
\[
 \mathcal C_G(\zeta)
 \lesssim
 \langle\zeta\rangle_\gamma^{
 -(\gamma+1)-2(5q-\delta)+},
 \qquad
 G\in\left\{
 G_{\XoneFourB,3}^{\eps}(13),
 G_{\XoneFourC,3}^{\eps}(14)
 \right\}.
\]

\smallskip
\noindent{\small\textbf{Three-multiplier one-loop contractions.}}

Pairing leaves \(1\) and \(5\) creates one loop frequency.  It runs
through the three differentiated vertices on the path between them,
namely \(12\subset123\subset1234\).  At the root the paired frequencies
cancel, so the root depends only on leaves \(2,3,4\) and its multiplier
becomes \(M_{234}\):

\begin{center}
 \includegraphics[height=2.75cm]{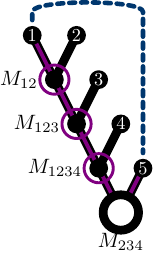}
\end{center}

By definition of the Fourier second-moment density, the three unpaired
leaf frequencies are integrated over the hypersurface
\(\zeta_2+\zeta_3+\zeta_4=\zeta\), and we seek decay in this fixed output
frequency \(\zeta\).  Write
\[
 \zeta=\zeta_{234}=(\omega,k),\qquad
 \eta=\zeta_2,\qquad \theta=\zeta_{23}.
\]
For fixed \(\zeta\), this parametrizes the hypersurface by
\[
 (\zeta_2,\zeta_3,\zeta_4)
 =(\eta,\theta-\eta,\zeta-\theta),
 \qquad \dd\zeta_2\dd\zeta_3=\dd\eta\dd\theta.
\]
With \((\zeta_1,\zeta_5)=(\zeta',-\zeta')\), where
\(\zeta'=(\omega',k')\), the tree gives the ordered kernel
\[
\begin{aligned}
 \abs{G_{\XoneFourC,3}^{\eps}(15)
 (\eta,\theta-\eta,\zeta-\theta)}
 & \lesssim
 \abs{M_\gamma(\zeta)H_\alpha^\eps(\eta)
 H_\alpha^\eps(\theta-\eta)H_\alpha^\eps(\zeta-\theta)}\\[-2pt]
 &\qquad\times
 \left|\int \abs{H_\alpha^\eps(\zeta')}^2
 \chi_\odot(-k',k'+k)
 \prod_{z\in\{\eta,\theta,\zeta\}}
 M_\gamma(\zeta'+z)\,\dd\zeta'\right|.
\end{aligned}
\]
Thus the three shifted multipliers come directly from
\(M_{12},M_{123},M_{1234}\), while \(M_\gamma(\zeta)\) is the root
multiplier.  This is the three-multiplier analogue of
\eqref{eq:lower-contraction-two-multiplier-loop}.  Put
\(\mu=\gamma+1+2(q-\delta)\).  Squaring the displayed kernel,
using \eqref{eq:sbe-H-multiplier}--\eqref{eq:sbe-M-multiplier}, and
changing signs in the two loop variables bounds its covariance by
\[
\begin{aligned}
 \langle\zeta\rangle_\gamma^{-2\delta}
 \int \frac{\dd\eta\dd\theta\dd\zeta'\dd\zeta''}
 {\langle\eta\rangle_\gamma^\mu
  \langle\theta-\eta\rangle_\gamma^\mu
  \langle\zeta-\theta\rangle_\gamma^\mu
  \langle\zeta'\rangle_\gamma^\mu
  \langle\zeta''\rangle_\gamma^\mu}
 \prod_{r\in\{\zeta',\zeta''\}}
 \frac{1}{
  \langle r+\eta\rangle_\gamma^\delta
  \langle r+\theta\rangle_\gamma^\delta
  \langle r+\zeta\rangle_\gamma^\delta}.
\end{aligned}
\]
Here \(\zeta',\zeta''\) are the two copies of the contracted frequency
created by squaring the kernel.  The five \(\mu\)-weights come from the
three unpaired leaves and \(\zeta',\zeta''\);
the six \(\delta\)-weights come from the three shifted multipliers in
each copy; and the prefactor is the squared root multiplier.

We now sum the fractional-parabolic blocks with essentially the same idea as in
\eqref{eq:fractional-annular-convolution}.  First restrict the preceding
integral to the region in which each of its eleven denominator brackets
is comparable to \(\langle\zeta\rangle_\gamma\).  Since each of
\(\eta,\theta,\zeta',\zeta''\) then ranges over a fractional-parabolic block of
measure \(O(\langle\zeta\rangle_\gamma^{\gamma+1})\), this part of the
integral satisfies
\[
\begin{aligned}
&\langle\zeta\rangle_\gamma^{-2\delta}
 \int_{\substack{\text{each denominator}\\
\text{ bracket }\asymp\langle\zeta\rangle_\gamma}}
 \frac{\dd\eta\dd\theta\dd\zeta'\dd\zeta''}
 {\langle\eta\rangle_\gamma^\mu
  \langle\theta-\eta\rangle_\gamma^\mu
  \langle\zeta-\theta\rangle_\gamma^\mu
  \langle\zeta'\rangle_\gamma^\mu
  \langle\zeta''\rangle_\gamma^\mu}
 \prod_{r\in\{\zeta',\zeta''\}}
 \frac{1}{
  \langle r+\eta\rangle_\gamma^\delta
  \langle r+\theta\rangle_\gamma^\delta
  \langle r+\zeta\rangle_\gamma^\delta}\\
&\quad\lesssim
 \langle\zeta\rangle_\gamma^{-2\delta}
 \langle\zeta\rangle_\gamma^{4(\gamma+1)}
 \langle\zeta\rangle_\gamma^{-5\mu}
 \langle\zeta\rangle_\gamma^{-6\delta}=
 \langle\zeta\rangle_\gamma^{-(\gamma+1)-2(5q-\delta)}.
\end{aligned}
\]
On the other hand, the weakest separation of the external scale from a lower dyadic scale
\(L\ll\langle\zeta\rangle_\gamma\) is the region
\[
\begin{gathered}
 \langle\eta\rangle_\gamma,\ \langle\theta-\eta\rangle_\gamma,
 \ \langle\zeta'\rangle_\gamma,\ \langle\zeta''\rangle_\gamma,
 \ \langle r+\eta\rangle_\gamma,\ \langle r+\theta\rangle_\gamma
 \asymp L
 \quad\text{for }r\in\{\zeta',\zeta''\},\\
 \langle\zeta-\theta\rangle_\gamma,
 \ \langle\zeta'+\zeta\rangle_\gamma,
 \ \langle\zeta''+\zeta\rangle_\gamma
 \asymp\langle\zeta\rangle_\gamma.
\end{gathered}
\]
The restriction of the covariance integral to this region is clearly bounded by
\[
\begin{aligned}
 \langle\zeta\rangle_\gamma^{-2\delta}
  \langle\zeta\rangle_\gamma^{-\mu-2\delta}
  L^{4(\gamma+1)-4\mu-4\delta}=
  \langle\zeta\rangle_\gamma^{-\mu-4\delta}
  L^{4(\delta-2q)}.
\end{aligned}
\]
Indeed, the four integration variables have block volume
\(O(L^{4(\gamma+1)})\), while four \(\mu\)-weights and four
\(\delta\)-weights have scale \(L\).  Summing over these separated
shells, and harmlessly extending the sum to
\(L\le\langle\zeta\rangle_\gamma\), gives
\[
 \langle\zeta\rangle_\gamma^{-\mu-4\delta}
 \sum_{\substack{L\le\langle\zeta\rangle_\gamma\\L\text{ dyadic}}}
 L^{4(\delta-2q)}
 \lesssim
 \langle\zeta\rangle_\gamma^{-(\gamma+1)-2(5q-\delta)}.
\]
A routine but tedious repetition of the same calculation shows that all
remaining dyadic regions contribute at most the same order. A similar argument applied to \(\XtwoThree(13)\) gives the same bound.
Thus
\[
 \mathcal C_G(\zeta)
 \lesssim
 \langle\zeta\rangle_\gamma^{
 -(\gamma+1)-2(5q-\delta)+},
 \qquad
 G\in\left\{
 G_{\XoneFourC,3}^{\eps}(15),
 G_{\XtwoThree,3}^{\eps}(13)
 \right\}.
\]

\smallskip
\noindent{\small\textbf{Contracted-vertex one-loop contractions.}}

For the orbits \(\XoneFourC(34)\) and \(\XtwoThree(35)\), the
contracted-vertex estimate
\(\abs{M_\gamma L^\eps}\lesssim
\langle\cdot\rangle_\gamma^{-2q+}\), followed by
\eqref{eq:fractional-annular-convolution}, gives
\[
 \mathcal C_G(\zeta)
 \lesssim
 \langle\zeta\rangle_\gamma^{
 -(\gamma+1)-2(5q-\delta)+},
 \qquad
 G\in\left\{
 G_{\XoneFourC,3}^{\eps}(34),
 G_{\XtwoThree,3}^{\eps}(35)
 \right\}.
\]

\smallskip
\noindent{\small\textbf{Nested one-loop contraction.}}

The remaining orbit \(\XoneFourC(13)\) gives the weaker bound.  Contracting the
\(X\)-leaf attached directly to the root of the \(\Xtwo\)-subtree with a
noise leaf of its inner \(\Xone\)-subtree leaves
\[
 M_{24}M_2H_2^\eps H_4^\eps
 \int \abs{H_\alpha^\eps(\zeta)}^2
 M_\gamma(\zeta+\zeta_2)\,\dd\zeta .
\]
By \eqref{eq:sbe-M-multiplier} and
\eqref{eq:Xtwo-loop-bound},
\[
 \left|M_2\int \abs{H_\alpha^\eps(\zeta)}^2
 M_\gamma(\zeta+\zeta_2)\,\dd\zeta\right|
 \lesssim \langle\zeta_2\rangle_\gamma^{-2q+}.
\]
After squaring, the covariance of the contracted \(\Xtwo\)-subtree is
bounded by
\(\langle\zeta_2\rangle_\gamma^{-(\gamma+1)-2(3q-\delta)+}\),
while the remaining noise leaf has second-moment exponent
\((\gamma+1)+2(q-\delta)\).  Since
\(3q-\delta>0\), \eqref{eq:fractional-annular-convolution} gives
\[
 \int_{\zeta_2+\zeta_4=\zeta'}
 \left|M_{24}M_2H_2^\eps H_4^\eps
 \int \abs{H_\alpha^\eps(\zeta)}^2
 M_\gamma(\zeta+\zeta_2)\,\dd\zeta\right|^2\dd\zeta_2
 \lesssim
 \langle\zeta'\rangle_\gamma^{-(\gamma+1)-2q+}.
\]
Thus this component of \((\Xthree)^\eps\) has regularity \(q-\).
At the final resonance the two inputs to the root have regularities
\(q-\delta\) and \(q-\).  The resonant-root estimate above therefore
gives
\[
 \mathcal C_G(\zeta)
 \lesssim
 \langle\zeta\rangle_\gamma^{-(\gamma+1)-4q+},
 \qquad G=G_{\XoneFourC,3}^{\eps}(13).
\]
This is the only one-pair orbit for which the weaker third-chaos bound is
needed.

\smallskip
\noindent{\small\textbf{Successive two-loop contraction.}}

For the representative \(\XoneFourB(15)(23)\), with
\(\zeta'=(\omega',k')\) the remaining input frequency, its first-chaos
kernel is, up to symmetrization,
\[
\begin{aligned}
 &M_\gamma(\zeta')H_\alpha^\eps(\zeta')
 \int\abs{H_\alpha^\eps(\zeta)}^2
 \chi_\odot(-k,k+k')M_\gamma(\zeta+\zeta')\\
 &\qquad\times
 \left[
  \int\abs{H_\alpha^\eps(\zeta'')}^2
 M_\gamma(\zeta+\zeta'')M_\gamma(\zeta'-\zeta'')\,\dd\zeta''
 \right]\dd\zeta .
\end{aligned}
\]
Using \(\abs{M_\gamma(-\zeta)}=\abs{M_\gamma(\zeta)}\), the
absolute-value argument of
\eqref{eq:lower-contraction-two-multiplier-loop} bounds the inner
bracket by
\(\max\{\langle\zeta\rangle_\gamma,
\langle\zeta'\rangle_\gamma\}^{-2q+}\).
Estimates \eqref{eq:lower-contraction-cutoff-loop} and
\eqref{eq:lower-contraction-two-multiplier-loop}, followed by
\eqref{eq:fractional-annular-convolution} with the remaining noise-leaf
exponent \(\gamma+1+2(q-\delta)\), show that the factor multiplying
\(H_\alpha^\eps(\zeta')\) is
\(O(\langle\zeta'\rangle_\gamma^{-4q+})\).  Thus
\[
 \mathcal C_G(\zeta)
 \lesssim
 \langle\zeta\rangle_\gamma^{
 -(\gamma+1)-2(5q-\delta)+},
 \qquad G=G_{\XoneFourB,1}^{\eps}(15)(23).
\]

\smallskip
\noindent{\small\textbf{Contracted-vertex two-loop contractions.}}

For the orbits \(\XoneFourC(13)(25)\) and \(\XoneFourC(15)(34)\),
\eqref{eq:sbe-M-multiplier} and
\eqref{eq:Xtwo-loop-bound} give, respectively,
\(\abs{M_\gamma L^\eps}\lesssim
\langle\cdot\rangle_\gamma^{-2q+}\) and
\(\abs{M_\gamma^2L^\eps}\lesssim
\langle\cdot\rangle_\gamma^{-\delta-2q+}\).
After \eqref{eq:fractional-annular-convolution}, both satisfy
\[
 \mathcal C_G(\zeta)
 \lesssim
 \langle\zeta\rangle_\gamma^{
 -(\gamma+1)-2(5q-\delta)+},
 \qquad
 G\in\left\{
 G_{\XoneFourC,1}^{\eps}(13)(25),
 G_{\XoneFourC,1}^{\eps}(15)(34)
 \right\}.
\]

\smallskip
\noindent{\small\textbf{Mixed-cutoff two-loop contraction.}}

For the orbit \(\XoneFourC(13)(45)\), the product of the cutoff
contraction in
\eqref{eq:lower-contraction-cutoff-loop} and the cutoff-free contraction
in \eqref{eq:Xtwo-loop-bound} gives
\[
 \mathcal C_G(\zeta)
 \lesssim
 \langle\zeta\rangle_\gamma^{
 -(\gamma+1)-2(5q-\delta)+},
 \qquad G=G_{\XoneFourC,1}^{\eps}(13)(45).
\]

\smallskip
\noindent{\small\textbf{Subtree-covariance two-loop contraction.}}

For the orbit \(\XtwoThree(13)(24)\), the contracted covariance is
\[
 M_\gamma(\zeta')H_\alpha^\eps(\zeta')
 \int\chi_\odot(k,k'-k)M_\gamma(\zeta'-\zeta)
 \mathcal C_{G_{\Xone,2}^\eps}(\zeta)\,\dd\zeta .
\]
Applying \eqref{eq:fractional-annular-convolution} with exponents
\(\gamma+1+2(2q-\delta)\) and \(\delta\) gives the
minimum \(4q-\delta\); the outer multiplier therefore makes the
factor multiplying \(H_\alpha^\eps(\zeta')\)
\(O(\langle\zeta'\rangle_\gamma^{-4q+})\).
Hence
\[
 \mathcal C_G(\zeta)
 \lesssim
 \langle\zeta\rangle_\gamma^{
 -(\gamma+1)-2(5q-\delta)+},
 \qquad G=G_{\XtwoThree,1}^{\eps}(13)(24).
\]

\smallskip
\noindent{\small\textbf{Reduced two-loop contractions.}}

Finally, the four unpictured orbits
\[
 \XoneFourC(14)(35),\quad \XoneFourC(14)(25),\quad
 \XtwoThree(13)(45),\quad \XtwoThree(13)(25)
\]
reduce to the preceding mechanisms by exchanging loop variables and
using the cutoff-free or interchanged variants of
\eqref{eq:lower-contraction-two-multiplier-loop}.  Therefore
\[
 \mathcal C_G(\zeta)
 \lesssim
 \langle\zeta\rangle_\gamma^{
 -(\gamma+1)-2(5q-\delta)+},
 \quad
 G\in\left\{
 G_{\XoneFourC,1}^{\eps}(14)(35),
 G_{\XoneFourC,1}^{\eps}(14)(25),
 G_{\XtwoThree,1}^{\eps}(13)(45),
 G_{\XtwoThree,1}^{\eps}(13)(25)
 \right\}.
\]

These paragraphs exhaust, up to the tree symmetries, all non-vanishing
contractions: eleven one-pair and nine two-pair orbits.  The omitted
contractions vanish because a sibling pairing kills the corresponding
differentiated multiplier or, for two pairs, forces the four-leaf
frequency to zero.
\end{proof}

Now that we have all the necessary ingredients, we can combine them and prove \Cref{prop:fractional-enhancement}. 

\begin{proof}[Proof Outline of \Cref{prop:fractional-enhancement}]
The deterministic reconstructions preceding
\Cref{lem:lower-contraction-loop-estimates} cover the complementary
parameter regimes, and \Cref{lem:first-six-enhancement-kernel-bounds}
constructs the first six coordinates.  It remains to work in the regime
\eqref{eq:sbe-tree-response-regime}.  Combining
\Cref{lem:response-leading-kernel-bounds,lem:response-contracted-kernel-bounds}
gives, uniformly in \(\eps\),
\begin{equation}\label{eq:sbe-tree-covariance-kernel}
 \sum_{n\in\{1,3,5\}}\mathcal C_{G_{\tau,n}^\eps}(\zeta)
 \lesssim
 \langle\zeta\rangle_\gamma^{
 -(\gamma+1)-2s_{\alpha,\gamma}(\tau)+}.
\end{equation}
For \(\tau\in\{\XoneFourB,\XtwoThree\}\),
\(s_{\alpha,\gamma}(\tau)=5q-\delta\) in this regime; for
\(\tau=\XoneFourC\), \(s_{\alpha,\gamma}(\tau)=2q\).
We first pass from the stationary space--time kernels to a fixed-time
estimate.  For the \(n\)-th chaos component, inverse Fourier
transformation at time \(t\) contributes the phase
\(e^{it(\omega_1+\cdots+\omega_n)}\).  Since this phase has modulus one,
the Wiener--It\^o isometry and the definition of \(\mathcal C_G\) give the control of the stationary trees:
\[
 \begin{aligned}
 \mathbb E\lvert\widehat\tau_{n,\mathrm{stat}}^\eps(t,k)\rvert^2
 &=n!\sum_{k_1+\cdots+k_n=k}\int_{\R^n}
   \abs{G_{\tau,n}^\eps(\zeta_1,\ldots,\zeta_n)}^2
   \,\dd\omega_1\cdots\dd\omega_n\\
 &=\int_\R\mathcal C_{G_{\tau,n}^\eps}(\omega,k)\,\dd\omega.
 \end{aligned}
\]
Chaos orthogonality and the comparison of the finite-time kernel
functions with their stationary versions in
\cite[Section~9.5, the induction leading to (87), and
(87)]{GubinelliPerkowski2017KPZReloaded} therefore imply
\[
 \mathbb E\lvert\widehat\tau^\eps(t,k)\rvert^2
 \lesssim
 \sum_{n\in\{1,3,5\}}\int_\R
 \mathcal C_{G_{\tau,n}^\eps}(\omega,k)\,\dd\omega.
\]

Put \(s=s_{\alpha,\gamma}(\tau)\).  In the present regime,
\(2s-\delta
\ge 4q-\delta>0\)
for all three driving terms.  Make the terminal loss in
\eqref{eq:sbe-tree-covariance-kernel} explicit as \(\eta>0\), chosen so that
\(\gamma+1+2s-\eta>\gamma\).  Under
\(\omega=(1+\abs k)^\gamma y\),
\[
 \begin{gathered}
 \langle(\omega,k)\rangle_\gamma
 =(1+\abs k)(1+\abs y^{1/\gamma}),
 \qquad
 \dd\omega=(1+\abs k)^\gamma\dd y.
 \end{gathered}
\]
Consequently,
\[
 \begin{aligned}
 \int_\R\langle(\omega,k)\rangle_\gamma^{
  -(\gamma+1)-2s+\eta}\,\dd\omega
 &=(1+\abs k)^{-1-2s+\eta}
   \int_\R(1+\abs y^{1/\gamma})^{
    -(\gamma+1)-2s+\eta}\,\dd y\\
 &\lesssim(1+\abs k)^{-1-2s+\eta}.
 \end{aligned}
\]
The \(y\)-integral is finite by the choice of \(\eta\).  Spatial
stationarity makes distinct Fourier modes orthogonal, while the block
\(\abs k\asymp2^j\) contains \(O(2^j)\) modes.  Consequently,
\begin{equation}\label{eq:sbe-tree-fixed-time-block}
 \begin{aligned}
 \mathbb E\abs{\Delta_j\tau^\eps(t,x)}^2
 &\lesssim
 \sum_{\abs k\asymp2^j}(1+\abs k)^{-1-2s+\eta}\\
 &\lesssim
 2^j2^{-j(1+2s-\eta)}
 =2^{-2j(s-\eta/2)}
 \lesssim2^{-2j(s_{\alpha,\gamma}(\tau)-)}.
 \end{aligned}
\end{equation}
The bottom spatial block contains only finitely many modes.

The same
strict inequality \(2s-\delta>0\) also allows the time increment.  For
\(0<\vartheta\le1/2\), the time-frequency integral remains
finite after choosing the suppressed loss within this strict margin,
and gives
\begin{equation}\label{eq:sbe-tree-time-increment-block}
 \abs{e^{it\omega}-e^{is\omega}}^2
\lesssim\abs{t-s}^{2\vartheta}
\langle(\omega,k)\rangle_\gamma^{2\gamma\vartheta} \implies \mathbb E\abs{\Delta_j(\tau^\eps(t)-\tau^\eps(s))(x)}^2
 \lesssim
 \abs{t-s}^{2\vartheta}
 2^{-2j(s_{\alpha,\gamma}(\tau)-\gamma\vartheta-)}.
\end{equation}
Since
\(\vartheta>0\) and these losses may be chosen arbitrarily small,
\eqref{eq:sbe-tree-fixed-time-block} and
\eqref{eq:sbe-tree-time-increment-block}, together with the standard
finite-chaos completion fixed at the beginning of this appendix, give
every spatial regularity \(s_{\alpha,\gamma}(\tau)-\), the corresponding
time modulus, and mollifier-independent convergence.  Comparing with the
stationary kernels after time translation makes these estimates uniform
over restart times in compact sets.  Together with
\Cref{lem:first-six-enhancement-kernel-bounds}, this proves joint
convergence of all nine driving coordinates.
\end{proof}

\begin{remark}[Uniform resolvent extension of \(\mathbb X_Q\)]
For \(\lambda\ge0\), set
\[
 M_{\gamma,\lambda}(\omega,k)
 :=\frac{ik}{i\omega+\abs{k}^{\gamma}+\lambda}.
\]
For the stationary version,
\(Q_{X,\lambda}^\eps
=\mathcal I_1(M_{\gamma,\lambda}H_\alpha^\eps)\),
the Wiener product formula and the Fourier definition of the Bony
resonance give the second-chaos kernel
\[
\begin{aligned}
 G_{Q,\lambda,2}^\eps(\zeta_1,\zeta_2)
 :=
 \operatorname{Sym}_2\left[
  \chi_\odot(k_1,k_2)
  M_{\gamma,\lambda}(\zeta_1)
  H_1^\eps H_2^\eps
 \right].
\end{aligned}
\]
At \(\lambda=0\), this is exactly the kernel
\(G_{\mathbb X_Q,2}^\eps\) above.  For every \(\lambda\ge0\), the
zeroth-chaos component vanishes by the same calculation.  Consequently,
\(\mathbb X_{Q,\lambda}^\eps
=\mathcal I_2(G_{Q,\lambda,2}^\eps)\).  Assume now without loss of
generality that \(q\le\delta/2\).  The elementary bound
\(\abs{M_{\gamma,\lambda}(\omega,k)}
\le\abs{M_\gamma(\omega,k)}\), Jensen's inequality, and the preceding
second-moment estimate for \(G_{\mathbb X_Q,2}^\eps\) give, 
\[
 \sup_{\lambda\ge0}
 \mathcal C_{G_{Q,\lambda,2}^\eps}(\zeta)
 \lesssim
 \langle\zeta\rangle_\gamma^{
  -(\gamma+1)-2(2q-\delta)+}.
\]

For \(t_0\le s\le t\), let
\(
 \Kcal_{\gamma;s,t}F
 :=\int_s^t\partial_xe^{-(t-r)\Lambda^\gamma}F(r)\,\dd r
\).
Integration by parts in the Laplace weight, as in
\cite[Lemma~2.16]{ZhangZhuZhu2022SingularHJB}, gives the identity below;
the same spatial oddness removes each zeroth-chaos contraction:
\[
\begin{aligned}
 \mathbb X_{Q,\lambda}^\eps(t)
 &=
 e^{-\lambda(t-t_0)}
 \bigl(\Kcal_{\gamma;t_0,t}X^\eps\bigr)
 \odot_{\rm ren}X^\eps(t)\\
 &\quad+
 \lambda\int_{t_0}^t e^{-\lambda(t-s)}
 \bigl(\Kcal_{\gamma;s,t}X^\eps\bigr)
 \odot_{\rm ren}X^\eps(t)\,\dd s.
\end{aligned}
\]
The positive coefficients on the right have total mass one.  Consequently,
for every \(\rho\in\R\),
\[
 \sup_{\lambda\ge0}\sup_{t_0\le t\le T}
 \norm{\mathbb X_{Q,\lambda}^\eps(t)}_{\Ccal^\rho}
 \le
 \sup_{t_0\le s\le t\le T}
 \norm{
  (\Kcal_{\gamma;s,t}X^\eps)
  \odot_{\rm ren}X^\eps(t)}_{\Ccal^\rho}.
\]

The uniform truncated-convolution estimate of
\cite[Lemma~6.1]{ZhangZhuZhu2022SingularHJB}, with the preceding
fractional second-moment estimate in place of its heat-kernel estimate,
gives the required bounds and mollifier convergence uniformly over
\(t_0\le s\le t\le T\).  The last inequality transfers these estimates
uniformly to the resolvent parameter. Thus, for every \(p<\infty\),
\begin{equation}\label{eq:uniform-resolvent-second-chaos}
\begin{aligned}
 &\sup_{0<\eps\le1}
 \mathbb E\left[
  \sup_{\lambda\ge1}
  \norm{\mathbb X_{Q,\lambda}^\eps}_{
   C_t\Ccal^{2q-\delta-}}^p
 \right]<\infty,\\
 &\lim_{\eps\downarrow0}
 \mathbb E\left[
  \sup_{\lambda\ge1}
  \norm{\mathbb X_{Q,\lambda}^\eps
        -\mathbb X_{Q,\lambda}}_{
   C_t\Ccal^{2q-\delta-}}^p
 \right]=0.
\end{aligned}
\end{equation}
The estimates are uniform over restart times in compact intervals and
prove the model bound and convergence asserted after
\eqref{eq:resolvent-model-uniform}.
\end{remark}

\section*{Acknowledgements}
This project originated in discussions with Felix Otto and Nicolas
Perkowski, to whom I am grateful.  Part of this work was carried out during
a visit to the Pattern Formation, Energy Landscapes and Scaling Laws group
at the Max Planck Institute for Mathematics in the Sciences in Leipzig.  I
gratefully acknowledge the hospitality of the group.  I also thank Bj\"orn
Bringmann, Wenhao Zhao, Fan Cheng and Carlos Villanueva Mariz for helpful discussions.

\begingroup
\hfuzz=2pt
\hbadness=5000
\renewcommand{\bibliofont}{\fontsize{9}{11.4}\selectfont}
\bibliographystyle{alphaurl}
\bibliography{sbe_references}
\endgroup

\end{document}